\numberwithin{equation}{section}
\newtheorem{theorem}{Theorem}[section]
\newtheorem{lemma}[theorem]{Lemma}
\newtheorem{remark}[theorem]{Remark}
\newtheorem{definition}[theorem]{Definition}
\newtheorem{proposition}[theorem]{Proposition}
\newtheorem{corollary}[theorem]{Corollary}
\def\XXint#1#2#3{{\setbox0=\hbox{$#1{#2#3}{\int}$ }
\vcenter{\hbox{$#2#3$ }}\kern-.6\wd0}}
\newcommand{\ba}{\mathbf{a}}
\newcommand{\mC}{{\mathcal C}}
\newcommand{\Si}{\Sigma}
\title{Mean curvature flow converging to an minimizing cone and its Hardt-Simon foliation}
\author{Jiuzhou Huang}
\address{School of Mathematics, Korea Institute for Advanced Study, 85 Hoegiro, Dongdaemun-gu, Seoul 02455, Republic of Korea}
\email{jiuzhou@kias.re.kr}
\date{}
\begin{document}
\allowdisplaybreaks
\begin{abstract}
    In this paper, we construct a family of mean curvature flow which converges to an area minimizing, strictly stable hypercone $\mC$ after type I rescaling, and converges to the Hardt-Simon foliation of the cone after a type II rescaling provided the cone satisfies some technique conditions. The difference from Vel\'azquez's previous results is that we drop the symmetry condition on the cone.
\end{abstract}
\maketitle

\section{Introduction}
A family of smooth hypersurfaces $\{\Sigma_t\}_{t_0\leq t<0}\subset\mathbb R^{n+1}$ immersed in $\mathbb R^{n+1}$ is called a mean curvature flow (MCF) if it evolves by
\begin{equation}\label{eq mcf}
     F_t=-  H\nu,
\end{equation}
where $F$, $ H, \nu$ are the position vector, mean curvature, and unit normal vector of $\Sigma_t$ respectively. 
If the norm of the second fundamental form $| A(t)|$ of $\Sigma_t$ blows up at time $t=0$, i.e. $\limsup_{t\nearrow 0} |A(t)|=\infty$, then we say $\Sigma_t$ develops a singularity at $t=0$. When a singularity appears, we can rescale the flow to study the structure of the singularities. The type I rescaled flow $\{\Pi_s:=e^{\frac{1}{2}s}\Sigma_{-e^{-s}}\}_{s_0\leq s<\infty}$, ($t=-e^{-s}$, $s_0=-\ln|t_0|$) satisfies 
\begin{equation}\label{eq typie I flow}
\tilde F_s=\frac{1}{2}\tilde F -\tilde{H}\tilde\nu,   
\end{equation}
where $\tilde F$, $\tilde H,\tilde \nu$ are the position vector, mean curvature, and unit normal vector of $\Pi_s$ respectively. If $\{\Pi_s\}_{s_0\leq s<\infty}$ has bounded second fundamental form, then we say the singularity is type I, otherwise we say the singularity is type II. 

Both Type I and type II singularities are very common in the study of MCF. For example, let $\Xi\subset\mathbb R^{n+1}$ be a self-shrinker. That is, $\Xi$ satisfies the equation
\begin{equation}
   \frac{1}{2}\langle  F,\nu\rangle= H,
\end{equation}
where $F,\nu,H$ are the position vector, unit outer normal, and mean curvature of $\Xi$ respectively. Then, $\{F_t:=\sqrt{|t|}\Xi\}_{-\infty\leq t<0}$ is a MCF. If $\Xi$ is smooth, then $F_t$ has type I singularity at $t=0$, otherwise, $F_t$ has type II singularity at $t=0$. When $n=2$, Illmanen \cite{I95} showed that all shrinkers are smooth. When $n\geq 3$, we do have singular shrinkers, like the the minimal cone $\mathcal C$ generated by Colifford torus $\{x_1^2+x_2^2=x_3^2+x_4^2\}\subset \mathbb S^3$ in $\mathbb R^4$. 

When a type II singularity appears, we want to explore the blowing-up rate of $|A(t)|$ near $t=0$. That is, we want to find a positive scaling function $S(t)$, $t_0\leq t<0$; such that the rescaled flow $\{S(t)\Sigma_t\}_{t_0\leq t<0}$ converges to some smooth non-planar smooth hypersurface as $t\to 0$. Equivalently, we want $\lim_{t\to 0}S(t)=\infty$, and $0<\limsup_{t\to 0} S(t)\sup_{\Sigma(t)}|A|<\infty$. In some cases, we can get an explicit expression of the function $S$. For instance, Vel\'azque \cite{V94} constructed a family of MCF $\{\Sigma_t^{l}\}_{t_0\leq t<0;l\geq 2}$ in $\mathbb R^{2n}$ for $n\geq 4$, which has $O(n)\times O(n)$ symmetry. In Vel\'azquez' construction, we can take $S(t)=|t|^{-\frac{1}{2}-\sigma_l}$, where $\sigma_l=\sigma_l(n)$ is some positive constant. To be more precise, Vel\'azquez's solution converges to the Simons' cone in any fixed annulus centered at $O$ as $s\to \infty$ after type I rescaling; and converges in $C^0$ sense to a smooth minimal hypersurface tangent to Simons' cone at infinity after a type II rescaling by multiplying $S(t)$. Vel\'azquez's solution was studied throughly by Guo-Sesum in \cite{GS} in which they proved that the Type II rescaled flow actually converges locally smoothly to the minimal hypersurface. Moreover,  a sub-family of Vel\'azquez's solutions have mean curvature blowing up near the origin at a smaller rate than that of the second fundamental form. On the other hand, Stolarski \cite{S} showed that many other Vel\'azquez's solutions have bounded mean curvature. 

Besides the Simons' cone case, Liu \cite{Liu} recently constructed a compact mean curvature flow which converges to a quadratic cone $\mC_{p,q}$ in $\mathbb R^{n+1}$ ($p,q\geq 2$ with $p+q+1=n\geq 7$, and $p,q\geq 3$ when $n=7$) after type I rescaling, and converges to a smooth minimal hypersurface tangent to $\mC_{p,q}$ at infinity after a type II rescaling with $S(t)=|t|^{-\frac{1}{2}-\sigma_l}$. Moreover, for $l$ sufficiently large, the mean curvature of the flow remains bounded. Note that Simons' cone is a quadratic cone $\mC_{p,q}$ with $p=q$. Thus, Liu's result is a generalization of  Vel\'azque and Stolarski's. 

On the other hand, $\mC_{p,q}$ are invariant by the action of $O(p)\times O(q)$, thus still can be parametrized by a single parameter. So Vel\'azquez and Liu's solution can are based on the analysis of ODEs. A natural question is whether we can consider the case when the cone is non-symmetric. This is the main result of this paper. 
To be more precise, we show that
\begin{theorem}\label{thm main aim intro}
    Let $n\geq 7$ be an integer, $\mC\subset \mathbb R^{n+1}$ be a regular  minimizing, strictly stable hypercone with isolated singularity. Let $\mathcal L_{\mC}$ be the Jacobi operator, $S_+$ be the Hardt-Simon foliation. If $\mathcal L_{\mC}$ has eigenvalue $\lambda_l$ satisfies the condition \eqref{eq lambda l} for some integer $l$, and the constant $\alpha,\tilde\alpha$ in \eqref{eq psi asy} satisfies \eqref{eq alpha ine}\footnote{Note that the condition on $\mC$ is satisfied for Simons' cone for $n$ large (see Remark \ref{rem nonemp} for more details).}. Then for $|t_0|\ll 1$ (depending on $n,\Sigma,l$), there is a MCF $\{\Sigma_t\}_{t_0\leq t<0}$ for which, the type I rescaled hypersurface $\{\Pi_s\}$ converges to $\mathcal C$ locally smoothly, i.e. for any $0<r<R<\infty$
\begin{align*}
    \Pi_s\to \mathcal C\quad \text{in }C^\infty(B(O,R)\setminus B(O,r)).
\end{align*}
as $s\to\infty$. Moreover, the type II rescaled hypersurfaces $\{\Gamma_\tau\}_{\tau_0\leq \tau<\infty}$ converges to $S_{\kappa,+}$ locally smoothly, i.e.
\begin{align*}
    \Gamma_\tau\to S_{\kappa,+} \text{ in }C_{loc}^\infty (S_{\kappa,+}).
\end{align*}
as $\tau\to\infty$. Here $S_{\kappa,+}$ is one piece of the Hardt-Simon foliation tangent to $\mC$ at infinity.
\end{theorem}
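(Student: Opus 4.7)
The plan is to construct the flow $\{\Sigma_t\}$ as a normal graph over $\mC$ in an outer region and over a rescaled Hardt--Simon leaf $S_{\kappa,+}$ in an inner region near the tip, then to glue the two descriptions via matched asymptotics. Away from the tip, write $\Sigma_t=\{y+u(y,t)\nu(y):y\in\mC\setminus B_{r(t)}(O)\}$ for a small scalar function $u$ on the regular part of $\mC$, so that \eqref{eq mcf} becomes a quasilinear parabolic equation
\begin{equation*}
    \pa_t u=\mathcal L_{\mC} u+Q(u,\nabla u,\nabla^2 u),
\end{equation*}
with $Q$ at least quadratic in its arguments. Passing to type I variables $s=-\log|t|$ and $w(y,s)=e^{s/2}u(e^{-s/2}y,-e^{-s})$ converts this into a drift-diffusion equation on $\mC$ whose stationary solution $w\equiv 0$ corresponds to $\Sigma_t=\mC$.

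Using the conical structure I would separate variables along rays: write $y=r\theta$ with $\theta$ in the link $\mC\cap\mathbb{S}^n$, and decompose $w(r\theta,s)=\sum_k a_k(r,s)\phi_k(\theta)$ into eigenfunctions $\phi_k$ of the induced Jacobi operator on the link. Each coefficient $a_k$ satisfies a one-dimensional parabolic equation whose separable model solutions have spatial dependence $r^{\gamma_k^\pm}$, the exponents being determined by the eigenvalue $\lambda_k$ and the dimension $n$. The hypothesis \eqref{eq lambda l} selects a specific mode $l$ whose exponent $\gamma_l^+$ is the source of the anomalous type II scaling $S(t)=|t|^{-1/2-\sigma_l}$. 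By assumption the outer expansion of $S_{\kappa,+}$ as $r\to\infty$ has its leading correction to $\mC$ precisely in this mode with the coefficients $\alpha,\tilde\alpha$ appearing in \eqref{eq psi asy}, so that the inequality \eqref{eq alpha ine} is exactly what renders the matching between inner and outer pictures consistent.

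To produce the actual flow I would run a shooting argument with two free parameters: the coefficient of the $\lambda_l$-mode in the initial datum at time $s_0=-\log|t_0|$ and the Hardt--Simon leaf parameter $\kappa$ in the inner region. For each candidate datum, construct a local-in-time solution, propagate it forward in $s$, and show that a unique choice of the initial coefficient kills the projection onto the single growing direction of the linearized operator, producing a solution globally defined in $s$ that decays to zero uniformly on compact annuli in $\mC\setminus\{O\}$. Standard parabolic regularity together with the smoothness of the regular part of $\mC$ upgrades this decay to $C^\infty$ on $B(O,R)\setminus B(O,r)$ for each $0<r<R<\infty$, which is the type I statement. The type II limit then follows by rewriting the outer graph in the inner variable $\tau$ and using $S(t)$: by construction the rescaled surface is a perturbation of $S_{\kappa,+}$ whose size tends to zero locally smoothly as $\tau\to\infty$.

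The hard part will be controlling the nonlinear coupling between infinitely many modes in the absence of the $O(p)\times O(q)$ symmetry that allowed Vel\'azquez and Liu to reduce their constructions to ODEs on a single radial variable. One must show that the cross-couplings between the resonant $l$-th mode and all other modes contribute only higher-order corrections, and for this I expect to need sharp weighted Schauder-type estimates for the linearized operator on $\mC$ in function spaces tailored to the different asymptotic behaviors of $w$ near the tip and at infinity, with weights read off from the spectrum of $\mathcal L_{\mC}$. A further delicate point is that besides the single unstable direction in the $\lambda_l$-mode, the linearized problem carries a zero mode coming from dilations of $\mC$; the correct codimension accounting that lets the one-parameter shooting close up depends on the combined use of the spectral hypothesis on $\lambda_l$ and the sign condition on $\alpha,\tilde\alpha$ from \eqref{eq alpha ine}.
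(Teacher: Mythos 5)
Your overall architecture (graph over $\mC$ outside, graph over $S_{\kappa,+}$ near the tip, type I/type II rescalings, spectral decomposition along the link, selection of initial data to kill bad modes) matches the paper's, but there are two genuine gaps. First, your mode-counting and selection mechanism is wrong. You propose a one-parameter shooting argument to kill ``the single growing direction,'' but the operator $L_{\mC}$ has $l-1$ eigenvalues $\lambda_1<\lambda_2\leq\cdots\leq\lambda_{l-1}$ strictly below $\lambda_l$ (coming from \emph{all} link eigenfunctions $\omega_j$ with $j\leq m$, not just the first), and every one of the corresponding projections must vanish asymptotically for the $\lambda_l$-mode to dominate. This forces an $(l-1)$-dimensional parameter $\mathbf a\in B^{l-1}(O,\beta^{-\tilde\alpha})$ in the initial datum, and since the resulting map $\Phi_t(\mathbf a)$ (the rescaled projections onto $\varphi_1,\dots,\varphi_{l-1}$) is not known to be monotone or injective, one cannot shoot: the paper instead shows $\Phi_{t_0}$ is uniformly close to the identity and runs a Brouwer degree argument, $\deg(\Phi_t,\mathcal O_t,0)=1$ for all $t$, to produce for each $t^\circ$ a good parameter $\mathbf a_{t^\circ}$, and then passes to a limit using the uniform a priori estimates. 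Also, $\kappa$ is not a free shooting parameter in the paper; it is determined a posteriori as the $\varphi_{i_11}$-projection of the solution at time $s_1$ and shown to satisfy $|\kappa-1|\leq Ce^{-\xi\lambda_l s_0}$.

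Second, your treatment of the tip region is too thin precisely where the non-symmetric case departs from Vel\'azquez--Guo--\v{S}e\v{s}um--Liu. Near the tip the flow is not a graph over the cone, so the weighted Schauder estimates you invoke for the linearization on $\mC$ give no $C^0$ control there, and ``by construction the rescaled surface is a perturbation of $S_{\kappa,+}$'' is exactly what must be proved. The paper's mechanism is a barrier argument: the lower barrier is a time-dependent family of rescaled Hardt--Simon leaves $S_{\lambda_-(\tau)\kappa,+}$, and the upper barrier is a graph over $S_{\lambda_+(\tau)\kappa,+}$ built from the positive first Dirichlet eigenfunction of the Jacobi operator $\Delta_{S_{\kappa,+}}+|A|^2$ on $S_{\kappa,+,2\beta}$ (its positivity $\lambda_{1,\beta}>0$ uses that $S_{\kappa,+}$ is minimizing, hence locally stable), corrected by a term $-d_1(\tau)\tilde z$ to absorb errors at the matching radius $(2\sigma_l\tau)^{(1-\vartheta)/2}$. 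This is where both the minimizing hypothesis and the inequality \eqref{eq alpha ine} (through the admissible range of $\vartheta$ and $\varrho$) are actually consumed; without some substitute for this construction your matching step does not close.
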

In fact, the flow we get is admissible which means that it is a normal graph over $S_{\kappa,+}$ in the tip region and a normal graph over $\mC$ outside the tip region after rescaling (see Section \ref{sec adm} for the precise definition of admissible flows). Moreover, we have more detailed description about the flow $\{\Sigma_t\}$ in Theorem \ref{thm main aim}. 

Now, let's recall some terminology in Theorem \ref{thm main aim intro}. Here, a hypercone $\mC\subset \mathbb R^{n+1}$ is called regular if it has an isolated singularity at the origin, or equivalently, the link $\Sigma:=\mC\cap \mathbb S^{n}$ of $\mC$ is a smooth hypersurface in $\mathbb S^{n}$. The strict stability of a hypercone $\mC$ was introduced by Hardt-Simon \cite{HS}, where they also introduced the foliation of $\mathbb R^{n+1}\setminus \mC$, which is called Hardt-Simon foliations now. The reader can refer to Appendix \ref{sec HS foliation} for more details about Hardt-Simon foliations. A regular cone $\mC$ is called strictly stable if the first eigenvalue $\mu_1$ of 
\begin{equation}\label{eq Lsigma def}
\mathcal L_\Sigma:= \Delta_\Sigma+|A_\Sigma|^2    
\end{equation}
satisfies 
\begin{equation}
    \mu_1>-\frac{(n-2)^2}{4}.
\end{equation}

Since the quadratic cone $\mC_{p,q}$ is a minimizing and strictly stable hypercone when $p, q \ge 2$ with $p + q + 1 = n \ge 7$, and when $p, q \ge 3$ in the borderline case $n = 7$ (see Bombieri--De~Giorgi--Giusti~\cite{BDG}, Lawson~\cite{La}, and Sim\~oes~\cite{SP}), Theorem~\ref{thm main aim intro} generalize of the results of Vel\'azquez and Liu in higher dimensions in some sense.

The construction of $\{\Sigma_t\}$ in Theorem~\ref{thm main aim intro} largely follows the simplified version of Vel\'azquez's construction developed by Guo--\v{S}e\v{s}um~\cite{GS}. A key distinction between our setting and those considered by Vel\'azquez, Guo--\v{S}e\v{s}um, and Liu is that we must deal with a partial differential equation rather than an ordinary differential equation, since the absence of symmetry in $\mC$ forces us to account for the non-radial variables. Fortunately, most of the estimates required in the symmetric case extend to the non-symmetric setting after suitable modifications. 

A more essential difference lies in the lack of a global graphical representation of the flow over the plane. In the symmetric case, one only needs to study a single rotationally symmetric slice of the flow, which can be written as an entire graph over a half-line. In contrast, in our setting the (rescaled) flow can be written as a graph over $S_{\kappa,+}$ only in the tip region, while in other regions it must be expressed as a graph over a portion of the cone~$\mC$. This geometric restriction introduces additional difficulty in constructing barriers for the $C^0$ estimate near the tip. To overcome this obstacle, we make use of the strict stability of~$\mC$ together with the Hardt--Simon foliation of minimal hypersurfaces asymptotic to~$\mC$~\cite{HS}. More precisely, by rescaling this foliation at two distinct spatial scales and exploiting the positivity of the first eigenvalue of the Jacobi operator of $S_{\kappa,+}$ on suitable local domains, we construct upper and lower barriers that yield the desired control. Further details are given in Subsection~\ref{sec inner reg}.

We should emphasize that both the minimizing property and the strict stability assumption in Theorem \ref{thm main aim intro} on $\mC$ are heavily used in our proof. More concretely, the strict stability is used to deduce the coercivity of the linearized operator of \eqref{eq typie I flow} in section \ref{sec pre}, which is the foundation of the spectral method we employed in the analysis of our solution in the intermediate region. The area minimizing property of $\mC$ are used in two different places. First, we need the minimizing property of $\mC$ to provide the Hardt-Simon foliation described by \cite{HS}, which is the smooth limit of our type II rescaled flow. Secondly, we need the minimizing property to deduce that the foliation is also minimizing, thus a locally stable minimal hypersurface so that its Jacobi operator has positive fist eigenvalue on every compact domain of the hypersurface. The latter is the key to construct our upper barrier for the flow in tip region described above.  

The rest of the paper is organized as follows. In Section \ref{sec pre}, we prove some prelimilary results about the spectrum of the linearized operator of the flow \eqref{eq typie I flow} which is the base for the analysis of the constructions. Then we define admissble mean curvature flows in Section \ref{sec adm}. Our solution is constructed under this frame work. In Section \ref{sec outline}, we describe the ideas to construct our solutions, which includes the degree method, and the statement of $C^0$ and $C^2$ estimates needed for the degree method. Assuming these estimates, we prove Theorem \ref{thm main aim intro} at the end of Section \ref{sec outline}. The $C^0$ and $C^2$ estimates are proved in Section \ref{sec C0 est}, \ref{sec higher est}, respectively. The proofs are divided into outer, intermediate and inner (tip) regions in different scales, and mostly follow that of \cite{GS} except for the two differences described above
described above. We omit details of the proof if it is very similar to that of the symmetric case. Since the flow we constructed are normal graphs over cone $\mC$ or a smooth hypersurface $S_{\kappa,+}$, we first collect some facts about the normal graphs and evolution equations along MCF in Appendix \ref{sec graph evo}. In Appendix \ref{sec cone prea}, we collect some facts about the geometry of cones, and prove a Morrey type inequality on cones. At last, we collect Hardt-Simon's results on strictly stable, minimizing hypercones in Appendix \ref{sec HS foliation}. These results are important for the construction of barriers and proof of convergence in the tip region.
 \section*{Acknowledgement}
 The author thanks Professor Herrero and Professor Vel\'azquez for kindly sending their unpublished notes \cite{HV}. He is grateful to Professor Guo for answering questions about the paper \cite{GS}. At last, the author is deeply indebted to Professor Choi for many helpful discussions and his constant support. The author is supported by KIAS individual Grant MG088502.

\section{Preliminaries}\label{sec pre}
In this section, we prove some preliminary properties of the linearized operator corresponding to \eqref{eq typie I flow}. These properties are the foundations of the spectral method to construct the solution. 

Let $Y:=\mC=\mathbb R_+\times\Sigma=\{(y,\theta)|y\in\mathbb R_+,\theta\in\Sigma\}\subset\mathbb R^{n+1}$ be a regular hypercone in $\mathbb R^{n+1}$, where $\Sigma=\mC\cap\mathbb S^{n}$ is a smooth hypersurface in $\mathbb S^{n-1}$. Let 
\begin{equation}\label{eq lin res}
    L_{\mC}v:=\Delta v+|A|^2v+\frac{1}{2}(v-\langle Y,\nabla v\rangle),
\end{equation}
with $\Delta, A$ the Laplacian operator and length square of the second fundamental form of $\mC$, $E(v)
$ is given in \eqref{eq error outer}. Since $\mC$ is a cone, \eqref{eq lin res} becomes
\begin{equation}
    L_{\mC}v=\Delta v+|A|^2v+\frac{1}{2}(v-yv_y).
\end{equation}
\subsection{A coercivity lemma}
\begin{lemma}\label{lem cocer}
Suppose that $\mC$ is a strictly stable minimal hypercone in $\mathbb R^{n+1}$, then there exists a constant $C(n)>0$, ${\tilde \varepsilon} (\Sigma)\in(0,1)$ depending only on $n,\Sigma$,  small such that
    \begin{equation}
        \int_{\mC} -(L_{\mC} uu) e^{-\frac{|Y|^2}{4}}dvol_{\mC}\geq {\tilde \varepsilon}  \int_{\mC} |\nabla_{\mC} u|^2e^{-\frac{|Y|^2}{4}}dvol_{\mC} -C\int_{\mC} u^2  e^{-\frac{|Y|^2}{4}}dvol_{\mC}
    \end{equation}
    for every $u\in C_c^\infty(\mC)$.
\end{lemma}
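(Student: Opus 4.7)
My plan is to integrate by parts against the Gaussian weight, diagonalize the resulting quadratic form via a spherical decomposition on the link $\Sigma$, and apply a weighted radial Hardy inequality whose spectral gap comes from the strict stability of $\mC$.

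Since the position vector $Y$ is tangent to the cone ($Y^T=Y$), the drift term in $L_{\mC}$ combines with the Gaussian weight into divergence form, $e^{|Y|^2/4}\dv(e^{-|Y|^2/4}\nabla u)=\Delta_{\mC} u-\tfrac12\langle Y,\nabla u\rangle$, and integration by parts gives
\[
\int_{\mC}-(L_{\mC} u)u\,e^{-|Y|^2/4}\,dvol_{\mC}=\int_{\mC}|\nabla u|^2 e^{-|Y|^2/4}dvol_{\mC}-\int_{\mC}|A|^2 u^2 e^{-|Y|^2/4}dvol_{\mC}-\tfrac12\!\int_{\mC}u^2 e^{-|Y|^2/4}dvol_{\mC}.
\]
Writing $u(y,\theta)=\sum_k\phi_k(y)\psi_k(\theta)$ in an $L^2(\Sigma)$-orthonormal basis of Jacobi eigenfunctions on $\Sigma$ with eigenvalues $\mu_k$, the off-diagonal contributions from $|A_\Sigma|^2\psi_k\psi_l$ and $\nabla\psi_k\cdot\nabla\psi_l$ cancel exactly, so the first two terms on the right collapse to the diagonal sum $\sum_k\bigl[\int\phi_k'^2 y^{n-1}e^{-y^2/4}dy+\mu_k\int\phi_k^2 y^{n-3}e^{-y^2/4}dy\bigr]$.

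For each radial mode, the substitution $\phi_k=y^{-(n-2)/2}g_k$ converts the radial $H^1$-energy into
\[
\int_0^\infty\!\phi_k'^2 y^{n-1}e^{-y^2/4}dy=\tfrac{(n-2)^2}{4}\!\int_0^\infty\!\phi_k^2 y^{n-3}e^{-y^2/4}dy+\!\int_0^\infty\! y g_k'^2 e^{-y^2/4}dy-\tfrac{n-2}{4}\!\int_0^\infty\!\phi_k^2 y^{n-1}e^{-y^2/4}dy,
\]
so the mode contribution is bounded below by $\bigl(\tfrac{(n-2)^2}{4}+\mu_k\bigr)\int\phi_k^2 y^{n-3}e^{-y^2/4}dy-\tfrac{n-2}{4}\int\phi_k^2 y^{n-1}e^{-y^2/4}dy$. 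Strict stability is precisely the statement that $\tfrac{(n-2)^2}{4}+\mu_k\ge 2\delta>0$ uniformly in $k$. Reserving a small fraction $\tilde\varepsilon$ of $\int\phi_k'^2 y^{n-1}e^{-y^2/4}dy$ for the right-hand side (via the same identity) while keeping the remaining Hardy remainder non-negative---possible for $\tilde\varepsilon\le 4\delta/(n-2)^2$---and summing in $k$ yields the desired estimate with the radial gradient $u_y^2$ in place of $|\nabla_{\mC} u|^2$.

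To recover the tangential part, I add a small multiple of the tangential coercivity $\int_\Sigma(|\nabla_\Sigma u|^2-|A_\Sigma|^2 u^2)\,d\theta\ge\mu_1\int_\Sigma u^2\,d\theta$ (valid because only finitely many $\mu_k$ are negative and $\mu_k\ge\mu_1$), and absorb the resulting $\int u^2/|Y|^2\,e^{-|Y|^2/4}\,dvol_{\mC}$ term into $\int|\nabla u|^2 e^{-|Y|^2/4}dvol_{\mC}+C\!\int u^2 e^{-|Y|^2/4}dvol_{\mC}$ via the weighted Hardy inequality obtained from the same $\phi=y^{-(n-2)/2}g$ substitution. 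The main obstacle is precisely this upgrade: strict stability is intrinsically a \emph{radial} Hardy inequality, so recovering the tangential direction requires carefully balancing the radial spectral gap $\delta$ against the finite collection of unstable tangential modes.
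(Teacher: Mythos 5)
Your proof is correct and follows essentially the same route as the paper: integrate by parts to obtain the quadratic form, split off a small fraction of the gradient for the output, control the remainder by combining the weighted radial Hardy inequality (constant $\tfrac{(n-2)^2}{4}$) with the link inequality $\int_\Sigma(|\nabla_\Sigma u|^2-|A_\Sigma|^2u^2)\,d\theta\ge\mu_1\int_\Sigma u^2\,d\theta$, and close using strict stability together with $\sup_\Sigma|A_\Sigma|^2<\infty$. The only differences are organizational: you derive the Hardy identity explicitly via the substitution $\phi=y^{-(n-2)/2}g$ where the paper cites \cite{HV}, and your full eigenfunction expansion on $\Sigma$ is not needed since only the variational characterization of $\mu_1$ enters.
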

\begin{proof}
Note that for any $u\in C_c^\infty(\mC)$, $|\nabla_{\mC}u|^2=|\partial_ yu|^2+\frac{|\nabla_\Sigma u|^2}{y^2}$, and $|A_{\mC}|^2=\frac{|A_\Sigma|^2}{y^2}$. Thus, for any  $u\in C_c^\infty(\mC)$, we have 
 \begin{align*}
        &\int_{\mC} -(L_{\mC}uu) e^{-\frac{|Y|^2}{4}}dvol_{\mC}
        =\int_{\mC} |\nabla_{\mC} u|^2-(|A_{\mC}|^2+\frac{1}{2})u^2  e^{-\frac{|Y|^2}{4}} dvol_{\mC}\\
        =&\int_{\mC} \left(|\partial_y u|^2+\frac{|\nabla_\Sigma u|^2-|A_\Sigma|^2u^2}{y^2}-\frac{1}{2}u^2\right)e^{-\frac{|Y|^2}{4}}dvol_{\mC}=I_1+I_2+I_3,
\end{align*}
where
\begin{align*}
    I_1=&\varepsilon \int_{\mC} \left(|\nabla_{\mC} u|^2-\frac{|A_\Sigma|^2}{y^2}u^2\right)e^{-\frac{|Y|^2}{4}}dvol_{\mC},\\
    I_2=&(1-\varepsilon)\int_{\mC}\left(|\partial_y u|^2+\frac{|\nabla_\Sigma u|^2-|A_\Sigma|^2u^2}{y^2}\right)e^{-\frac{|Y|^2}{4}}dvol_{\mC},\quad I_3=-\frac{1}{2}\int_{\mC}u^2e^{-\frac{|Y|^2}{4}}dvol_{\mC},
\end{align*}
with $\varepsilon\in (0,1)$ to be determined. In polar coordinates, $dvol_{\mC}=y^{n-1}d\theta dy$, we have
\begin{align*}
    I_2=(1-\varepsilon)\left[\int_\Sigma\int_0^\infty |\partial_y u|^2y^{n-1}e^{-\frac{y^2}{4}}dy d\theta+\int_0^\infty \int_\Sigma\left(\frac{|\nabla_\Sigma u|^2-|A_\Sigma|^2u^2}{y^2}\right)y^{n-1}e^{-\frac{|y|^2}{4}}d\theta dy\right].
\end{align*}
Let $\mu_1$ be the first eigenvalue of $\mathcal L_\Sigma$ defined in \eqref{eq Lsigma def}, i.e.  $\mu_1=\inf_{v\in C^\infty(\Sigma),v\neq 0}\frac{\int_\Sigma-\mathcal L_\Sigma v vd\theta }{\int_\Sigma v^2d\theta}$. Thus,
\begin{align*}
    I_2\geq (1-\varepsilon)\int_\Sigma \int_0^\infty \left(\frac{\mu_1}{y^2}u^2+|\partial_yu|^2\right)y^{n-1}e^{-\frac{y^2}{4}}dyd\theta
\end{align*}
By the same argument as in Herrero-Vel\'azquez (step 2 of proof of Lemma 2.3 in \cite{HV}),  there exists a constant $C'(n)$ depending only on $n$ such that, for any fixed $\theta\in\Sigma$,
    \begin{equation}\label{eq Hardy type HV}
         \int_0^\infty  \left(|\partial_yu(\cdot,\theta)|^2-\frac{(n-2)^2}{4y^2}u(\cdot,\theta)^2\right)y^{n-1}e^{-\frac{r^2}{4}} dy\geq -C'\int_0^\infty u(\cdot,\theta)^2 y^{n-1}e^{-\frac{y^2}{4}}dy.
    \end{equation}
    Thus,
\begin{align*}
    I_2\geq (1-\varepsilon)\Big[-C'\int_\Sigma \int_0^\infty u^2y^{n-1}e^{-\frac{y^2}{4}}dyd\theta+\big(\mu_1+\frac{(n-2)^2}{4}\big)\int_\Sigma\int_0^\infty \frac{u^2}{y^2}y^{n-1}e^{-\frac{y^2}{4}}dyd\theta\Big],
\end{align*}
and 
\begin{align*}
    I_1+I_2+I_3\geq  \varepsilon\int_{\mC}\left(|\nabla_{\mC}u|^2+\frac{\frac{(1-\varepsilon)}{\varepsilon}\big(\mu_1+\frac{(n-2)^2}{4}\big)-|A_\Sigma|^2}{y^2}u^2\right)e^{-\frac{|Y|^2}{4}}dvol_{\mC}-(C'+\frac{1}{2})\int_{\mC}u^2e^{-\frac{|Y|^2}{4}}dvol_{\mC}
\end{align*}
    Since $\mC$ is strictly stable, we have $\mu_1+\frac{(n-2)^2}{4}>0$. On the other hand, $\Sigma$ is compact, we can take $\varepsilon={\tilde \varepsilon} (\Sigma)\in(0,1)$ sufficiently small such that
    \begin{align*}
        \frac{(1-{\tilde \varepsilon} )}{{\tilde \varepsilon} }(\mu_1+\frac{(n-2)^2}{4})-\sup_\Sigma|A_\Sigma|^2\geq 0. 
    \end{align*}
    Then we get
    \begin{equation}
        \int_{\mC} -(L_{\mC}uu) e^{-\frac{|Y|^2}{4}}dvol_{\mC}\geq {\tilde \varepsilon}  \int_{\mC} |\nabla_{\mC} u|^2e^{-\frac{|Y|^2}{4}}dvol_{\mC}-(C'+\frac{1}{2})\int_{\mC} u^2 e^{-\frac{|Y|^2}{4}}dvol_{\mC}.
    \end{equation}
    We can then take $C=C'(n)+\frac{1}{2}$. 
\end{proof}
\subsection{Functional analysis on $\mC$}
Similar to the symmetric case \cite{V94} (see also \cite{HV}), we define the following functional space to facilitate our analysis of the operator $\mathcal L$ on $\mC$. Let 
\begin{equation}
    \begin{aligned}
        &\langle f,g\rangle_W:=\int_{\mC} fg e^{-\frac{1}{4}|Y|^2}dg\\
        &L^2_W(\mC):=\{f:\mC\to\mathbb R|\|f\|_W:=\langle f,f\rangle_W^{\frac{1}{2}}<\infty\}\\
        &H_W^k(\mC):=\{f:\mC\to\mathbb R|\|f\|_{W,k}:=\|f\|_W+\|\nabla_{\mC}f\|_W+\cdots+\|\nabla_{\mC}^k f\|_W<\infty\},\quad k\geq 1.
    \end{aligned}
\end{equation}
be the completetion of $C_c^\infty(\mC)$ under the norm $\|\cdot\|_W$, $\|\cdot\|_{W,k}$ respectively. Then we can do the Friedrichs extension as in \cite{HV} to extend the operator $L_{\mC}$ as a self-adjoint operator (still denoted by $L_{\mC}$) whose domain 
\begin{equation}\label{eq def H}
    D(L_{\mC})\subset H^1_W(\mC)=:\mathbf H. 
\end{equation}
Moreover, we can prove that $(L_{\mC}-\lambda)^{-1}:L^2_W(\mC)\to L_W^2(\mC)$ is locally compact and compact for $\lambda>0$ large by the same argument of Lemma 2.3 in \cite{HV}, and show that there are eigenfunctions $\{\varphi_i\}_{i=1}^\infty\subset H^1_W(\mC)$ of $L_{\mC}$ such that 
\begin{equation}\label{eq eig}
    L_{\mC}\varphi_i=-\lambda_i\varphi_i,
\end{equation} 
which forms a basis of $L^2_W(\mC)$, with $\lambda_i\to\infty$ as $i\to\infty$. By elliptic theory, $\varphi_i\in C^\infty(\mC)$. 

To solve out $\varphi_i$, we write $\varphi_i=\sum_{j=1}^{\infty}\varphi_{ij}(y)\omega_j$, where $\varphi_{ij}(y)=\int_\Sigma\varphi(y,\theta)\omega_j(\theta)d\theta$, and $\{\omega_j\}_{j=1}^\infty$ is an $L^2$ orthonormal basis of $\mathcal L_\Sigma$ defined in \eqref{eq Lsigma def}, with 
$$\mathcal L_\Sigma\omega_j=-\mu_j\omega_j.$$
Note that 
$$L_{\mC}u=\frac{1}{y^{n-1}}\frac{\partial}{\partial y}(y^{n-1}\frac{\partial u}{\partial y})+\frac{\mathcal L_\Sigma u}{y^2}+\frac{1}{2}(u-yu_y).$$
Thus, we have
\begin{equation}\label{eq eig rai}
    \varphi_{ij}''+\frac{n-1}{y}\varphi_{ij}'-\frac{\mu_j}{y^2}\varphi_{ij}+\frac{1}{2}(\varphi_{ij}-y\varphi_{ij}')=-\lambda_{i}\varphi_{ij}.
\end{equation}
Here, prime means partial derivative with respect to $y$. Near $y=0$, the solution of \eqref{eq eig rai} behaves like 
$y^{\alpha_j^\pm}$, where 
\begin{equation}
    \alpha_j^{\pm}=\frac{-(n-2)\pm \sqrt{(n-2)^2+4\mu_j}}{2}
\end{equation}
is the solution of
\begin{align*}
    \alpha^2+(n-2)\alpha-\mu_j=0.
\end{align*}
In fact, let $\varphi_{ij}=y^{\alpha_j^\pm}\phi(\frac{y^2}{4})=y^{\alpha_j^\pm}\phi(\eta)$, we have $\phi(\eta)$ satisfies the equation
\begin{align*}
    \eta \phi''(\eta)+(\alpha_j^\pm+\frac{n}{2}-\eta)\phi'(\eta)-(-\frac{1}{2}(1-\alpha_j^\pm)-\lambda_{i})\phi(\eta)=0
\end{align*}
whose solution is given by $M(-\lambda_{i}-\frac{1}{2}(1-\alpha_j);\alpha_j^\pm+\frac{n}{2};\frac{r^2}{4})$. Here $M$ is the Kummer's function 
 defined by
 $$
 M(a;b;\xi)=1+\sum_{k=1}^{\infty}\frac{a(a+1)\cdots(a+k-1)}{b(b+1)\cdots(b+k-1)}\frac{\xi^k}{k!}
 $$
 and satisfies
 $$\xi M(a;b;\xi)+(b-\xi)\partial_\xi M(a;b;\chi)-a M(a;b;\xi)=0.$$
 Note that if $a\neq0,-1,-2,\cdots$, then $M(a,b;0)=1$; $M(a;b;\xi)\sim\frac{\Gamma(b)}{\Gamma(a)}e^\xi\xi^{a-b}$ as $\xi\to\infty$ if $a\neq 0,-1,-2,\cdots$, where $\Gamma(s)$ denotes the standard Euler's gamma function (see page 25 of \cite{HV}). This means that $$\varphi_{ij}\sim y^{\alpha_j}\text{ as }y\to0 ^+,$$ 
 and $$\varphi_{ij}\sim\frac{\Gamma(\alpha_j+\frac{n}{2})}{\Gamma(-\lambda_i-\frac{1}{2}(1-\alpha_j))}e^{\frac{y^2}{4}}(\frac{y^2}{4})^{-\frac{\alpha_j}{2}-\frac{n+1}{2}-\lambda_i}\text{ as }y\to\infty\text{ if }-\lambda_i-\frac{1}{2}(1-\alpha_j^\pm)\neq 0,-1,-2,\cdots.$$ 
To ensure that $\varphi_i\in H^1_W(\mC)$, we need $$2\alpha_j^\pm-2>-n,$$
and $$-\frac{1}{2}(1-\alpha_j^\pm)-\lambda_{i}=a=0,-1,-2,\cdots.$$ 
Thus we have to take positive sign in $\alpha_j^\pm$, 
$$\lambda_{i}=-\frac{1}{2}(1-\alpha_j^+)+i,\quad i=0,1,2,\cdots,\quad j=1,2\cdots.$$
and 
$$\varphi_{ij}=y^{\alpha_{j}}M(-i;\alpha_j^++\frac{n}{2};\frac{y^2}{4}), \quad i=0,1,2,\dots, \quad j=1,2\cdots.$$ 
To simplify notations, we omit $+$ in $\alpha_j^+$ and use $\alpha_j$ in the rest of the paper. 

Since the eigenfunction $\varphi_{ij}\omega_j$ (no summation in $j$) and the corresponding eigenvalue $-\frac{1}{2}(1-\alpha_j)+i$, $i=0,1,2,\cdots$, $j=1,2\cdots$ depend on both $i$ and $j$, we use double sub-index $i,j$ to number them. That is, $L_{\mC}$ has eigenfunction and eigenvalues  
\begin{equation}\label{eq phiij eq}
\begin{aligned}
    \varphi_{ij}=&\tilde \varphi_{ij}\omega_j\quad(\text{no summation in } j)\\
    \lambda_{ij}=&-\frac{1}{2}(1-\alpha_j)+i;\quad i=0,1,2,\cdots;\, j=1,2\cdots,
\end{aligned}
\end{equation}
where
\begin{equation}\label{eq tilde phiij}
\begin{aligned}
     \tilde \varphi_{ij}
    =&c_{ij}y^{\alpha_{j}}(1+\sum_{m=1}^i(-1)^mK_{mij}y^{2m}) 
\end{aligned}
\end{equation}
with $K_{mij}=\frac{(-1)^m(-i)^{(m)}}{(\alpha_j+\frac{n}{2})^{(m)}4^mm!}>0$, and $a^{(m)}:=a(a+1)\cdots (a+m-1)$ for $m\geq 1$; and $c_{ij}$ is the normalization constant such that $\|\varphi_{ij}\|_W=1$. 

We can order the eigenvalues by magnitudes. In this case, we use a single index to mark the eigenfunctions and eigenvalues. For instance,  $\varphi_k$, $\lambda_k$ denotes the $k-th$ eigenfunction and eigenvalue of $L_{\mC}$ ($\lambda_1<\lambda_2\leq \lambda_3\leq \cdots$). We always use this convention when we use a single index to mark the eigenfunctions and eigenvalues. 

For simplicity, we will write 
$$\alpha_1=\alpha.$$ 
We choose $l\in \mathbb N_+$ such that there exists $i_1\geq 0$ s.t. 
$$\lambda_{i_11}=\lambda_l>0.$$ 
That is, the $l$-th eigenvalue $\lambda_{l}$ comes from $\omega_1$ and is positive. Moreover, we need that there exists $\delta_l>0$ such that 
\begin{equation}\label{eq lambda l}
    \lambda_{l+1}\geq \lambda_l+\delta_l.
\end{equation}
For such a fixed $l$, define $m=m(l)$ as 
$$m=\sup\{m'|\lambda_{0m'}\leq  \lambda_l\}.$$ 
For $2\leq k\leq m$, define 
$$i_k=\max\{i|\lambda_{ik}<\lambda_{l}\},\quad k\geq 2. $$ 
For later use, we also define  
\begin{equation}\label{eq sigmal def}
    \sigma_l=\frac{\lambda_l}{1-\alpha}.
\end{equation}

\section{Admissible flows}\label{sec adm}
In this section, we define admissible mean curvature flows. It will be the main objects we will consider in the following sections. Let $n\in\mathbb N_+$ be a large integer,
\begin{equation}\label{eq para}
    l\in \mathbb N_+,\,\Lambda=\Lambda(n)\gg 1,\, 0<\rho\ll 1\ll\beta,\,  |t_0|\ll1.
\end{equation}
 be constants to be determined, $t_0<t^\circ<0$. Assume there is a one-parameter family of smooth hypersurfaces $\{\Si_t\}_{t_0\leq t\leq t^\circ}$ in $\mathbb R^{n+1}$ moving by mean curvature. That is, the position vector $F$ of $\Sigma_t$ satisfies \eqref{eq mcf}.
We say $\{\Si_t\}_{t_0\leq t\leq t^\circ}$ is admissible if 

(1) The flow is a normal graph over $|t|^{\frac{1}{2}+\sigma_l}S_{\kappa,+}$ inside $B(O;2\beta^2|t|^{\frac{1}{2}+\sigma_l})$ with profile function $\hat u(x,t)$, where $\sigma_l$ is defined in \eqref{eq sigmal def}, and $S_{\kappa,+}={\kappa}^{\frac{1}{1-\alpha}}S_+$ is one piece of the Hardt-Simon foliation with $\kappa \approx 1$ is defined in \eqref{eq k def}.

(2) The flow is a normal graph over $X:=\mC=\{(x,\theta)|x\in\mathbb R_+,\theta\in\Sigma\}$ outside $B(O;\beta|t|^{\frac{1}{2}+\sigma_l})$. In other words, we can parametrize $\Si_t$ by
\begin{equation}\label{eq Sigma t gra eq}
    F(x,\theta,t)=X(x,\theta)+u(x,\theta,t)\nu(x,\theta)
\end{equation}
for $(x,\theta)\in [\beta|t|^{\frac{1}{2}+\sigma_l},\infty)\times \Sigma$, $t_0\leq t\leq t^\circ$, with $u(x,\theta,t)$ satisfying the equation 
\begin{equation}\label{eq flow outer}
    u_t=-\frac{\bar{H}}{\nu\cdot\bar{\nu}}
    =\mathcal L_{\mC}u+E(u)
\end{equation}
where $\nu$ is the unit normal of $X$, $\bar H,\bar \nu$ are the mean curvature and unit normal of $F$, respectively; $\mathcal L_{\mC}$ is defined in \eqref{eq mathcal Lcu} , and $E(u)$ is defined in $\eqref{eq error outer}$.

(3) For the function $u(x,\theta,t)$, there holds 
\begin{equation}\label{eq adm x}
    x^{|\gamma|}|\nabla^{\gamma}_{(x,\theta)}u(x,\theta,t)|<\Lambda(|t|^{i_1}x^\alpha+x^{2\lambda_l+1}),\quad |\gamma|\in\{0,1,2\}
\end{equation}
for $(x,\theta)\in [\beta|t|^{\frac{1}{2}+\sigma_l}, \rho]\times\Sigma$, $t_0\leq t\leq t^\circ$. Here, $\gamma$ is a multi-index and $|\gamma|$ is the length of $\gamma$.

We can divide the admissible flows into three regions and do rescalings in the corresponding region.
\begin{itemize}
    \item The outer region: $\Sigma_t\setminus B(O;\sqrt{|t|})$. In this region, we parametrize the flow $\Si_t$ by \eqref{eq Sigma t gra eq} with $X=\mC$ and $u(x,\theta,t)$ satisfies the equation \eqref{eq flow outer}.

\item The intermediate region: $\Sigma_t\cap\Big(B(O;\sqrt{|t|})\setminus B(O;\beta|t|^{\frac{1}{2}+\sigma_l})\Big)$: we do the Type I rescaling
    \begin{equation}
        \Pi_s=\frac{1}{\sqrt{|t|}}\Si|_{t=-e^{-s}}
    \end{equation}
    By this rescaling, the intermediate region is dilated to $\Pi_s\cap(B(O;1)\setminus B(O;\beta e^{-\sigma_l s}))$ for $s_0\leq s\leq s^\circ$, where $s_0=-\ln|t_0|$ and $s^\circ=-\ln(|t^\circ|)$. Let $x=e^{-\frac{s}{2}}y$, and
    \begin{equation}\label{eq vu}
        v(y,\theta,s)=e^{\frac{s}{2}}u(e^{-\frac{s}{2}}y,\theta,-e^{-s}).
    \end{equation}
We can parametrize the rescaled hypersurface $\Pi_s$ in the intermediate region by    
    \begin{equation}\label{eq v eq}
    \begin{aligned}
    \tilde F(y,\theta,s)=Y(y,\theta)+v(y,\theta,s)\nu(y,\theta),
    \end{aligned}     
    \end{equation}
where $Y(y,\theta)=e^{\frac{s}{2}}X(e^{-\frac{s}{2}}y,\theta)$ is the rescaled cone (thus still a cone) in $(y,\theta)$ coordinates. By \eqref{eq flow outer} and \eqref{eq vu}, $v$ satisfies
$$v_s =\frac{1}{2}v-\frac{1}{2} yv_y-\frac{\tilde H}{\nu\cdot\tilde\nu}=L_{\mC}v+E(v)$$
with   
\begin{equation}\label{eq LmC def}
    L_{\mC}v:=\Delta_{\mC} v+|A_{\mC}|^2v+\frac{1}{2}(v-yv_y),
\end{equation}
and $E(v)$ is defined in $\eqref{eq error outer}$. Here $\tilde H,\tilde\nu$ are the mean curvature and unit outer normal of $\tilde F$ respectively.

Since $v$ satisfies \eqref{eq vu}, the admissible condition \eqref{eq adm x} is rescaled to
\begin{equation}\label{eq adm y}
    y^{|\gamma|}|\nabla^\gamma_{(y,\theta)}v(y,\theta,s)|<\Lambda e^{-\lambda_ls}(y^\alpha+y^{2\lambda_l+1}),\quad |\gamma|\in \{0,1,2\}
\end{equation}
for $(y,\theta)\in [\beta e^{-\sigma_ls}, \rho e^{\frac{s}{2}}]\times \Sigma$, $s_0\leq s\leq s^\circ$.

\item the tip region $\Si_t\cap B(O;\beta|t|^{\frac{1}{2}+\sigma_l})$, we do the Type II rescaling
\begin{equation}\label{eq type ii rescaling graph}
    \Gamma_\tau=\frac{1}{|t|^{\frac{1}{2}+\sigma_l}}\Si_t|_{t=-(2\sigma_l\tau)^{\frac{-1}{2\sigma_l}}}.
\end{equation}
By this rescaling, the tip region is dilated to $\Gamma_\tau\cap B(O;\beta)$ for $\tau_0\leq \tau\leq \tau^\circ$, where $\tau_0=(2\sigma_l)^{-1}|t_0|^{-2\sigma_l}$ and $\tau^\circ=(2\sigma_l)^{-1}|t^\circ|^{-2\sigma_l}$. Let $z=e^{\sigma_ls}y=(2\sigma_l\tau)^{\frac{1}{2}}y$, and
\begin{equation}\label{eq de w vu}
    \begin{aligned}
        w(z,\theta,\tau)
        =&|t|^{-\frac{1}{2}-\sigma_l}u(|t|^{\frac{1}{2}+\sigma_l}z,\theta,t)|_{t=-(2\sigma_l\tau)^{-\frac{1}{2\sigma_l}}}=e^{\sigma_ls}v(e^{-\sigma_ls}z,\theta,s)|_{s=\frac{1}{2\sigma_l}\ln(2\sigma_l\tau)}.
    \end{aligned}
\end{equation} 
We can parametrize $\Gamma_\tau$ outside $B(O,\beta)$ by
\begin{equation}\label{eq til til F eq}
    \begin{aligned}
        \hat{F}(z,\theta,\tau)=Z(z,\theta)+w(z,\theta,\tau)\nu(z,\theta),
    \end{aligned}
\end{equation}
where $Z(z,\theta)=(2\sigma_l\tau)^{c_l}X((2\sigma_l\tau)^{-c_l}z,\theta)$ is the cone in $(z,\theta)$ coordinates, $c_l=\frac{1}{2}+\frac{1}{4\sigma_l}>0$. By \eqref{eq flow outer} and \eqref{eq de w vu}, $w$ satisfies
\begin{equation}
w_\tau=c_l\tau^{-1}(w-zw_z)-\frac{\hat{H}}{\nu\cdot\hat \nu}=\hat L_{\mC}w+E(w)
\end{equation}
where
\begin{equation}
    \hat L_{\mC}w=c_l\tau^{-1}(w-zw_z)+\Delta_{\mC}w+|A_{\mC}|^2w
\end{equation}
and $E(w)$ is defined in $\eqref{eq error outer}$.  Here $\hat H,\hat\nu$ are the mean curvature and unit outer normal of $\hat F$ respectively.

Since $w$ satisfies \eqref{eq de w vu}, the admissible condition \eqref{eq adm x} is rescaled to
\begin{equation}\label{eq adm z}
    z^{|\gamma|} |\nabla^\gamma_{(z,\theta)}w(z,\theta,\tau)|<\Lambda (z^\alpha+\frac{z^{2\lambda_l+1}}{(2\sigma_l\tau)^{i_1}}),|\gamma|\in\{0,1,2\}
\end{equation}
for $(z,\theta)\in [\beta, \rho (2\sigma_l\tau)^{\frac{1}{2}+\frac{1}{4\sigma_l}}]\times \Sigma$, $\tau_0\leq \tau\leq \tau^\circ$.

By the first admissible condition, in the region $\Gamma_\tau\cap B(O;2\beta^2)$, we can parametrize the $\Gamma_\tau$ as a graph over $S_{\kappa,+}=\{(\tilde z,\theta)|\tilde z\geq \tilde z_0,\theta\in \Sigma\}$ with some profile function $\hat w(\tilde z,\theta,\tau)$ (see the last paragraph of Appendix \ref{sec HS foliation} for the definition of global coordinates $(\tilde z,\theta)$ ($(\tilde r,\theta)$ there) on $S_{\kappa,+}$). That is
\begin{equation}\label{eq hat w def}
    \hat F(\tilde z,\theta,\tau)=S_{\kappa,+}(\tilde z,\theta)+\hat w(\tilde z,\theta,\tau)\nu_{S_{\kappa,+}}(\tilde z,\theta).
\end{equation}
On the other hand, by \eqref{eq mcf} and $\hat F(\tau)=(2\sigma_l\tau)^{c_l}F(-(2\sigma_l\tau)^{-\frac{1}{2\sigma_l}})$, 
we have
\begin{equation}\label{eq rescaled hat flow}
 \hat F_\tau
 =c_l\tau^{-1}\hat F-\hat H\hat\nu.
\end{equation}
Let $\nu_+(\tilde z,\theta)=\nu_{S_{\kappa,+}}(\tilde z,\theta)$, we obtain
\begin{equation}\label{eq rescaled hat graph}
\begin{aligned}
    \hat w_\tau=&c_l\tau^{-1}\frac{(S_{\kappa,+}+\hat w\nu_+)\cdot\hat\nu}{\nu_+\cdot \hat\nu}-\frac{\hat  H}{\nu_+\cdot \hat\nu}=c_l\tau^{-1}\frac{S_{\kappa,+}\cdot\hat\nu}{\nu_+\cdot \hat\nu}+c_l\tau^{-1}\hat w-\frac{\hat  H}{\nu_+\cdot \hat\nu}
\end{aligned}
\end{equation}
for $(\tilde z,\theta)\in S_{\kappa,+}\cap B(O,2\beta^2)$, $\tau_0\leq \tau\leq\tau^\circ$.
\end{itemize}
\section{Constructions of the flow}\label{sec outline}
In this section, we will construct an admissible solution by degree method following \cite{GS}. The method is based on the a prior estimates in Proposition \ref{prop C0 est} and Proposition \ref{prop higher est}, whose proof will be given in Section \ref{sec C0 est}, \ref{sec higher est}, respectively. Assuming Proposition \ref{prop C0 est}, \ref{prop higher est}, we construct the solution in Theorem \ref{thm main aim}. 

The idea to apply the degree method is to show that we can choose a "good" initial hypersurface $\{\Sigma_{t_0}^{\mathbf a}\}$ by choosing a parameter $\mathbf a\in B^{l-1}(O)$ which is close to the origin $O$ for some $l\in\mathbb N_+$ and $0<-t_0\ll 1$ small. Here "good" means that if we evolves $\{\Sigma_{t_0}^{\mathbf a}\}$ by MCF, then the flow (denoted by $\{\Sigma_{t}^{\mathbf a}\}$) exists and is admissible up to $t<0$. To achieve this, for each $t^\circ\in [t_0,0)$, we show that there is a parameter $\mathbf a_{t^\circ}\in B^{l-1}(O)$ close to the origin, such that the flow $\{\Sigma_{t}^{\mathbf a_{t^\circ}}\}$ exists and is admissible up to time $t^\circ$. More importantly, we can derive uniform estimates (Proposition \ref{prop C0 est}, \ref{prop higher est}) for $\{\Sigma_{t}^{\mathbf a_{t^\circ}}\}$, which can be used to take a limit as $t^\circ\to 0$ to get a limit flow $\{\Sigma_t\}$ which exists and is admissible on $[t_0,0)$. Moreover, these estimates also imply that $\{\Sigma_t\}$ converges to $\mC$ in $C^\infty_{loc}(\mC)$ after type I rescaling, and converges to $S_{\kappa,+}$ in $C^{\infty}_{loc}(S_{\kappa,+})$ after type II rescaling for some $\kappa\approx 1$.  

Let's first construct the initial value of the MCF.
\subsection{Initial values}
Let ${\tilde \alpha} ={\tilde \alpha} (\Sigma)>0$ be the constant defined in \eqref{eq psi asy}, for
\begin{equation}\label{eq a def}
    \ba=(a_1,\cdots,a_{l-1)}\in B^{l-1}(0,\beta^{-\tilde \alpha }),
\end{equation} 
we define

(1) The profile function $v(y,\theta,s_0)=v(y,\theta,s_0;\mathbf a)$ over $\mC$ of the type I rescaled hypersurface
\begin{align*}
    \Pi_{s_0}^{\ba}=\frac{1}{\sqrt{|t_0|}}\Si_{t_0}^{\ba}
\end{align*}
is given by
\begin{equation}\label{eq ini int reg}
\begin{aligned}
    v(y,\theta,s_0;\mathbf a)=&e^{-\lambda_ls_0}\Big(\frac{1}{c_l}\varphi_l(y,\theta)+\sum_{k=1}^{l-1}\frac{a_{k}}{c_k}\varphi_{k}(y,\theta)\Big)\\
    =&e^{-\lambda_ls_0}\Big[y^{\alpha}\omega_1\big((1+\sum_{i=0}^{i_1-1}a_{i1})-(K_{1i_11}+\sum_{i=0}^{i_1-1}a_{i1}K_{1i1})y^2+\cdots+(-1)^{i_1}K_{i_1i_11}y^{2i_1}\big)\\
    &+\sum_{j=2}^my^{\alpha_j}\omega_2\big(\sum_{i=0}^{i_j}a_{ij}-\sum_{i=0}^{i_j}a_{ij}K_{1ij}y^2+\cdots+(-1)^{i_j}a_{i_jj}K_{i_ji_jj}y^{2i_j}\big)\Big]
\end{aligned}
\end{equation}
for $\frac{1}{2}\beta e^{-\sigma_l s_0}\leq y\leq 2\rho e^{\frac{s_0}{2}}$. Here we use $a_k$ with a single sub-index $k$ to indicate $a_k$ corresponds to $\varphi_k$, and two sub-index $i,j$ to indicate that $a_{ij}$ corresponds to the eigenfucntion $\varphi_{ij}$ (see the end of Section \ref{sec pre} for the two different ways to index the eigenfunctions of $L_\mC$).

(2) The function $u(x,\theta,t_0)=u(x,\theta,t_0;\mathbf a)$ of $\Si_{t_0}^{\ba}$ is chosen to be
\begin{align*}
    u(x,\theta,t_0)\approx (-1)^{i_1}K_{i_1i_11}\rho^{2\lambda_l+1}\omega_1,
\end{align*}
for $x\gtrsim\rho$, $\theta\in \Sigma$, such that
\begin{equation}\label{eq ini out noncom reg}
    \begin{aligned}
        x^{-1}|u(x,\theta,t_0)|,|\nabla u(x,\theta,t_0)|\leq \varepsilon_0(\mC),\\
        |\nabla^2u(x,\theta,t_0)|\leq C(n,\rho)
    \end{aligned}
\end{equation}
for $(x,\theta)\in[\frac{1}{6}\rho,\infty)\times \Sigma$, for some $\varepsilon_0(\mC)>0$ small.

(3) The part of the hypersurface $|t_0|^{-\lambda_l-\frac{1}{2}}\Sigma_{t_0}$ in $B(O,2\beta^2)$ is a graph of the function $\hat w(\tilde z,\theta)$ over $S_{\kappa,+}$ with $\kappa\approx1$, and is trapped between in two re-scaled surfaces of Simon's foliation $S_{\kappa_1,+}$ and $S_{\kappa_2,+}$ with 
\begin{equation}\label{eq ini lam1 lam2}
\kappa_1=1-\beta^{-\frac{{\tilde \alpha} }{2}}<\kappa<\kappa_2=1+\beta^{-\frac{{\tilde \alpha} }{2}}.
\end{equation}
Moreover, we need
\begin{equation}\label{eq ini tip reg}
\begin{aligned}
|\nabla \hat w(\tilde z,\theta,\tau_0)|\leq C(n,\Sigma,\Lambda)\beta^{-\frac{{\tilde \alpha} }{2}},\\
|\nabla^2 \hat w(\tilde z,\theta,\tau_0)|\leq  C(n,\Sigma,\Lambda)\beta^{-\frac{{\tilde \alpha} }{2}},
\end{aligned}
\end{equation}
for $(\tilde z,\theta)\in S_{\kappa,+}\cap B(O,2\beta^2)$. 

Next, we prove that the initial value defined in \eqref{eq ini int reg} and \eqref{eq ini out noncom reg} are compatible at the intersection points. Moreover, we can take $\kappa_1<\kappa_2$ as in \eqref{eq ini lam1 lam2} so that \eqref{eq ini int reg} holds.
\begin{lemma}
    \eqref{eq ini out noncom reg} holds for $t=t_0$, by choosing $l(\rho)$ large.
\end{lemma}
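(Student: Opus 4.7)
The plan is to convert the inner formula \eqref{eq ini int reg} back to the $u$-coordinate on the overlap strip $x\approx \rho$, identify the dominant term in the limit $|t_0|\to 0$, and then extend smoothly to all $x\geq\rho/6$ with a standard cut-off, choosing $l$ large so that $\rho^{2\lambda_l}$ renders every residual quantity small.

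First I would substitute $y=|t_0|^{-1/2}x$ into \eqref{eq ini int reg} and apply the relation $u(x,\theta,t_0)=|t_0|^{1/2}v(|t_0|^{-1/2}x,\theta,s_0)$. Using $\lambda_{mj}=-\tfrac{1}{2}(1-\alpha_j)+m$ from \eqref{eq phiij eq}, each monomial $y^{\alpha_j+2m}$ produces a term
\[
|t_0|^{\lambda_l-\lambda_{mj}}\,x^{\alpha_j+2m}\,\omega_j
\]
whose coefficient is bounded by a universal constant (since $|\mathbf{a}|\leq\beta^{-\tilde\alpha}<1$ and the $K_{mij}$ are uniformly controlled). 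The single pair $(m,j)=(i_1,1)$ gives $|t_0|^{0}=1$ and recovers exactly $(-1)^{i_1}K_{i_1i_11}x^{2\lambda_l+1}\omega_1$, since $\alpha+2i_1=2\lambda_l+1$. Every other admissible $(m,j)$ satisfies $\lambda_{mj}<\lambda_l$, so its prefactor $|t_0|^{\lambda_l-\lambda_{mj}}$ is arbitrarily small when $|t_0|\ll 1$; the minimal gap is bounded below using $\delta_l$ from \eqref{eq lambda l}.

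Next I would evaluate this expansion together with its first and second $(x,\theta)$-derivatives on the transition strip $x\in[\rho/2,2\rho]$, where both the inner formula and the constant outer target are available. The leading term and its derivatives of order $|\gamma|\leq 2$ are bounded by $CK_{i_1i_11}\rho^{2\lambda_l+1-|\gamma|}$, while the subleading terms are further damped by a positive power of $|t_0|$. I would then define $u(x,\theta,t_0)$ globally by gluing the inner representation to the constant $(-1)^{i_1}K_{i_1i_11}\rho^{2\lambda_l+1}\omega_1$ via a smooth cut-off $\eta$ equal to $1$ on $[0,\rho]$, vanishing on $[2\rho,\infty)$, and satisfying $|\eta^{(k)}|\leq C_k\rho^{-k}$. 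Derivatives hitting $\eta$ generate at most the same $C\rho^{2\lambda_l-|\gamma|}$ bound on the transition strip, and outside this strip $u$ is either the inner expression (small away from $\rho$) or an explicit constant with vanishing derivatives.

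Finally, since $K_{i_1i_11}=\prod_{k=0}^{i_1-1}[4(\alpha+\tfrac{n}{2}+k)]^{-1}$ stays bounded (in fact decays in $i_1$), and $\lambda_l\to\infty$ as $l\to\infty$, choosing $l=l(\rho,n,\Sigma)$ large enough forces $\rho^{2\lambda_l}\leq\varepsilon_0(\mC)$; this yields $|u|/x\leq\varepsilon_0(\mC)$ and $|\nabla u|\leq\varepsilon_0(\mC)$ on $[\rho/6,\infty)\times\Sigma$, while the $|\nabla^2 u|\leq C(n,\rho)$ bound follows from $C\rho^{2\lambda_l-1}\leq C(n,\rho)$. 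The main obstacle is the bookkeeping across the two indexing conventions for the eigenfunctions and the verification of the exponent identity $\alpha+2i_1=2\lambda_l+1$ that aligns the leading-order inner term with the prescribed outer constant; once this matching is in place, the whole estimate collapses to the single smallness statement $\rho^{2\lambda_l}\ll 1$, which is available by taking $l$ large depending on $\rho$.
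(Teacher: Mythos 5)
Your proposal is correct and follows essentially the same route as the paper: rewrite \eqref{eq ini int reg} in the outer variable via $u=|t_0|^{1/2}v$, observe that the mode $\varphi_l=\varphi_{i_11}$ contributes exactly $(-1)^{i_1}K_{i_1i_11}x^{2\lambda_l+1}\omega_1$ (using $2\lambda_l+1=\alpha+2i_1$) while every lower mode carries a positive power of $|t_0|$, and reduce everything to the smallness of $\rho^{2\lambda_l}$. Two cosmetic remarks: the spectral gap you need is between $\lambda_l$ and the eigenvalues \emph{below} it (a fixed positive constant depending on $n,\Sigma,l$), not $\delta_l$ from \eqref{eq lambda l}, which controls the gap above; and the paper closes by taking $\rho$ small for fixed $l$ rather than $l$ large for fixed $\rho<1$, but both give $\rho^{2\lambda_l}\leq\varepsilon_0(\mC)$.
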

\begin{proof}
Note $t=-e^{-s}$, $y=\frac{x}{\sqrt{|t|}}$, $\lambda_l+\frac{1}{2}-\frac{1}{2}\alpha_k=\lambda_{i_11}+\frac{1}{2}-\frac{1}{2}\alpha_k=\frac{1}{2}(\alpha-\alpha_k)+i_1$, $k\geq 1$. Define $\bar x_0=\frac{x}{\sqrt{|t_0|}}$, and using \eqref{eq vu} \eqref{eq ini int reg} is equivalent to 
\begin{equation}\label{eq u t0 eq}
    \begin{aligned}
        u(x,\theta,t_0)
        =&\omega_1\big[(1+\sum_{i=0}^{i_1-1}a_{i1})|t_0|^{i_1}x^{\alpha}-(K_{1i_11}+\sum_{i=0}^{i_1-1}a_{i1}K_{1i1})|t_0|^{i_1-1}x^{\alpha+2}+\cdots+(-1)^{i_1}K_{i_1i_11}x^{2\lambda_l+1}\big]\\
        &+\sum_{j=2}^m\omega_j\big[(\sum_{i=0}^{i_j}a_{ij})|t_0|^{\frac{1}{2}(\alpha-\alpha_j)+i_1}x^{\alpha_j}-\sum_{i=0}^{i_j}a_{ij}K_{1ij}|t_0|^{\frac{1}{2}(\alpha-\alpha_j)+i_1-1}x^{\alpha_j+2}\\
        &+\cdots+(-1)^{i_j}a_{i_jj}K_{i_ji_jj}|t_0|^{\lambda_{i_11}-\lambda_{i_jj}}x^{2\lambda_{i_jj}+1}\big].
            \end{aligned}
\end{equation}
Also, we can write
        \begin{align*}
        u(x,\theta,t_0)=&\omega_1 x^{2\lambda_l+1}((1+\sum_{i=0}^{i_1-1}a_{i1}){\bar x_0}^{-2i_1}-(K_{1i_11}+\sum_{i=0}^{i_1-1}a_{i1}K_{1i1}){\bar x_0}^{-2(i_1-1)}+\cdots+(-1)^{i_1}K_{i_1i_11})\\
        &+x^{2\lambda_l+1}\{\sum_{j=2}^m\omega_j[(\sum_{i=0}^{i_j}a_{ij}){\bar x_0}^{-(\alpha-\alpha_j)-2i_1}-\sum_{i=0}^{i_j}a_{ij}K_{1ij}{\bar x_0}^{-(\alpha-\alpha_j)-2i_1+2}\\
        &+\cdots+(-1)^{i_j}a_{i_jj}K_{i_ji_jj}{\bar x_0}^{-2\lambda_{i_11}+2\lambda_{i_jj}}]\}
    \end{align*}
for $\frac{1}{2}\beta|t_0|^{\frac{1}{2}+\sigma_l}\leq x\leq 2\rho$. Since $\omega_1$ is a positive smooth function on $\Sigma$ and $\Si$ is compact, $\omega_1$ has a positive lower bound on $\Si$, thus
\begin{align*}
    x^{|\gamma|}|\nabla^\gamma_{(x,\theta)}u(x,\theta,t_0)|\leq C(n,\Sigma)(|t_0|^{i_1}x^{\alpha}+x^{2\lambda_l+1}),\quad |\gamma|\in\{0,1,2\}
\end{align*}
for $\frac{1}{2}\beta|t_0|^{\frac{1}{2}+\sigma_l}\leq x\leq 2\rho$, and
\begin{equation}
\begin{aligned}
x^{-1}|u(x,\theta,t_0)|\leq C(n,\Sigma)(\beta^{\alpha-1}+\rho^{2\lambda_l})\leq C(n,\Sigma)\rho^{2\lambda_l}
\end{aligned}
\end{equation}
for $x\in [\frac{1}{6}\rho,2\rho]$ if $|t_0|\ll 1$ (depending on $\beta,\rho)$. Thus, \eqref{eq ini out noncom reg} can be achieved by choosing $\rho$ small (depending on $\varepsilon_0(\mC)$) and $|t_0|\ll1$ small (depending on $\rho,\beta$). 
\end{proof}

For the tip region, we have
\begin{lemma}
    We can construct the tip of $\Gamma_{\tau_0}$, s.t. $\Gamma_{\tau_0}\cap B_{(2\sigma_l\tau_0)^{\frac{1-\vartheta}{2}}} $ is trapped between $S_{\kappa_1,+}$ and $S_{\kappa_2,+}$, with $\kappa_1,\kappa_2$ satisfying \eqref{eq ini lam1 lam2}, and intersect smoothly with the intermediate region.
\end{lemma}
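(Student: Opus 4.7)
The plan is to construct $\hat w(\tilde z,\theta,\tau_0)$ on $S_{\kappa,+}$ directly in type~II rescaled coordinates and match it smoothly to the already-specified intermediate datum \eqref{eq ini int reg} in the overlap annulus $\{z\simeq\beta\}$. First I would transfer \eqref{eq ini int reg} into type~II variables via \eqref{eq de w vu}; using the identity $\sigma_l(1-\alpha)=\lambda_l$ from \eqref{eq sigmal def}, the tip-coordinate expression of the intermediate datum becomes
\[
w(z,\theta,\tau_0)=z^{\alpha}\omega_1(\theta)\Bigl(1+\sum_{i=0}^{i_1-1}a_{i1}\Bigr)+R(z,\theta;\ba,\tau_0),
\]
where every term in $R$ is either a higher-angular-mode contribution (carrying a factor $a_{ij}$ with $j\ge 2$, so bounded by $\beta^{-\tilde\alpha}$ times a positive power of $z$) or a polynomial correction $z^{\alpha+2k}$ with a negative power of $(2\sigma_l\tau_0)$. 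Using $|\ba|\le \beta^{-\tilde\alpha}$ from \eqref{eq a def} and $\alpha_j>\alpha$ for $j\ge 2$, one checks that $|R|_{C^2}\le C(n,\Sigma,l)\,\beta^{-\tilde\alpha}\, z^{\alpha}$ on the matching annulus $\{\beta\le z\le 2\beta\}$, provided $|t_0|$ is chosen small relative to $\beta$.

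Next I would invoke the Hardt--Simon asymptotic \eqref{eq psi asy}. Scaling it to $S_{\kappa,+}=\kappa^{1/(1-\alpha)}S_+$ shows that, as a normal graph over $\mC$ at large $z$, the leaf $S_{\kappa,+}$ has profile $\kappa\, z^{\alpha}\omega_1(\theta)+O(z^{\tilde\alpha})$ with $\tilde\alpha<\alpha$. Matching the leading coefficient with the one above forces the choice
\[
\kappa:=1+\sum_{i=0}^{i_1-1}a_{i1},
\]
which by \eqref{eq a def} satisfies $|\kappa-1|\le(l-1)\beta^{-\tilde\alpha}\ll\beta^{-\tilde\alpha/2}$, so that $\kappa\in(\kappa_1,\kappa_2)$ as in \eqref{eq ini lam1 lam2}. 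The difference between the intermediate graph over $\mC$ and the leaf $S_{\kappa,+}$ in the overlap is then $O(\beta^{-\tilde\alpha})$ in $C^2$, accounting for both the remainder $R$ and the subleading Hardt--Simon term.

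The tip profile itself I would build by a cutoff. In the annulus $\{\beta\le|\tilde z|\le 2\beta^2\}$ the leaf $S_{\kappa,+}$ is a uniformly small normal graph over $\mC$, so by a standard implicit-function argument any hypersurface that is a small normal graph over $\mC$ there can be uniquely re-parametrized as a small normal graph over $S_{\kappa,+}$, with $C^2$ norms comparable up to a constant depending only on $n,\Sigma$. Reparametrizing the intermediate datum in this way yields a function $\hat w_{\rm out}$ with $|\hat w_{\rm out}|_{C^2}\le C\beta^{-\tilde\alpha}$. Picking a smooth cutoff $\chi$ equal to $0$ on $\{|\tilde z|\le \tfrac{5}{4}\beta\}$ and to $1$ on $\{|\tilde z|\ge\tfrac{7}{4}\beta\}$, I set $\hat w:=\chi\,\hat w_{\rm out}$ for $|\tilde z|\le 2\beta^2$, so that on $\{|\tilde z|\le\tfrac{5}{4}\beta\}$ the tip is exactly $S_{\kappa,+}$; the estimate \eqref{eq ini tip reg} follows immediately, the loss from $\beta^{-\tilde\alpha}$ to $\beta^{-\tilde\alpha/2}$ being absorbed into the constant $C(n,\Sigma,\Lambda)$.

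The trapping between $S_{\kappa_1,+}$ and $S_{\kappa_2,+}$ then follows from the monotone foliation structure of the Hardt--Simon family near $\mC$: the normal displacement of $S_{\kappa',+}$ from $\mC$ is, to leading order, $\kappa'\,z^{\alpha}\omega_1$, and the gaps $\kappa_2-\kappa,\,\kappa-\kappa_1\ge \tfrac{1}{2}\beta^{-\tilde\alpha/2}$ dominate all $O(\beta^{-\tilde\alpha})$ discrepancies in the tip region. The main obstacle is extending this barrier property over the full ball $B\!\left(O,(2\sigma_l\tau_0)^{(1-\vartheta)/2}\right)$, which grows with $\tau_0$, so the polynomial tails $z^{2\lambda_l+1}$ in \eqref{eq ini int reg} must be controlled in a region of diameter much larger than $\beta$. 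These tails carry an overall prefactor of $(2\sigma_l\tau_0)^{-(1-\vartheta)\lambda_l}$ after rescaling, and I would choose $|t_0|\ll 1$ (depending on $n,\Sigma,l,\beta,\vartheta$) so that this factor dominates $\tfrac{1}{2}\beta^{-\tilde\alpha/2}z^{\alpha}$ pointwise on the ball, completing the two-sided barrier claim and the smooth matching with the intermediate region.
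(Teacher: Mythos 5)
Your proposal follows essentially the same route as the paper's proof: transfer the intermediate datum \eqref{eq ini int reg} to type~II variables via \eqref{eq de w vu}, compare against the Hardt--Simon profile using \eqref{eq psi asy}, observe that the discrepancy is $O(\beta^{-\tilde\alpha})z^{\alpha}$ plus polynomial corrections suppressed by negative powers of $\tau_0$ on the range $z\le(2\sigma_l\tau_0)^{(1-\vartheta)/2}$, and conclude trapping because the gap between $S_{\kappa_1,+}$ and $S_{\kappa_2,+}$ is of order $\beta^{-\tilde\alpha/2}z^{\alpha}$. Two harmless slips worth fixing: the Hardt--Simon remainder is $O(z^{\alpha-\tilde\alpha})$, not $O(z^{\tilde\alpha})$, and the tail $z^{2\lambda_l+1}$ appears with prefactor $(2\sigma_l\tau_0)^{-i_1}$, i.e.\ it equals $\bar z_0^{2i_1}z^{\alpha}\le(2\sigma_l\tau_0)^{-\vartheta i_1}z^{\alpha}$ on that range, rather than carrying the factor $(2\sigma_l\tau_0)^{-(1-\vartheta)\lambda_l}$ you state; neither affects the conclusion.
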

\begin{proof}
By \eqref{eq de w vu}, and the fact $z=e^{\sigma_ls}y=(2\sigma_l\tau)^{\frac{1}{2}}y$, let $\bar z_0=\frac{z}{\sqrt{2\sigma_l\tau_0}}$, 
\eqref{eq ini int reg} is equivalent to
    \begin{align*}
         w(z,\theta,\tau_0)
        =&(2\sigma_l\tau_0)^{\frac{\alpha}{2}}(\frac{1}{c_{l}}\varphi_{l}({\bar z_0},\theta)+\sum_{k=1}^{l-1}\frac{a_k}{c_k}\varphi_k({\bar z_0},\theta))
    \end{align*}
for $\frac{1}{2}\beta\leq z\leq 2\rho(2\sigma_l\tau_0)^{\frac{1}{2}+\frac{1}{4\sigma_l}}$, i.e.
    \begin{equation}\label{eq w ini}
    \begin{aligned}
    w(z,\theta,\tau_0)=&z^{\alpha}\Big\{\omega_1\big[(1+\sum_{i=0}^{i_1-1}a_{i1})-(K_{1i_11}+\sum_{i=0}^{i_1-1}a_{i1}K_{1i1}){\bar z_0}^2+\cdots+(-1)^{i_1}K_{i_1i_11}{\bar z_0}^{2i_1}\big]\\
    &+\sum_{j=2}^m{\bar z_0}^{\alpha_j-\alpha}\omega_j\big[\sum_{i=0}^{i_j}a_{ij}-\sum_{i=0}^{i_j}a_{ij}K_{1ij}{\bar z_0}^2+\cdots+(-1)^{i_j}a_{i_jj}K_{i_ji_jj}{\bar z_0}^{2i_j}\big]\Big\}
    \end{aligned}
    \end{equation}
    for $\frac{1}{2}\beta\leq z\leq 2\rho(2\sigma_l\tau_0)^{\frac{1}{2}+\frac{1}{4\sigma_l}}$, 
    which implies
    \begin{equation}\label{eq w ini half int}
    \begin{aligned}
        w(z,\theta,\tau_0)=&z^{\alpha}\omega_1(1+\sum_{i=0}^{i_1-1}a_{i1}+O(|\mathbf a|{\bar z_0}^{2\delta_\alpha})+O({\bar z_0}^{2}))
    \end{aligned}
    \end{equation}
    for $\frac{1}{2}\beta\leq z\leq (2\sigma_l\tau_0)^{\frac{1}{2}}$, where $\delta_\alpha=\delta_\alpha(\Sigma)=\frac{\alpha_2-\alpha}{2}>0$. By 
    \eqref{eq psi asy} and \eqref{eq a def}, we then get
    \begin{align*}
    &|w(z,\theta,\tau_0)-\psi(z,\theta)|\leq |w(z,\theta,\tau_0)-z^{\alpha}\omega_1|+|z^{\alpha}\omega_1-\psi(z,\theta)|\\
    \leq& [|a_{i_1-1,1}|+\cdots|a_{0,1}|+C(n,l,\Sigma)(|\mathbf a|{\bar z_0}^{2\delta_\alpha}+{\bar z_0}^2+z^{-{\tilde \alpha} })]z^{\alpha}\\
    \leq& C(n)\beta^{-{\tilde \alpha} }z^{\alpha}
    \end{align*}
for $\frac{1}{2}\beta\leq z\leq (2\sigma_l\tau_0)^{\frac{1-\vartheta}{2}}$ provided that $\beta\gg 1$ (depending on $n,\Sigma,l$) and $\tau_0\gg 1$ (depending on $n,\Sigma,l,\beta,\vartheta$).
Note also by \eqref{eq psik asym}
\begin{align*}
    \psi_{1\pm \beta^{-\frac{{\tilde \alpha} }{2}}}(z,\theta)-\psi(z,\theta)=(\pm\beta^{-\frac{{\tilde \alpha} }{2}}+O(z^{-{\tilde \alpha} }))z^{\alpha}.
\end{align*}
for $z\geq (1+\beta^{-\frac{\tilde \alpha}{1}})^{\frac{1}{1-\alpha}}R_s$. Consequently, we get
\begin{align*}
    \psi_{1-\beta^{-\frac{{\tilde \alpha} }{2}}}(z,\theta)<w(z,\theta,\tau_0)<\psi_{1+\beta^{-\frac{{\tilde \alpha} }{2}}}(z,\theta)
\end{align*}
for $\frac{1}{2}\beta\leq z\leq(2\sigma_l\tau_0)^{\frac{1-\vartheta}{2}}$ provided that $\beta\gg 1$ (depending on $n,\Sigma,R_s,l$) and $\tau_0\gg 1$ (depending on $n,\Sigma,l,\beta$). Thus we can choose the tip of $\Gamma_{\tau_0}$, s.t. $\Gamma_{\tau_0}\cap B_{(2\sigma_l\tau_0)^{\frac{1-\vartheta}{2}}} $ is trapped between $S_{\kappa_1,+}$ and $S_{\kappa_2,+}$, with $\kappa_1,\kappa_2$ satisfying \eqref{eq ini lam1 lam2}.
\end{proof}
\subsection{Degree method}
In this subsection, we describe the degree method to construct the admissible solution $\{\Sigma_t\}$. Let's define the domain of the map first.
\begin{definition}
 Define $\mathcal O\subset B^{l-1}(O,\beta^{-\tilde\alpha})\times[t_0,0)$ as follows: $({\mathbf a},t^\circ)\in \mathcal O$ iff

 (1) the corresponding smooth MCF $\{\Sigma_t^{\mathbf a}\}$ exists for $t_0\leq t\leq t^\circ$ and can be extended beyond $t^\circ$;

 (2) $\{\Sigma_t^{\mathbf a}\}$ is admissible for $t_0\leq t\leq t^\circ$.
\end{definition}
For $t_0\leq t^\circ<0$, let $\mathcal O_{t^\circ}=\{{\mathbf a}\in B^{l-1}(O,\beta^{-\tilde\alpha})|({\mathbf a},t^\circ)\in \mathcal O)\}$, then $\mathcal O_{t^\circ}$ is an open set of $B^{l-1}(O,\beta^{-\tilde\alpha})$, and is decreasing in $t^\circ$, and $\mathcal O_{t_0}=B^{l-1}(O,\beta^{-\tilde\alpha})$.

Recall that when $\{\Sigma_t\}_{t_0\leq t\leq t^\circ}$ is admissible, we have profile function $v(y,\theta,s)$ for the type I rescaled flow $\{\Pi_s\}_{s_0\leq s\leq s^\circ}$ defined in \eqref{eq vu} for $(y,\theta)\in [\beta e^{-\sigma_ls}, \rho e^{\frac{s}{2}}]\times \Sigma$, $s_0\leq s\leq s^\circ$. In the following, we will cut off $v$ to define the degree map. Let \begin{equation}
    \eta(x)=\begin{cases}
        0,\quad x\leq 0,\\
        1,\quad x\geq 1;
    \end{cases}
\end{equation}
be a smooth cut-off function, and 
\begin{equation}\label{eq def tilde v}
    \tilde v(y,\theta,s;\mathbf a)=\eta(e^{\sigma_ls}y-\beta)\eta(\rho e^{\frac{s}{2}}-y)v(y,\theta,s;\mathbf a)
\end{equation}
with $v(y,\theta,s_0,\mathbf a)$ defined in \eqref{eq ini int reg}.

Define
\begin{equation}
    \Phi({\mathbf a},t)=e^{\lambda_ls}(\langle c_1\tilde v(y,\theta,s;{\mathbf a}),\varphi_{1}\rangle,\cdots,\langle c_{l-1}\tilde v(y,\theta,s;{\mathbf a}),\varphi_{l-1}\rangle)_{s=-\ln|t|}
\end{equation}
For $t_0\leq t<0$, we also define $\Phi_t({\mathbf a})=\Phi(t,{\mathbf a})$, ${\mathbf a}\in \mathcal O_t$. 
\begin{lemma}\label{lem diff id}
    If $s_0\gg 1$ (depending on $n,\Sigma,l,\rho,\beta$), then
     \begin{align*}
        &|\langle\eta(e^{\sigma_ls}y-\beta)\eta(\rho e^{\frac{s}{2}}-y)\varphi_{ij},\varphi_{kp}\rangle_{C,W}-\delta_{ik}\delta_{jp}|
        \leq C(n,\Sigma,l)e^{-(n+2\alpha_j)\sigma_ls},\\
        &\|(1-\eta(e^{\sigma_ls}y-\beta)\eta(\rho e^{\frac{s}{2}}-y))\varphi_{ij}\|_{W}
        \leq C(n,\Sigma,l,\beta)e^{-\frac{1}{2}(n+2\alpha_j)\sigma_ls},
    \end{align*}
for $\lambda_{ij},\lambda_{kj}\leq \lambda_l$, $p\geq 1$.
\end{lemma}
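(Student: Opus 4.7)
The plan is to split the factor $1 - \eta_1\eta_2$, where $\eta_1 = \eta(e^{\sigma_l s}y - \beta)$ and $\eta_2 = \eta(\rho e^{s/2} - y)$, according to its support. Since $\eta_1\eta_2 = 1$ on $[(\beta+1)e^{-\sigma_l s}, \rho e^{s/2}-1]$, the complement is supported in $A_s := [0,(\beta+1)e^{-\sigma_l s}]$ (the tip region) and $B_s := [\rho e^{s/2}-1,\infty)$ (the Gaussian tail). On each piece I will plug in the explicit form from \eqref{eq phiij eq}--\eqref{eq tilde phiij}, namely $\varphi_{ij}(y,\theta) = \tilde\varphi_{ij}(y)\,\omega_j(\theta)$ with $\tilde\varphi_{ij}(y) = c_{ij} y^{\alpha_j}(1 + O(y^2))$ near $y=0$ and $\tilde\varphi_{ij}$ a polynomial in $y^2$ of controlled degree for large $y$.

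For the first estimate, since $\{\varphi_{ij}\}$ is an $L^2_W$-orthonormal family,
\[
\langle \eta_1\eta_2 \varphi_{ij},\varphi_{kp}\rangle_W - \delta_{ik}\delta_{jp} = -\langle (1-\eta_1\eta_2)\varphi_{ij},\varphi_{kp}\rangle_W,
\]
and I separate cases. If $j \neq p$, the $\theta$-integral $\int_\Sigma \omega_j\omega_p\,d\theta$ vanishes by orthogonality of $\{\omega_j\}$, so the right-hand side is identically zero. If $j = p$, I reduce to the one-dimensional integral of $(1-\eta_1\eta_2)\tilde\varphi_{ij}\tilde\varphi_{kj}\,y^{n-1}e^{-y^2/4}$. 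On $A_s$, the product $\tilde\varphi_{ij}\tilde\varphi_{kj}$ is $c_{ij}c_{kj}y^{2\alpha_j}(1 + O(y^2))$, so
\[
\int_{A_s} |\tilde\varphi_{ij}\tilde\varphi_{kj}|\,y^{n-1}\,dy \leq C\int_0^{(\beta+1)e^{-\sigma_l s}} y^{n+2\alpha_j-1}\,dy \leq C(n,\Sigma,l)\,e^{-(n+2\alpha_j)\sigma_l s}.
\]
On $B_s$, because $\lambda_{ij},\lambda_{kj}\leq \lambda_l$ forces $\tilde\varphi_{ij}$, $\tilde\varphi_{kj}$ to be polynomials of bounded degree, the Gaussian weight gives a contribution of order $e^{-c\rho^2 e^s}$, which for $s_0$ large is dwarfed by $e^{-(n+2\alpha_j)\sigma_l s}$.

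For the second estimate, the analogous splitting gives
\[
\|(1-\eta_1\eta_2)\varphi_{ij}\|_W^2 \leq \Bigl(\int_\Sigma \omega_j^2\,d\theta\Bigr)\int_{A_s\cup B_s} \tilde\varphi_{ij}^2\,y^{n-1}e^{-y^2/4}\,dy.
\]
On $A_s$, $\tilde\varphi_{ij}^2 \leq C y^{2\alpha_j}$ produces a bound of order $((\beta+1)e^{-\sigma_l s})^{n+2\alpha_j}$, and on $B_s$ the tail is once again super-exponentially small. Taking square roots and absorbing the $\beta$-dependence into the constant gives the claimed $C(n,\Sigma,l,\beta)\,e^{-(n+2\alpha_j)\sigma_l s/2}$.

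The main subtlety is that the two inequalities are not related by a direct Cauchy--Schwarz bound: the exponent in the first is exactly twice that in the second. So I cannot deduce the first from the second; instead I must keep the inner product intact and exploit the pointwise $y^{2\alpha_j}$ decay of the product $\tilde\varphi_{ij}\tilde\varphi_{kj}$ in $A_s$, not just the $y^{\alpha_j}$ decay of a single factor. The rest is elementary polynomial bookkeeping, with the Gaussian tail absorbed into the constant by the smallness requirement on $s_0$.
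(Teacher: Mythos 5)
Your proposal is correct and follows essentially the same route as the paper: reduce to $\langle(1-\eta_1\eta_2)\varphi_{ij},\varphi_{kp}\rangle_W$ by orthonormality, split the support into the tip piece $[0,(\beta+1)e^{-\sigma_l s}]$ giving the $y^{n+2\alpha_j-1}$ integral and the Gaussian tail $[\rho e^{s/2}-1,\infty)$, and for the second estimate use $(1-\eta_1\eta_2)^2\le 1-\eta_1\eta_2$ before taking a square root, which is exactly how the paper obtains the halved exponent. Your explicit remarks that the angular integral vanishes for $j\neq p$ and that the first bound is not a Cauchy--Schwarz consequence of the second are correct but do not change the argument.
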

\begin{proof}
    Note that $\langle \varphi_{ij},\varphi_{kp}\rangle_{W}=
    \delta_{ik}\delta_{jp}$. Thus, from the definition of $\eta$, we get
    \begin{align*}
       & |\langle\eta(e^{\sigma_ls}y-\beta)\eta(\rho e^{\frac{s}{2}}-y)\varphi_{ij},\varphi_{kp}\rangle_{W}-\delta_{ik}\delta_{jp}|
        =|\langle \big(1-\eta(e^{\sigma_ls}y-\beta)\eta(\rho e^{\frac{s}{2}}-y)\big)\varphi_{ij},\varphi_{kp}\rangle_{W}|\\
        =&|\int_0^\infty\big(1-\eta(e^{\sigma_ls}y-\beta)\eta(\rho e^{\frac{s}{2}}-y)\big)\tilde\varphi_{ij}\tilde\varphi_{kp}y^{n-1}e^{-\frac{|y|^2}{4}}\int_\Sigma\omega_j\omega_pd\theta dy |\leq (I_1+I_2),
        \end{align*}
        where
        \begin{align*}
       I_1:=\int_0^{(\beta+1)e^{-\sigma_ls}}|\tilde\varphi_{ij}||\tilde\varphi_{kp}|y^{n-1}e^{-\frac{|y|^2}{4}}dy,\quad I_2:=\int_{\rho e^{\frac{s}{2}}-1}^\infty |\tilde\varphi_{ij}||\tilde\varphi_{kp}|y^{n-1}e^{-\frac{|y|^2}{4}}dy.
       \end{align*}
       By \eqref{eq tilde phiij},
       \begin{align*}
           I_1\leq C(n,\Sigma,l,\beta) \int_0^{(\beta+1)e^{-\sigma_ls}}y^{2\alpha_j}y^{n-1}dy\leq  C (n,\Sigma,l,\beta) e^{-(2\alpha_j+n)\sigma_ls}\\
           I_2\leq C(n,\Sigma,l,\beta)\int_{\rho e^{\frac{s}{2}}-1}^\infty y^{2\lambda_{ij}+2\lambda_{kj}+2}y^{n-1}e^{-\frac{|y|^2}{4}}dy
        \leq C (n,\Sigma,l,\beta) e^{-(2\alpha_j+n)\sigma_ls}.
       \end{align*}
    These two inequalities imply the first inequality. Similarly,
    \begin{align*}
        &\|\big (1-\eta(e^{\sigma_ls}y-\beta)\eta(\rho e^{\frac{s}{2}}-y)\big)\varphi_{ij}\|^2_{W}
        \leq \langle \big(1-\eta(e^{\sigma_ls}y-\beta)\eta(\rho e^{\frac{s}{2}}-y)\big)\varphi_{ij}, \varphi_{ij}\rangle_{W}
        \leq C(n,\Sigma,l,\beta)e^{-(2\alpha_j+n)\sigma_ls}.
    \end{align*}
\end{proof}
By Lemma \ref{lem diff id}, $\Phi_{t_0}$ converges uniformly to the identity map on $B^{l-1}(O,\beta^{-\tilde\alpha})$ as $t_0\nearrow 0$. Thus if $|t_0|\ll 1$, we have $\Phi_{t_0}^{-1}(0)\subset\subset B^{l-1}(O,\beta^{-\tilde\alpha})$ and the topological degree
\begin{align*}
    deg(\Phi_{t_0},\mathcal O_{t_0},0)=deg(\Phi_{t_0},B^{l-1}(0,\beta^{-\tilde\alpha}),0)=deg(id,B^{l-1}(O,\beta^{-\tilde\alpha}),0)=1.
\end{align*}
We consider the set
\begin{align*}
    \mathcal I=\{t\in [t_0,0)|deg(\Phi_t,\mathcal O_t,0)=1\}.
\end{align*}
When $(\mathbf a,t)\in O$ and $\Phi_{t_1}(\mathbf a)=0$, and \eqref{eq para} holds, we have the a prior estimates which are important for the extension of the solution. 
\begin{proposition}\label{prop C0 est}
Let $n\geq 7$ be an large integer,  $\mC\subset \mathbb R^{n+1}$ be a regular  minimizing, strictly stable hypercone with isolated singularity. Let $\mathcal L_{\mC}$ be the Jacobi operator, $S_+$ be the Hardt-Simon foliation. If $\mathcal L_{\mC}$ has eigenvalue $\lambda_l$ satisfies the condition \eqref{eq lambda l} for some integer $l$, and the constant $\alpha,\tilde\alpha$ in \eqref{eq psi asy} satisfies
\begin{equation}\label{eq alpha ine}
  \frac{-1-\alpha}{1-\alpha}<\min\{\frac{2(1-\alpha)}{n+2\alpha+4},\frac{n-4+2\alpha}{n+4+2\alpha},\frac{2(1-\alpha)\delta_l}{(n+2\alpha+4)\lambda_l},\frac{\tilde\alpha}{1+\tilde\alpha}\}  
\end{equation}
there exists $\xi=\xi(n)>0$, $\vartheta=\vartheta(n)\in (0,1)$ so that
 \begin{equation}\label{eq xi}
        0<\xi<\min\{1,\frac{n-4+2\alpha}{2(1-\alpha)},\frac{\delta_l}{\lambda_l}\}
    \end{equation}
\begin{equation}\label{eq theta def}
    \frac{-1-\alpha}{1-\alpha}<\vartheta< \frac{1}{2}(1-\alpha)\min\{\frac{4\xi}{n+2\alpha},\frac{(1-\vartheta){\tilde \alpha} }{1-\alpha}\}.
\end{equation}
Assume that $\mathbf a\in \bar {\mathcal O}_{t_1}$ for which 
\begin{equation}\label{eq assum 0}
    \Phi_{t_1}(\mathbf a)=0,
\end{equation}
where $t_1\in [t_0,0)$. Suppose that $\mathbf a\in\bar  {\mathcal O}_{t^\circ}$ for some $t^\circ\in [t_1,e^{-1}t_1]$. Then if $\Lambda\gg1$ (depending on $n,\Sigma$), $0<\rho\ll 1\ll \beta$ (depending on $n,\Lambda$) and $|t_0|\ll 1$ (depending on $n,\Sigma,\Lambda,\rho,\beta$), we have the following estimates

1. In the outer region, the function $u(z,\theta,t)$ of the hypersurface $\Sigma_t^{\mathbf a}$ defined in \eqref{eq Sigma t gra eq} satisfies 
\begin{equation}\label{eq u C2 sum}
    \begin{cases}
        |u(x,\theta,t)|\leq \frac{1}{3}\min\{x,1\},\\
        |\nabla_{(x,\theta)} u(x,\theta,t)|\leq \frac{1}{3},\\
        |\nabla_{(x,\theta)}^2(x,\theta,t)|\leq C(n,\Sigma,\rho);
    \end{cases}
\end{equation}
for $(x,\theta)\in [\frac{1}{3}\rho,\infty)\times \Sigma$, $t_0\leq t\leq t ^\circ$ and
\begin{equation}\label{eq Lambda equ}
    x^{|\gamma|}|\nabla ^\gamma u(x,\theta,t)|\leq \frac{\Lambda }{2}(|t|^{i_1}x^\alpha+x^{2\lambda_l+1}), \quad |\gamma|\in\{0,1,2\}
\end{equation}
for $(x,\theta)\in [\beta|t|^{\frac{1}{2}+\sigma_l}, \rho]\times \Sigma,t_0\leq t\leq t^\circ$.

2. In the intermediate region, if we do the type I rescaling, the function $v(y,\theta,s)$ of the rescaled hypersurface $\Pi_{s}^{\mathbf a}$ defined in \eqref{eq v eq} satisfies
\begin{equation}\label{eq v C0 sum1}
    |v(y,\theta,s)-\frac{\kappa}{c_l}e^{-\lambda_ls}\varphi_l(y,\theta)|\leq C(n,\Sigma,l,\Lambda,\beta,\rho,R)e^{-(1+\tilde k)\lambda_ls}y^{\alpha}\min\{1,y^2\}
\end{equation}
for $(y,\theta)\in [\frac{1}{2}e^{-\vartheta\sigma_ls}, 2R]\times \Sigma$, $s_0\leq s\leq s^\circ$; and
\begin{equation}\label{eq v C0 sum2}
|v(y,\theta,s)-e^{-\sigma_ls}\psi_{\kappa}(e^{\sigma_ls}y,\theta)|\leq C(n,\Sigma,l,\Lambda,\beta,\rho,R)\beta^{-\frac{{\tilde \alpha} }{4}}e^{-2\varrho\sigma_l(s-s_0)}e^{-\lambda_ls}y^\alpha    
\end{equation}
for $(y,\theta)\in [\beta e^{-\sigma_ls},e^{-\vartheta\sigma_ls}]\times \Sigma$, $s_0\leq s\leq s^\circ$. Here, $s^\circ=-\ln|t^\circ|$, $R\gg 1$ is large, $\tilde k>0$ is defined in \eqref{eq tilde k def}, and $\varrho>0$ is defined in \eqref{eq varrho def}.

3. In the tip region, if we perform the type II rescaling, the function of the rescaled hypersurface $\Gamma_\tau$ on $S_{\kappa,+}$ defined in \eqref{eq hat w def} satisfies
\begin{equation}\label{eq hatw C2 sum}
   \begin{cases}
        | \hat w|(\tilde z,\theta,\tau)\leq  C(n,\Sigma)\psi_{\beta^{-{\frac{{\tilde \alpha} }{4}}}}\\
        |\nabla \hat w(\tilde z,\theta,\tau)|\leq C(n,\Sigma,l,\beta),\\
        |\nabla^2 \hat w(\tilde z,\theta,\tau)|\leq C(n,\Sigma,l,\beta);
    \end{cases}
\end{equation}
for $(\tilde z,\theta)\in S_{\kappa,+,3\beta}$, and $\tau_0\leq \tau\leq \tau^\circ$, where $\tau^\circ=(2\sigma_l)^{-1}|t^\circ|^{-2\sigma_l}$. Here $S_{\kappa,+,R}:=B(O,R)\cap S_{\kappa,+,R}$ for $R>0$.
\end{proposition}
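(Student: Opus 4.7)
The plan is to establish the three sets of estimates region by region, exploiting the fact that the admissibility hypothesis $\mathbf a \in \bar{\mathcal O}_{t^\circ}$ already gives crude bounds and that the constraint $t^\circ \in [t_1, e^{-1}t_1]$ means the rescaled time increment $s^\circ - s_1 \le 1$ is bounded. The improvements (turning $\Lambda$ into $\Lambda/2$ in \eqref{eq Lambda equ}, turning the trap parameter $\beta^{-\tilde\alpha/2}$ in \eqref{eq ini lam1 lam2} into $\beta^{-\tilde\alpha/4}$ in \eqref{eq hatw C2 sum}, etc.) come from two mechanisms: a spectral/coercivity argument in the intermediate region using $\Phi_{t_1}(\mathbf a)=0$, and Hardt--Simon barriers in the tip region using strict stability of $\mC$. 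I would carry out the intermediate-region estimate first, then transport the resulting matching data outward (to the outer region) and inward (to the tip), and finally close the loop.

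\textbf{Intermediate region.} Decompose $v(\cdot,\cdot,s) = \sum_{k} b_k(s)\varphi_k$ in the $L^2_W$ basis of Section~\ref{sec pre}, so that $b_k' = -\lambda_k b_k + \langle E(v),\varphi_k\rangle_W$. By Lemma~\ref{lem diff id}, the hypothesis $\Phi_{t_1}(\mathbf a)=0$ together with the cut-off in \eqref{eq def tilde v} forces $|b_k(s_1)| \le C e^{-\lambda_l s_1} e^{-\delta s_1}$ with some $\delta>0$ for each $k \le l-1$. Running the finitely many unstable-mode ODEs forward in $s$ for time at most $1$, and simultaneously controlling the stable tail $\sum_{k>l} b_k\varphi_k$ via the coercivity estimate of Lemma~\ref{lem cocer} applied to $v - b_l(s)\varphi_l - \sum_{k<l} b_k(s)\varphi_k$, yields a weighted $L^2$ bound that, by interior parabolic Schauder, upgrades to the pointwise estimate \eqref{eq v C0 sum1} with the neutral coefficient close to $\kappa/c_l$. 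The constant $\tilde k$ in the improvement exponent comes from the spectral gap $\delta_l$ in \eqref{eq lambda l}, and choosing $\xi$ as in \eqref{eq xi} lets me convert the weighted decay to a pointwise decay on the annulus $[\tfrac{1}{2}e^{-\vartheta\sigma_l s},2R]\times\Sigma$. For the inner half of the intermediate region, I match against a rescaled foliation profile $e^{-\sigma_l s}\psi_\kappa(e^{\sigma_l s}y,\theta)$, comparing \eqref{eq psi asy} with the principal term $\tfrac{1}{c_l}e^{-\lambda_l s}\varphi_l$ and using \eqref{eq alpha ine} to bound the cross-error, giving \eqref{eq v C0 sum2}.

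\textbf{Tip region.} For the type II rescaled flow, represented via $\hat w$ over $S_{\kappa,+}$ by \eqref{eq hat w def}--\eqref{eq rescaled hat graph}, the plan is to trap $\Gamma_\tau$ between two Hardt--Simon leaves $S_{\kappa\pm\beta^{-\tilde\alpha/4},+}$. Each such leaf is a time-independent smooth minimal hypersurface, hence a stationary solution of the mean curvature part of \eqref{eq rescaled hat flow}; the extra dilation term $c_l\tau^{-1}\hat F$ is $O(\tau_0^{-1})$, which, using minimality and the minimising hypothesis (so that $S_{\kappa,+}$ is a locally stable minimal hypersurface with positive first Jacobi eigenvalue on relatively compact domains of $S_{\kappa,+,3\beta}$, by Appendix~\ref{sec HS foliation}), can be absorbed by a lower-order perturbation of the leaves to produce genuine sub- and super-solutions of the $\hat w$-equation. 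The initial and lateral boundary inequalities are verified from \eqref{eq w ini} and the matching estimate \eqref{eq v C0 sum2} respectively, shrinking the gap from $\beta^{-\tilde\alpha/2}$ to $\beta^{-\tilde\alpha/4}$ over the time interval of length $\le 1$ in $s$. The maximum principle then delivers the $C^0$ bound in \eqref{eq hatw C2 sum}, and standard interior Schauder estimates on the graphical MCF equation yield the $C^1, C^2$ bounds.

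\textbf{Outer region and conclusion.} Outside $B(O,\rho)$ the admissibility bound already gives $|u|, |\nabla u|\le \varepsilon_0(\mC)$, so the graphical MCF equation \eqref{eq flow outer} is uniformly parabolic with $|A|^2$ bounded on $\mC\cap\{x\ge \tfrac{1}{3}\rho\}$; a short-time-in-$t$ Schauder argument starting from \eqref{eq ini out noncom reg} and using the matching data at $x = \rho$ (transported from the intermediate estimate by \eqref{eq vu}) produces \eqref{eq u C2 sum}. In the annular outer piece $[\beta|t|^{1/2+\sigma_l},\rho]$ the pointwise estimates \eqref{eq v C0 sum1}--\eqref{eq v C0 sum2}, when rescaled back by \eqref{eq vu}, directly give the improvement \eqref{eq Lambda equ} with $\Lambda/2$ in place of $\Lambda$, provided $\Lambda$ was chosen large (depending on $n,\Sigma$) and $\rho$ small. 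The main obstacle, and the reason Gu--\v{S}e\v{s}um's argument must be substantially modified, is the tip-region barrier construction: because the flow is only a graph over $S_{\kappa,+}$ (not over a line), one cannot use explicit ODE sub/supersolutions and must instead rely on the positivity of the first Jacobi eigenvalue of $S_{\kappa,+}$ on compact subdomains to absorb the error terms; this is precisely where the minimality and strict stability of $\mC$ enter in an essential way, and where the slackness $\tilde\alpha/2 \to \tilde\alpha/4$ in the trap parameter is consumed.
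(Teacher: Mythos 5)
Your overall architecture matches the paper's: a spectral decomposition of the cut-off profile $\tilde v$ in the intermediate region driven by $\Phi_{t_1}(\mathbf a)=0$, coercivity (Lemma \ref{lem cocer}) for the stable tail, and Hardt--Simon leaves combined with the first Dirichlet eigenfunction of the Jacobi operator on $S_{\kappa,+,2\beta}$ as tip barriers. However, there are two concrete gaps. First, you treat the whole argument as living on a time interval of length $\le 1$: only $[s_1,s^\circ]$ is short, while the estimates must hold on all of $[s_0,s^\circ]$ and $[\tau_0,\tau^\circ]$, which are arbitrarily long. In the intermediate region this forces a \emph{backward} integration of the low-mode ODEs from $s_1$ down to $s_0$ (which converges because $(1+\xi)\lambda_l-\lambda_{ij}>0$), not just a forward run "for time at most $1$"; you omit this. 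In the tip region the same issue means static leaves perturbed by an $O(\tau_0^{-1})$ correction cannot work: the paper's barriers are genuinely time-dependent, $\lambda_\pm(\tau)=1\mp\beta^{-\tilde\alpha/4}(\tau/\tau_0)^{-\varrho}$, and the sign computation $\lambda'-c_l\tau^{-1}\lambda<0$ (resp.\ the inequality $d_0'-c_l\tau^{-1}d_0+d_0\tilde\lambda_+^{-2}\lambda_{1,\beta}>0$) is what makes them sub/supersolutions uniformly over the long interval. Also note the trap parameter \emph{degrades} from $\beta^{-\tilde\alpha/2}$ to $\beta^{-\tilde\alpha/4}$ (the allowed gap gets larger, since $\beta\gg1$); it is not "shrunk".

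Second, your outer-region treatment is too thin. For $x\ge\rho$ the admissibility conditions give quantitative bounds only on the initial slice (via \eqref{eq ini out noncom reg}); condition (2) of admissibility says nothing pointwise for later times there. The paper obtains \eqref{eq u C2 sum} from the Ecker--Huisken interior gradient and curvature estimates applied to the MCF itself (Proposition \ref{prop C2 estim outer noncom first}), which is not a routine Schauder step. Likewise, on the compact annulus $[2R\sqrt{|t|},\rho]$ the improvement to $\Lambda/2$ with the precise profile $|t|^{i_1}x^\alpha+x^{2\lambda_l+1}$ is not obtained by "transporting matching data"; it requires the explicit polynomial super/subsolutions $u^\pm=C_0^\pm(x^{2\lambda_l+1}-C^\pm|t|x^{2\lambda_l-1})\omega_1$ and a comparison argument against $E(u)$, with the sign of $(-1)^{i_1}$ tracked. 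Without these two ingredients the outer-region half of the proposition does not close.
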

\begin{remark}\label{rem nonemp}
If $n\gg1 $, and $\mC$ is the Simons' cone, we have (see \cite{GS})
\begin{align*}
    \alpha\approx -1-\frac{2}{n+1},\quad \frac{-1-\alpha}{1-\alpha}\approx\frac{1}{n+2},\quad 
    \tilde\alpha=2-2\alpha\approx 4+\frac{4}{n+1}.
\end{align*}
We can take 
\begin{equation}
    \vartheta\approx \frac{1}{n+2}, \quad\delta_l\approx\frac{2\lambda_l}{n}
\end{equation}
So, at least for Simons' cone $\mC$ and large $n$, condition \eqref{eq alpha ine} is satisfied.
\end{remark}
Moreover, we have the following asymptotic and smooth estimates.
\begin{proposition}\label{prop higher est}
    Under the hypothesis of Proposition \ref{prop C0 est}, there is 
    \begin{equation}
        \kappa\in (1-C(n,\Sigma,\Lambda,l,\rho,\beta)|t_0|^{\xi\lambda_l},1+C(n,\Sigma,\Lambda,l,\rho,\beta)|t_0|^{\xi\lambda_l})
    \end{equation}
    so that for any given $0<\delta\ll 1$, $m,q\in \mathbb N_+$, the following estimates holds.

    1. In the outer region, the function $u$ of $\Sigma_t$ defined in \eqref{eq Sigma t gra eq} satisfies 
    \begin{equation}\label{eq u sum1}
        |\nabla_{(x,\theta)}^{m}\nabla_t^q u(x,\theta,t)|\leq C(n,\Sigma,l,\rho,\delta,m,q)
    \end{equation}
    for $(x,\theta)\in [\frac{1}{2}\rho,\infty)\times\Sigma$, $t_0+\delta^2\leq t\leq t^\circ$, and 
    \begin{equation}\label{eq u sum2}
        x^{m+2q}\left |\nabla_{(x,\theta)}^{m}\nabla_t^q \left(u(x,\theta,t)-\frac{\kappa}{c_l}|t|^{\lambda_l+\frac{1}{2}}\varphi_l(\bar x,\theta)\right)\right|\leq C(n,\Sigma,l,\Lambda,\delta,m,q)\rho^{4\lambda_l}x^{2\lambda_l+1}
    \end{equation}
    for $(x,\theta)\in[R\sqrt{|t|},\frac{3}{4}\rho]\times\Sigma$, $t_0+\delta^2x^2\leq t\leq t^\circ$.

    2. In the intermediate region, we rescale the hypersurface by the Type I rescaling, then the function $v$ of the rescaled hypersurface $\Pi_s$ defined in \eqref{eq v eq}  satisfies 
    \begin{equation}\label{eq v sum1}
        y^{m+2q}\left|\nabla_{(y,\theta)}^m\nabla_t^q\left(v(y,\theta,s)-\frac{\kappa}{c_l}e^{-\lambda_ls}\varphi_l(y,\theta)\right)\right|\leq C(n,\Sigma,l,\Lambda,\delta,m,q)e^{-(1+\xi)\lambda_ls}y^{\alpha}\min\{1,y^2\}
    \end{equation}
    for $(y,\theta)\in [\frac{3}{4}e^{-\vartheta\sigma_l s},\frac{3}{2}R]\times \Sigma$, $s_0+\delta^2 y^2\leq s\leq s^\circ$, and
    \begin{equation}\label{eq v sum2}
        y^{m+2q}|\nabla_{(y,\theta)}^m\nabla_s^q\left(v(y,\theta,s)-e^{-\sigma_ls}\psi_{\kappa}(e^{\sigma_ls}y,\theta)\right)|\leq C(n,\Sigma,l,\Lambda,\delta,m,q)\beta^{\alpha-{\tilde \alpha} }e^{-2\varrho\sigma_l(s-s_0)}e^{-\lambda_ls}y^\alpha
    \end{equation}
    for $(y,\theta)\in [\frac{3}{2}\beta e^{-\sigma_ls},\frac{4}{5}e^{-\vartheta\sigma_l s}]$, $s_0+\delta^2y^2\leq s\leq s^\circ$, where $\xi$ satisfies \eqref{eq xi}, and
    \begin{equation}\label{eq varrho def}
        \varrho=1-\frac{1}{2}(1-\alpha)(1-\vartheta)\in (0,\vartheta)
    \end{equation}
    are positive constants.

    3. In the tip region, if we rescale the hypersurface hy Type II rescaling, then the function $\hat w(\tilde z,\theta,\tau)$ for the rescaled hypersurface $\Gamma_\tau$ over $S_{\kappa,+}$ defined in \eqref{eq hat w def} satisfies,
    \begin{equation}\label{eq hat w sum}
        \delta^{m+2q}|\nabla_{(\tilde z,\theta))}^m\nabla^q_\tau\hat w(\tilde z,\theta,\tau)|\leq C(n,\Sigma,m,q)\beta^{-\frac{{\tilde \alpha} }{4}}(\frac{\tau}{\tau_0})^{-\varrho}
    \end{equation}
    for $(\tilde z,\theta)\in S_{\kappa,+,2\beta}$, $\tau_0+\delta^2\leq \tau\leq \tau^\circ$.
\end{proposition}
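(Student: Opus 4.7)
The plan is to bootstrap the $C^0/C^2$ bounds of Proposition \ref{prop C0 est} to all higher derivatives by applying standard interior parabolic Schauder theory at the natural scale of each of the three regions, and to obtain the sharp decay bounds \eqref{eq u sum2}, \eqref{eq v sum1}, \eqref{eq v sum2} by applying the same machinery to the difference between the solution and its identified leading term. The constant $\kappa$ is extracted by projecting $v(\cdot,\cdot,-\ln|t_1|)$ onto $\varphi_l$: the constraint \eqref{eq assum 0} kills the first $l-1$ spectral modes of $\tilde v$ at $t=t_1$, and Lemma \ref{lem diff id} together with \eqref{eq v C0 sum1} then yields $|\kappa-1|\leq C|t_0|^{\xi\lambda_l}$.

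\textbf{Outer region.} At scales $x\gtrsim\rho$ the geometry is essentially fixed, so \eqref{eq u sum1} follows by applying interior parabolic Schauder to $u_t=\mathcal L_\mC u+E(u)$ on cylinders of fixed size and bootstrapping, using the $C^2$ bound \eqref{eq u C2 sum}; the $\delta^2$ offset is the single parabolic cylinder of room needed above $t_0$. For the sharper \eqref{eq u sum2}, set $\tilde u:=u-\tfrac{\kappa}{c_l}|t|^{\lambda_l+1/2}\varphi_l(\bar x,\theta)$. Since $L_\mC\varphi_l=-\lambda_l\varphi_l$, the leading term exactly solves the linearized equation, hence $\tilde u$ satisfies a linear parabolic equation whose forcing is quadratic in $\nabla u$ and therefore controlled by \eqref{eq Lambda equ}. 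Applying Schauder on parabolic cylinders of radius $\sim x$ (which justifies the $t_0+\delta^2 x^2\leq t$ restriction) and using the $C^0$ decay of $\tilde u$ inherited from \eqref{eq v C0 sum1} as the starting sup-bound delivers \eqref{eq u sum2}.

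\textbf{Intermediate and tip regions.} In the intermediate region, the coefficients of $L_\mC$ scale homogeneously along the cone, so rescaling $(y,s)\mapsto (y/y_*,s-s_*)$ converts each parabolic cylinder $Q_{cy}(y_*,s_*)$ into a unit-size cylinder on which the linearization has smooth, uniformly bounded coefficients and the nonlinear remainder is small by \eqref{eq adm y}. Schauder bootstrapping then gives \eqref{eq v sum1} from \eqref{eq v C0 sum1}, and the same scheme, applied to $\tilde v:=v-e^{-\sigma_l s}\psi_\kappa(e^{\sigma_l s}y,\theta)$, yields \eqref{eq v sum2} from \eqref{eq v C0 sum2}, the decay $e^{-2\varrho\sigma_l(s-s_0)}$ being inherited directly from the $C^0$ bound. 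The tip region is the simplest: the $C^2$ bound \eqref{eq hatw C2 sum} on $S_{\kappa,+,3\beta}$ makes the graphical equation \eqref{eq rescaled hat graph} uniformly parabolic on $S_{\kappa,+,2\beta}$, and the $\tau^{-1}$ coefficient is a bounded lower-order perturbation for $\tau\geq\tau_0$; standard parabolic Schauder on cylinders of radius $\delta$ then yields \eqref{eq hat w sum}, with the sup-factor $(\tau/\tau_0)^{-\varrho}$ obtained first by a barrier argument analogous to the one used for \eqref{eq v C0 sum2} and then propagated to higher derivatives.

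\textbf{Main obstacle.} The technical core is the intermediate region, where the coefficients of $L_\mC$ degenerate as $y\to 0$, the spectral decomposition is needed to track the subtle decay constants, and Schauder must be applied at the varying scale $\sim y$; carefully controlling the $y$-dependence in the Schauder constants and matching the estimates across the interfaces with the outer and tip regions at the scales $y\sim R$ and $y\sim\beta e^{-\sigma_l s}$ is the bulk of the work. As noted in the excerpt, most of these steps are direct modifications of the symmetric arguments of \cite{GS}, so the points deserving emphasis are precisely those where the present setting genuinely differs: the non-radial spectral structure of $L_\mC$ (forcing one to keep $\theta$-derivatives), and the construction of the upper and lower barriers over $S_{\kappa,+}$ that feed the $(\tau/\tau_0)^{-\varrho}$ decay in the tip region.
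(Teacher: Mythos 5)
Your proposal matches the paper's route for the higher-order estimates themselves: the paper also obtains \eqref{eq u sum1}, \eqref{eq u sum2}, \eqref{eq v sum1}, \eqref{eq v sum2}, \eqref{eq hat w sum} by interior parabolic Schauder theory applied at the natural scale of each region (fixed cylinders in the outer region, cylinders rescaled by $x$ resp.\ $y$ for the weighted estimates, cylinders of size $\delta$ in the tip), applied to the difference with the leading profile and seeded by the $C^0$ decay from Proposition \ref{prop C0 est}; your identification of $\kappa$ via the spectral projection and \eqref{eq assum 0} is exactly \eqref{eq k def} and \eqref{eq bound k diff}. The one substantive point of difference is what you treat as given: you start Schauder from "the $C^0/C^2$ bounds of Proposition \ref{prop C0 est}," but in the paper the uniform $C^1$ and $C^2$ bounds (the second and third lines of \eqref{eq u C2 sum} and \eqref{eq hatw C2 sum}) are themselves only established in Section \ref{sec higher est}, and this is where most of the work lies: in the outer region via maximum-principle arguments for $\eta|\nabla u|^2$ and $\eta|\nabla^2u|^2$ with carefully chosen cutoffs (Lemmas \ref{lem imp gra est outer noncom}, \ref{lem imp sec der est outer noncom}), and in the tip region via a tangent-plane graph argument for the gradient (Proposition \ref{prop tip C1}), a short-time bound from the evolution of $|\hat A|^2$, and the Ecker--Huisken curvature estimate for all times (Proposition \ref{prop tip C2}) — none of which is Schauder. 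Your reading is logically admissible since those bounds appear in the statement of Proposition \ref{prop C0 est}, but a complete write-up would either supply these non-Schauder $C^2$ arguments or make explicit that they are being assumed; as written, the proposal captures the bootstrap but not the bulk of the section's actual content.
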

Proposition \ref{prop C0 est}, \ref{prop higher est} will be proved in Section \ref{sec C0 est}, \ref{sec higher est}, respectively. After we prove Proposition \ref{prop C0 est}, \ref{prop higher est}, we can apply the results in \cite{EH} to conclude that $\mathbf a\in e^{-1}t_1$. Moreover, we can prove
\begin{corollary}
    If $|t_0|\ll1$ (depending on $n,\Sigma$), then we have $\mathcal I=[t_0,0)$.
\end{corollary}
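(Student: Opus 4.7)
The plan is to argue by a standard open--closed continuity method on $\mathcal I\subset[t_0,0)$, where the main input is that Propositions~\ref{prop C0 est} and \ref{prop higher est} improve \emph{strictly} the admissibility conditions, leaving room to extend the flow past any candidate endpoint. By Lemma~\ref{lem diff id}, for $|t_0|\ll 1$ the map $\Phi_{t_0}$ is uniformly close to the identity on $\bar B^{l-1}(O,\beta^{-\tilde\alpha})$, so $\Phi_{t_0}^{-1}(0)\subset\subset \mathcal O_{t_0}$ and $\deg(\Phi_{t_0},\mathcal O_{t_0},0)=\deg(\mathrm{id},B^{l-1}(O,\beta^{-\tilde\alpha}),0)=1$, giving $t_0\in\mathcal I$. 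It then suffices to show that $\mathcal I$ is both relatively open and relatively closed in $[t_0,0)$, so connectedness forces $\mathcal I=[t_0,0)$.

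For openness, I would fix $t_1\in\mathcal I$ and pick $\mathbf a\in \mathcal O_{t_1}$ with $\Phi_{t_1}(\mathbf a)=0$ (such $\mathbf a$ exists since the degree is nonzero). Proposition~\ref{prop C0 est} applied at $t^\circ=t_1$ then upgrades the admissibility conditions to the strict bounds $|u|\le \tfrac13\min\{x,1\}$, $x^{|\gamma|}|\nabla^\gamma u|\le \tfrac{\Lambda}{2}(|t|^{i_1}x^\alpha+x^{2\lambda_l+1})$, and $|\hat w|\le C(n,\Sigma)\psi_{\beta^{-\tilde\alpha/4}}$ in the outer, intermediate and tip regions, together with the higher-order control of Proposition~\ref{prop higher est}. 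Combined with the classical Ecker--Huisken short-time extension \cite{EH}, these strict improvements allow $\{\Sigma^{\mathbf a}_t\}$ to be continued smoothly and admissibly to some $t^\circ>t_1$, so $\mathbf a\in \mathcal O_{t^\circ}$. Continuity of $\Phi_t(\mathbf a')$ in $(\mathbf a',t)$, together with $\Phi_{t_1}^{-1}(0)\subset\subset \mathcal O_{t_1}$, ensures that for $t^\circ$ sufficiently close to $t_1$ one has $\Phi_t^{-1}(0)\subset\subset \mathcal O_t$ for all $t\in[t_1,t^\circ]$; homotopy invariance of the Brouwer degree then preserves the value $1$ across this interval, yielding $[t_1,t^\circ]\subset\mathcal I$.

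For closedness, I would suppose $t_n\in\mathcal I$ with $t_n\nearrow t_*\in(t_0,0)$, pick zeros $\mathbf a_n\in \mathcal O_{t_n}$ of $\Phi_{t_n}$, extract a subsequential limit $\mathbf a_*\in \bar B^{l-1}(O,\beta^{-\tilde\alpha})$, and pass to the limit in the flows $\{\Sigma^{\mathbf a_n}_t\}$ on compact subsets of $[t_0,t_*)$ using the uniform higher-order estimates of Proposition~\ref{prop higher est}. Since the estimates of Proposition~\ref{prop C0 est} are strict, they persist in the limit, so the limiting flow is admissible up to $t=t_*$, giving $\mathbf a_*\in \mathcal O_{t_*}$ and $\Phi_{t_*}(\mathbf a_*)=0$ by continuity of $\Phi$. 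The openness argument applied at $t_*$ then produces $\deg(\Phi_{t_*},\mathcal O_{t_*},0)=1$, so $t_*\in\mathcal I$.

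The hard part will be cleanly checking that the strict improvement built into Proposition~\ref{prop C0 est} is genuinely preserved by forward-time extension: one must rule out admissibility degenerating at $t_1$ because the parameter $\mathbf a$ drifts to $\partial B^{l-1}(O,\beta^{-\tilde\alpha})$ or the tip graph touches the barriers $S_{\kappa_1,+}\cup S_{\kappa_2,+}$. The slack constants $\tfrac13$, $\tfrac{\Lambda}{2}$ and $\beta^{-\tilde\alpha/4}$ appearing in \eqref{eq u C2 sum}--\eqref{eq hatw C2 sum}, together with the smallness of $\rho$ and $|t_0|$ and the largeness of $\beta,\Lambda$ fixed in \eqref{eq para}, are precisely what provides this cushion, and their verification is absorbed into the hypotheses of the two propositions.
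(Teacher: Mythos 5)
Your proposal is correct and follows essentially the same route as the paper: the paper simply defers to Corollary 4.7 of \cite{GS}, whose proof is exactly the open--closed degree-continuity argument you describe, driven by the fact that Propositions \ref{prop C0 est} and \ref{prop higher est} return \emph{strictly} better bounds than the admissibility conditions (so zeros of $\Phi_t$ never reach $\partial\mathcal O_t$ and the flow extends past $t^\circ$ via \cite{EH}). Your write-up just fills in the details the paper omits by citation.
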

\begin{proof}
   The proof is the same as that of Corollary 4.7 in \cite{GS}, except that we apply Proposition 
    \ref{prop C0 est}, \ref{prop higher est} here. 
\end{proof}
\begin{theorem}\label{thm main aim}
   Under the assumption of Proposition \eqref{prop C0 est}, if $|t_0|\ll 1$ (depending on $n,\Sigma$), there is an admissible MCF $\{\Sigma_t\}_{t_0\leq t<0}$ for which the function $u$ defined in \eqref{eq Sigma t gra eq} satisfies \eqref{eq u C2 sum}. Moreover, in the tip region, if we do the type II rescaling, the rescaled function $\hat w$ over $S_{\kappa,+}$ (defined in \eqref{eq hat w def})  satisfies \eqref{eq hatw C2 sum}, with 
    \begin{align*}
        \kappa\in (1-C(n,\Sigma|t_0|^{\xi\lambda_l},1+C(n,\Sigma)|t_0|^{\xi\lambda_l}).
    \end{align*}
In addition, for any given $0<\delta\ll 1,m,q\in\mathbb N_+$, there hold

1. In the outer region, the function $u$ of $\Sigma_t$ defined in \eqref{eq Sigma t gra eq} satisfies \eqref{eq u sum1} and \eqref{eq u sum2}.

2. In the intermediate region, if we do the type I rescaling, the function $v$ of $\Pi_s$ defined in \eqref{eq v eq} satisfies \eqref{eq v sum1} and \eqref{eq v sum2}.

3. In the tip region, if we do the type II rescaling, the function $\hat w$ of $\Gamma_\tau$  defined in \eqref{eq hat w def} satisfies \eqref{eq hat w sum}. 
\end{theorem}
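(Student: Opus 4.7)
The plan is to use a compactness argument on the parameter family provided by the degree method above. By the preceding corollary, $\mathcal{I}=[t_0,0)$, so for every $t^\circ\in [t_0,0)$ there exists $\mathbf{a}_{t^\circ}\in\bar{\mathcal O}_{t^\circ}$ with $\Phi_{t^\circ}(\mathbf{a}_{t^\circ})=0$, and the associated MCF $\{\Sigma_t^{\mathbf{a}_{t^\circ}}\}_{t_0\leq t\leq t^\circ}$ is admissible. First I would pick a sequence $t_k^\circ\nearrow 0$ and set $\mathbf{a}_k:=\mathbf{a}_{t_k^\circ}$; since $\{\mathbf{a}_k\}\subset\bar{B}^{l-1}(O,\beta^{-\tilde\alpha})$ is bounded, after extracting a subsequence we may assume $\mathbf{a}_k\to\mathbf{a}^*$.

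Next, for any fixed $t^\circ\in[t_0,0)$ we eventually have $t_k^\circ>t^\circ$, so Propositions~\ref{prop C0 est} and~\ref{prop higher est} apply uniformly in $k$ on $[t_0,t^\circ]$ to the flow $\{\Sigma_t^{\mathbf{a}_k}\}$. In each of the outer, intermediate, and tip regions these propositions yield uniform $C^{m,q}$ bounds on the profile functions $u_k(x,\theta,t)$, $v_k(y,\theta,s)$, and $\hat w_k(\tilde z,\theta,\tau)$ on any compact subset bounded away from the initial slice $t=t_0$. By Arzel\`a-Ascoli together with a diagonal extraction over an exhaustion of $[t_0,0)$, a further subsequence converges in $C^\infty_{loc}$ to smooth limits $u$, $v$, $\hat w$.

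Since the MCF equation together with the admissibility conditions \eqref{eq adm x}-\eqref{eq adm z} are closed under $C^\infty_{loc}$ convergence, the limit data define an admissible MCF $\{\Sigma_t\}_{t_0\leq t<0}$. The pointwise estimates \eqref{eq u C2 sum}-\eqref{eq hatw C2 sum} of Proposition~\ref{prop C0 est} and the asymptotic/higher-order estimates \eqref{eq u sum1}-\eqref{eq hat w sum} of Proposition~\ref{prop higher est} transfer to the limit directly. The associated constants $\kappa_k$ lie in the uniform interval provided by Proposition~\ref{prop higher est}, so after a further extraction $\kappa_k\to\kappa^*$, with $\kappa^*\in(1-C|t_0|^{\xi\lambda_l},1+C|t_0|^{\xi\lambda_l})$.

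The main obstacle I anticipate is the compatibility of the tip-region graph representation in the limit, since $\hat w_k$ is a graph over the $\mathbf{a}_k$-dependent Hardt-Simon leaf $S_{\kappa_k,+}$, not a fixed hypersurface. This is handled using the smooth dependence of the foliation $\{S_{\kappa,+}\}$ on the parameter $\kappa$ (Appendix~\ref{sec HS foliation}) together with the convergence $\kappa_k\to\kappa^*$: the uniform $C^2$ bound \eqref{eq hatw C2 sum} keeps the graph transverse to the family of normals, so the natural reparametrization map from a tubular neighborhood of $S_{\kappa_k,+}$ to one of $S_{\kappa^*,+}$ converges smoothly to the identity, and the limit $\hat w$ is a smooth graph over $S_{\kappa^*,+}$ inheriting \eqref{eq hatw C2 sum} and \eqref{eq hat w sum}. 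The matching of the three regional parametrizations across their overlaps is automatic from the uniform $C^m$ estimates and the $C^\infty_{loc}$ convergence, completing the construction.
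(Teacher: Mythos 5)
Your compactness argument—extracting a convergent subsequence of parameters $\mathbf a_{t_k^\circ}$ from the degree method, applying Propositions~\ref{prop C0 est} and~\ref{prop higher est} uniformly, and passing to a $C^\infty_{loc}$ limit that inherits admissibility and all the stated estimates—is exactly the argument the paper invokes by deferring to Theorem~4.8 of \cite{GS}. The proposal is correct and takes essentially the same route, including the correct handling of the $\kappa$-dependence of the Hardt--Simon leaf in the tip region.
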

\begin{proof}
   The proof is the same as the proof of Theorem 4.8 in \cite{GS}, so we omit it.
\end{proof}
\begin{proof}[Proof of Theorem \ref{thm main aim intro}]
Let $\{\Sigma_t\}_{t_0\leq t<0}$ be the solution in Theorem \ref{thm main aim}. From \eqref{eq v eq}, \eqref{eq u sum2}, and \eqref{eq v sum1}, the type I rescaled hypersurface $\Pi_s$ converges to $\mathcal C$ locally smoothly, i.e. for any $0<r<R<\infty$
\begin{align*}
    \Pi_s\to \mathcal C\quad \text{in }C^\infty(B(O,R)\setminus B(O,r)).
\end{align*}
as $s\to\infty$. Likewise, from \eqref{eq hat w def}, \eqref{eq v sum2}, \eqref{eq hat w sum}, \eqref{eq psik asym}, the type II rescaled hypersurfaces $\Gamma_\tau$ converges to $S_{\kappa,+}$ locally smoothly, i.e.
\begin{align*}
    \Gamma_\tau\to S_{\kappa,+} \text{ in }C_{loc}^\infty (S_{\kappa,+}).
\end{align*}
as $\tau\to\infty$.
\end{proof}
\section{$C^0$ estimates}\label{sec C0 est}
In this section, we will prove Proposition \ref{prop C0 est} under the assumption \eqref{eq assum 0}. More precisely, we will show that if $0<\rho\ll 1\ll\beta$ (depending on $n,\Sigma,l,\Lambda$), and $|t_0|\ll 1$ (depending on $n,\Sigma,l,\Lambda,\rho,\beta$) there holds 
\begin{equation}
    |\mathbf a|\leq C(n,\Sigma,\lambda,l,\rho,\beta)|t_0|^{\xi\lambda_l}
\end{equation}
where $\xi>0$ is the constant defined in \eqref{eq xi}. Moreover, there exsits 
\begin{equation}
    \kappa\in (1-C(n,\Sigma,l,\Lambda,\rho,\beta)|t_0|^{\xi\lambda_l},1+C(n,\Sigma,l,\Lambda,\rho,\beta)|t_0|^{\xi\lambda_l})
\end{equation}
so that Proposition \ref{prop C0 est} holds.

The idea is to first prove the estimate for the type I rescaled flow $\Pi_s^{\mathbf a}$ in the intermediate region by the assumption \eqref{eq assum 0}, and the constructions of $\Pi_{s_0}^{\mathbf a}$ in \eqref{eq ini int reg}. Then we use the maximum principle and barrier arguments together the estimate at the boundary of intermediate region and initial condition to prove the $C^0$ estimates in the compact outer region and tip region. Finally, we use Ecker-Huisken's estimates \cite{EH} to extend the estimates in the compact outer region and initial condition to noncompact outer region. These four estimates are given in Proposition \ref{pro inter C0}, \ref{pro outer C0}, \ref{prop tip C0}, \ref{prop C2 estim outer noncom first}  respectively. We consider the intermediate region first.
\subsection{Intermediate region} 
We are going to prove the following estimate in the intermediate region. 
\begin{proposition}\label{pro inter C0}
Assume that \eqref{eq assum 0} holds, if $0<\rho\ll1$, $\beta\gg 1$ (depending on $n,\Sigma,l,\Lambda$), and $s_0\gg 1$ (depending on $n,\Sigma,l,\rho,\beta$) then there exists $\kappa\in\mathbb R$, $\vartheta\in (0,1)$, any $R\geq 1$, and all $s_0\leq s\leq s^\circ$ satisfying 
    \begin{equation}\label{eq inter C0}
        |v(y,\theta,s)-\frac{\kappa}{c_l}e^{-\lambda_ls}\varphi_l(y,\theta)|\leq C(n,\Sigma,l,\Lambda,\beta,\rho,R)e^{-(1+\tilde k)\lambda_ls}y^{\alpha}\min\{1,y^2\}
    \end{equation}
for $\frac{1}{2}e^{-\vartheta\sigma_ls}\leq y\leq 2R$, 
\begin{equation}\label{eq tilde k def}
    \tilde k=\xi-\vartheta \frac{\frac{n}{2}+\alpha+2}{1-\alpha}>0
\end{equation}
if $\vartheta$ satisfies \eqref{eq theta def1}.
\begin{remark}
     Note \eqref{eq inter C0} is exactly \eqref{eq v C0 sum1}.
\end{remark}
\end{proposition}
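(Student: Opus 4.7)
The plan is to perform a spectral decomposition of the (cut-off) profile $\tilde v$ from \eqref{eq def tilde v} in the weighted orthonormal basis $\{\varphi_k\}$ of Section~\ref{sec pre}, and to exploit three complementary facts: the vanishing condition $\Phi_{t_1}(\mathbf a)=0$ from \eqref{eq assum 0} kills the first $l-1$ Fourier coefficients at $s_1:=-\ln|t_1|$; the spectral gap $\lambda_{l+1}\ge\lambda_l+\delta_l$ from \eqref{eq lambda l} forces the higher modes to decay strictly faster than $e^{-\lambda_l s}$; and the coercivity estimate of Lemma~\ref{lem cocer} supplies the weighted energy estimate controlling those higher modes. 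The constant $\kappa$ will be identified with a multiple of the $\varphi_l$-coefficient of $\tilde v$ at $s=s^\circ$.

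First I would expand
\[
\tilde v(y,\theta,s)=\sum_{k\ge 1}b_k(s)\varphi_k(y,\theta),\qquad b_k(s):=\langle \tilde v,\varphi_k\rangle_W,
\]
and project the PDE $v_s=L_{\mathcal C}v+E(v)$ against each $\varphi_k$. This produces a family of scalar ODEs
\[
\dot b_k(s)+\lambda_k b_k(s)=f_k(s),
\]
where $f_k$ collects contributions from the quadratic error $E(v)$, the commutator $[L_{\mathcal C},\chi_s]$ (with $\chi_s(y):=\eta(e^{\sigma_l s}y-\beta)\eta(\rho e^{s/2}-y)$), and $\partial_s\chi_s$. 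Using the admissibility bound \eqref{eq adm y} and the Gaussian weight $e^{-|Y|^2/4}$, the inner and outer boundary contributions decay exponentially, with the inner rate dictated by the choice of $\vartheta$ in \eqref{eq theta def}.

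I would then analyse the three spectral regimes separately. For the high modes ($k\ge l+1$), writing $v_{>}:=\sum_{k\ge l+1}b_k\varphi_k$, the spectral gap combined with Cauchy--Schwarz yields the differential inequality
\[
\frac{d}{ds}\|v_{>}\|_W^2\le-2\lambda_{l+1}\|v_{>}\|_W^2+C\|f_{>}\|_W^2,
\]
which integrates to $\|v_{>}\|_W\le Ce^{-(\lambda_l+\delta_l)s}$; by \eqref{eq xi} this beats the target rate $(1+\tilde k)\lambda_l$. For the low modes ($k\le l-1$), the vanishing $b_k(s_1)=0$ combined with Duhamel's formula applied both forward and backward from $s_1$ gives $|b_k(s)|\le Ce^{-(1+\tilde k)\lambda_l s}$ on $[s_0,s^\circ]$. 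For the middle mode ($k=l$), setting $\kappa:=c_l b_l(s^\circ)e^{\lambda_l s^\circ}$ and integrating $\dot b_l+\lambda_l b_l=f_l$ backward from $s^\circ$ yields $\bigl|b_l(s)-\frac{\kappa}{c_l}e^{-\lambda_l s}\bigr|\le Ce^{-(1+\tilde k)\lambda_l s}$.

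To pass from the weighted $L^2_W$ bound to a pointwise estimate with profile $y^{\alpha}\min\{1,y^2\}$, I would apply interior parabolic regularity on dyadic annuli in $y$, combined with the Morrey-type inequality on cones from Appendix~\ref{sec cone prea} in the angular variable. The stated profile emerges because $\varphi_l=c_l y^{\alpha}\omega_1(1-K_1 y^2+\cdots)$ from \eqref{eq tilde phiij}, while the other radial modes $\varphi_{i,1}$ share the same leading $y^{\alpha}\omega_1$ asymptotic, so their differences with $\varphi_l$ vanish to order $y^{\alpha+2}$; the angular modes $\varphi_{i,j}$ with $j\ge 2$ contribute at order $y^{\alpha_j}$, which is smaller still near the origin. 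The main obstacle I foresee is the quadratic source $f_k$: naively bounding $E(v)$ from \eqref{eq adm y} gives a crude rate, so one must bootstrap by feeding the low- and high-mode decay back into $E(v)$, and the precise tuning of $\tilde k$ in \eqref{eq tilde k def} against the ranges in \eqref{eq xi} and \eqref{eq theta def} is exactly what is needed for the Duhamel integrals to converge to the target exponent. Unlike the symmetric case, this bookkeeping now involves the full angular expansion in $\omega_j$ rather than a single radial ODE, which is the novel ingredient here.
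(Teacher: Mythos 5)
Your proposal follows essentially the same route as the paper: cut off $v$ to get $\tilde v$, split into low/middle/high spectral components, use $\Phi_{t_1}(\mathbf a)=0$ with forward/backward Duhamel for the low modes, the spectral gap \eqref{eq lambda l} plus the coercivity of Lemma \ref{lem cocer} for the high modes, and the Morrey-type inequality of Lemma \ref{lem Morrey} to convert the $H^1_W$ bound into the pointwise profile, with $\vartheta$ tuned so that the $y^{-n/2}$ loss near the inner boundary is absorbed into $e^{-\tilde k\lambda_l s}$. The only (harmless) deviations are that the paper defines $\kappa$ from the $\varphi_l$-coefficient at $s_1$ rather than $s^\circ$, and that no bootstrap on $E(v)$ is actually needed — the admissibility bound \eqref{eq adm y} already yields $\|f\|_W\le Ce^{-(1+\xi)\lambda_l s}$ directly once $\xi<\frac{n+2\alpha-4}{2(1-\alpha)}$.
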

In the following of this subsection, $C=C(n,\Sigma,l,\Lambda,\beta,\rho)$ is a constant depending on $n,\Sigma,l,\Lambda,\beta,\rho$ if there is no other clarifications. Recall the definition of $\tilde v$ in \eqref{eq def tilde v}. By \eqref{eq v eq}, 
\begin{equation}\label{eq tilde v eq}
    \tilde v_s-L_{\mC}\tilde v=[\partial_s-\partial_{yy}-\frac{n-1}{y}\partial y-\frac{\Delta_\Sigma+|A_\Sigma|^2}{y^2}-(\frac{1}{2}-\frac{1}{2}y\cdot\partial_y)]\tilde v=f=:f_1+f_2+f_3,
\end{equation}
where 
\begin{equation}\label{eq f1 f2 f3}
\begin{aligned}
    &f_1(y,\theta,s)=\eta(e^{\sigma_ls}y-\beta)\eta(\rho e^{\frac{s}{2}}-y)E(v)\\
    &f_2(y,\theta,s)=\big\{\eta'(e^{\sigma_ls}y-\beta)e^{\sigma_ls}[(\sigma_l+\frac{1}{2})y-\frac{2v_y}{v}-\frac{n-1}{y}]-\eta''(e^{\sigma_ls}y-\beta)e^{2\sigma_ls}\big\}\eta(\rho e^{\frac{s}{2}}-y)v\\
    &f_3(y,\theta,s)=\big\{\eta'(\rho e^{\frac{s}{2}}-y)[e^{\frac{s}{2}}\frac{\rho}{2}+\frac{2v_y}{v}+\frac{n-1}{y}-\frac{1}{2}y]-\eta''(\rho e^{\frac{s}{2}}-y)\big\}\eta(e^{\sigma_ls}y-\beta)v.
\end{aligned}
\end{equation}
are smooth compactly supported functions. Note that $(\beta+1) e^{-\sigma_ls}<\rho e^{\frac{s}{2}}-1$ for $s$ sufficiently large, thus $\eta'(e^{\sigma_ls}y-\beta)\eta'(\rho e^{\frac{s}{2}}-y)\equiv0$ for $s$ sufficiently large. We have the following estimate for $f_i$ by \eqref{eq adm y},
\begin{align}
&|f_1(y,\theta,s)|\leq |E(v)|\chi_{(\beta e^{-\sigma_ls},\rho e^{\frac{s}{2}})}   \\ 
&|f_2(y,\theta,s)|\leq  C[(1+\frac{1}{y^2})|v|+\frac{|\nabla v|}{y}]\chi_{(\beta e^{-\sigma_ls},(\beta+1) e^{-\sigma_ls})}\leq C e^{-\lambda_ls}y^{\alpha-2}\chi _{(\beta e^{-\sigma_ls},(\beta+1) e^{-\sigma_ls})}\label{eq boun f2}\\
&|f_3(y,\theta,s)|\leq C[(|y|+1)|v|+|\nabla v|]\chi_{(\rho e^{\frac{s}{2}}-1,\rho e^{\frac{s}{2}})}\leq Ce^{-\lambda_ls}y^{2\lambda_l+2}\chi_{(\rho e^{\frac{s}{2}}-1,\rho e^{\frac{s}{2}})}\label{eq boun f3}
\end{align}
We estimate $|f_1|$ as follows. Recall $E(v)$ is defined in \eqref{eq error outer}. Since $v$ satisfies \eqref{eq adm y}, and $\sigma_l=\frac{\lambda_l}{1-\alpha}$, $v$ satisfies \eqref{eq basic assum}, with 
\begin{equation}
\mu=\Lambda e^{-\lambda_ls}(y^{\alpha-1}+y^{2\lambda_l})\text{ for }\beta e^{-\sigma_ls}\leq y\leq\rho e^{\frac{s}{2}}, s_0\leq s\leq s^\circ.
\end{equation} 
Thus $|E(v)|\leq C\frac{\mu^2}{y}$ by \eqref{eq E stru 2}, and 
\begin{equation}\label{eq boun f1}
    |f_1(y,\theta,s)|\leq C e^{-2\lambda_ls}(y^{2\alpha-3}+y^{4\lambda_l-1})\chi_{(\beta e^{-\sigma_ls},\rho e^{\frac{s}{2}})}.
\end{equation}
Next, we compute the $L_W^2$ norm of $f_i$ ($i=1,2,3$). By \eqref{eq boun f1}, we have
\begin{align*}
    \|f_1\|_{W}(s)=&\big(\int_0^\infty \int_\Sigma |f_1(\cdot,\cdot,s)|^2y^{n-1}e^{-\frac{y^2}{4}}d\theta dy\big)^{\frac{1}{2}}
    \leq C|\Sigma|^{\frac{1}{2}}e^{-2\lambda_ls} (\int_{\beta e^{-\sigma_ls}}^{\rho e^{\frac{s}{2}}}(y^{4\alpha-6}+y^{8\lambda_l-2})y^{n-1}e^{-\frac{y^2}{4}}dy)^{\frac{1}{2}}\\
    \leq &C e^{-2\lambda_ls}((\beta e^{-\frac{\lambda_ls}{1-\alpha} })^{\frac{(n+4\alpha-6)}{2}}+C)\leq C e^{-\lambda_l(1+\xi)s}
\end{align*}
since $\xi<\min\{1+\frac{n+4\alpha-6}{2(1-\alpha)},1\}=\frac{n+2\alpha-4}{2(1-\alpha)}$, and $ s_0(n,\Sigma,l,\Lambda,\beta,\rho)\gg1$ large. Similarly, by \eqref{eq boun f2} and \eqref{eq boun f3}, we obtain
\begin{align*}
    \|f_2\|_{W}(s)
    \leq& C|\Sigma|^{\frac{1}{2}}e^{-\lambda_ls} (\int_{\beta e^{-\sigma_ls}}^{(\beta+1) e^{-\sigma_ls}}y^{2\alpha-4}y^{n-1}e^{-\frac{y^2}{4}}dy)^{\frac{1}{2}}
    \leq C e^{-\lambda_ls}(\beta e^{-\frac{\lambda_ls}{1-\alpha} })^{\frac{(n+2\alpha-4)}{2}}
    \leq Ce^{-\lambda_l(1+\xi)s}
\end{align*}
since $\xi<\frac{n+2\alpha-4}{2(1-\alpha)}$, and $s_0(n,\Sigma,l,\Lambda,\beta)\gg1$ large; and
\begin{align*}
    \|f_3\|_{W}(s)\leq C|\Sigma|^{\frac{1}{2}}e^{-\lambda_ls} (\int_{\rho e^{\frac{s}{2}}-1}^{\infty}y^{4\lambda_l+4}y^{n-1}e^{-\frac{y^2}{4}}dy)^{\frac{1}{2}}
    \leq C e^{-\lambda_ls}e^{-\frac{(\rho e^{\frac{s}{2}}-1)^2}{8}}\leq C e^{-\lambda_l(1+\xi)s}
\end{align*}
since $\xi<1$, and $s_0(n,\Sigma,l,\Lambda,\beta,\rho)\gg 1$ large.
Thus if we take $\xi>0$ as in \eqref{eq xi},
then we get
\begin{equation}\label{eq boun f norm}
    \|f\|_{W}\leq \sum_{i=1}^3\|f_i\|_{W}\leq C e^{-\lambda_l(1+\xi)s}.
\end{equation}
for $ s_0(n,\Sigma,l,\Lambda,\beta,\rho)\gg1 $ large.

Next, we estimate the lower frequency components.
\begin{lemma}
Under the assumption of Proposition \ref{pro inter C0}, we have
\begin{equation}
\begin{aligned}
     |a_{ij}|
     \leq C e^{-\xi\lambda_ls_0},
\end{aligned}
\end{equation}
for $\lambda_{ij}<\lambda_l$, $s_0\leq s\leq s^\circ$; and
\begin{equation}
\begin{aligned}
     &|\kappa-1|
    \leq
    Ce^{-\xi\lambda_ls_0}
\end{aligned}
\end{equation}
for $s_0\leq s\leq s^\circ$, where
\begin{equation}\label{eq k def}
    \kappa:=c_{i_11}e^{\lambda_ls_1}\langle\tilde v(\cdot,s_1),\varphi_{1i_1}\rangle.
\end{equation}
\end{lemma}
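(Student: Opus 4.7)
The strategy is to project the cutoff equation \eqref{eq tilde v eq} onto the $L^2_W$-orthonormal eigenbasis $\{\varphi_{ij}\}$ of $L_{\mC}$. Setting $b_{ij}(s) := \langle \tilde v(\cdot, s), \varphi_{ij}\rangle_W$ and exploiting the self-adjointness of $L_{\mC}$ (valid since $\tilde v(\cdot, s)$ has compact $(y,\theta)$-support for each $s$), the equation $\tilde v_s - L_{\mC}\tilde v = f$ reduces to the scalar linear ODE $b_{ij}'(s) + \lambda_{ij}\, b_{ij}(s) = \langle f(\cdot, s), \varphi_{ij}\rangle_W$, with Duhamel representation
\begin{equation*}
b_{ij}(s_1) = e^{-\lambda_{ij}(s_1 - s_0)}\, b_{ij}(s_0) + \int_{s_0}^{s_1} e^{-\lambda_{ij}(s_1 - s')} \langle f(\cdot, s'), \varphi_{ij}\rangle_W\, ds'.
\end{equation*}

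For the low modes $\lambda_{ij} < \lambda_l$, the hypothesis \eqref{eq assum 0} forces $b_{ij}(s_1) = 0$, so rearranging and combining with \eqref{eq boun f norm} (noting $\lambda_{ij} < \lambda_l < \lambda_l(1+\xi)$) gives $|b_{ij}(s_0)| \leq C e^{-\lambda_l(1+\xi) s_0}$. On the other hand, the explicit initial data \eqref{eq ini int reg} together with Lemma \ref{lem diff id} yields $b_{ij}(s_0) = e^{-\lambda_l s_0}\, a_{ij}/c_{ij} + O\bigl((1+|\mathbf a|)\, e^{-\lambda_l s_0}\, e^{-\frac{1}{2}(n+2\alpha_j)\sigma_l s_0}\bigr)$. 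The constraint \eqref{eq xi} ensures $\xi \lambda_l < \tfrac{1}{2}(n+2\alpha)\sigma_l$ (since $\xi(1-\alpha) < \tfrac{n-4+2\alpha}{2} < \tfrac{n+2\alpha}{2}$), so this correction is absorbed once $s_0$ is large, using that $|\mathbf a| \leq \beta^{-\tilde\alpha}$ is bounded; dividing through by $e^{-\lambda_l s_0}$ produces $|a_{ij}| \leq C e^{-\xi \lambda_l s_0}$.

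The bound on $\kappa - 1$ follows the same template applied to $\varphi_l = \varphi_{i_1 1}$, except now no endpoint vanishing is available; instead $\kappa = c_l e^{\lambda_l s_1} b_l(s_1)$ by the definition \eqref{eq k def}. Multiplying the Duhamel identity by $c_l e^{\lambda_l s_1}$ gives
\begin{equation*}
\kappa = c_l e^{\lambda_l s_0}\, b_l(s_0) + c_l \int_{s_0}^{s_1} e^{\lambda_l s'}\langle f(\cdot, s'), \varphi_l\rangle_W\, ds'.
\end{equation*}
By Lemma \ref{lem diff id} together with the just-proved bound on $|\mathbf a|$, the first term equals $1$ up to an error of order $e^{-\frac{1}{2}(n+2\alpha)\sigma_l s_0} \leq e^{-\xi \lambda_l s_0}$, while the forcing integral is bounded by $C\int_{s_0}^{s_1} e^{-\xi \lambda_l s'}\, ds' \leq C e^{-\xi \lambda_l s_0}$. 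This yields $|\kappa - 1| \leq C e^{-\xi \lambda_l s_0}$. The only real subtlety is the mismatch between the global $L^2_W$-eigenbasis and the truncated $\tilde v$, which Lemma \ref{lem diff id} is precisely designed to quantify; the remainder is routine ODE bookkeeping and exponent comparison through \eqref{eq xi} and \eqref{eq theta def}.
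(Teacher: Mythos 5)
Your proposal is correct and follows essentially the same route as the paper: project onto the eigenbasis, use $\Phi_{t_1}(\mathbf a)=0$ to kill the low modes at $s=s_1$, run Duhamel with the forcing bound \eqref{eq boun f norm}, and convert back to $a_{ij}$ and $\kappa$ via Lemma \ref{lem diff id} and the exponent comparison from \eqref{eq xi}. The only (cosmetic) difference is that the paper integrates the mode ODEs over both $[s_0,s]$ and $[s,s_1]$ to record the pointwise-in-$s$ bounds $|\langle\tilde v,\varphi_{ij}\rangle(s)|\le Ce^{-(1+\xi)\lambda_l s}$ and $|e^{\lambda_l s}\langle\tilde v,\varphi_{i_11}\rangle(s)-\kappa/c_{i_11}|\le Ce^{-\xi\lambda_l s}$, which it reuses later in the section, whereas you evaluate only at the endpoints $s_0,s_1$, which suffices for the lemma as stated.
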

\begin{proof}
For simplicity, we omit $W$ in $\langle\cdot,\cdot,\rangle_W$ and $\|\cdot\|_W$. First, we note that we have $s^\circ\leq s_1+1$ since $t^\circ\leq e^{-1}t_1$. We have for $\lambda_{ij}< \lambda_l$ (i.e. $i<i_1$ for $j=1$, or $i\leq i_j$ for $1<j\leq m$),
\begin{equation}
    \begin{cases}
    \partial_s\langle \tilde v,\varphi_{ij}\rangle+\lambda_{ij}\langle\tilde v,\varphi_{ij}\rangle=\langle  f,\varphi_{ij}\rangle,\\
        \langle \tilde v,\varphi_{ij}\rangle(s_1)=0.
    \end{cases}   
\end{equation}
This together with \eqref{eq boun f norm} implies that for $s_1\leq s\leq s^\circ$,
\begin{align*}
    &|\langle\tilde v,\varphi_{ij}\rangle (s)|\leq |\int_{s_1}^{s}e^{\lambda_{ij}(s'-s)}\langle  f,\tilde\varphi_{ij}\rangle ds'|\leq \int_{s_1}^s e^{\lambda_{ij}(s'-s)}\|f\|\|\varphi_{ij}\|ds'
    \leq Ce^{|\lambda_{ij}|}\int_{s_1}^{s} e^{-(1+\xi)\lambda_ls'}ds'\leq Ce^{-(1+\xi)\lambda_ls}
\end{align*}
since $|s^\circ -s_1|\leq 1$; and for $s_0\leq s\leq s_1$,
\begin{align*}
    &|\langle\tilde v,\varphi_{ij}\rangle (s)|\leq |\int_{s}^{s_1}e^{\lambda_{ij}(s'-s)}\langle f,\varphi_{ij}\rangle(s') ds'|\leq Ce^{-\lambda_{ij}s}\int_{s}^{s_1}e^{\lambda_{ij}s'} e^{-(1+\xi)\lambda_ls'}ds'
    \leq Ce^{-(1+\xi)\lambda_ls}
\end{align*}
since  $(1+\xi)\lambda_l-\lambda_{ij}>\lambda_l-\lambda_{ij}>0$. Thus for $\lambda_{ij}<\lambda_l$, there holds 
\begin{equation}\label{eq boun tilde small proj}
\begin{aligned}
|\langle\tilde v,\varphi_{ij}\rangle (s)|\leq  C(n,\Sigma,l)e^{-(1+\xi)\lambda_ls}.
\end{aligned}
\end{equation}
On the other hand, by Lemma \ref{lem diff id}, we have (note $\tilde v(\cdot,s)= \tilde v(\cdot,s;\mathbf a)=\eta(e^{\sigma_ls}y-\beta)\eta(\rho e^{\frac{s}{2}}-y)v(\cdot,s;\mathbf a)$, with $\ba=(a_1,\cdots,a_{l-1)}\in B^{l-1}(0,\beta^{-\tilde \alpha})$ defined in \eqref{eq a def})
\begin{align*}
    &|e^{\lambda_ls_0}\langle\tilde v(\cdot,s_0),c_{ij}\varphi_{ij}\rangle-a_{ij}|=|\langle \eta(e^{\sigma_ls}y-\beta)\eta(\rho e^{\frac{s}{2}}-y)(\sum_{k\leq i_j}\frac{a_{kj}}{c_{kj}}\varphi_{kj}),c_{ij}\varphi_{ij}\rangle-a_{ij}|
    \leq C e^{\frac{-(n+2\alpha_j)}{1-\alpha}\lambda_ls_0}\leq Ce^{-2\xi\lambda_ls_0}
\end{align*}
(and this also holds for $a_{i_11}=a_l=1$). Thus, for $\lambda_{ij}<\lambda_l$
\begin{equation}\label{eq aij less norm}
\begin{aligned}
     |a_{ij}|\leq& |e^{\lambda_ls_0}\langle\tilde v(\cdot,s_0),c_{ij}\varphi_{ij}\rangle-a_{ij}|+ |e^{\lambda_ls_0}\langle\tilde v,\varphi_{ij}\rangle (s_0)|\leq C(n,\Sigma,l,\Lambda,\beta,\rho) e^{-\xi\lambda_ls_0}
\end{aligned}
\end{equation}
for $s_0\leq s\leq s^\circ$.

Now, for $i=i_1$, $\lambda_{i_11}=\lambda_l$, by Lemma \ref{lem diff id}, we have
\begin{equation}
    \begin{cases}
    \partial_s\langle \tilde v,\varphi_{i_11}\rangle+\lambda_{i_11}\langle\tilde v,\varphi_{i_11}\rangle=\langle  f,\varphi_{i_11}\rangle,\\
        |e^{\lambda_{i_11}s_0}\langle \tilde v,\varphi_{i_11}\rangle(s_0) -\frac{1}{c_{i_11}}|\leq Ce^{-(2\alpha+n)\frac{\lambda_l}{1-\alpha}s_0}.
    \end{cases}   
\end{equation}
Then for $s_1\leq s\leq s^\circ$, by \eqref{eq k def}, \eqref{eq boun f norm}, we have
\begin{align*}
    &|e^{\lambda_{l}s}\langle\tilde v,\varphi_{i_11}\rangle (s)-\frac{\kappa}{c_{i_11}}|\leq |\int_{s_1}^{s}e^{\lambda_{i_1i}s'}\langle  f,\varphi_{i_11}\rangle(s') ds'|\leq C\int_{s_1}^{s_1+1} e^{\lambda_{i_11}s'} e^{-\lambda_l(1+\xi)s'}ds'\leq  Ce^{-\xi\lambda_{i_11}s}
\end{align*}
since $s_1\leq s\leq s^\circ\leq s_1+1$; and for $s_0\leq s\leq s_1$, 
\begin{align*}
    &|e^{\lambda_{l}s}\langle\tilde v,\varphi_{i_11}\rangle (s)-\frac{\kappa}{c_{i_11}}|\leq |\int_{s}^{s_1}e^{\lambda_{i_1i}s'}\langle  f,\varphi_{i_11}\rangle(s') ds'|\leq C\int_{s_1}^{s_1+1} e^{\lambda_{i_11}s'} e^{-\lambda_l(1+\xi)s'}ds'\leq  Ce^{-\xi\lambda_{i_11}s}.
\end{align*}
Thus we get,
\begin{equation}\label{eq boun tilde l proj}
\begin{aligned}
    &|e^{\lambda_{l}s}\langle\tilde v,\varphi_{i_11}\rangle (s)-\frac{\kappa}{c_{i_11}}|\leq
    Ce^{-\xi\lambda_{i_11}s},
\end{aligned}
\end{equation}
and
\begin{equation}\label{eq bound k diff}
\begin{aligned}
     &|\kappa-1|\leq c_{i_11}(|e^{\lambda_{i_11}s_0}\langle \tilde v,\varphi_{i_11}\rangle(s_0) -\frac{1}{c_{i_11}}|+|e^{\lambda_{l}s_0}\langle\tilde v,\varphi_{i_11}\rangle (s_0)-\frac{\kappa}{c_{i_11}}|)\leq Ce^{-\xi\lambda_ls_0}
\end{aligned}
\end{equation}
for $s_0\leq s\leq s^\circ$ since $0<\xi\leq \frac{n+2\alpha}{2(1-\alpha)}$.
\end{proof}
\begin{lemma}\label{lem H*}
    Let $\mathbf H_*$ be the closed subspace of $\mathbf H$ sapnned by eigenfucntions $\{\varphi_{ij}\}_{i\geq i_j+1}$ ($i_j=0$ if $j>m$) of $L_{\mC}$. Given 
    \begin{align*}
        f(\cdot,s)\in L^2([s_0,s^\circ];L^2_W)
    \end{align*}
    and $h\in \mathbf H_*$, let $v(\cdot,s)\in C([s_0,s^\circ];\mathbf H_*)$ be the weak solution of 
    \begin{equation}
\begin{cases}
    (\partial_s-L_{\mC})v=f,\\
    v(\cdot,0)=h.
    \end{cases}
  \end{equation}
  Then for any $\delta\in(0,1)$, we have
  \begin{align*}
      \|v\|_W^2(s)\leq e^{-2(1-\delta)\lambda_{l+1}(s-s_0)}\|v\|_W^2(s_0)+\frac{1}{2\delta\lambda_{l+1}}\int_{s_0}^se^{-2(1-\delta)\lambda_{l+1}(s-s')}\|f\|_W^2(s')ds',
  \end{align*}
  and
  \begin{align*}
      \langle -L_{\mC}v,v\rangle_W(s)\leq  e^{-2(1-\delta)\lambda_{l+1}(s-s_0)}\langle - L_{\mC}h,h\rangle_W(s_0)+\frac{1}{2\delta}\int_{s_0}^se^{-2(1-\delta)\lambda_{l+1}(s-s')}\|f\|_W^2(s')ds'.
  \end{align*}
\end{lemma}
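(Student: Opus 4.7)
The plan is to run two parallel energy estimates driven by the spectral gap on $\mathbf{H}_*$. Since $L_{\mC}$ is self-adjoint on $L^2_W(\mC)$ with eigenbasis $\{\varphi_k\}$, and $\mathbf{H}_*$ is spanned by those $\varphi_k$ with eigenvalue $\lambda_k\geq \lambda_{l+1}$, any $u\in \mathbf{H}_*\cap D(L_{\mC})$ with expansion $u=\sum c_k\varphi_k$ satisfies $\|u\|_W^2=\sum c_k^2$, $\langle -L_{\mC}u,u\rangle_W=\sum\lambda_k c_k^2$ and $\|L_{\mC}u\|_W^2=\sum\lambda_k^2 c_k^2$. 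Comparing these termwise yields the two Poincar\'e-type inequalities
\begin{equation*}
\langle -L_{\mC}u,u\rangle_W \geq \lambda_{l+1}\|u\|_W^2, \qquad \|L_{\mC}u\|_W^2 \geq \lambda_{l+1}\langle -L_{\mC}u,u\rangle_W,
\end{equation*}
which are the only places the subspace $\mathbf{H}_*$ enters. Note also that $f$ need not lie in $\mathbf{H}_*$: it is used only through $\|f\|_W$, which dominates the norm of its projection.

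For the first bound I would test the equation with $v$ itself. Self-adjointness of $L_{\mC}$ gives $\tfrac{1}{2}\tfrac{d}{ds}\|v\|_W^2 = -\langle -L_{\mC}v,v\rangle_W + \langle f,v\rangle_W$. Bounding the source by the weighted Young inequality $2\langle f,v\rangle_W\leq 2\delta\lambda_{l+1}\|v\|_W^2 + (2\delta\lambda_{l+1})^{-1}\|f\|_W^2$ and applying the first spectral inequality above produces
\begin{equation*}
\frac{d}{ds}\|v\|_W^2 + 2(1-\delta)\lambda_{l+1}\|v\|_W^2 \leq \frac{1}{2\delta\lambda_{l+1}}\|f\|_W^2,
\end{equation*}
after which a standard integrating-factor integration between $s_0$ and $s$ yields the first inequality.

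For the second bound I would test with $-L_{\mC}v$. Again by self-adjointness and the $s$-independence of $L_{\mC}$,
\begin{equation*}
\frac{1}{2}\frac{d}{ds}\langle -L_{\mC}v,v\rangle_W = -\|L_{\mC}v\|_W^2 + \langle f,-L_{\mC}v\rangle_W.
\end{equation*}
The weighted splitting $2\langle f,-L_{\mC}v\rangle_W \leq 2\delta\|L_{\mC}v\|_W^2 + (2\delta)^{-1}\|f\|_W^2$ absorbs one factor of $\|L_{\mC}v\|_W^2$, and the remaining $-2(1-\delta)\|L_{\mC}v\|_W^2$ is dominated via the second spectral inequality by $-2(1-\delta)\lambda_{l+1}\langle -L_{\mC}v,v\rangle_W$. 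The resulting differential inequality has exactly the same exponential rate as in the first step, so Gr\"onwall again delivers the second estimate, with initial datum $\langle -L_{\mC}v,v\rangle_W(s_0)=\langle -L_{\mC}h,h\rangle_W$.

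The one technical point is justifying these formal energy identities for a weak solution in $C([s_0,s^\circ];\mathbf{H}_*)$, and I expect this to be the only obstacle. The cleanest way is to go mode by mode: expand $v(\cdot,s)=\sum c_k(s)\varphi_k$ summed over $k$ with $\lambda_k\geq \lambda_{l+1}$, so that each Fourier coefficient satisfies the scalar ODE $c_k'(s)+\lambda_k c_k(s)=\langle f(\cdot,s),\varphi_k\rangle_W$ and is given by Duhamel's formula. Then both $\sum c_k^2$ and $\sum\lambda_k c_k^2$ are absolutely continuous in $s$, can be differentiated termwise, and reduce both energy identities to their finite-dimensional counterparts followed by a dominated-convergence summation. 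With this reduction the argument is essentially one-dimensional mode by mode, and no further difficulty arises.
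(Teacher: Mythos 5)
Your proof is correct and takes essentially the same route as the paper, which simply defers to Lemma 6.2 of \cite{GS}: the standard energy estimate combining the two spectral-gap inequalities on $\mathbf H_*$ with weighted Young's inequality and Gr\"onwall, made rigorous by the mode-by-mode Duhamel expansion. The only cosmetic discrepancy is that the initial condition in the statement should read $v(\cdot,s_0)=h$, which is how you (correctly) use it.
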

\begin{proof}
    The same as the proof of Lemma 6.2 in \cite{GS}.
\end{proof}

Now we estimate the high frequency parts in the Fourier expansion of $\tilde v$. Let
\begin{align*}
    \tilde v_*=\tilde v-\sum_{j=1}^m\sum_{i=0}^{i_j}\langle \tilde v,\varphi_{ij}\rangle_W\varphi_{ij},
\end{align*}
then $\tilde v_*\in C([s_0,s_1];\mathbf H_*)$, where $\mathbf H_*$ is defined in Lemma \ref{lem H*}. By \eqref{eq tilde v eq}, we have
\begin{align*}
    (\partial_s-L_{\mC})\tilde v_*=f-\sum_{j=1}^m\sum_{i=0}^{i_j}\langle f,\varphi_{ij}\rangle_W\varphi_{ij}=f_*
\end{align*}
Note that, $\|f_*\|_W\leq \|f\|_W$, $\lambda_{l+1}\geq\lambda_{i_11}+\delta_l$ by assumptions on $l$. By Lemma \ref{lem H*}, we have
\begin{align*}
      \|\tilde v_*\|_W^2(s)\leq e^{-2(1-\delta)\lambda_l(s-s_0)}\|\tilde v_*\|_W^2(s_0)+\frac{1}{2\delta\lambda_{l+1}}\int_{s_0}^se^{-2(1-\delta)\lambda_{l+1}(s-s')}\|f\|_W^2(s')ds'
  \end{align*}
  and
  \begin{align*}
      \langle -L_{\mC}\tilde v_*,\tilde v_*\rangle_W(s)\leq  e^{-2(1-\delta)\lambda_l(s-s_0)}\langle - L_{\mC}\tilde v_*,\tilde v_*\rangle_W(s_0)+\frac{1}{2\delta}\int_{s_0}^se^{-2(1-\delta)\lambda_{l+1}(s-s')}\|f\|_W^2(s')ds'
  \end{align*}
  for $\delta<\delta_0$, $s_0\leq s\leq s^\circ$. We claim that
  \begin{equation}\label{eq claim}
      \|\tilde v_*\|_W(s_0)+\|- L_{\mC}\tilde v_*\|_W(s_0)\leq Ce^{-(1+\xi)\lambda_ls_0}.
  \end{equation}
  Assume the claim is true. Note that since
  \begin{equation}
      \xi<\frac{\delta_l}{\lambda_l},
  \end{equation}
we have $(1-\delta)(\lambda_{l}+\delta_l)>(1+\xi)\lambda_l$ for $\delta\in(0,1)$ small. Thus, we get
\begin{align*}
     \|\tilde v_*\|_W^2(s)+\langle -L_{\mC}\tilde v_*,\tilde v_*\rangle_W(s)\leq Ce^{-2(1+\xi)\lambda_ls},
\end{align*}
which by Lemma \ref{lem cocer}, yields
\begin{align*}
    \|\tilde v_*\|_W^2(s)+\|\nabla_{\mC}\tilde v_*\|_W^2(s)\leq Ce^{-2(1+\xi)\lambda_ls}.
\end{align*}
By Lemma \ref{lem Morrey}, we then get
\begin{equation}\label{eq tilde v*}
    |\tilde v_*(y,\theta,s)|\leq C(y^{-\frac{n}{2}}+e^{\frac{(y+1)^2}{4}})(\|\nabla \tilde v_*\|_W+\|\tilde v_*\|_W)\\
    \leq Ce^{-(1+\xi)\lambda_ls}(y^{-\frac{n}{2}}+e^{\frac{(y+1)^2}{4}})
\end{equation}
for $s_0\leq s\leq s^\circ$. To prove \eqref{eq claim}, we use Lemma \ref{lem diff id}, \eqref{eq aij less norm}, \eqref{eq bound k diff}
\begin{align*}
    &\|\tilde v_*\|_W(s_0)=\|\tilde v(\cdot,s_0)-\sum_{j=1}^m\sum_{i=0}^{i_j}\langle \tilde v,\varphi_{ij}\rangle_W(s_0)\varphi_{ij}\|_W\\
    \leq &\|\tilde v(\cdot,s_0)-e^{-\lambda_ls_0}\sum_{j=1}^m\sum_{i=0}^{i_j}\frac{a_{ij}}{c_{ij}}\varphi_{ij}\|_W+\|e^{-\lambda_ls_0}\sum_{j=1}^m\sum_{i=0}^{i_j}\frac{a_{ij}}{c_{ij}}\varphi_{ij}-\sum_{j=1}^m\sum_{i=0}^{i_j}\langle \tilde v,\varphi_{ij}\rangle_W(s_0)\varphi_{ij}\|_W\\
    \leq &e^{-\lambda_ls_0}\|\left(1-\eta(e^{\sigma_ls_0}-\beta)\eta(\rho e^{\frac{s_0}{2}}-y)\right)\sum_{j=1}^m\sum_{i=0}^{i_j}\frac{a_{ij}}{c_{ij}}\varphi_{ij}\|_W+\sum_{j=1}^m\sum_{i=0}^{i_j}\frac{1}{c_{ij}}|\langle \tilde v,c_{ij}\varphi_{ij}\rangle_W(s_0)-a_{ij}e^{-\lambda_ls_0}|\\
    \leq& e^{-(1+\xi)\lambda_ls_0}
\end{align*}
where $a_{i_11}=1$, and 
\begin{align*}
   & \|L_{\mC}\tilde v_*\|_W(s_0)=\|L_{\mC}\left(\eta(e^{\sigma_ls_0}-\beta)\eta(\rho e^{\frac{s_0}{2}}-y)v(\cdot,s_0)\right)+\sum_{j=1}^m\sum_{i=0}^{i_j}\langle \tilde v,\varphi_{ij}\rangle_W(s_0)\lambda_{ij}\varphi_{ij}\|_W\\
    =&\|L_{\mC}\left(\eta(e^{\sigma_ls_0}-\beta)\eta(\rho e^{\frac{s_0}{2}}-y)\sum_{j=1}^m\sum_{i=0}^{i_j}\frac{a_{ij}}{c_{ij}}\lambda_{ij}\varphi_{ij}\right)+\sum_{j=1}^m\sum_{i=0}^{i_j}\langle \tilde v,\varphi_{ij}\rangle_W(s_0)\lambda_{ij}\varphi_{ij}\|_W\\
    \leq &\|\left(1-\eta(e^{\sigma_ls_0}-\beta)\eta(\rho e^{\frac{s_0}{2}}-y)e^{-\lambda_ls_0}\right)e^{-\lambda_ls_0}\sum_{j=1}^m\sum_{i=0}^{i_j}\frac{a_{ij}}{c_{ij}}\lambda_{ij}\varphi_{ij}\|_W+\|h\|_W\\
    &+\|\sum_{j=1}^m\sum_{i=0}^{i_j}\langle \tilde v,\varphi_{ij}\rangle_W(s_0)\lambda_{ij}\varphi_{ij}-e^{-\lambda_ls_0}\sum_{j=1}^m\sum_{i=0}^{i_j}\frac{a_{ij}}{c_{ij}}\lambda_{ij}\varphi_{ij}\|_W\\
\leq &Ce^{-(1+\xi)\lambda_ls_0}+\|\tilde h\|_W
\end{align*}
where 
\begin{align*}
    \tilde h=&\big\{\eta'(e^{\sigma_ls_0}y-\beta)e^{\sigma_ls_0}[\frac{1}{2}y-\frac{2v_y}{v}-\frac{n-1}{y}]-\eta''(e^{\sigma_ls_0}y-\beta)e^{2\sigma_ls_0}\big\}\eta(\rho e^{\frac{s_0}{2}}-y)v\\
   &+\big\{\eta'(\rho e^{\frac{s_0}{2}}-y)[\frac{2v_y}{v}+\frac{n-1}{y}-\frac{1}{2}y]-\eta''(\rho e^{\frac{s_0}{2}}-y)\big\}\eta(e^{\sigma_ls_0}y-\beta)v.
\end{align*}
By a similar computations as for $f_2,f_3$, we obtain
\begin{align*}
    \|\tilde h\|_W\leq Ce^{-(1+\xi)\lambda_ls_0}.
\end{align*}
Hence,
\begin{align*}
    \|L_{\mC}\tilde v_*\|_W(s_0)\leq Ce^{-(1+\xi)\lambda_ls_0}.
\end{align*}
The claim is true.

Finally, combining \eqref{eq boun tilde small proj}, \eqref{eq boun tilde l proj},\eqref{eq tilde v*}, we conclude
\begin{align*}
    &|\tilde v(y,\theta,s)-\frac{\kappa}{c_{i_11}}e^{-\lambda_{i_11}s}\varphi_{i_11}(y,\theta)|=|\sum_{j=1}^m\sum_{i=0}^{i_j}\langle \tilde v,\varphi_{ij}\rangle_W\varphi_{ij}+\tilde v_*(y,\theta,s)-\frac{\kappa}{c_{i_11}}e^{-\lambda_{i_11}s}\varphi_{i_11}(y,\theta)|\\
    \leq &|(\sum_{i=0}^{i_1-1}\langle \tilde v,\varphi_{i1}\rangle_W\varphi_{i1}+\sum_{j=2}^m\sum_{i=0}^{i_j}\langle \tilde v,\varphi_{ij}\rangle_W\varphi_{ij}|+|\langle \tilde v,\varphi_{i_11}\rangle_W\varphi_{i_11}-\frac{\kappa}{c_{i_11}}e^{-\lambda_{i_11}s}\varphi_{i_11}(y,\theta)|+|\tilde v_*(y,\theta,s)|\\
    \leq& Ce^{-(1+\xi)\lambda_ls}(y^{-\frac{n}{2}}+e^{\frac{(y+1)^2}{4}})
\end{align*}
for $s_0\leq s\leq s^\circ$. As a result, for $\frac{1}{2}e^{-\vartheta \sigma_ls}\leq y\leq 1$, we have
\begin{align*}
    |\tilde v(y,\theta,s)-\frac{\kappa}{c_{i_11}}e^{-\lambda_{i_11}s}\varphi_{i_11}(y,\theta)|\leq C\frac{e^{-\xi\lambda_ls}}{y^{\frac{n}{2}+\alpha+2}}e^{-\lambda_ls}y^{\alpha+2}\leq Ce^{-(\xi\lambda_l-\vartheta\sigma_l(\frac{n}{2}+\alpha+2))s}e^{-\lambda_ls}y^{\alpha+2}.
\end{align*}
Note $\xi\lambda_l-\vartheta\sigma_l(\frac{n}{2}+\alpha+2)=(\xi-\vartheta \frac{\frac{n}{2}+\alpha+2}{1-\alpha})\lambda_l>0$ if
\begin{equation}\label{eq theta def1}
    \vartheta<\frac{2(1-\alpha)\xi}{n+2\alpha+4}.
\end{equation} 

For $1\leq y\leq 2R$, 
we have
\begin{align*}
     |\tilde v(y,\theta,s)-\frac{\kappa}{c_{i_11}}e^{-\lambda_{i_11}s}\varphi_{i_11}(y,\theta)|\leq C (e^{-\xi\lambda_ls}e^{\frac{(y+1)^2}{4}})e^{-\lambda_ls}y^{\alpha}\leq C(n,\Sigma,l,\Lambda,\beta,\rho,R) e^{-(1+\xi)\lambda_ls}y^{\alpha}.
\end{align*}
\subsection{The compact outer region}
 We use the estimate in Proposition \ref{pro inter C0} as boundary to prove the $C^0$ estimates in the compact outer region in this subsection. 
\begin{proposition}\label{pro outer C0}
    If $0<\rho\ll1$ (depending on $n,\Sigma,l,\Lambda$), $R\gg 1$ (depending on $n,\Sigma,l,\Lambda,\beta$) and $|t_0|\ll 1$ (depending on $n,\Sigma,l,\Lambda,\rho,\beta,R$), then 
    \begin{equation}\label{eq outer C0}
        |u(x,\theta,t)-\frac{\kappa}{c_l}|t|^{-\lambda_l+\frac{1}{2}}\varphi_l(\bar x,\theta)|\leq C(n,\Sigma,l)R^{-2}x^{2\lambda_l+1}\omega_1(\theta)
    \end{equation}
for $(x,\theta,t)\in  \Omega:=\{(x,\theta,t):2R\sqrt{|t|}\leq x\leq \rho,\theta\in \Sigma, t_0\leq t\leq t^\circ \}$.
\end{proposition}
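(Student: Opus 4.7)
The plan is to run a parabolic maximum principle/barrier argument on the cylinder $\Omega$ for the difference $w := u - \bar u$, where the approximate solution is
\[
\bar u(x,\theta,t) := \frac{\kappa}{c_l}|t|^{\lambda_l+\frac{1}{2}}\varphi_l(\bar x,\theta), \qquad \bar x = x/\sqrt{|t|}.
\]
Because $\bar v(y,\theta,s) := \tfrac{\kappa}{c_l}e^{-\lambda_l s}\varphi_l(y,\theta)$ is an eigenfunction that satisfies $\bar v_s = L_{\mC}\bar v$ exactly, the change of variables \eqref{eq vu} yields $\bar u_t = \mathcal L_{\mC}\bar u$. Hence $w$ solves the inhomogeneous linearization $w_t - \mathcal L_{\mC} w = E(u)$.

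I would take a barrier of the form
\[
\phi(x,\theta,t) = C_1 R^{-2}\, x^{2\lambda_l+1}\omega_1(\theta) + \Psi(x,\theta,t),
\]
where the leading piece realises the target bound and satisfies the closed-form identity $\mathcal L_{\mC}(x^{2\lambda_l+1}\omega_1) = 2i_1(2\alpha+2i_1+n-2)\,x^{2\lambda_l-1}\omega_1$ (using $\mathcal L_\Sigma\omega_1 = -\mu_1\omega_1$ and $\alpha^2+(n-2)\alpha = \mu_1$), while $\Psi$ is a positive correction built from a few homogeneous monomials $x^p\omega_1$ that absorb $E(u)$. From the admissibility \eqref{eq adm x} one has $|E(u)| \leq C\mu^2/x$ with $\mu = \Lambda(|t|^{i_1}x^{\alpha-1}+x^{2\lambda_l})$, which splits into three scale-homogeneous pieces; each piece is handled by the corresponding monomial in $\Psi$ with coefficient made small by taking $\rho$ small (depending on $n,\Sigma,l,\Lambda$). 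In that regime $\pm\phi$ becomes a super/subsolution of the equation for $w$.

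It then remains to check $|w| \leq \phi$ on the parabolic boundary of $\Omega$. At the inner face $x = 2R\sqrt{|t|}$, Proposition \ref{pro inter C0} evaluated at $y = 2R$ and pulled back via \eqref{eq vu} gives $|w| \leq C(R)\,R^\alpha\,|t|^{\frac{1}{2}+(1+\tilde k)\lambda_l}$, which is dominated by $C_1R^{-2}(2R)^{2\lambda_l+1}|t|^{\lambda_l+\frac{1}{2}}\omega_1$ once $|t_0|$ is small (depending on $R$), because $\tilde k > 0$. At the outer face $x = \rho$, \eqref{eq adm x} gives $|u|(\rho,\theta,t) \leq C\Lambda\rho^{2\lambda_l+1}$ (the $|t|^{i_1}\rho^\alpha$ term is negligible for $|t_0|\ll 1$), and the explicit form of $\bar u$ gives $|\bar u|(\rho,\theta,t) \leq C\rho^{2\lambda_l+1}\omega_1$, so the boundary datum fits under $C_1R^{-2}\rho^{2\lambda_l+1}\omega_1$ once $R$ is large relative to $\Lambda$. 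At $t = t_0$, \eqref{eq u t0 eq} combined with the bounds $|\mathbf a| \leq C|t_0|^{\xi\lambda_l}$ and $|\kappa - 1| \leq C|t_0|^{\xi\lambda_l}$ coming from \eqref{eq aij less norm}, \eqref{eq bound k diff} yield $|w|(x,\theta,t_0) \leq C|t_0|^{\xi\lambda_l}x^{2\lambda_l+1}\omega_1$, which fits inside the barrier for $|t_0|$ small (depending on $R$). The parabolic maximum principle then delivers $|w| \leq \phi$ on $\Omega$, which, after absorbing $\Psi$ into the leading term (possible because $\rho$ is small), is \eqref{eq outer C0}.

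The main obstacle is the hierarchy of parameters: $\rho$ must be small enough to absorb $E(u)$ into the barrier, $R$ must then be chosen large (depending on $\Lambda$ and the already-fixed $\rho$) so that the admissibility datum at $x=\rho$ fits inside $R^{-2}\rho^{2\lambda_l+1}\omega_1$, and finally $|t_0|$ must be small (after $R$ is fixed) to simultaneously control the inner datum at $x=2R\sqrt{|t|}$ and the initial datum at $t=t_0$. The strict positivity of the first eigenfunction $\omega_1$ on $\Sigma$ is essential so that the barrier is a genuine pointwise majorant in the angular variable, a feature peculiar to the non-symmetric setting that is not needed in the rotationally symmetric constructions of Vel\'azquez and Guo--\v{S}e\v{s}um.
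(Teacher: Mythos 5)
Your overall framework (a barrier/comparison argument on $\Omega$, with the three pieces of the parabolic boundary controlled by the intermediate-region estimate, the admissibility/initial data, and the choice of $|t_0|$) is the same as the paper's, and your inner-face and initial-time checks are correct. But the treatment of the lateral boundary $\{x=\rho\}$ has a genuine gap that breaks the argument. You bound the datum there by the crude admissibility estimate $|u|(\rho,\theta,t)\leq C\Lambda\rho^{2\lambda_l+1}$ and claim it "fits under $C_1R^{-2}\rho^{2\lambda_l+1}\omega_1$ once $R$ is large relative to $\Lambda$." This is backwards: enlarging $R$ shrinks $R^{-2}$ and makes the inequality harder, so you would be forced to take $C_1\gtrsim\Lambda R^2$, and the maximum principle would then only yield $|w|\leq C\Lambda x^{2\lambda_l+1}\omega_1$ — losing both the $R^{-2}$ decay and, crucially, the independence of the constant from $\Lambda$, which is exactly what \eqref{eq outer C0} must deliver for the bootstrap \eqref{eq Lambda equ} to close. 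The paper's fix is a time-integration trick at $x=\rho$: one first shows $|\partial_t\bigl(u(\rho,\theta,t)-\tfrac{\kappa}{c_l}|t|^{\lambda_l+\frac12}\varphi_l(\rho/\sqrt{|t|},\theta)\bigr)|\leq C\rho^{-2}x^{2\lambda_l+1}\omega_1$ (from the equation and the admissibility bounds on $\nabla^2 u$), then integrates in $t$ over $[t_0,t]$, whose length is at most $|t_0|$, and adds the initial smallness $C|t_0|^{\xi\lambda_l}x^{2\lambda_l+1}\omega_1$ from \eqref{eq outer boun ini}. The $\Lambda$-dependent constants are then multiplied by powers of $|t_0|$ and killed by taking $|t_0|$ small last.

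A second, related problem: you treat $E(u)$ as a known inhomogeneity with $|E(u)|\leq C\Lambda^2x^{4\lambda_l-1}$, so your correction $\Psi$ must have size $\sim\Lambda^2\rho^{2\lambda_l}x^{2\lambda_l+1}$ on $\Omega$; since $\rho$ is fixed before $R$ in the parameter hierarchy, this term cannot be absorbed into $C(n,\Sigma,l)R^{-2}x^{2\lambda_l+1}$. The paper avoids this by running the comparison principle for the full quasilinear equation with explicit barriers $u^{\pm}=C_0^{\pm}(x^{2\lambda_l+1}-C^{\pm}|t|x^{2\lambda_l-1})\omega_1$ whose coefficients $C_0^{\pm}\approx K_{i_1i_11}$ are independent of $\Lambda$, and evaluating $E(u^{\pm})$ on the barriers themselves, where the quadratic smallness $|E(u^\pm)|\lesssim (C_0^\pm)^2\rho^{2\lambda_l}x^{2\lambda_l-1}$ is harmless. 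You should either adopt that device or otherwise remove the $\Lambda^2$ from the source term; as written, the barrier does not close.
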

\begin{proof}
   In the following of the proof, $C=C(n,\Sigma,l,\Lambda,\beta,\rho)$ denote a positive constant depending on $n,\Sigma,l,\Lambda,\beta,\rho$ which may change from line to line there is no other illustration. We prove it by constructing sub and supersolutions. The proof follows \cite{Liu}. First note, by \eqref{eq flow outer}, we have
    \begin{equation}\label{eq outer}
        \partial_tu=\mathcal L_{\mC}u+E(u)=[\partial_{xx}+\frac{n-1}{x}\partial_x +\frac{\Delta_\Sigma+|A_{\Sigma}|^2}{x^2}]u+E(u),
    \end{equation}
    where $E(u)$ is given in \eqref{eq error outer} with $X=\mC$ there. Then \eqref{eq E stru 2} and \eqref{eq adm x} yield 
      \begin{equation}
          |E(u)|\leq Cx^{-1}(|t|^{i_1}x^{\alpha-1}+x^{2\lambda_l})^2\leq Cx^{-1}(x^{2\lambda_l})^2=Cx^{4\lambda_l-1},\quad 
      \end{equation}
     in $\Omega$ if $\rho\ll 1\ll R$. On the hand, since $\omega_1>0$ on $\Sigma$ which is compact, $\min_{\Sigma}\omega_1(\theta)\geq \varepsilon_1(\Sigma)>0$. This together with \eqref{eq adm x} implies that
      \begin{equation}
      \begin{aligned}
          |\partial_tu|\leq& C\big(|u_{xx}|+\frac{|u_x|}{x}+\frac{|\nabla^2_\Sigma u|+|u|}{x^2}\big)+E(u)
          \leq Cx^{-1}\big(|t|^{i_1}x^{\alpha-1}+x^{2\lambda_l}\big)+C(n,\Lambda)x^{4\lambda_l-1}
          \leq Cx^{2\lambda_l-1}\omega_1,
      \end{aligned}
      \end{equation}
     in $\Omega$, if $\rho\ll 1\ll R$. Moreover, let $\bar x:=\frac{x}{\sqrt{|t|}}$, by \eqref{eq phiij eq} and \eqref{eq bound k diff}, we obtain
      \begin{align*}
          &|\partial_t\big(\kappa|t|^{\lambda_l+\frac{1}{2}}\varphi_l(\bar x,\theta)\big)|
          \leq C x^{2\lambda_l-1}\omega_1
      \end{align*}
      in $\Omega$, if $\rho\ll 1\ll R$. Thus,
      \begin{equation}\label{eq outer boun speed}
      \begin{aligned}
          &|\partial_t\big(u(x,\theta,t)-\frac{\kappa}{c_l}|t|^{\lambda_l+\frac{1}{2}}\varphi_l(\bar x)\big)|
          \leq C x^{2\lambda_l-1}\omega_1
          \end{aligned}
      \end{equation}
      in $\Omega$, if $\rho\ll 1\ll R$. In particular,
      \begin{equation}\label{eq outer boun speed rho}
      \begin{aligned}
          |\partial_t\big(u(\rho,\theta,t)-\frac{\kappa}{c_l}|t|^{\lambda_l+\frac{1}{2}}\varphi_l(\frac{\rho}{\sqrt{|t|}})\big)|\leq C x^{2\lambda_l-1}
          \leq C\rho^{-2} x^{2\lambda_l+1}\omega_1
        \end{aligned}
      \end{equation}
      in $\Omega$. 
      
      On the other hand, let $\bar x_0:=\frac{x}{\sqrt{|t_0|}}$. Then, by \eqref{eq phiij eq}, \eqref{eq u t0 eq}, \eqref{eq bound k diff}, and \eqref{eq aij less norm}, we have
      \begin{equation}\label{eq outer boun ini}
      \begin{aligned}
          &|u(x,\theta,t_0)-\kappa|t_0|^{\lambda_l+\frac{1}{2}}\varphi_l({\bar x_0})|\leq |t_0|^{\lambda_l+\frac{1}{2}}\big(\frac{|1-\kappa|}{c_l}\varphi_{l}({\bar x_0},\theta)+\sum_{k=0}^{l-1}\frac{|a_k|}{c_k}\varphi_{k}({\bar x_0},\theta)\big)\\
          \leq& C|t_0|^{\xi\lambda_l}|t_0|^{\lambda_l+\frac{1}{2}}\big({\bar x_0}^{2\lambda_l+1}+{\bar x_0}^{\alpha}\big)
           \leq C|t_0|^{\xi\lambda_l}x^{2\lambda_l+1}\omega_1   
      \end{aligned} 
      \end{equation}
for $2R\sqrt{|t_0|}\leq x\leq \rho\ll1,\theta\in\Sigma$. Combining \eqref{eq outer boun speed rho} and \eqref{eq outer boun ini} gives 
\begin{equation}\label{eq outer boun rig bun}
    \begin{aligned}
    &|u(\rho,\theta,t)-\frac{\kappa}{c_l}|t|^{\lambda_l+\frac{1}{2}}\varphi_l(\frac{\rho}{\sqrt{|t|}})|
    \leq C|t_0|^{\xi\lambda_l}x^{2\lambda_l+1}+ C\rho^{-2} x^{2\lambda_l+1}|t_0|
    \leq C|t_0|^{\xi\lambda_l}x^{2\lambda_l+1}\omega_1,
    \end{aligned}
\end{equation}
for $\theta\in\Sigma, t_0\leq t\leq t^\circ$. At last, \eqref{eq inter C0} implies
      \begin{equation}\label{eq outer boun boun}
      \begin{aligned}
         & |u(x,\theta,t)-\frac{\kappa}{c_l}|t|^{\lambda_l+\frac{1}{2}}\varphi_l(\bar x,\theta))|\leq \tilde C|t|^{(1+\tilde k)\lambda_l+\frac{1}{2}}\bar x^\alpha\\
          \leq& \tilde C|t|^{(1+\tilde k)\lambda_l+\frac{1}{2}} \bar x^{2\lambda_l+1} (2R)^{-(2\lambda_l+1-\alpha)}
          \leq \tilde C(2R)^{-2i_1}|t|^{\tilde k\lambda_l}x^{2\lambda_l+1}\omega_1
    \end{aligned}
      \end{equation}
       for $x=2R\sqrt{|t|}$, $\theta\in\Sigma$, $t_0\leq t\leq t^\circ$, where $\tilde C=\tilde C(n,\Sigma,l,\Lambda,\beta,R)$ depends on $n,\Sigma,l,\Lambda,\beta,R$.
       
Combining \eqref{eq outer boun ini}, \eqref{eq outer boun rig bun}, \eqref{eq outer boun boun}, we get 
      \begin{equation}\label{eq u para bound boun}
      \begin{aligned}
          &|u(x,\theta,t)-\frac{\kappa}{c_l}|t|^{\lambda_l+\frac{1}{2}}\varphi_l(\bar x,\theta))|
          \leq \bar C|t_0|^{\tilde k\lambda_l}x^{2\lambda_l+1}\omega_1
          \end{aligned}
      \end{equation}
 on the parabolic boundary $P\Omega$ of $\Omega$, since $\tilde k\leq \xi$, with $\bar C=\bar C(n,\Sigma,l,\Lambda,\beta,R)$ depending on $n,\Sigma,l,\Lambda,\rho,\beta,R$.
     
      To construct super and subsolutions, we need to compare $u$ and $\kappa K_{i_1i_11}x^{2\lambda_l+1}\omega_1$ on $P\Omega$. Using \eqref{eq phiij eq}, we get
      \begin{equation}\label{eq diff phiij high}
      \begin{aligned}
          &|\frac{\kappa}{c_l}|t|^{\lambda_l+\frac{1}{2}}\varphi_l(\bar x,\theta)-(-1)^{i_1}\kappa K_{i_1i_11}x^{2\lambda_l+1}\omega_1(\theta)|
          \leq C(n,l,\Sigma) R^{-2} x^{2\lambda_l+1}\omega_1
          \end{aligned}
      \end{equation}
      for $x\geq 2R\sqrt{|t|},\theta\in\Sigma,t<0$. This and \eqref{eq u para bound boun} yield
      \begin{equation}\label{eq outer u high diff}
          \begin{aligned}
              &|u(x,\theta,t)-(-1)^{i_1}\kappa K_{i_1i_11}x^{2\lambda_l+1}\omega_1|
              \leq \bar C|t_0|^{\tilde k\lambda_l}x^{2\lambda_l+1}+ C(n,l,\Sigma) R^{-2} x^{2\lambda_l+1}\omega_1
              \leq C'(n,\Sigma,l)R^{-2}x^{2\lambda_l+1}\omega_1
          \end{aligned}
      \end{equation}
      on $P\Omega$ if $|t_0|\ll1$ small (depending on $n,\Lambda,\rho,\beta,R$), with $\bar C=\bar C(n,\Sigma,l,\Lambda,\beta,R)$ depending on $n,\Sigma,l,\Lambda,\rho,\beta,R$. Let 
       \begin{equation}
           u^{\pm}(x,t)=C_0^{\pm}(x^{2\lambda_l+1}-C^{\pm}|t|x^{2\lambda_l-1})\omega_1,
       \end{equation}
       where $C_0^{\pm}$, $C^{\pm}$ are constants to be determined. Direct computations show that 
      \begin{align*}
      u^{\pm}_t=&C_0^{\pm}C^{\pm}x^{2\lambda_l-1}\omega_1
      \end{align*}
      and 
      \begin{align*}
           \mathcal L_{\mC}u^{\pm}=& C_0^{\pm}\{M_lx^{2\lambda_l-1}-C^{\pm}|t|M'_lx^{2\lambda_l-3}\}\omega_1.
      \end{align*}
     where $M_l=(2\lambda_l+1)(2\lambda_l+n-1)-\mu_1$, $M'_l=(2\lambda_l-1)(2\lambda_l-2+n-1)-\mu_1$. If $i_1$ is even, we choose $C^+$ such that
          \begin{equation}
              C^+\geq 2M_l>2M'_l,
          \end{equation}
    then
      \begin{align*}
          (\partial_t-\mathcal L_{\mC})u^{+}=&C_0^{+}\{(C^{+}-M_l)x^{2\lambda_l-1}+C^+|t|M'_lx^{2\lambda_l-3}\}\omega_1\geq \frac{1}{2}C_0^+C^+x^{2\lambda_l-1}\omega_1.
          \end{align*}
          Moreover,
          \begin{align*}
          |E(u^{+})|\leq& Cx^{-1}(\frac{u^{+}}{x}+|\nabla u^+|+x|\nabla^2u^+|)^2\leq Cx^{-1}(C_0^+)^2C(\Sigma,l)(x^{2\lambda_l}+C_+R^{-2}x^{2\lambda_l})^2\\
        \leq& 2C(C_0^+)^2C(\Sigma,l)x^{4\lambda_l-1}\leq \frac{1}{4}C_0^+C^+ x^{2\lambda_l-1}\omega_1\qquad  
      \end{align*}
      for $2R\sqrt{|t|}\leq x\leq \rho$, $-1\leq t_0\leq t\leq 0$, if 
      \begin{align*}
         C_+\leq  R^2, C_0^+\leq \frac{\min_{\Sigma}\omega_1}{8CC(\Sigma,l)\rho^{2\lambda_l}}.
      \end{align*}
      In particular, if we take $R>0$ large, and
      \begin{equation}{\label{eq C+ C0+ cond}}
          C^+=2M_l\leq R^2, \,C_0^+\leq \frac{\min_{\Sigma}\omega_1}{8CC(\Sigma,l)\rho^{2\lambda_l}},
      \end{equation}
then we have
\begin{equation}\label{eq com outer sup equ}
     (\partial_t-\mathcal L_{\mC})u^{+}-E(u^+)\geq \frac{1}{2}C_0^+C^+x^{2\lambda_l-1}\omega_1(\theta)-\frac{1}{4}C_0^+C^+ x^{2\lambda_l-1}\omega_1\geq 0.
\end{equation} 
      Similarly,
      \begin{align*}
          (\partial_t-\mathcal L_{\mC})u^{-}=&C_0^{-}\{(C^{-}-M_l)x^{2\lambda_l-1}+C^-|t|M'_lx^{2\lambda_l-3}\}\omega_1,
          \end{align*}
          and
          \begin{align*}
           |E(u^{-})|\leq& Cx^{-1}(\frac{u^{-}}{x}+|\nabla u^-|+x|\nabla^2u^-|)^2\leq Cx^{-1}(C_0^-)^2C(\Sigma,l)(x^{2\lambda_l}+C^-(R)^{-2}x^{2\lambda_l})^2\\
           \leq &2C(C_0^-)^2C(\Sigma,l)x^{4\lambda_l-1}\leq \frac{1}{4}C_0^-M_l x^{2\lambda_l-1}\omega_1
      \end{align*}
      if we take      
      \begin{equation}\label{eq C- C0- cond}
          C^-=0,\, 0<C_0^-\leq \frac{M_l\min_{\Sigma}\omega_1}{8CC(\Sigma,l)\rho^{2\lambda_l}},
      \end{equation}
      Thus, under the assumption \eqref{eq C- C0- cond}, we have
      \begin{equation}\label{eq com outer sub equ}
          (\partial_t-\mathcal L_{\mC})u^{-}-E(u^-)\leq -C_0^-M_lx^{2\lambda_l-1}\omega_1+\frac{1}{4}C_0^-x^{2\lambda_l-1}\omega_1 \leq 0.
      \end{equation}
    Now, we can take 
      \begin{equation}
\begin{aligned}
   & 0<C_0^+=\{\kappa K_{i_1i_11}+C'R^{-2}\}(1-\frac{M_l}{2R^2})^{-1}\leq \frac{\min_{\Sigma}\omega_1}{8CC(\Sigma,l)\rho^{2\lambda_l}},\\
   & 0<C_0^-=(\kappa K_{i_1i_11}-C'R^{-2})\leq \frac{M_l\min_{\Sigma}\omega_1}{8CC(\Sigma,l)\rho^{2\lambda_l}},
\end{aligned}
      \end{equation}
if $\rho>0$ (depending on $n,\Sigma,l$) is small enough, so that $C_0^+,C_0^-$ satisfies \eqref{eq C+ C0+ cond}, \eqref{eq C- C0- cond}, respectively. Therefore \eqref{eq com outer sup equ}, \eqref{eq com outer sub equ} hold. Furthermore, we have by \eqref{eq bound k diff},
      \begin{align*}
          \frac{1}{2}K_{i_1i_11}\leq C_0^-\leq C_0^+\leq 2K_{i_1i_11}
      \end{align*}     
      for $R\gg 1$ (depending on $n,M_1(l),\Lambda,\beta$).
      
On the other hand, \eqref{eq outer u high diff} implies
      \begin{equation}\label{eq com outer sub super bounary}
      \begin{aligned}
          u^+=&C_0^+x^{2\lambda_l+1}(1-2M_l\bar x^{-2})\omega_1\geq C_0^+x^{2\lambda_l+1}(1-\frac{M_l}{2R^2})\omega_1=\{\kappa K_{i_1i_11}+C'R^{-2}\}x^{2\lambda_l+1}\omega_1\geq u\\
          u^-=&(\kappa K_{i_1i_11}-C'R^{-2})x^{2\lambda_l+1}\omega_1\leq u.
        \end{aligned}
      \end{equation}
      on $P\Omega$ if $-\frac{1}{2}\leq t_0$.
      Note \eqref{eq com outer sup equ}, \eqref{eq com outer sub equ}, \eqref{eq com outer sub super bounary} together show that $u^{+},u^-$ are super and subsolution to \eqref{eq outer} respectively. By comparison theorem, we have for all $(x,t)\in\bar\Omega$,
      \begin{align*}
       (\kappa K_{i_1i_11}-C'R^{-2})x^{2\lambda_l+1}\omega_1  
       =u^-\leq u\leq  u^+\leq C_0^+x^{2\lambda_l+1}\omega_1
      \end{align*}
      Note $(1-\frac{M_l}{2R^2})^{-1}\leq 1+\frac{M_l}{R^2}$ when $R\gg1$ (depending on $n,l$), we get 
      \begin{align*}
           (\kappa K_{i_1i_11}-C'R^{-2})x^{2\lambda_l+1}\omega_1\leq u^-\leq u\leq u^+\leq (\kappa K_{i_1i_11}+C'R^{-2})(1+\frac{M_l}{R^2})x^{2\lambda_l+1}\omega_1
      \end{align*}
      This combining with  \eqref{eq diff phiij high} gives
      \begin{equation}
          |u(x,\theta,t)-\frac{\kappa}{c_l}|t|^{\lambda_l+\frac{1}{2}}\varphi_l(\bar x,\theta)|\leq 100(C'(n,l,\Sigma)+M_l)R^{-2}x^{2\lambda_l+1}\omega_1\leq C(n,l,\Sigma)x^{2\lambda_l+1}\omega_1.
      \end{equation}
      When $i_1$ is odd, we let $C^+=0,C^-=2M_l$, then we can get a subsolution $u^+$ and a supersolution $u^-$ and \eqref{eq diff phiij high} and \eqref{eq outer u high diff} remain valid. The final choice of $C_0^{\pm}$ is 
      \begin{align*}
          C_0^+=&(-1)(\kappa K_{i_1i_11}-C'R^{-2}),\\
          C_0^-=&(-1)\{\kappa K_{i_1i_11}+C'R^{-2}\}(1-\frac{M_l}{2R^2})^{-1}\leq \frac{\min_{\Sigma}\omega_1}{8CC(\Sigma,l)\rho^{2\lambda_l}}.
      \end{align*}
 \end{proof}

 \subsection{Inner region}\label{sec inner reg}
 In this subsection, we prove the $C^0$ estimate in the tip region by barrier arguments. Th following proposition is the statement of the result.
\begin{proposition}\label{prop tip C0}
If $\beta\gg1 $ (depending on $n,\Sigma$) and $\tau_0\gg1 $ (depending on $n,\Sigma,l,\Lambda,\rho,\beta$), there holds
\begin{equation} \label{eq tip inner C0 est 0}
    |\hat w(z,\theta,\tau)\leq C(n,\Sigma) \beta^{-\frac{{\tilde \alpha} }{4}}(\frac{\tau}{\tau_0})^{-\varrho}z^\alpha,
\end{equation}
on $S_{k,+,(2\sigma_l\tau)^{\frac{1}{2}(1-\vartheta)}}\setminus S_{k,+,5\beta}$, for $\tau_0\leq\tau\leq \tau^\circ$; and
\begin{equation}\label{eq tip inner C0 est hat}
| \hat w(\tilde z,\theta,\tau)|\leq C(n,\Sigma) \beta^{-\frac{{\tilde \alpha} }{4}}(\frac{\tau}{\tau_0})^{-\varrho}
\end{equation}
on $S_{k,+,5\beta}$,  $\tau_0\leq\tau\leq \tau^\circ$. Here $\hat w$ is the function of $\Gamma_\tau$ as a normal graph over $S_{\kappa,+}$  which is defined in \eqref{eq hat w def}, $S_{\kappa,+,R}:=B(O,R)\cap S_{\kappa,+}$, for $R>0$.
\end{proposition}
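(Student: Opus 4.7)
The plan is to apply the parabolic comparison principle in the type II rescaled tip region, with barriers built from the Hardt--Simon foliation and the temporal decay driven by the strict stability of $\mathcal C$. Concretely, for each small $\epsilon$, let $\phi^\epsilon(\tilde z,\theta)$ denote the graph function of the leaf $S_{\kappa + \epsilon,+}$ written over $S_{\kappa,+}$. Smooth dependence of the foliation on its parameter together with the asymptotic expansion \eqref{eq psik asym} show that $\phi^\epsilon$ is bounded of size $O(\epsilon)$ uniformly on the compact piece $S_{\kappa,+,5\beta}$ and behaves like $\epsilon\, \tilde z^\alpha$ in the cone-like region $\tilde z \to \infty$. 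Setting
\[
\hat w^\pm(\tilde z,\theta,\tau) = \phi^{\pm \epsilon(\tau)}(\tilde z,\theta), \qquad \epsilon(\tau) = C_0\, \beta^{-\tilde\alpha/4}(\tau/\tau_0)^{-\varrho},
\]
with $C_0 = C_0(n,\Sigma)$ sufficiently large, then yields candidate barriers whose spatial profiles exactly match the right-hand sides of \eqref{eq tip inner C0 est 0}--\eqref{eq tip inner C0 est hat}.

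The heart of the argument is verifying the sub/super-solution property. Linearizing the graph equation \eqref{eq rescaled hat graph} around $\hat w = 0$ produces the Jacobi operator $\mathcal J_{S_{\kappa,+}} = \Delta + |A_{S_{\kappa,+}}|^2$, a scaling term $c_l\tau^{-1}(\hat w - \tilde z\,\partial_{\tilde z}\hat w)$, and a drift coming from $c_l\tau^{-1}(S_{\kappa,+}\cdot\hat\nu)/(\nu_+\cdot\hat\nu)$. Because $S_{\kappa+\epsilon,+}$ is minimal, $\phi^\epsilon$ is annihilated by $\mathcal J_{S_{\kappa,+}}$ to leading order in $\epsilon$, so inserting $\hat w^\pm$ into \eqref{eq rescaled hat graph} reduces the check to a pointwise inequality in which $\epsilon'(\tau)$ must absorb the $c_l\tau^{-1}$ residual. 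The strict stability of $\mathcal C$ forces each leaf $S_{\kappa,+}$ to be minimizing (via \cite{HS}), hence the first Dirichlet eigenvalue $\mu_1 > 0$ of $-\mathcal J_{S_{\kappa,+}}$ on compact subdomains is strictly positive; the choice \eqref{eq varrho def} of $\varrho$ is precisely calibrated to balance the resulting terms, so that $\hat w^+$ becomes a supersolution and $\hat w^-$ a subsolution.

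The parabolic boundary data are then matched in two stages. At $\tau = \tau_0$, the initial trapping \eqref{eq ini lam1 lam2} gives $|\hat w(\cdot,\tau_0)| \le |\phi^{\pm\beta^{-\tilde\alpha/2}}|$, which is dominated by $|\hat w^\pm(\cdot,\tau_0)|$ as soon as $C_0\,\beta^{-\tilde\alpha/4} \ge 2\beta^{-\tilde\alpha/2}$, automatic for $\beta \gg 1$. At the outer boundary $\tilde z \sim (2\sigma_l\tau)^{(1-\vartheta)/2}$, the intermediate-region estimate \eqref{eq v C0 sum2} transferred through \eqref{eq de w vu} gives $|\hat w|\lesssim \beta^{-\tilde\alpha/4}\, e^{-2\varrho\sigma_l(s - s_0)}\, \tilde z^\alpha$, which matches $|\hat w^\pm|$ via the identity $e^{-2\varrho\sigma_l(s - s_0)} = (\tau/\tau_0)^{-\varrho}$ following from \eqref{eq type ii rescaling graph}. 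The parabolic maximum principle then yields $\hat w^- \le \hat w \le \hat w^+$ on the entire tip region; the two asymptotic regimes of $\phi^{\pm\epsilon(\tau)}$ produce \eqref{eq tip inner C0 est 0} on the cone-like annulus and \eqref{eq tip inner C0 est hat} on the compact tip.

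I expect the sub/super-solution verification to be the main obstacle. Because the domain $S_{\kappa,+,(2\sigma_l\tau)^{(1-\vartheta)/2}}$ grows unboundedly with $\tau$, one cannot use a single Dirichlet eigenfunction of $-\mathcal J$ as the spatial profile; instead the barrier must be built from the \emph{global} Jacobi field obtained by differentiating the foliation in $\kappa$. The drift $c_l\tau^{-1}(S_{\kappa,+}\cdot\nu_+)$ has asymptotic behavior $\sim (1-\alpha)\kappa\, \tilde z^\alpha$ at infinity (via the support-function expansion of each leaf) and is of order $O(1)$ near the tip, and must be controlled simultaneously in both regimes. This is exactly where the dual-scale rescaling of the foliation sketched in the introduction becomes essential, and where \eqref{eq varrho def} emerges as the sharp balance between the $\mu_1 > 0$ coercivity supplied by strict stability and the $\tau^{-1}$-scaled drift.
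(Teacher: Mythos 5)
Your lower barrier is essentially the paper's: a pure rescaled leaf $S_{\lambda_-(\tau)\kappa,+}$ with $\lambda_-(\tau)=1-\beta^{-\tilde\alpha/4}(\tau/\tau_0)^{-\varrho}$ works, because for a static leaf the quantity $\langle \hat F^-_\tau-c_l\tau^{-1}\hat F^-+\hat H\hat\nu,\hat\nu\rangle=(\lambda'-c_l\tau^{-1}\lambda)\langle S_+,\nu_{S_+}\rangle$ is negative (both $\lambda'\le 0$ and $-c_l\tau^{-1}\lambda<0$ contribute with the right sign, using $\langle S_+,\nu_{S_+}\rangle>0$). The gap is in your \emph{upper} barrier. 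Taking $\hat w^+=\phi^{+\epsilon(\tau)}$, i.e.\ a pure rescaled leaf with shrinking parameter, gives exactly the same identity with $J=(\tilde\lambda_+'-c_l\tau^{-1}\tilde\lambda_+)\langle S_{\kappa,+},\hat\nu\rangle<0$, since $\epsilon'(\tau)<0$ and the dilation term $c_l\tau^{-1}\hat F$ pushes the flow \emph{outward}, i.e.\ toward the upper barrier. So your candidate is again a subsolution, not a supersolution. Your remark that $\phi^\epsilon$ is "annihilated by $\mathcal J$ to leading order" is precisely the problem: the global Jacobi field carries zero coercivity, so there is no positive term left to absorb either $\epsilon'<0$ or the $-c_l\tau^{-1}\langle S_{\kappa,+},\hat\nu\rangle$ residual. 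This is why the paper's upper barrier is not a leaf but $S_{\lambda_+(\tau)\kappa,+}+(d_0(\tau)\phi_1-d_1(\tau)\tilde z)\nu$: the compactly supported first Dirichlet eigenfunction $\phi_1$ of the Jacobi operator on $S_{\kappa,+,2\beta}$ contributes $d_0\lambda_{1,\beta}\phi_1>0$ near the tip (this is where $\lambda_{1,\beta}>0$ from local stability actually enters — not through the leaf itself), while the term $-d_1\tilde z$ contributes $d_1 L_{S_{\kappa,+}}\tilde z\gtrsim d_1/\tilde z>0$ on the cone-like annulus where $\phi_1\equiv 0$; these two positive terms, with $d_0,d_1\sim\tau^{-1+\varrho}(\tau/\tau_0)^{-\varrho}$, are what dominate the negative $-c_l\tau^{-1}\tilde\lambda_+\psi\sim-\tau^{-1}\tilde z^{\alpha}$ (using $\alpha<-1$). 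Without some such correction your comparison argument cannot close from above.

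A secondary issue: you match the outer boundary data using \eqref{eq v C0 sum2}, but that estimate is a \emph{conclusion} of Proposition \ref{prop tip C0} (see the remark following it), so this is circular. The paper instead derives the boundary condition \eqref{eq mcf intmed inn con} from the intermediate-region estimate \eqref{eq inter C0} together with the expansion \eqref{eq psik asym}, obtaining decay $(2\sigma_l\tau)^{-\vartheta}z^{\alpha}$ at $z=\tfrac12(2\sigma_l\tau)^{(1-\vartheta)/2}$, which is then dominated by the barrier's $(\tau/\tau_0)^{-\varrho}$ profile because $\varrho<\vartheta$. Your initial-time matching via \eqref{eq ini lam1 lam2} and the final reading-off of \eqref{eq tip inner C0 est 0}--\eqref{eq tip inner C0 est hat} from the two asymptotic regimes of the barriers are consistent with the paper.
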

 \begin{remark}
\eqref{eq de w vu} and \eqref{eq tip inner C0 est 0} implies \eqref{eq v C0 sum2}; and \eqref{eq tip inner C0 est hat} implies the first equation in \eqref{eq hatw C2 sum}.
\end{remark}
As mentioned before, Proposition \ref{prop tip C0} is proved by a barrier argument. To construct a barrier, we need to know the bound for initial and boundary condition in the tip region. These bounds can be obtained from the estimates in the intermediate region and the initial assumptions. First we consider the boundary conditions. Note that by \eqref{eq de w vu}, \eqref{eq inter C0} implies
\begin{align*}
    |w(z,\theta,\tau)-\frac{\kappa}{c_l}(2\sigma_l\tau)^{\frac{\alpha}{2}}\varphi_l(\bar z,\theta)|\leq C(n,\Sigma,l,\Lambda,\beta,\rho,R)(2\sigma_l\tau)^{-\frac{1}{2}\tilde k(1-\alpha)}\bar z^2
z^\alpha
\end{align*}
for $\frac{1}{2}(2\sigma_l\tau)^{\frac{1-\vartheta}{2}}\leq z\leq (2R)(2\sigma_l\tau)^{\frac{1}{2}}$, $\tau_0\leq \tau\leq \tau^\circ$, where $\bar z:=\frac{z}{(2\sigma_l\tau)^{\frac{1}{2}}}$. On the other hand,
\begin{align*}
    \frac{\kappa}{c_l}(2\sigma_l\tau)^{\frac{\alpha}{2}}\varphi_l(\bar z,\theta)=&\kappa z^{\alpha}\omega_1\big(1+\sum_{m=1}^{i_1}(-1)^mK_{mi1}\bar z^{2m}\big)
\end{align*}
by \eqref{eq phiij eq}. Hence, we get
\begin{equation}
    |w(z,\theta,\tau)-\kappa z^\alpha\omega_1|\leq C(n,\Sigma,l,\Lambda,\beta,\rho,R)\big((2\sigma_l\tau)^{-\frac{1}{2}\tilde k(1-\alpha)}+1\big)\bar z^2 z^{\alpha}\leq C(n,\Sigma,l)\bar z^2z^{\alpha}
\end{equation}
 for $\frac{1}{2}(2\sigma_l\tau)^{\frac{1-\vartheta}{2}}\leq z\leq (2R)(2\sigma_l\tau)^{\frac{1}{2}}$, $\tau_0\leq \tau\leq \tau^\circ$, provided that $\tau_0\gg 1$ (depending on $\Lambda,\rho, \beta,R$). 

On the other hand, by \eqref{eq psik asym} and \eqref{eq bound k diff}, we have 
\begin{align*}
    |\psi_{\kappa}(z,\theta)-\kappa z^{\alpha}\omega_1|\leq C(n,\Sigma,\kappa)z^{\alpha-{\tilde \alpha} }\leq C(n,\Sigma)z^{\alpha-{\tilde \alpha} }.
\end{align*}
for $z\geq R_s$. Therefore, we get
\begin{equation}
    |w(z,\theta,\tau)-\psi_{\kappa}(z,\theta)|\leq |w(z,\theta,\tau)-\kappa z^{\alpha}\omega_1|+|\kappa z^{\alpha}\omega_1-\psi_{\kappa}(z,\theta)|\leq C(n,\Sigma,l)(\bar z^2+z^{-{\tilde \alpha} })z^{\alpha}.
\end{equation}
for $\frac{1}{2}(2\sigma_l\tau)^{\frac{1-\vartheta}{2}}\leq z\leq (2R)(2\sigma_l\tau)^{\frac{1}{2}}$, $\tau_0\leq \tau\leq \tau^\circ$. In particular, there holds
\begin{equation}\label{eq mcf intmed inn con}
    dist(\hat F(z,\theta,\tau),S_{\kappa,+})\leq C(n,\Sigma,l)(2\sigma_l\tau)^{-\vartheta}z^{\alpha}
\end{equation}
for $z=\frac{1}{2}(2\sigma_l\tau)^{\frac{1-\vartheta}{2}}$, $\tau_0\leq \tau\leq \tau^\circ$ if we choose $\vartheta>0$ small such that 
\begin{equation}\label{eq theta def3}
    \vartheta<\frac{1}{2}(1-\vartheta){\tilde \alpha} .
\end{equation}
For the initial condition, by \eqref{eq w ini half int}, \eqref{eq aij less norm}, \eqref{eq bound k diff}, and \eqref{eq psik asym}, we have
\begin{align*}
    &|w(z,\theta,\tau_0)-\psi_{\kappa}(z,\theta)|\leq |w(z,\theta,\tau_0)-\kappa z^{\alpha}\omega_1|+|\kappa z^{\alpha}\omega_1-\psi_{\kappa}(z,\theta)|\\
    \leq &\big(|\kappa-1|+|a_{i_1-1,1}|+\cdots|a_{0,1}|+C(n,\Sigma,l)(|\mathbf a|\bar z_0^{2\delta_\alpha}+\bar z_0^2+z^{-{\tilde \alpha} })\big)z^{\alpha}\omega_1\\
    \leq& \big(C(n,\Sigma,l,\Lambda,\rho,\beta)|t_0|^{\xi\lambda_l}+C(n,\Sigma,l)(\bar z_0^{2\delta_\alpha'}+\beta^{-{\tilde \alpha} })\big)z^{\alpha}\omega_1\\
    \leq &\big(C(n,\Sigma,l,\Lambda,\rho,\beta)(2\sigma_l\tau_0)^{-\frac{\xi(1-\alpha)}{2}}+C(n,\Sigma,l,\beta)((2\sigma_l\tau_0)^{-\vartheta\delta_\alpha'}+\beta^{-{\tilde \alpha} })\big)z^{\alpha}\omega_1\\
    \leq& C(n,\Sigma,l)\beta^{-{\tilde \alpha} }z^{\alpha}\omega_1
\end{align*}
for $\beta\leq z\leq 2(2\sigma_l\tau_0)^{\frac{1-\vartheta}{2}}$, provided $\tau_0\gg1$ (depending on $n,\Sigma,l,\Lambda,\rho,\beta$), where $\delta_\alpha'=\min\{\delta_\alpha,1\}>0$ ($\delta_\alpha=\frac{\alpha_2-\alpha}{2}>0$), $\bar z_0=\frac{z}{(2\sigma_l\tau_0)^{\frac{1}{2}}}$. That is
\begin{equation}\label{eq w ini z large}
    dist(\hat F(z,\theta,\tau_0),S_{\kappa,+})\leq C(n,\Sigma,l)\beta^{-{\tilde \alpha} }z^{\alpha}\omega_1
\end{equation}
for $\beta\leq z\leq 2(2\sigma_l\tau_0)^{\frac{1-\vartheta}{2}}$, provided $\tau_0\gg1$ (depending on $n,\Sigma,l,\Lambda,\rho,\beta$).
\subsubsection{Choice of $\vartheta,l$}
At this moment, let's discuss the value of $\vartheta$. We need 
\begin{equation}\label{eq theta def4}
    0<\frac{-1-\alpha}{1-\alpha}<\vartheta
\end{equation}
to define $\varrho$ in \eqref{eq varrho def}.
On the other hand, by \eqref{eq theta def1}, and 
\eqref{eq theta def3}, we have
\begin{align*}
    0<\vartheta<&\min \{\frac{2(1-\alpha)\xi}{n+2\alpha+4},\frac{1}{2}(1-\vartheta){\tilde \alpha} \}
    = \frac{1}{2}(1-\alpha)\min\{\frac{4\xi}{n+2\alpha+4},\frac{(1-\vartheta){\tilde \alpha} }{1-\alpha}\}
\end{align*}
with $\xi$ defined in \eqref{eq xi}. For the set of $\vartheta$ to be non-empty, what we need is exactly \eqref{eq alpha ine}. 
\subsubsection{Sub solutions}
Now we construct sub and supersolutions to control $\Gamma_\tau=\hat F$ in the tip region. Note by $\hat F(z,\theta,\tau)$ satisfies \eqref{eq rescaled hat flow}.
To get a lower barrier, we set
\begin{equation}\label{eq Fhat - def}
    \hat F^-=\lambda_-^{\frac{1}{1-\alpha}}(\tau)S_{\kappa,+}=S_{\lambda_-(\tau)\kappa,+},
\end{equation}
with 
\begin{equation}\label{eq lambda - def}
    \lambda_-(\tau)=1-\beta^{-\frac{{\tilde \alpha} }{4}}(\frac{\tau}{\tau_0})^{-\varrho}\in (\frac{99}{100},1), \tau\geq \tau_0.
\end{equation}
if $\beta\gg1$ (depending on $n,\Sigma,l$). We claim that $\hat F^-$ is a subsolution.

For the initial value, note
 \begin{equation}
   \lambda_-(\tau_0)\kappa=(1-\beta^{-\frac{{\tilde \alpha} }{4}})(1+o(1))\leq 1-\beta^{-\frac{{\tilde \alpha} }{2}}
\end{equation}
by \eqref{eq bound k diff} provided $\beta\gg 1$ (depending on $n,\Sigma,l$), and $|t_0|\ll 1$ (depending on $\beta,\tilde \alpha)$. Thus, $\hat F^-$ is below $S_{\kappa_1,+}$ by the monotonicity of $S_{\kappa,+}$ with respect to $\kappa$. This implies that $\hat F^-(\tau_0)$ is below $\hat F(\tau_0)$ in $B_{\frac{3}{2}\beta}$ by \eqref{eq ini lam1 lam2}.  

On the other hand, we note that $\hat F^-$ can be written as a normal graph over $\mC$ with profile function $\psi_{\lambda_-(\tau)\kappa}$ for $(z,\theta)\in [(\lambda_-(\tau)\kappa)^{\frac{1}{1-\alpha}}R_s,\infty)\times \Sigma$ by \eqref{eq S+ para}. Since
\begin{equation}
\begin{aligned}
    &\psi_{\lambda_-(\tau_0)\kappa}(z,\theta)=\kappa\lambda_-z^{\alpha}\omega_1(\theta)+O(z^{\alpha-{\tilde \alpha} })=((1-o(1))(1-\beta^{-\frac{{\tilde \alpha} }{4}}))z^{\alpha}\omega_1+O(z^{\alpha-{\tilde \alpha} })\leq w(z,\theta,\tau_0).
\end{aligned}
\end{equation}
on $S_{k,+,(2\sigma_l\tau_0)^{\frac{1-\vartheta}{2}}}\setminus S_{k,+,\frac{3}{2}\beta}$, by \eqref{eq w ini z large}, \eqref{eq psi asy}, provided $\beta\gg1$ (depending on $n,\Sigma,l$), $\hat F^{-}(\tau_0)$ is below $\hat F(\tau_0)$ in $B_{(2\sigma_l\tau_0)^{\frac{1-\vartheta}{2}}}\setminus B_{\frac{3}{2}\beta}$,

Second, for $\tau_0\leq \tau\leq \tau^\circ$, $(z,\theta)\in \partial B_{(2\sigma_l\tau)^{\frac{1-\vartheta}{2}}}\cap\mC$, we have
\begin{equation}
    \begin{aligned}
        &\psi_{\lambda_-(\tau)\kappa}(z,\theta)=\kappa\lambda_-z^{\alpha}\omega_1(\theta)+O(z^{\alpha-{\tilde \alpha} })\\
        =&((1-o(1))(1-\beta^{-\frac{{\tilde \alpha} }{4}}(\frac{\tau}{\tau_0})^{-\varrho}))z^{\alpha}\omega_1+O(z^{\alpha-{\tilde \alpha} })\\
    \leq& (1-C(n)(2\sigma_l\tau)^{-\vartheta})z^{\alpha}+O(z^{\alpha-{\tilde \alpha} })\leq w(z,\theta,\tau)
    \end{aligned}
\end{equation}
by \eqref{eq mcf intmed inn con}, \eqref{eq psi asy}, since $\frac{1}{2}<\kappa\lambda_-(\tau)<1$, provided $\tau_0\gg 1$ (depending on $n,\Sigma,l,\beta$) and $0<\varrho<\vartheta$. 

At last, let $\lambda=(\kappa\lambda_-(\tau))^{\frac{1}{1-\alpha}}$, then we have $\lambda_-\geq \frac{1}{2}$ if $\beta\geq1$, and
\begin{align*}
    &\lambda'-c_l\tau^{-1}\lambda={\kappa}^{\frac{1}{1-\alpha}}\lambda_-^{\frac{1}{1-\alpha}}[\frac{1}{1-\alpha}\lambda_-^{-1}(\varrho\beta^{-\frac{{\tilde \alpha} }{4}}(\frac{\tau}{\tau_0})^{-\rho})-c_l]\tau^{-1}
    \leq {\kappa}^{\frac{1}{1-\alpha}}\lambda_-^{\frac{1}{1-\alpha}}(\frac{2}{1-\alpha}\varrho\beta^{-\frac{{\tilde \alpha} }{4}}-c_l)\tau^{-1}<0,
\end{align*}
for $\beta\gg1$ (depending on $n,\Sigma,l$). Thus, 
\begin{equation}
\begin{aligned}
   &\langle \hat F^-_\tau-c_l\tau^{-1}\hat F^-+\hat H^+\hat\nu^-,\hat\nu^-\rangle=(\lambda'-c_l\tau^{-1}\lambda) \langle S_+(\frac{z}{\lambda}),\nu_{S_+}(\frac{z}{\lambda})\rangle<0
\end{aligned}
\end{equation}
since $\langle S_+(\frac{z}{\lambda}),\nu_{S_+}(\frac{z}{\lambda})\rangle>0$ (see \cite{HS}), and $S_{+}$ has zero mean curvature. That is, $ \hat F^-$ is a subsolution to \eqref{eq rescaled hat flow}. By avoidance principle for parabolic flows, $\hat F$ is above $\hat F^-$ in $B_{(2\sigma_l\tau)^{\frac{1-\vartheta}{2}}}$ and $\tau_0\leq \tau\leq \tau^\circ$.
\subsubsection{Upper barrier}
Now we construct an upper barrier. Let $\phi_1$ be the positive first eigenfunction of
$$\mathcal L_{S_{\kappa,+}}:=\Delta_{S_{\kappa,+}}+|A|^2_{S_{\kappa,+}}$$ 
on $S_{\kappa,+,2\beta}$ with Dirichlet boundary condition with eigenvalue $\lambda_{1,\beta}$, and $\|\phi_1\|_{L^2(S_{\kappa,+,2\beta})}=1$. That is 
\begin{equation}\label{eq phi1}
    \begin{cases}
        \mathcal L_{S_{\kappa,+}}\phi_1=-\lambda_{1,\beta}\phi_1\quad\text{ in }S_{\kappa,+,2\beta},\\
        \phi_1=0\quad\text{ on }\partial S_{\kappa,+,2\beta}.
    \end{cases}
\end{equation}
  Since $S_{\kappa,+}$ is stable (thus, is locally stable), $\lambda_{1,\beta}>0$. We extend $\phi_1$ outside $S_{\kappa,+,2\beta}$ by zero so that it is defined on the whole $S_{\kappa,+}$. 
  
Recall that $\psi_{k}$ is the profile function $S_{\kappa,+}$ over $\mC$ outside $B_{R_s}$ (see \eqref{eq S+ para}). Let $\bar \psi:=\langle S_{\kappa,+}, \nu_{S_{\kappa,+}}\rangle$ be the positive Jacobi field on $S_{\kappa,+}$ (corresponding to scaling), $\tilde \psi$ be the profile function of $S_{\kappa_2,+}$ when regarded as a normal graph over $S_{\kappa,+}$. We can write $\psi_{k},\bar \psi, \tilde \psi$ as functions on $S_{\kappa,+}$ in terms of coordinates $(\tilde z,\theta)$. Alternatively, we can write them as functions over $\mC$ outside $B_{{\kappa}^{\frac{1}{1-\alpha}}R_s}$ in coordinates $(z,\theta)$ via the parametrization \eqref{eq Sk+ para}. Here, $(\tilde z,\theta)$ is the global coordinates on $S_{\kappa,+}$ (see Appendix \ref{sec HS foliation}), and $(z,\theta)$ for the coordinates on $\mC$. By \eqref{eq psik asym}, we have
\begin{equation}\label{eq bar psi asym}
     \bar\psi(z,\theta)\sim \psi_{\kappa}(z,\theta)\sim \kappa z^\alpha\omega_1\sim \kappa\psi(z,\theta)\text{ as }z\to \infty.
\end{equation}
Since $\kappa_2=1+\beta^{-\frac{{\tilde \alpha} }{2}}$, we have 
\begin{equation}\label{eq tilde psi asy}
    \tilde\psi(z,\theta)=(\beta^{-\frac{{\tilde \alpha} }{2}}+1-\kappa)z^\alpha\omega_1+O(z^{\alpha-{\tilde \alpha} })\sim (\beta^{-\frac{{\tilde \alpha} }{2}}+1-\kappa)\psi(z,\theta)
    \sim \frac{(\beta^{-\frac{{\tilde \alpha} }{2}}+1-\kappa)}{\kappa}\psi_{\kappa}(z,\theta)\text{ as }z\to \infty.
\end{equation}
Now we construct the upper barrier $\hat F^+$ as a graph over $S_{\lambda_+(\tau)\kappa,+}=\lambda_+^{\frac{1}{1-\alpha}}S_{\kappa,+}$, 
\begin{equation}\label{eq Fhat + def}
    \hat F^+(\tilde z,\theta,\tau):=S_{\lambda_+(\tau)\kappa,+}(\tilde z,\theta)+(d_0(\tau)\phi_1-d_1(\tau)\tilde z)\nu_{S_{\lambda_+(\tau)\kappa,+}(\tilde z,\theta)}(\tilde z,\theta),
\end{equation}
where 
\begin{equation}\label{eq lambda + def}
\lambda_+(\tau)=1+\beta^{-\frac{{\tilde \alpha} }{4}}(\frac{\tau}{\tau_0})^{-\varrho},\quad  d_0(\tau)=C(\beta)(2\sigma_l\tau)^{-1+\varrho}(\frac{\tau}{\tau_0})^{-\varrho},\quad  d_1(\tau)=\delta\beta^{-\frac{{\tilde \alpha} }{4}}(2\sigma_l\tau)^{-1+\varrho}(\frac{\tau}{\tau_0})^{-\varrho}.
\end{equation}
for some constant $\delta(\beta)\ll1$, $C(\beta)\gg1$ depending on $\beta$ to be determined. Note that we can choose $\lambda_+-1,d_1\in (0,\frac{1}{1000})$ for $\beta\gg 1,0<\delta\ll1$ and $0<\rho<1$. Since $\nu_{S_{\lambda_+(\tau)\kappa,+}}(\tilde z,\theta)=\nu_{S_{\kappa,+}}(\tilde z,\theta)$, we can write $\hat F^+$ as
\begin{equation}
    \hat F^+(\tilde z,\theta,\tau)
    =S_{\lambda_+(\tau)\kappa,+}(\tilde z,\theta)+w^+(\tilde z,\theta,\tau)\nu_{S_{\kappa,+}}(\tilde z,\theta)
\end{equation}
where $w^+(\tilde z,\theta,\tau)=d_0(\tau)\phi_1-d_1(\tau)\tilde z$.

To prove that $\hat F^+$ satisfies the upper initial and boundary condition, we write $ \hat F^+$ as a normal graph over $S_{\kappa,+}$ with profile function $\hat w^+$. This can be done if $0<\lambda_+-1$, $d_0,d_1$ are sufficiently small, which is true if $\beta>0$ is large and $\tau_0>0$ is large depending on $\beta$. Moreover, outside $B_{\frac{1002}{1000}R_s}$, we can use coordinates $z$ for $\hat w$. That is  
\begin{equation}
     \hat F^+(\tilde z,\theta,\tau)=S_{\kappa,+}(\tilde z,\theta)+\hat w^+(\tilde z,\theta,\tau)\nu_{S_{\kappa,+}}(\tilde z,\theta), \quad (\tilde z,\theta)\in S_{\kappa,+};
\end{equation}
and
\begin{equation}
    \hat F^+(z,\theta,\tau)
    =S_{\kappa,+}(z,\theta)+\hat w^+(z,\theta,\tau)\nu_{S_{\kappa,+}}(z,\theta), \quad(z,\theta)\in [\frac{1002}{1000}R_s,\infty)\times \Sigma.
\end{equation}
Since $d_0(\tau)\geq 0$, $\phi_1\geq 0$, we have 
$$dist(S_{\kappa,+}(\tilde z,\theta),\lambda_+^{\frac{1}{1-\alpha}}S_{\kappa,+})\geq \frac{2}{3}(\lambda_+^{\frac{1}{1-\alpha}}-1) \langle S_{\kappa,+}(\tilde z,\theta),\nu_{S_{\kappa,+}}(\tilde z,\theta)\rangle\geq \frac{1}{C_0(\Sigma)}(\lambda_+^{\frac{1}{1-\alpha}}-1) \psi (\tilde z,\theta)$$ 
for some $C_0(\Sigma)>0$ by $\eqref{eq bar psi asym}$, \eqref{eq bound k diff}, if $0<\lambda_+-1$, $d_0,d_1$ is sufficiently small. Define
\begin{equation}\label{eq hat f+ tilde f+}
\begin{aligned}
   \tilde w^+(\tilde z,\theta,\tau):=\frac{1}{C_0}(\lambda_+^{\frac{1}{1-\alpha}}-1) \psi(\tilde z,\theta)-d_1\tilde z.
\end{aligned}
\end{equation}
Then the profile function $\hat w^+$ satisfies
\begin{align*}
     \hat w^+(\tilde z,\theta,\tau)=dist(S_{\kappa,+}(\tilde z,\theta),\hat F^+)
   \geq  dist(S_{\kappa,+}(\tilde z,\theta),\lambda_+^{\frac{1}{1-\alpha}}S_{\kappa,+})-d_1 \tilde z
   \geq\tilde w^+(\tilde z,\theta,\tau).
\end{align*}
We claim that
\begin{equation}\label{eq hat f+ tilde f+ f}
\tilde w^+(\tilde z,\theta,\tau)\geq \hat w(\tilde z,\theta,\tau)
\end{equation}
on $ S_{\kappa,+,(2\sigma_l\tau_0)^{\frac{1-\vartheta}{2}}}\times \{\tau=\tau_0\}$ and $ \partial S_{\kappa,+,(2\sigma_l\tau)^{\frac{1-\vartheta}{2}}}$, $\tau_0\leq \tau\leq \tau^\circ$; where $\hat w(\tilde z,\theta,\tau)$ is the profile function of $\hat F(\tau)$ as a graph over $S_{\kappa,+}$ defined in \eqref{eq hat w def}. The claim implies that $F^+$ satisfies the initial and boundary condition for an uppersolution. Let's prove \eqref{eq hat f+ tilde f+ f} now. First note
\begin{equation}
 \lambda_+^{\frac{1}{1-\alpha}}(\tau_0)-1\geq \frac{1}{2(1-\alpha)}\beta^{-\frac{{\tilde \alpha} }{4}}  (\frac{\tau}{\tau_0})^{-\rho}
\end{equation}
for $\tau_0\leq \tau\leq \tau^\circ$ if $\beta>0$ is small, and 
\begin{equation}
    \tilde z\leq C(\Sigma) z.
\end{equation}
for $(\tilde z,\theta)\in S_{\kappa,+}$. Thus, for $\tilde z\leq R_s$, we have from the definition of $d_1$ that
\begin{align*}
     &\tilde w^+(\tilde z,\theta,\tau_0)
    \geq \frac{1}{4C_0(1-\alpha)}\beta^{-\frac{{\tilde \alpha} }{4}}(\psi+\varepsilon(R_s))-\delta\beta^{-\frac{{\tilde \alpha} }{4}}(2\sigma_l\tau_0)^{-1+\varrho}R_s
    \geq \frac{1}{4C_0(1-\alpha)}\beta^{-\frac{{\tilde \alpha} }{4}}\psi\geq 100\tilde\psi.
\end{align*}
for $(\tilde z,\theta)\in S_{\kappa,+,R_s}$ if $\beta\gg1$ (depending on $\Sigma,{\tilde \alpha} $), $\tau_0\gg 1$ (depending on $\Sigma,R_s,\alpha,l$) by \eqref{eq bound k diff}, where $\varepsilon(R_s):=\inf_{S_{\kappa,+,R_s}}\psi(\tilde z,\theta)>0$. As a result,
\begin{equation}\label{eq tilde f+ ini1}
\begin{aligned}
    \tilde w^+(\tilde z,\theta,\tau_0)\geq &\hat w(\tilde z,\theta,\tau_0)
\end{aligned}
\end{equation}
for $(\tilde z,\theta)\in S_{k,+,R_s}$ by \eqref{eq ini lam1 lam2} and \eqref{eq tilde psi asy}. Therefore, $F^+(\tau_0)$ is above $\hat F(\tau_0)$ in $B{R_s}$. 

For $(z,\theta)\in (B_{(2\sigma_l\tau_0)^{\frac{1-\vartheta}{2}}}\setminus B_{R_s})\cap \mC$, we first note that by the definition of $d_1$,
\begin{align*}
     &\tilde w^+(z,\theta,\tau_0)\geq -C(\Sigma)d_1(\tau_0)z\min_\Sigma\omega_1+\frac{1}{C_0}(\lambda_+^{\frac{1}{1-\alpha}}-1)(\tau_0)\psi(z,\theta)
         \\
         \geq& -C(\Sigma)\delta\beta^{-\frac{{\tilde \alpha} }{4}}(2\sigma_l\tau_0)^{-1+\varrho}z\omega_1+\frac{1}{2C_0(1-\alpha)}\beta^{-\frac{{\tilde \alpha} }{4}}z^\alpha\omega_1\\
         \geq &  \beta^{-\frac{{\tilde \alpha} }{4}}z^\alpha\omega_1\big(\frac{1}{4C_0(1-\alpha)}-C(\Sigma)\delta (2\sigma_l\tau_0)^{-1+\varrho}z^{1-\alpha}\big)+\frac{1}{4C_0(1-\alpha)}\beta^{-\frac{{\tilde \alpha} }{4}}z^\alpha\omega_1.
\end{align*}     
Then we use \eqref{eq varrho def} to get, 
\begin{equation}\label{eq tau z multi boun}
    (2\sigma_l\tau)^{-1+\varrho}z^{1-\alpha}\leq (2\sigma_l\tau)^{-1+\varrho}(2\sigma_l\tau)^{\frac{(1-\alpha)(1-\vartheta)}{2}}=1
\end{equation} 
for $R_s\leq z\leq (2\sigma_l\tau)^{\frac{1-\vartheta}{2}}$, $\tau_0\leq \tau\leq \tau^\circ$. In particular, take $\tau=\tau_0$, we get
\begin{equation}\label{eq tilde f+ ini2}
    \begin{aligned}
        \tilde w^+(z,\theta,\tau_0)\geq \frac{1}{4C_0(1-\alpha)}\beta^{-\frac{{\tilde \alpha} }{4}}z^\alpha\omega_1\geq \hat w(z,\theta,\tau_0)
    \end{aligned}
\end{equation}
by \eqref{eq tilde psi asy} and \eqref{eq ini lam1 lam2} for $(z,\theta)\in (B_{(2\sigma_l\tau_0)^{\frac{1-\vartheta}{2}}}\setminus B_{R_s})\cap \mC$, if we choose $\delta<\frac{1}{4C_0(1-\alpha)C(\Sigma)}$, and $\beta\gg 1$ is large (depending on $\alpha, R_s$).

Similarly, for $\tau_0\leq \tau\leq \tau^\circ$, $(z,\theta)\in \partial B_{(2\sigma_l\tau)^{\frac{1-\vartheta}{2}}}\cap\mC$, we can use \eqref{eq tau z multi boun} to get 
\begin{align*}
     &\tilde w^+(z,\theta,\tau)
         \geq  \beta^{-\frac{{\tilde \alpha} }{4}}z^\alpha(\frac{\tau}{\tau_0})^{-\varrho}\omega_1\big(\frac{1}{4C_0(1-\alpha)}-C(\Sigma)\delta (2\sigma_l\tau)^{-1+\varrho}z^{1-\alpha}\big)+\frac{1}{4C_0(1-\alpha)}\beta^{-\frac{{\tilde \alpha} }{4}}(\frac{\tau}{\tau_0})^{-\varrho}z^\alpha \omega_1\\
         \geq 
         & \beta^{-\frac{{\tilde \alpha} }{4}}z^\alpha(\frac{\tau}{\tau_0})^{-\varrho}\omega_1\big(\frac{1}{4C_0(1-\alpha)}-C(\Sigma)\delta\big)+\frac{1}{4C_0(1-\alpha)}\beta^{-\frac{{\tilde \alpha} }{4}}z^\alpha(\frac{\tau}{\tau_0})^{-\varrho}\omega_1\\
         \geq& \frac{1}{4C_0(1-\alpha)}\beta^{-\frac{{\tilde \alpha} }{4}}z^\alpha(\frac{\tau}{\tau_0})^{-\varrho}\omega_1 \geq C(n,R,l)(2\sigma_l\tau)^{-\vartheta}z^\alpha\omega_1
\end{align*}
since $0<\varrho<\vartheta$ by \eqref{eq varrho def} if we choose $\delta<\frac{1}{4C_0(1-\alpha)C(\Sigma)}$ and $\tau_0\gg1$ (depending on $n,\Sigma,l,\beta,\varrho,\vartheta$). Here $C(n,\Sigma,l)$ is the constant in \eqref{eq mcf intmed inn con}. Thus, 
\begin{equation}\label{eq tilde f+ boun}
    \begin{aligned}
        \tilde w^+(\tilde z,\theta,\tau)\geq \hat w(\tilde z,\theta,\tau).
    \end{aligned}
\end{equation}
for $\tau_0\leq \tau\leq \tau^\circ$, $(\tilde z,\theta)\in \partial S_{\kappa,+,(2\sigma_l\tau)^{\frac{1-\vartheta}{2}}}$ by \eqref{eq mcf intmed inn con}. 

Next, we explore the evolution equation of $\hat F^+$. To simplify notations, we define $\tilde\lambda_+:=\lambda_+^{\frac{1}{1-\alpha}}$, and we write the metric, second fundamental form, Jacobi operator of $S_{\lambda_+\kappa,+}$ as $\tilde g$, $\tilde A$, $\tilde L=\Delta_{\tilde g}+|\tilde A|^2$, respectively. Then we have
\begin{equation}\label{eq II super eq df}
\begin{aligned}
    &J:=\langle \hat F^+_\tau-c_l\tau^{-1}\hat F^++\hat H^+\hat\nu^+,\hat\nu^+\rangle\\
    =& \langle\tilde\lambda_+'S_{\kappa,+}+(d_0'\phi_1-d_1'\tilde z)\nu_{S_{\kappa,+}}-c_l\tau^{-1}F^+,\hat\nu^+\rangle+\hat H^+\\
    =&\tilde\lambda_+'\langle S_{\kappa,+},\hat\nu^+\rangle+(d_0'\phi_1-d_1'\tilde z)\langle \nu_{S_{\kappa,+}},\hat\nu^+\rangle-c_l\tau^{-1}\langle \tilde\lambda_+S_{\kappa,+}+(d_0\phi_1-d_1\tilde z)\nu_{S_{\kappa,+}},\hat\nu^+\rangle +\hat H^+.
\end{aligned}
\end{equation}
Since $S_{\kappa,+}$ is a smooth minimal hypersurface, $S_{\lambda_+\kappa,+}=\tilde\lambda_+S_{\kappa,+}$ is also a minimal hypersurface. In particular, it has zero mean curvature. We have  
\begin{align*}
    &\hat H^+ \langle \nu_{S_{\lambda_+\kappa,+}},\hat\nu^+\rangle^{-1}
    =-\tilde L(w^+)-E(w^+)
    =-\tilde L(d_0\phi_1-d_1\tilde z)-E(d_0\phi_1-d_1\tilde z)
\end{align*}  
where $E$ is defined in \eqref{eq error outer}. Using the equation \eqref{eq phi1} for $\phi_1$, we get
\begin{equation}
    \hat H^+(\tilde z,\theta)\langle \nu_{S_{\kappa,+}},\hat\nu^+\rangle^{-1}=d_0\tilde\lambda_+^{-2}\lambda_{1,\beta}\phi_1+d_1\tilde\lambda_+^{-2}L_{S_{\kappa,+}}\tilde z-E(d_0\phi_1-d_1\tilde z).
\end{equation}
and thus,
\begin{align*}
    J\geq &\tilde\lambda_+'\langle S_{\kappa,+},\hat\nu^+\rangle+(d_0'\phi_1-d_1'\tilde z)\langle \nu_{S_{\kappa,+}},\hat\nu^+\rangle-c_l\tau^{-1}\langle \tilde\lambda_+S_{\kappa,+}+(d_0\phi_1-d_1\tilde z)\nu_{S_{\kappa,+}},\hat\nu^+\rangle\\
    &+[d_0\tilde\lambda_+^{-2}\lambda_{1,\beta}\phi_1+d_1\tilde\lambda_+^{-2}L_{S_{\kappa,+}}\tilde z-E(d_0\phi_1-d_1\tilde z)]\langle \nu_{S_{\kappa,+}},\hat\nu^+\rangle.
\end{align*}
On the other hand, note $0<\frac{1}{C_1(\Sigma)}\psi\leq \langle S_{\kappa,+},\hat\nu^+\rangle \leq C_1(\Sigma) \psi$, $0\leq \nu_{S_{\kappa,+}},\hat\nu^+\leq 1$ $\phi_1\geq 0$ on $S_{\kappa,+}$ for some $C_1(\Sigma)$ depending on $\Sigma$, if $\lambda_+-1, d_0,d_1$ are sufficiently small. We get
\begin{equation}\label{eq F+t nu}
\begin{aligned}
    J\geq&  (d_0'-c_l\tau^{-1}d_0+d_0\tilde\lambda_+^{-2}\lambda_{1,\beta})\phi_1 \langle \nu_{S_{\kappa,+}},\hat\nu^+\rangle+C_1(\Sigma)(\tilde\lambda_+'-c_l\tau^{-1}\tilde\lambda_+)\psi\\
    &+d_1\tilde\lambda_+^{-2}L_{S_{\kappa,+}}(\tilde z)\langle \nu_{S_{\kappa,+}},\hat\nu^+\rangle-|E(d_0\phi_1-d_1\tilde z)|=:J_1+J_2+J_3
 \end{aligned}
\end{equation}
since $\tilde\lambda_+,d_0,d_1>0$, $\tilde\lambda_+',d_0,,d_1'<0$,  and $\langle \nu_{S_{\kappa,+}},\hat\nu^+\rangle>0$ if  $\lambda_+-1, d_0,d_1$ are sufficiently small. Here
\begin{align*}
    &J_1:=(d_0'-c_l\tau^{-1}d_0+d_0\tilde\lambda_+^{-2}\lambda_{1,\beta})\phi_1\langle \nu_{S_{\kappa,+}},\hat\nu^+\rangle\quad 
    J_2:=C_1(\tilde\lambda_+'-c_l\tau^{-1}\tilde\lambda_+)\psi,\\
    &J_3:=d_1\tilde\lambda_+^{-2}L_{S_{\kappa,+}}(\tilde z)\langle \nu_{S_{\kappa,+}},\hat\nu^+\rangle-|E(d_0\phi_1-d_1\tilde z)|.
\end{align*}
Now we estimate $J_i$ ($i=1,2,3$) separately. We first consider the case when $(z,\theta)\in S_{k,+,(2\sigma_l\tau)^{\frac{1-\vartheta}{2}}}\setminus S_{\kappa,+,\beta}$. By the choice of $\lambda_+,d_0,d_1$ and \eqref{eq tilde psi asy}, we have
\begin{equation}\label{eq J1}
    J_1\geq 0,
\end{equation}
and
\begin{equation}\label{eq J2}
    J_2\geq C_1(\tilde\lambda_+'-c_l\tau^{-1}\tilde\lambda_+)\tilde z^{\alpha}\omega_1,
\end{equation}
since
\begin{equation}
    \quad d_0'-c_l\tau^{-1}d_0+d_0\tilde\lambda_+^{-2}\lambda_{1,\beta}>0
\end{equation}
for $\beta\leq \tilde z\leq (2\sigma_l\tau)^{\frac{1-\vartheta}{2}}$, $\tau_0$ large. For $J_3$, note that $S_{\kappa,+}(z,\theta)$ is asymptotic to $\mC$ as $\tilde z\to\infty$. We have 
\begin{equation}\label{eq linear}
    L_{S_{\kappa,+}}\tilde z\geq \frac{1}{2}[\partial_{\tilde z \tilde z}+\frac{\partial_{\tilde z}}{\tilde z}+\frac{\Delta_{\tilde\Sigma} }{\tilde z^2}+|\tilde A|^2_{\tilde g}]\tilde z= \frac{1}{2\tilde z}+\frac{1}{2}|\tilde A|_{\tilde g}^2\tilde z\geq \frac{1}{2\tilde z} ,
\end{equation}
and
\begin{equation}\label{eq error}
\begin{aligned}
     |E(d_0\phi_1-d_1\tilde z)|\leq& C(n,\Sigma)[d_0^2(\frac{\phi_1}{\tilde z}+|\tilde \nabla \phi_1|+\tilde z|\tilde\nabla^2\phi_1|)^2+d_1^2(\frac{\tilde z}{\tilde z}+|\tilde \nabla \tilde z|+\tilde z|\tilde\nabla^2\tilde z|)^2]\\
     \leq& C(n,\Sigma)(d_0^2\|\phi_1\|_{C^2(B_1(\tilde z,\theta))\cap S_{\kappa,+} }^2+d_1^2)
\end{aligned}
\end{equation}
by \eqref{eq E stru 2} and \eqref{eq z C2 norm} for $\tilde z\geq \beta$, $\beta\gg1 $ large (depending on $n,\Sigma$), if $\lambda_+-1, d_0,d_1$ are sufficiently small. Thus, if we choose $\lambda_+-1, d_0,d_1$ are sufficiently small such that $\langle \nu_{S_{\kappa,+}},\hat\nu^+\rangle\geq \frac{1}{2}$, then we have
\begin{equation}\label{eq J3}
\begin{aligned}
    J_3\geq& d_1\tilde\lambda_+^{-2} (\frac{1}{8\tilde z}+\frac{1}{32\beta})-C(n,\Sigma)(d_0^2C(\Sigma,\beta)+d_1^2)\geq d_1\tilde\lambda_+^{-2} \frac{1}{8\tilde z}-C(n,\Sigma)d_1^2\geq d_1\tilde \lambda_+^{-2} \frac{1}{16\tilde z}
\end{aligned}
\end{equation}
since $\phi_1=0$ outside $S_{\kappa,+,2\beta}$, $|\phi_1|_{C^2(S_{\kappa,+,2\beta+1})}\leq C(\Sigma,\beta)$,  $\frac{1}{\tilde z}\geq \frac{1}{4\beta}$ on $S_{\kappa,+,2\beta+1}$, and
\begin{equation}
    \beta C(\Sigma,\beta)d_0^2\leq d_1\leq \frac{1}{16\tilde\lambda_+^2\tilde z}
\end{equation}
for $\beta\leq \tilde z\leq (2\sigma_l\tau)^{\frac{1-\vartheta}{2}}$, and $\tau_0\gg1$. This can be achieved if $\varrho$ defined in \eqref{eq varrho def} satisfies $ -1+\varrho<\frac{-1+\vartheta}{2}$, i.e. $ \varrho<\frac{1+\vartheta}{2}.$
Since we can take $\vartheta\in(0,\frac{1}{2})$ in \eqref{eq theta def}, then \eqref{eq varrho def} implies $\varrho<\vartheta<\frac{1}{2}< \frac{1+\vartheta}{2}$ is satisfies automatically. Plugging \eqref{eq J1}, \eqref{eq J2}, and \eqref{eq J3} into \eqref{eq F+t nu}, we get
\begin{equation}
   J\geq d_1\lambda_+^{-2} \frac{1}{16\tilde z}+C_1(\Sigma)(\tilde\lambda_+'-c_l\tau^{-1}\tilde\lambda_+)\tilde z^{\alpha}\omega_1\geq 0
\end{equation}
for $(\tilde z,\theta)\in S_{k,+,(2\sigma_l\tau)^{\frac{1-\vartheta}{2}}}\setminus S_{\kappa,+,\beta}$, and $\tau_0\gg 1$ since 
\begin{align*}
    &d_1\geq C(\beta)\max \{\tau^{-1},|\tilde\lambda_+'|\}\max_{\Sigma}\omega,\quad \alpha<-1
\end{align*}
for $\beta\leq \tilde z\leq (2\sigma_l\tau)^{\frac{1-\vartheta}{2}}$, and $\tau_0\gg 1$ large. 

For the case $(\tilde z,\theta)\in S_{\kappa,+,\beta}$, we note that $\phi_1$ is strictly positive on $S_{\kappa,+,2\beta}$ and $\lambda_{1,\beta}>0$ since $S_{k,+}$ is locally stable. This implies that 
\begin{equation}
    \lambda_{1,\beta}\phi_1\geq \varepsilon_1(\beta)>0\text{ on }S_{\kappa,+,\beta}.
\end{equation} 
Moreover, by the definition of $\lambda_+,d_0,d_1$, we have
\begin{equation}
    d_0'-c_l\tau^{-1}d_0+\frac{1}{2}d_0\tilde\lambda_+^{-2}\lambda_{1,\beta}>0.
\end{equation}
Thus, if we choose $\lambda_+-1, d_0,d_1$ are sufficiently small such that $\langle \nu_{S_{\kappa,+}},\hat\nu^+\rangle\geq \frac{1}{2}$, then we have
\begin{equation}\label{eq J1 1}
    J_1\geq \frac{1}{4}d_0\tilde\lambda_+^{-2}\lambda_{1,\beta}\phi_1\geq \frac{1}{4}d_0\tilde\lambda_+^{-2}\varepsilon_1(\beta).
\end{equation}
on $S_{\kappa,+,\beta}$. On the other hand, since 
\begin{equation}
    |\psi|_{C^0(S_{\kappa,+,\beta})}+|\phi_1|_{C^2(S_{\kappa,+,\beta})}\leq C(\beta)
\end{equation}
on $S_{\kappa,+,\beta}$, we have
\begin{equation}\label{eq J2 1}
    J_2\geq C(\beta) (\tilde\lambda_+'-c_l\tau^{-1}\tilde\lambda_+),
\end{equation}
\begin{equation}\label{eq J3 1}
    J_3\geq -C(\beta)(d_1+d_0^2)
\end{equation}
on $S_{\kappa,+,\beta}$, for $\tau_0$ large. Thus
\begin{equation}
    J\geq \frac{1}{4}d_0\tilde\lambda_+^{-2}\varepsilon_1(\beta)-C(\beta)(c_l\tau^{-1}\tilde\lambda_+-\tilde\lambda_+'+d_1+d_0^2)\geq 0
\end{equation}
on $S_{\kappa,+,\beta}$, since
\begin{equation}
    d_0\geq \frac{4C(\beta)}{\varepsilon_1(\beta)}(c_l\tau^{-1}\tilde\lambda_+-\tilde\lambda_+'+d_1+d_0^2),\quad 1\leq \lambda_+\leq 2,
\end{equation}
for $C(\beta)$ large, and $\tau_0$ large (depending on $\beta$).

Now we prove \eqref{eq tip inner C0 est hat}. For this purpose, we only need to estimate the lower bound of $\hat w^{-}$ and upper bound of $\hat w^+$ which are the profile function of $\hat F^-$ and $\hat F^+$ over $S_{\kappa,+}$, respectively. For $\hat w^{-}$, we note from the definition of $\hat F^-$ in \eqref{eq Fhat - def}, \eqref{eq lambda - def}, \eqref{eq bound k diff}, and \eqref{eq psik asym}, that
\begin{equation}
    |\hat w^{-}( z,\theta,\tau)|\leq C(\Sigma) (1-\lambda_-^{\frac{1}{1-\alpha}})\psi_{\kappa}(z,\theta)\leq C(\Sigma) \beta^{-\frac{{\tilde \alpha} }{4}}(\frac{\tau}{\tau_0})^{-\varrho}z^\alpha
\end{equation}
for $z\geq \beta\geq (\lambda_-\kappa)^{\frac{1}{1-\alpha}}R_s, \theta\in\Sigma,\tau_0\leq\tau\leq \tau^\circ$, for $\beta\gg 1$ large (depending on $R_s$). Inside $S_{\kappa,+,\beta}$, we have
\begin{equation}
     |\hat w^{-}( \tilde z,\theta,\tau)|\leq C(\Sigma) (1-\lambda_-^{\frac{1}{1-\alpha}})\psi_{\kappa}(\tilde z,\theta)\leq C_1(\Sigma) \beta^{-\frac{{\tilde \alpha} }{4}}(\frac{\tau}{\tau_0})^{-\varrho}
\end{equation} 
Here $ C_1(\Sigma)\geq \max_{S_{\kappa,+,\beta}}\psi_{\kappa}$ can be chosen so that it only depends on $\Sigma$ and independent of $\kappa,\beta$ by \eqref{eq psik asym} and \eqref{eq bound k diff}.

Similarly, from the definition of $\hat F^+$ in \eqref{eq Fhat + def}, \eqref{eq lambda + def}, \eqref{eq bound k diff}, and \eqref{eq psik asym}, we know that 
\begin{equation}
  |\hat w^{+}( z,\theta,\tau)|\leq C(\Sigma) (\lambda_+^{\frac{1}{1-\alpha}}-1)\psi_{\kappa}(z,\theta)+d_0\phi_1\leq C(\Sigma) (\frac{\tau}{\tau_0})^{-\varrho}(\beta^{-\frac{{\tilde \alpha} }{4}}\psi_{\kappa}+C(\beta)(2\sigma_l\tau)^{-1+\varrho}\phi_1)\leq C_1(\Sigma)\beta^{-\frac{{\tilde \alpha} }{4}}(\frac{\tau}{\tau_0})^{-\varrho}z^\alpha
\end{equation}
for $z\geq \beta\geq (\lambda_+\kappa)^{\frac{1}{1-\alpha}}R_s, \theta\in\Sigma,\tau_0\leq\tau\leq \tau^\circ$ (note $d_1>0$), for $\beta\gg 1$ large (depending on $R_s$), and $\tau_0$ large (depending on $\beta$) such that 
\begin{align*}
    (2\sigma_l\tau_0 )^{-1+\varrho}C(\beta)\max_{S_{\kappa,+,2\beta}}(\phi_1\psi^{-1})\leq \beta^{-\frac{{\tilde \alpha} }{4}}.
\end{align*}
Since $\phi_1=0$ on $S_{\kappa,+,2\beta}^c$, such a $\tau_0$ exists. Inside $S_{\kappa,+,\beta}$, we have
\begin{equation}
    |\hat w^{+}( \tilde z,\theta,\tau)|\leq C(\Sigma) (\lambda_+^{\frac{1}{1-\alpha}}-1)\psi_{\kappa}(\tilde z,\theta)+d_0\phi_1\leq C(\Sigma) (\frac{\tau}{\tau_0})^{-\varrho}(\beta^{-\frac{{\tilde \alpha} }{4}}\psi_{\kappa}+C(\beta)(2\sigma_l\tau)^{-1+\varrho}\phi_1)\leq C_1(\Sigma)\beta^{-\frac{{\tilde \alpha} }{4}}(\frac{\tau}{\tau_0})^{-\varrho},
\end{equation}
for $\tau_0$ large (depending on $\beta$) such that 
$$(2\sigma_l\tau_0 )^{-1+\varrho}C(\beta)\max_{S_{\kappa,+,\beta}}\phi_1\leq \beta^{-\frac{{\tilde \alpha} }{4}}.$$
From the above four inequalities, we get \eqref{eq tip inner C0 est 0}, \eqref{eq tip inner C0 est hat}.

\subsection{Noncompact outer region}
We use the gradient and curvature estimates in \cite{EH} to prove the first and second derivative estimates in the region $\{(x,\theta)\in \mC|(x,\theta)\in [\frac{1}{5}\rho,\infty)\times \Sigma\}$.
\begin{proposition}\label{prop C2 estim outer noncom first}
If $0<\rho\ll 1$ (depending on $n,\Sigma,\Lambda$) and $|t_0|\ll \rho^2$ (depending on $n,\Sigma$), there holds 
\begin{equation}\label{eq C0 est outer noncom}
    |u(x,\theta,t)-u(x,\theta,t_0)|\leq C(n,\Sigma)\sqrt{t-t_0},
    \end{equation}
and
\begin{align*}
    |\nabla_{(x,\theta)} u(x,\theta,t)|\lesssim 1,\\
    |\nabla^2_{(x,\theta)}u(x,\theta,t)|\leq \frac{C(n,\Sigma)}{\sqrt{t-t_0}},
\end{align*}
in the region $\{(x,\theta)\in \mC|(x,\theta)\in [\frac{1}{5}\rho,\infty)\times \Sigma\}$.
\end{proposition}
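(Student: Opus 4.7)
The strategy is to apply the interior gradient and curvature estimates of Ecker--Huisken \cite{EH} for graphical mean curvature flow, starting from the initial bounds \eqref{eq ini out noncom reg}. On the slightly larger region $\{x \geq \rho/6\}$ the cone $\mathcal C$ is smooth, and the initial profile $u(\cdot,t_0)$ has small gradient ($|\nabla u|\leq \varepsilon_0$) and bounded second derivatives. The scale invariance $\lambda\mathcal C = \mathcal C$ together with compactness of the link $\Sigma$ means that, locally, $\mathcal C$ in this region is uniformly graphical over a tangent hyperplane with uniformly controlled geometry at each scale. Consequently, after appropriate localization Ecker--Huisken's estimates should apply with constants depending only on $n$ and $\Sigma$.

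First I would derive the gradient bound $|\nabla_{(x,\theta)} u(x,\theta,t)| \lesssim 1$ by representing $\Sigma_t$ locally as a graph over a tangent hyperplane of $\mathcal C$ and applying the gradient maximum principle of Ecker--Huisken. Since the initial gradient is bounded by $\varepsilon_0$ and the background geometry of $\mathcal C$ is uniformly controlled outside $B(O,\rho/6)$, this bound propagates for all $t_0\leq t\leq t^\circ$. Next, with the gradient controlled, I would apply the interior curvature estimate from \cite{EH}, which yields $|A(\cdot,t)|^2 \leq C(n,\Sigma)/(t-t_0)$ at every point of $\Sigma_t$ whose nearest point on $\mathcal C$ lies in $\{x \geq \rho/5\}$, using the strip $\{\rho/6\leq x\leq \rho/5\}$ and the noncompact end as a buffer. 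Translating back through the graphical representation and the gradient bound gives $|\nabla^2 u(x,\theta,t)| \leq C(n,\Sigma)/\sqrt{t-t_0}$.

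Finally, the flow equation \eqref{eq flow outer} together with the gradient bound yields $|u_t| = |\bar H/(\nu\cdot\bar\nu)| \leq C|A| \leq C(n,\Sigma)/\sqrt{t-t_0}$, and integrating from $t_0$ to $t$ produces the stated $C^0$ estimate $|u(x,\theta,t)-u(x,\theta,t_0)|\leq C(n,\Sigma)\sqrt{t-t_0}$. The main technical obstacle is obtaining constants independent of $\rho$: this relies on exploiting the dilation invariance of the cone, so that rescaling any point $p$ with $x(p)\geq \rho/5$ to unit distance from the apex puts the flow into a fixed smooth region of bounded geometry, where Ecker--Huisken's estimates apply uniformly. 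Together with the smallness of the initial gradient in \eqref{eq ini out noncom reg}, this closes the argument with constants depending only on $n$ and $\Sigma$.
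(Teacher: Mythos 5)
Your proposal follows essentially the same route as the paper: localize near a point of the cone, apply Ecker--Huisken's gradient estimate with respect to a fixed normal direction $\nu^*$ of $\mC$ and then their interior curvature estimate to get $|\bar A|\le C(n,\Sigma)/\sqrt{t-t_0}$ (using $|t_0|\ll\rho^2$ to absorb the $1/(\varepsilon_1\rho)$ term), integrate the speed to obtain the $C^0$ bound, and translate back through the normal-graph formulas to get the bounds on $\nabla u$ and $\nabla^2u$. The only (minor, fixable) difference is the order of the last steps: the paper derives the $C^0$ estimate first by integrating the position vector $|\partial_tF|=|\bar H|$, and only afterwards converts the Ecker--Huisken graphical bound into $|\nabla u|\lesssim1$, because that conversion via $\nu\cdot\bar\nu=(1-u_iu_j\bar g^{ij})^{1/2}$ requires the bound on $|u|/x$ (hence on $u^2|A|^2$) that the $C^0$ estimate supplies.
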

\begin{remark}
    \eqref{eq C0 est outer noncom} implies the first equation in \eqref{eq u C2 sum}.
\end{remark}
\begin{proof}
   Recall by the admissible condition, $\Sigma_t:=\Sigma_t^{\mathbf a}$ can we written as a normal graph over $\mC$ outside $B(O,\frac{1}{6}\rho)$ with profile function $u(x,\theta,t)$ satisfying 
    \begin{equation}\label{eq ux quo ini}
        \max_{(x,\theta)\in [\frac{1}{6}\rho,\infty)\times \Sigma}\{x^{-1}|u(x,\theta,t_0)|,|\nabla_{(x,\theta)} u(x,\theta,t_0)|\}\leq \varepsilon_0, 
    \end{equation}
   for some $\varepsilon_0(\Sigma)$ small by \eqref{eq ini out noncom reg}. We thus have
    \begin{equation}\label{eq ini nor prod}
    \begin{aligned}
        \frac{1}{2}g_{ij}\leq \bar{g}_{ij}(t_0)\leq 2g_{ij},\\
        \nu(x,\theta)\cdot\bar\nu(x,\theta,t_0)\geq\frac{99}{100}. 
    \end{aligned}
    \end{equation}
    for any $(x,\theta)\in \mC\cap B(O,\frac{1}{6}\rho)^c$ by \eqref{eq nor C nor gra eq}. Now, fix a point $p^*=(x^*,\theta^*)\in \mC\cap B(O,\frac{1}{5}\rho)^c$, let $\nu^*:=\nu_{\mC}(p^*)$, $q^*=F(p^*,t_0)$, $\bar\nu^*=\bar\nu(q^*)$. Then for any $(x,\theta)\in B(p^*,100\varepsilon_1 x^*)\cap \mC$ ($\varepsilon_1\geq \varepsilon_0$), we have
    \begin{equation}\label{eq nor dif cone}
    \begin{aligned}
        \nu^*\cdot \nu(x,\theta)\geq \frac{99}{100},
    \end{aligned}
    \end{equation}
    and 
    $$x\geq x^*-100\varepsilon_1 x^*\geq \frac{99}{100}x^*
    >\frac{1}{6}\rho$$ 
    if $0<\varepsilon_1<\frac{1}{10^4}$ small. Thus, $(x,\theta)\in \mC\cap B(O,\frac{1}{6}\rho)^c$. Moreover, by \eqref{eq ini nor prod}, \eqref{eq nor dif cone}, we have
    \begin{equation}\label{eq nor prod image domain}
        \bar\nu(x,\theta,t_0)\cdot \nu^*\geq \frac{1}{2}.
    \end{equation}
    for any $(x,\theta)\in B(p^*,100\varepsilon_1 x^*)\cap \mC$ if $0<\varepsilon_1<\frac{1}{10^4}$ small. 
    
    We claim that for any $F(x,\theta,t_0)\in\Sigma_{t_0}\cap B(q^*,\varepsilon_1\rho)$, we have $(x,\theta)\in B(p^*,100\varepsilon_1 x^*)\cap \mC$. We prove the claim by a contradiction argument. First note, for any $(x,\theta)\in B(p^*,100\varepsilon_1 x^*)^c\cap \mC$, by \eqref{eq ux quo ini}, we have
    \begin{align*}
     |F(x,\theta,t_0)-q^*|      
        \geq |(x,\theta)-(x^*,\theta^*)|-|u(x,\theta,t_0)|-|u(x^*,\theta^*,t_0)|\geq  |(x,\theta)-(x^*,\theta^*)|-\varepsilon_0(x+x^*).
    \end{align*}    
   Suppose, $F(x,\theta,t_0)\in\Sigma_{t_0}\cap B(q^*,\varepsilon_1\rho)$, and $(x,\theta)\in B(p^*,100\varepsilon_1 x^*)^c\cap \mC$. If $2x^*\leq x$, we have $|(x,\theta)-(x^*,\theta^*)|\geq |x-x^*|\geq \frac{1}{2}x$, and
     \begin{align*}
        \varepsilon_1\rho&\geq |F(x,\theta,t_0)-q^*|\geq \frac{1}{2}x-\varepsilon_0x-\varepsilon_0x^*\geq \frac{1}{4} x  \geq \frac{1}{2}x^*\geq \frac{1}{10}\rho,
    \end{align*}   
 if we take $\varepsilon_0\leq \frac{1}{8}$, which is a contradiction since $\varepsilon_1<\frac{1}{10^4}$. If $2x^*\geq x$, then  
   \begin{align*}
       \varepsilon_1\rho&\geq |F(x,\theta,t_0)-q^*|\geq 100\varepsilon_1 x^*-2\varepsilon_0x^*-\varepsilon_0x^* = 97\varepsilon_1 x^*\geq \frac{97}{5}\varepsilon_1\rho
    \end{align*}   
   since $\varepsilon_1\geq \varepsilon_0$, which is again a contradiction. Thus, the claim is true. Then \eqref{eq nor prod image domain} implies that for any $q\in \Sigma_{t_0}\cap B(q^*,\varepsilon_1\rho)$, we have
    \begin{equation}\label{eq ini normal fixed} 
        \bar\nu(q)\cdot \nu^*\geq \frac{1}{2}.
    \end{equation}
    By gradient estimate in \cite{EH}, we have
    \begin{equation}
    (\bar\nu (F_t)\cdot \nu^*)^{-1}\leq \left(1-\frac{|F_t-q^*|^2+2n(t-t_0)}{(\varepsilon_1\rho)^2}\right)^{-1}\sup_{\Sigma_{t_0}\cap B(q^*,\varepsilon_1\rho)}(\bar\nu(F_{t_0})\cdot\nu^*)^{-1}
    \end{equation}
    for any $F_t\in \Sigma_t\cap B(q^*,\sqrt{(\varepsilon_1\rho)^2-2n(t-t_0)})$. Consequently, 
    \begin{equation}
        (\bar\nu (F_t)\cdot \nu^*)^{-1}\leq (1-\frac{1}{4})\cdot 2=\frac{3}{2}
    \end{equation}
    for any $F_t\in \Sigma_t\cap B(q^*,\sqrt{(\frac{\varepsilon_1\rho}{2})^2-2n(t-t_0)})$. It follows, by the curvature estimates in \cite{EH}, that 
    \begin{equation}
        |\bar A(F_t)|\leq C(n,\Sigma)(\frac{1}{\sqrt{t-t_0}}+\frac{1}{\varepsilon_1\rho})
    \end{equation}
    for any $F_t\in \Sigma_t\cap B(q^*,\sqrt{(\frac{\varepsilon_1\rho}{3})^2-2n(t-t_0)})$. By choosing $|t_0|\ll\varepsilon_1^2\rho^2$ (depending on $n,\Sigma$), we may assume that
    \begin{align*}
        \sqrt{(\frac{\varepsilon_1\rho}{4})^2-2n(t-t_0)}\geq \frac{\varepsilon_1\rho}{5}
    \end{align*}
    and 
    \begin{equation}\label{eq out sec fund norm}
         |\bar A(F_t)|\leq \frac{C(n,\Sigma)}{\sqrt{t-t_0}}.
    \end{equation}
    for any $F_t\in \Sigma_t\cap B(q^*,\frac{\varepsilon_1\rho}{5})$, $t_0\leq t\leq t^\circ$.
    
    Now, for each $(x,\theta)\in \mC$ with $x\geq \frac{1}{5}\rho$, let $t_{(x,\theta)}$ be the maximal time so that
    \begin{align*}
        F(x,\theta,t)\in \Sigma_t\cap B(F(x,\theta,t_0),\frac{\varepsilon_1\rho}{5})
    \end{align*}
    for all $t_0\leq t\leq t_{(x,\theta)}$. Then we have,
    \begin{align*}
        |\partial_tF(x,\theta,t)|=|\bar H(F(x,\theta,t))|\leq \frac{C(n,\Sigma)}{\sqrt{t-t_0}},
    \end{align*}
   by \eqref{eq out sec fund norm}. Integrating with respect to $t$, we get
    \begin{equation}
        |F(x,\theta,t)-F(x,\theta,t_0)|\leq C(n,\Sigma)\sqrt{t-t_0}
    \end{equation}
    for all $t_0\leq t\leq t_{(x,\theta)}$. Thus, if $|t_0|\ll 1$ (depending on $n,\Sigma$), we may assume that $t_{(x,\theta)}=t^\circ$ and
    \begin{equation}
        d_H(F_t\setminus B(O,\frac{\rho}{5}),F_{t_0}\setminus B(O,\frac{\rho}{5}))\leq C(n,\Sigma)\sqrt{t-t_0}
    \end{equation}
    for all $t_0\leq t\leq t^\circ$, where $d_H$ is the Hausdorff distance in $\mathbb R^{n+1}$. It follows from \eqref{eq ini out noncom reg} and the above inequaltiy that
    \begin{equation}\label{eq C0 outer noncom}
    \begin{aligned}
        &|u(x,t)-u(x,t_0)|\leq C(n,\Sigma)\sqrt{t-t_0},\\
        &-\varepsilon_0x-C(n,\Sigma)\sqrt{t-t_0}\leq u(x,t)\leq \varepsilon_0x+C(n,\Sigma)\sqrt{t-t_0}
    \end{aligned}
    \end{equation}
    for $(x,\theta)\in [\frac{\rho}{5},\infty)\times \Sigma$, $t_0\leq t\leq t^\circ$. Plugging this into \eqref{eq bar gij u}, we get 
    \begin{equation}\label{eq hat g low outer noncom}
        \bar g_{ij}\geq g_{ij}-(\varepsilon_0 x+C(n)\sqrt{t-t_0})\frac{C(\Sigma)}{x}\geq g_{ij}-(\varepsilon_0+C(n)\frac{\sqrt{t-t_0}}{\rho})C(\Sigma)\geq c_1(n,\Sigma)g_{ij}    
    \end{equation}
for some $c_1(n,\Sigma)>0$ if $|t_0|\leq \frac{\varepsilon_0\rho}{C(n,\Sigma)}$. So, $\bar g_{ij}$ is positive definite. In particular, $\bar g_{ij}$ is invertible. Furthermore, by taking $x=x^*,\theta=\theta^*$ in \eqref{eq ini normal fixed} and replacing $t_0$ by $t$, we obtain from \eqref{eq nor C nor gra eq} that
    \begin{equation}\label{eq V lower}
        (1-u_i(p^*,t)u_j(p^*,t)\bar g^{ij})^{\frac{1}{2}}=\nu^*\cdot \nu (F(p^*,t))\geq \delta>0.
    \end{equation}
    Thus,
    \begin{equation}\label{eq trac grad}
        \frac{|\nabla u(p^*,t)|^2}{n+u(p^*,t)^2|A(p^*)|^2+|\nabla u(p^*,t)|^2}=\frac{|\nabla u(p^*,t)|^2}{tr(g^{-1}\bar g)}\leq 
        \lambda_{min}(g^{-1}\bar g)|\nabla u(p^*,t)|^2\leq \bar g^{ij}u_i(p^*,t)u_j(p^*,t)\leq 1-\delta^2.
    \end{equation}
    By \eqref{eq C0 outer noncom},
    $$u^2|A|^2\leq C(n,\Sigma)\frac{u^2}{x^2}\leq \varepsilon_0^2+C(n,\Sigma)$$ 
    for $(x,\theta)\in [\frac{\rho}{5},\infty)\times \Sigma$, \eqref{eq trac grad} implies that
    \begin{equation}\label{eq gra outer noncom}
        |\nabla u(p^*,t)|^2\leq C(n,\Sigma).
    \end{equation}
    Since $(x^*,\theta^*)$ is arbitrary, \eqref{eq gra outer noncom} holds for $(x,\theta)\in [\frac{\rho}{5},\infty)\times \Sigma$, $t_0\leq t\leq t^\circ$. From this and \eqref{eq bar gij u}, we have
    \begin{equation}\label{eq hat g upp outer noncom}
        \bar g_{ij}\leq C_1(n,\Sigma)g_{ij}.
    \end{equation}
   for some $C_1(n,\Sigma)>0$. Combined with \eqref{eq hat g low outer noncom}, there are exists $c_1(n,\Sigma),C_1(n,\Sigma)>0$, such that
   \begin{equation}\label{eq gij bar gij equi}
       c_1g_{ij}\leq \bar g_{ij}\leq C_1g_{ij}
   \end{equation}
   for $(x,\theta)\in [\frac{\rho}{5},\infty)\times \Sigma$, $t_0\leq t\leq t^\circ$.
   
   For the second derivative, we note that by \eqref{eq out sec fund norm}, and \eqref{eq gij bar gij equi}, 
   \begin{equation}\label{eq sec deri sec fund outer noncom 1}
        \frac{C(n,\Sigma)}{t-t_0}\geq |\bar A(F_t)|_{\bar g}^2=\bar g^{ij}\bar g^{pq}\bar h_{ip}\bar h_{jq}\geq \frac{1}{C_1^2} g^{ij}g^{pq}\bar h_{ip}\bar h_{jq}
   \end{equation}
   for $t_0\leq t\leq t^\circ$, $(x,\theta)\in [\frac{\rho}{5},\infty)\times \Sigma$. On the other hand, by \eqref{eq bar hij u}, \eqref{eq V lower} and \eqref{eq gra outer noncom} (we can replace $(x^*,\theta^*)$ by $(x,\theta)$ there since $(x^*,\theta^*)$ is arbitrary), we get
   \begin{equation}\label{eq sec deri sec fund outer noncom 2}
     \frac{1}{C_1^2} g^{ij}g^{pq}\bar h_{ip}\bar h_{jq}
        \geq
       \frac{1}{C_1^2} g^{ij}g^{pq}(1-u_iu_j\bar g^{ij})u_{ip}u_{jq}-C(n,\Sigma)|\nabla^2 u|-C(n,\Sigma)
        \geq \frac{\delta^2}{C_1^2}|\nabla^2 u|^2-C(n,\Sigma)|\nabla^2u|-C(n,\Sigma).
   \end{equation}
   Combining \eqref{eq sec deri sec fund outer noncom 1} and \eqref{eq sec deri sec fund outer noncom 2}, we get
    \begin{equation}
        |\nabla^2 u|\leq \frac{C(n,\Sigma)}{\sqrt{t-t_0}}
    \end{equation}
    for $(x,\theta)\in [\frac{\rho}{5},\infty)\times \Sigma$, $t_0\leq t\leq t^\circ$.
\end{proof}

\section{Higher order estimates}\label{sec higher est} 
In this section we are going to prove Proposition \ref{prop higher est}. The estimates is based on the $C^0$ estimate in Proposition \ref{prop C0 est} and maximum principle. First, we prove the $C^2$ estimates, then the higher estimates can be obtained via Schauder theory. We consider the noncompact outer region first.
\subsection{$C^2$ estimates for the noncompact outer region}
\begin{lemma}\label{lem imp gra est outer noncom}
    If $0<\rho\ll 1$ (depending on $n,\Lambda$) and $|t_0|\ll 1$ (depending on $n,\rho$), there holds 
    \begin{equation}\label{eq impro grad est noncom outer} 
        \sup_{(x,\rho)\in [\frac{1}{4}\rho,\infty)\times \Sigma}|\nabla_{(x,\theta)} u(x,\theta,t)|\leq\sup_{(x,\rho)\in [\frac{1}{5}\rho,\infty)\times \Sigma}|\nabla_{(x,\theta)} u(x,\theta,t_0)|+C(n,\Sigma,\rho)\sqrt{t-t_0}
    \end{equation}
\end{lemma}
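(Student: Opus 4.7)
The plan is to control $|\nabla u|$ via the geometric angle function $v := (\bar\nu \cdot \nu)^{-1}$, where $\nu$ is the unit normal to the cone $\mC$ and $\bar\nu$ is the unit normal to $\Sigma_t$; since $\Sigma_t$ is a normal graph over $\mC$, one has $v^2 = 1 + \bar g^{ij} u_i u_j$ modulo terms involving $u$ and $|A_\mC|$, so pointwise bounds on $v$ translate to pointwise bounds on $|\nabla u|$ on the region $x\geq \rho/5$ where $|A_\mC|$ is bounded.

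First I would derive the evolution equation for $v$ along $\Sigma_t$ in the spirit of Ecker--Huisken. Because $\mC$ is a minimal cone, the computation should yield
\begin{equation*}
    (\partial_t - \Delta_{\Sigma_t})\, v = -v|\bar A|^2 - 2v^{-1}|\nabla_{\Sigma_t} v|^2 + R(x,\theta,t),
\end{equation*}
where $R$ collects the commutator terms arising from the curvature of $\mC$ (as opposed to the flat-ambient setting of Ecker--Huisken). On the region $x \geq \rho/5$ the cone is uniformly smooth, so we should have a bound $|R| \leq C(n,\Sigma,\rho)(v + v|\bar A|)$.

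Second, I would apply a cutoff maximum principle to handle the noncompactness and the loss of an interior buffer from $[\rho/5,\infty)$ to $[\rho/4,\infty)$. Pick a cutoff $\varphi = \varphi(x)$ with $\varphi \equiv 1$ on $[\rho/4,\infty)$ and $\supp \varphi \subset [\rho/5,\infty)$. At an interior spatial maximum of $v\varphi$ the second-order terms are nonpositive, and using the curvature estimate $|\bar A| \leq C/\sqrt{t-t_0}$ from Proposition \ref{prop C2 estim outer noncom first} one obtains
\begin{equation*}
    \frac{d}{dt}(v\varphi)_{\max}(t) \leq C(n,\Sigma,\rho)\left((v\varphi)_{\max}(t) + \frac{1}{\sqrt{t-t_0}}\right).
\end{equation*}
The extra contributions where $\varphi$ varies live at $x\in[\rho/5,\rho/4]$ and are controlled using the initial bound \eqref{eq ini out noncom reg} together with the $C^0$ closeness \eqref{eq C0 est outer noncom}, which keeps $v(\cdot,t)$ close to $v(\cdot,t_0)$ on this buffer region.

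Finally, Gronwall's inequality integrated from $t_0$ to $t$ (with $|t-t_0|\ll 1$) yields
\begin{equation*}
    (v\varphi)_{\max}(t) \leq (v\varphi)_{\max}(t_0) + C(n,\Sigma,\rho)\sqrt{t-t_0},
\end{equation*}
and converting back to $|\nabla u|$ via $v^2 = 1 + |\nabla u|^2_{\bar g} + O(u|A_\mC|)$ produces \eqref{eq impro grad est noncom outer}. The principal obstacle is the cone-geometry remainder $R$: I must verify that it is at worst linear in $v$ and $v|\bar A|$ with coefficients bounded on $x \geq \rho/5$, so that, against the integrable singularity $|\bar A|\leq C/\sqrt{t-t_0}$, it integrates exactly to the claimed $\sqrt{t-t_0}$ rate rather than producing a divergence or a worse power, and so that the cutoff gradient terms $\bar g^{ij}\partial_i\varphi\, \partial_j v$ at the buffer $[\rho/5,\rho/4]$ do not spoil this rate either.
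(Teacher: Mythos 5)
Your route is sound but genuinely different from the paper's. The paper never touches the angle function along the moving hypersurface: it works entirely in the graphical gauge on the fixed cone, applying the maximum principle to $\eta|\nabla u|^2$ using the evolution identity \eqref{eq gra diff ine cut app} from Lemma \ref{lem gra diff ine cut app}, and feeding in the bounds $|G_{ijk}|\leq C$, $|\tilde V_{ijk}|\leq C/\sqrt{t-t_0}$, $|\nabla^2u|\leq C/\sqrt{t-t_0}$ from Proposition \ref{prop C2 estim outer noncom first} so that the forcing is $O(1/\sqrt{t-t_0})$ and integrates to the claimed rate. Your Ecker--Huisken-style argument for $v=(\bar\nu\cdot\nu_\mC)^{-1}$ relies on exactly the same key input ($|\bar A|\leq C/\sqrt{t-t_0}$) and the same cutoff-plus-Gronwall skeleton, but avoids the long tensor computations of Appendix \ref{sec graph evo}. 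The remainder $R$ you worry about does have the structure you need: extending $\omega=\nu_\mC\circ P_\mC$ to a tubular neighborhood of $\mC$ on $x\geq\rho/5$ (where the cone has bounded geometry and $\Sigma_t$ stays in the neighborhood by \eqref{eq C0 est outer noncom}), one finds $|(\partial_t-\Delta_{\Sigma_t})\langle\bar\nu,\omega\rangle+|\bar A|^2\langle\bar\nu,\omega\rangle|\leq C(\rho)(1+|\bar A|)$ coming from $\nabla\bar\nu*\nabla\omega$, $\Delta_{\Sigma_t}\omega$ and $\partial_t\omega=D\omega\cdot(-\bar H\bar\nu)$, which is integrable against $\sqrt{t-t_0}$, so your Gronwall step closes.

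Two caveats. First, your cutoff only localizes at the inner edge $[\rho/5,\rho/4]$; on the noncompact end the supremum of $v\varphi$ need not be attained, so you must also cut off at an outer radius $R$ (with $|\nabla\varphi|^2/\varphi$ bounded, e.g.\ $\varphi=\tilde\varphi^4$ as in Lemma \ref{lem imp sec der est outer noncom}) and check the resulting bound is $R$-independent — the paper does precisely this. Relatedly, the $C^0$ closeness \eqref{eq C0 est outer noncom} does not control $v$ on the buffer; what you actually need there is the uniform gradient bound \eqref{eq gra outer noncom}. Second, converting back from $v$ to $|\nabla_{(x,\theta)}u|$ in the cone metric costs a factor $1+O(\varepsilon_0)$ (since $\bar g^{ij}u_iu_j=(1+O(\varepsilon_0))g^{ij}u_iu_j$) and, as with the paper's own square-root step, degrades $\sqrt{t-t_0}$ to $(t-t_0)^{1/4}$; so you obtain the lemma with leading constant $1+O(\varepsilon_0)$ rather than exactly $1$. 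This still suffices for the only place the lemma is used (the bound $|\nabla u|\leq 4\varepsilon_0$ in Lemma \ref{lem imp sec der est outer noncom}), but it is worth flagging that the literal statement is most directly reached by evolving $|\nabla u|^2$ itself.
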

for $t_0\leq t\leq t^\circ$.
\begin{remark}
    \eqref{eq impro grad est noncom outer} implies the second equation in \eqref{eq u C2 sum}.
\end{remark}
\begin{proof}
In the following proof, $C=C(n,\Sigma,\rho)$ is a positive constant depending on $n,\Sigma,\rho$, if there is no other clarifications. $C$ may change from line to line as before. 

Since $u$ satisfies the equation 
\eqref{eq flow outer}, \eqref{eq gra diff ine cut app} holds for any non-negative function $\eta$ independent of $t$. Since $\mC$ is a cone, by Proposition \ref{prop C2 estim outer noncom first}, we have
\begin{equation}\label{eq C2 estim outer noncom}
    \begin{aligned}
     &\max_{(x,\theta)\in [\frac{1}{5}\rho,\infty)\times \Sigma}|u||A|+|\nabla u|\leq \max_{(x,\theta)\in [\frac{1}{5}\rho,\infty)\times \Sigma}|u|x^{-1}+|\nabla u|\leq C,\\
        &\max_{(x,\theta)\in [\frac{1}{5}\rho,\infty)\times \Sigma}|\nabla^2u|\leq \frac{C}{\sqrt{t-t_0}},
    \end{aligned}
    \end{equation}
 if $|t_0|\ll \frac{1}{C(n,\Sigma,\rho)}$. This implies   
 \begin{equation}\label{eq Gijk tilde Vijk norm}
     |G_{ijk}|\leq C,|\tilde V_{ijk}|\leq \frac{C(n,\Sigma,\rho)}{\sqrt{t-t_0}},
 \end{equation}
  on $\mC\setminus B(O,\frac{1}{5}\rho)$, where $G_{ijk}$, $\tilde V_{ijk}$ are the tensor in \eqref{eq gra diff ine cut app}. On the other hand, for $R\geq 2$, we can choose $\eta(x)$ to be a smooth non-negative function so that $\eta=1$ on $(\frac{1}{4}\rho,R-1)$ and supported on $(\frac{1}{5}\rho,R)$, $|\eta_x|^2\eta^{-1}+|\eta_{xx}|\leq C(\rho)$, then
  \begin{equation}\label{eq cut off function norm}
    |\nabla \eta|^2\eta^{-1}+|\nabla^2\eta|\leq C(\rho)  
  \end{equation}
  by \eqref{eq z C2 norm}. Plugging \eqref{eq Gijk tilde Vijk norm}, \eqref{eq cut off function norm} into \eqref{eq gra diff ine cut app}, we get
  \begin{equation}\label{eq gra ine 1}
    \begin{aligned}
        &\frac{\partial \Big(\eta |\nabla u|^2\Big)}{\partial t}
        \leq  \bar g^{ij}(\eta |\nabla u|^2)_{ij}-\bar g^{ij}(2\eta_i |\nabla u|^2_j+\eta_{ij} |\nabla u|^2+2\eta u_{pi}u_{pj})\\
        &-\bar g^{ip}\bar g^{qj}(u_{ij}+V_{ij})\big((\eta|\nabla u|^2)_pu_q+(\eta|\nabla u|^2)_qu_p\big)+\frac{C}{\sqrt{t-t_0}}
    \end{aligned}
    \end{equation}
By Cauchy inequality, for any $\varepsilon>0$ 
\begin{align*}
    -2\eta_i|\nabla u |^2_j=-4\eta_iu_{k}u_{kj}\leq 2C_2(n)( \varepsilon \eta|\nabla^2u|^2+\varepsilon^{-1}|\nabla \eta|^2\eta^{-1}|\nabla u|^2).
\end{align*}
Take $\varepsilon=\frac{1}{C_2(n)}$, then we get 
\begin{align*}
    -\bar g^{ij}(2\eta_i |\nabla u|^2_j+\eta_{ij} |\nabla u|^2+2\eta u_{pi}u_{pj})\leq (2C_2(n)^2|\nabla \eta|^2\eta^{-1}+|\bar g^{ij}|)|\nabla  u|^2\leq C |\nabla u|^2\leq \frac{C}{\sqrt{t-t_0}}.
\end{align*}
by \eqref{eq C2 estim outer noncom} and \eqref{eq cut off function norm}, if $|t_0|<1$. Plugging this into \eqref{eq gra ine 1}, and using maximum principle, we get
    \begin{equation}
        \partial_t(\max_{(x,\theta)\in C} (\eta |\nabla u|^2))\leq \frac{C}{\sqrt{t-t_0}},
    \end{equation}
   or
    \begin{equation}
        \max_{(x,\theta)\in C}(\eta |\nabla u|^2)(t)\leq  \max_{(x,\theta)\in C}(\eta |\nabla u|^2)(t_0)+C\sqrt{t-t_0}.
    \end{equation}
    Likewise, we have
    \begin{equation}
        \min_{(x,\theta)\in C}(\eta |\nabla u|^2)(t)\geq  \min_{(x,\theta)\in C}(\eta |\nabla u|^2)(t_0)-C\sqrt{t-t_0}.
    \end{equation}
    This yields \eqref{eq impro grad est noncom outer}.    
\end{proof}

\begin{lemma}\label{lem imp sec der est outer noncom}
    If $0<\rho\ll 1$ (depending on $n,\Lambda$) and $|t_0|\ll 1$ (depending on $n,\rho$), there holds 
    \begin{equation}\label{eq imp sec der est outer noncom}
        \sup_{(x,\rho)\in [\frac{1}{3}\rho,\infty)\times \Sigma} |\nabla^2_{(x,\theta)}u(x,.\theta,t)|\leq 2\sup_{(x,\rho)\in [\frac{1}{4}\rho,\infty)\times \Sigma}|\nabla^2_{(x,\theta)}u(x,\theta,t_0)|+C(n,\rho,\Sigma)
    \end{equation}
    for $t_0\leq t\leq t^\circ$.
\end{lemma}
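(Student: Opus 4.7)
The plan is to adapt the Bernstein-type argument from Lemma \ref{lem imp gra est outer noncom} to the Hessian of $u$. First I would derive an evolution inequality for $|\nabla^2_{(x,\theta)} u|^2$ by differentiating \eqref{eq flow outer} twice and contracting against $u^{pq}$. Writing the graphical MCF equation in quasilinear form $u_t = \bar g^{ij}(Du) u_{ij} + (\text{lower order in }\nabla^2 u)$ and commuting covariant derivatives on $\mC$ (whose curvature is controlled on $\{x\geq \rho/4\}$ since $|A_\mC|\leq C(\Sigma)/\rho$ there), one obtains a Bochner-type inequality
\begin{equation*}
\partial_t |\nabla^2 u|^2 \leq \bar g^{ij}(|\nabla^2 u|^2)_{ij} - 2\bar g^{ij}\bar g^{pk}\bar g^{ql}u_{pqi}u_{kl j} + R,
\end{equation*}
where $R$ is polynomial in $u, \nabla u, \nabla^2 u$ plus linear cross-terms with $\nabla^3 u$. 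Using the gradient bound $|\nabla u|\leq C(n,\Sigma,\rho)$ from Lemma \ref{lem imp gra est outer noncom} and the $C^0$ estimate \eqref{eq C0 est outer noncom}, together with Cauchy--Schwarz to absorb $\nabla^3 u \cdot \nabla^2 u$ cross-terms into $\bar g^{ij}\bar g^{pk}\bar g^{ql} u_{pqi}u_{klj}$, one gets the reduced inequality
\begin{equation*}
\partial_t |\nabla^2 u|^2 \leq \bar g^{ij}(|\nabla^2 u|^2)_{ij} + C(n,\Sigma,\rho)\bigl(1+|\nabla^2 u|^2 + |\nabla^2 u|^4\bigr),
\end{equation*}
where the $|\nabla^2 u|^4$ term reflects the intrinsic $|A|^4$ nonlinearity of the MCF curvature evolution.

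Next, I would choose a cutoff $\eta(x)$ supported on $(\tfrac{1}{4}\rho, R)$, equal to $1$ on $(\tfrac{1}{3}\rho, R-1)$, with $|\nabla\eta|^2\eta^{-1}+|\nabla^2\eta|\leq C(\rho)$ as in the previous lemma, and consider $P:=\eta|\nabla^2 u|^2$. Multiplying the above inequality by $\eta$, using the cutoff bounds and Cauchy--Schwarz on the $\eta_i (|\nabla^2 u|^2)_j$ cross-terms (which produce extra $|\nabla^3 u|^2$ contributions that are absorbed into the good third-derivative term carried along), one obtains
\begin{equation*}
\partial_t P \leq \bar g^{ij} P_{ij} + C(n,\Sigma,\rho)\bigl(1+P+P^2\bigr).
\end{equation*}
Applying the parabolic maximum principle to $M(t):=\max_{(x,\theta)\in\mC} P(\cdot,t)$ gives the ODE
\begin{equation*}
M'(t) \leq C(n,\Sigma,\rho)\bigl(1+M(t)+M(t)^2\bigr),\qquad t_0\leq t\leq t^\circ.
\end{equation*}

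Finally, the initial bound $M(t_0)\leq \sup_{[\rho/4,\infty)\times\Sigma}|\nabla^2 u(\cdot,t_0)|^2$ is finite and controlled by $C(n,\rho)^2$ via \eqref{eq ini out noncom reg}, so on the short interval $[t_0,t^\circ]$ with $|t_0|\ll 1$ (depending on $n,\Sigma,\rho$), Gr\"onwall's inequality yields $M(t)\leq 4M(t_0)+C(n,\Sigma,\rho)$. Taking square roots and letting $R\to\infty$ recovers \eqref{eq imp sec der est outer noncom}. The main obstacle is precisely the quartic $|\nabla^2 u|^4$ term on the right-hand side: it cannot be absorbed by the good $|\nabla^3 u|^2$ term and would cause finite-time blow-up over long intervals. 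This is circumvented by taking the smallness of $|t_0|$ quantitatively depending on the initial-data bound $C(n,\rho)$, so that the comparison ODE stays well below its blow-up time and only doubles over $[t_0,t^\circ]$.
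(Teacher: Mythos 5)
Your argument is correct in outline and reaches the stated conclusion, but it closes the estimate by a genuinely different mechanism than the paper. The paper's proof (via Lemma \ref{lem sec der diff ine cut app}) keeps track of the \emph{sign} of the quartic contraction $-2\bar g^{ip}\bar g^{qj}u_{kl}u_{ij}(u_{pk}u_{ql}+u_{pl}u_{qk})$, extracting a good term $-2c_1^2\eta|\nabla^2u|^4$; since the competing bad quartic carries a factor $|\nabla u|^2\le\mu^2$ with $\mu\le 4\varepsilon_0$ small, the net quartic coefficient is negative, the cubic terms are then absorbed by Young's inequality into this negative quartic, and what remains is a \emph{linear} differential inequality for $\eta|\nabla^2u|^2$ with integrable coefficient $C/\sqrt{t-t_0}$, so the doubling comes from $e^{C_5(t-t_0)}\le 2$. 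You instead discard the sign structure, keep $+C|\nabla^2u|^4$ on the right, and run a Riccati comparison $M'\le C(1+M+M^2)$, which closes only because the time interval has length $t^\circ-t_0\le|t_0|$ and $M(t_0)\le C(n,\rho)^2$ by \eqref{eq ini out noncom reg}, so $|t_0|$ can be chosen small relative to the initial Hessian bound. Both routes are legitimate here; the paper's buys an estimate that is uniform on time intervals of arbitrary length (given only smallness of the gradient) and whose smallness condition on $|t_0|$ does not involve the size of $\nabla^2u(\cdot,t_0)$, while yours is more elementary in that it avoids verifying the definiteness of the quartic contraction. One point to make explicit: when you absorb the $\nabla^3u$ cross-terms, all occurrences of $\nabla^3u$ outside the Bochner term ($II_1$, $III_2$, and the cutoff term $\bar g^{ij}\eta_i(|\nabla^2u|^2)_j$) are \emph{linear} in $\nabla^3u$, so Cauchy--Schwarz with a small parameter absorbs them into $-2c_1\eta|\nabla^3u|^2$ at the cost of constants depending only on the (merely bounded) gradient; had any term been quadratic in $\nabla^3u$ with a coefficient of size $|\nabla u|^2$, you would additionally need the smallness of $|\nabla u|$, as the paper uses.
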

\begin{remark}
    \eqref{eq imp sec der est outer noncom} implies the third equation in \eqref{eq u C2 sum}.
\end{remark}
\begin{proof} 
Note that $u$ satisfies \eqref{eq flow outer}, and
$$c_1g_{ij}\leq \bar g_{ij}\leq C_1g_{ij},$$ 
for some $c_1(n,\Sigma,\rho)$, $C_1(n,\Sigma,\rho)>0$ by \eqref{eq gij bar gij equi} on $\mC\setminus B(O,\frac{1}{5}\rho)$, for $t_0\leq t\leq t^\circ$. Moreover, by Lemma \ref{lem imp gra est outer noncom} and \eqref{eq adm x},
\begin{equation}\label{eq imp der outer non com Sigma}
\begin{aligned}
|u|x^{-1}+|\nabla u|\leq 2(  \varepsilon_0+C(n,\Sigma,\rho)\sqrt{t-t_0}).
\end{aligned}
\end{equation}
on $\mC\setminus B(O,\frac{1}{5}\rho)$, for $t_0\leq t\leq t^\circ$. Thus, \eqref{eq sec der diff ine cut app} holds by Lemma \ref{lem sec der diff ine cut app}. If we take $0<\varepsilon=c_1<1$ in \eqref{eq sec der diff ine cut app} and then $|t_0|\ll 1$, such that $$\varepsilon_0+C(n,\Sigma,\rho)\sqrt{t-t_0})\leq 4\varepsilon_0=:\mu<\frac{c_1}{2\sqrt{C(n,\Sigma,c_1,C_1,\rho,\varepsilon)}},$$ 
then \eqref{eq sec der diff ine cut app} implies
\begin{equation}\label{eq eta D2u upper J1234}
\begin{aligned}
    \partial_t(\eta|\nabla^2u|^2)\leq& \bar g^{ij}(\eta|\nabla^2 u|^2)_{ij}-\bar g^{ij}(2\eta_i |\nabla^2 u|^2_j+\eta_{ij}|\nabla^2 u|^2)-c_1^2\eta|\nabla^3u|^2\\
    &-\bar g^{ip}\bar g^{qj}[(\eta |\nabla^2u|^2)_pu_q+(\eta|\nabla^2u|^2)_qu_p-(\eta_p u_q+\eta_q u_p)|\nabla^2 u|^2](u_{ij}+V_{ij})\\
       &-c_1^2\eta|\nabla ^2u|^4+C(n,\Sigma,\rho,\varepsilon)\eta(|\nabla^2 u|^3+1) =:J_1+J_2+J_3,
       \end{aligned}
\end{equation}
on $\mC\setminus B_{\frac{1}{5}\rho}$, for $t_0\leq t\leq t^\circ$. Here
\begin{align*}
    J_1:=&\bar g^{ij}(\eta|\nabla^2 u|^2)_{ij}-\bar g^{ij}(2\eta_i |\nabla^2 u|^2_j+\eta_{ij}|\nabla^2 u|^2)-c_1^2\eta|\nabla^3u|^2\\
   J_2:=&-\bar g^{ip}\bar g^{qj}[(\eta |\nabla^2u|^2)_pu_q+(\eta|\nabla^2u|^2)_qu_p-(\eta_p u_q+\eta_q u_p)|\nabla^2 u|^2](u_{ij}+V_{ij})-c_1^2\eta|\nabla^2u|^4.   \\
   J_3:=&C(n,\Sigma,\rho,\varepsilon)\eta(|\nabla^2u|^3+1)
\end{align*}
By \eqref{eq gij bar gij equi}, \eqref{eq C2 estim outer noncom}, and Cauchy-Schwarz inequality, there exists $C_2(n)\geq 1>c_1$ such that 
\begin{align*}
    J_1
    \leq &\bar g^{ij}(\eta |\nabla^2u|^2)_{ij}+\frac{C_2(n)}{c_1}(2|\nabla\eta||\nabla^3u||\nabla^2u|+|\nabla^2\eta||\nabla^2u|^2)-c_1^2\eta|\nabla^3 u|^2\\
    \leq&\bar g^{ij}(\eta |\nabla^2u|^2)_{ij}+\frac{C_2}{c_1}(\frac{C_2}{c_1^3}|\nabla\eta|^2\eta^{-1} |\nabla^2u|^2+\frac{c_1^3}{C_2}\eta |\nabla^3u|^2)+\frac{C_2}{c_1}|\nabla^2\eta||\nabla^2u|^2-c_1^2\eta|\nabla^3 u|^2\\
    \leq &\bar g^{ij}(\eta |\nabla^2u|^2)_{ij}+\frac{C_2^2}{c_1^4}(|\nabla\eta|^2\eta^{-1}+|\nabla^2\eta|)|\nabla^2u|^2\\
    \leq &\bar g^{ij}(\eta |\nabla^2u|^2)_{ij}+\frac{C_2^2}{c_1^4}(|\nabla\eta|^2\eta^{-\frac{3}{2}}+|\nabla^2\eta|\eta^{-\frac{1}{2}})\eta^{\frac{1}{2}}|\nabla^2u|\frac{1}{\sqrt{t-t_0}}\\
    \leq  &\bar g^{ij}(\eta |\nabla^2u|^2)_{ij}+\frac{C_2^2}{c_1^4}(|\nabla\eta|^2\eta^{-\frac{3}{2}}+|\nabla^2\eta|\eta^{-\frac{1}{2}})(\eta |\nabla^2u|^2+1)\frac{1}{\sqrt{t-t_0}}.
\end{align*}
By \eqref{eq imp der outer non com Sigma}, and Young's inequality, for any $\bar\varepsilon>0$, there holds
\begin{align*}
    J_2 \leq &P_{pq}((\eta|\nabla^2u|^2)_pu_q+(\eta|\nabla^2u|^2)_qu_p)+C_2(n)|\nabla\eta|(|\nabla ^2u|^3+|\nabla^2u|)-c_1^2\eta|\nabla^2u|^4\\
    \leq &P_{pq}((\eta|\nabla^2u|^2)_pu_q+(\eta|\nabla^2u|^2)_qu_p)+C_2(n)\frac{|\nabla\eta|}{\eta^{\frac{3}{4}}}(\bar \varepsilon\eta^{\frac{3}{2}\cdot\frac{4}{3}}|\nabla^2u|^4+\frac{1}{\bar\varepsilon})+\frac{C_2(n)|\nabla \eta|}{\sqrt{t-t_0}}-c_1^2\eta|\nabla^2u|^4,
\end{align*}
with $P_{pq}:=-\bar g^{ip}\bar g^{qj}(u_{ij}+V_{ij})$, and
\begin{align*}
    J_3\leq \frac{1}{2}C(n,\Sigma,\rho,\varepsilon)(\bar \varepsilon\eta|\nabla^2 u|^4+\frac{1}{\bar\varepsilon}\eta)+C(n,\Sigma,\rho,\varepsilon)\eta.
\end{align*}
Suppose 
\begin{equation}\label{eq cut off boun}
    \frac{|\nabla \eta|^2}{\eta^{\frac{3}{2}}}+\frac{|\nabla^2\eta|}{\eta^{\frac{1}{2}}}+\frac{|\nabla\eta|}{\eta^{\frac{3}{4}}}+|\nabla\eta|\leq  C_3(n,\Sigma,\rho),
\end{equation}
and choose $\bar \varepsilon>0$ small so that 
\begin{equation}(C_2(n)C_3(n,\Sigma,\rho)+C(n,\Sigma,\rho,\varepsilon))\bar\varepsilon\leq \frac{1}{2}c_1^2,
\end{equation}
then we are done by maximum principle. In fact, if \eqref{eq cut off boun} holds, then by the choice of $\bar\varepsilon$ and the estimates of $J_1,I_2,J_3$ above, we have 
\begin{align*}
     J_1+J_2+J_3\leq& \bar g^{ij}(\eta|\nabla^2u|^2)_{ij}
   +P_{pq}((\eta|\nabla^2u|^2)_pu_q+(\eta|\nabla^2u|^2)_qu_p)+\frac{C_2^2C_3}{c_1^4}(\eta|\nabla^2 u|^2+1)\frac{1}{\sqrt{t-t_0}}\\
   &+C_2C_3\bar\varepsilon^{-1}+\frac{C_2C_3}{\sqrt{t-t_0}}+\frac{1}{2}C(n,\Sigma,\rho,\varepsilon)\bar\varepsilon^{-1}\eta-\frac{1}{2}c_1^2\eta^2|\nabla^2u|^4+C(n,\Sigma,\rho,\varepsilon)\eta\\
   \leq &\bar g^{ij}(\eta|\nabla^2u|^2)_{ij}
   +P_{pq}((\eta|\nabla^2u|^2)_pu_q+(\eta|\nabla^2u|^2)_qu_p)+\frac{C_2^2C_3}{c_1^4}\eta|\nabla^2 u|^2\frac{1}{\sqrt{t-t_0}}+\frac{C_4(n,\Sigma,\rho,\varepsilon)}{\sqrt{t-t_0}}.
\end{align*}
for some $C_4(n,\Sigma,\rho,\varepsilon)$ large, if $|t_0|\leq 1$. The maximum principle shows that
\begin{equation}
    \max_{(x,\theta)\in[\frac{1}{4}\rho,\infty)\times\Sigma}\eta e^{-C_5t}|\nabla^2u(x,\theta,t)|^2\leq C_4\sqrt{|t_0|}+\max_{(x,\theta)\in[\frac{1}{4}\rho,\infty)\times\Sigma}\eta e^{-C_5t_0}|\nabla^2 u(x,\theta,t_0)|
\end{equation}
for any $t_0\leq t\leq t^\circ$, where $C_5=\frac{C_2^2C_3}{c_1^4}$. By taking $|t_0|\leq 1$ small depending on $C_5$, we can make $e^{C_5(t-t_0)}\leq 2$, and we are done. 

Now we prove \eqref{eq cut off boun}. Note $ |\nabla \eta|^2=\eta'^2$, and by \eqref{eq z C2 norm},
\begin{align*}
    |\nabla^2\eta|^2=&\eta'^2\frac{n-1}{x^2}+(\eta'')^2\leq C(n,\Sigma,\rho)(\eta'^2+(\eta'')^2).
\end{align*}
Thus, to prove \eqref{eq cut off boun}, we only need to prove $\eta'\eta^{-\frac{3}{4}}$ and $\eta''\eta^{-\frac{1}{2}}$ are bounded.
In fact, we can choose a smooth cut-off function $\tilde\eta$ such that $\chi_{[\frac{1}{3}\rho,R_1]}\leq \tilde\eta\leq \chi_{(\frac{1}{4}\rho,R)},$ and $\tilde \eta(x_0)=\tilde\eta'(x_0)=0$ ($x_0=\frac{1}{4}\rho$ or $x_0=R$), and let $\eta=\tilde \eta^4$. We then have $\eta'=4\tilde \eta^3\tilde\eta'$, $\eta''=12\tilde \eta^2\tilde\eta'^2+4\tilde \eta^3\tilde \eta''$. Thus, $\eta'\eta^{-\frac{3}{4}}=4\tilde\eta'\to 0$, and $\eta''\eta^{-\frac{1}{2}}=12\tilde\eta'^2+4\tilde\eta\tilde\eta''\to 0$, as $x\to x_0$. We are done.
\end{proof}
After we get Lemma \ref{lem imp gra est outer noncom}, and Lemma \ref{lem imp sec der est outer noncom}, then we can prove \eqref{eq u sum1} by a scaling and Schauder estimate. So we only give a sketch of them.

\begin{proof}[Proof of \eqref{eq u sum1}]
This follows from the standard regularity theory of parabolic equations, and a change of variable $(R_0,2R_0)\times \Sigma\to (1,2)\times \Sigma, (x,\theta)\mapsto (R_0\tilde x, \theta)$ for any $R_0>0$, and use the equation of $u$. Then the coefficients $g_{ij}$ will be uniformly bounded, and the domain $(R_0,2R_0)\times \Sigma$ will also be bounded in the new coordinates $(\tilde x.\theta)$. Then we can use Lemma \ref{lem imp gra est outer noncom}, \ref{lem imp sec der est outer noncom}, and Schauder theory for parabolic equations to derive \eqref{eq u sum1} as the proof of Proposition 7.4 of \cite{GS}. 
\end{proof}
Similarly, we can prove \eqref{eq u sum2} by using Proposition \ref{pro outer C0} as the proof of Proposition 7.5 of \cite{GS}, and prove \eqref{eq v sum1}, \eqref{eq v sum2}, by using \eqref{eq v C0 sum1}, \eqref{eq v C0 sum2} as the proof of Proposition 7.6 of \cite{GS}. So we omit them.
\subsection{$C^2$ estimates in the inner region}
\begin{lemma}\label{lem sec der est in intermedi reg hat w}
   If $\beta\gg1$ (depending on $n,\Sigma,\Lambda$) and $\tau_0\gg 1$ (depending on $n,\Sigma,\Lambda,\rho,\beta$), there holds
   \begin{equation}\label{eq gra est in intermedi reg hat w}
   z|\nabla_{(z,\theta)}\hat w(z,\theta,\tau)|\leq C(n,\Sigma,l,\Lambda)z^\alpha,
   \end{equation}
   \begin{equation}\label{eq sec der est in intermedi reg hat w}
       z^2|\nabla^2_{(z,\theta)}\hat w(z,\theta,\tau)|\leq C(n,\Sigma,l,\Lambda)z^\alpha,
   \end{equation}
   for $(z,\theta)\in [2\beta,\frac{1}{2}(2\sigma_l\tau)^{\frac{1}{2}(1-\vartheta)}]\times \Sigma$, $\tau_0\leq \tau\leq\tau^\circ$.
\end{lemma}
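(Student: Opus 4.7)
The plan is to apply interior parabolic Schauder estimates after a parabolic rescaling that exploits the conical scale invariance at $z = z_0$. Fix any base point $(z_0,\theta_0,\tau_1)$ with $z_0\in [2\beta,\frac{1}{2}(2\sigma_l\tau_1)^{(1-\vartheta)/2}]$, $\theta_0\in\Sigma$, $\tau_1\in[\tau_0,\tau^\circ]$, and introduce
\begin{equation*}
\tilde w(\bar z,\theta,\bar\tau) := z_0^{-\alpha}\,\hat w\bigl(z_0\bar z,\theta,\tau_1 + z_0^2\bar\tau\bigr),
\end{equation*}
defined on a parabolic cylinder $Q := \{\bar z\in[3/4,5/4]\}\times\{|\theta-\theta_0|_\Sigma<z_0^{-1}\}\times\{\bar\tau\in[-1,0]\}$ (truncated at $\bar\tau=0$ when $\tau_1=\tau_0$). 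The normalization $z_0^{-\alpha}$ converts the admissibility bound \eqref{eq adm z} together with the range condition $2\lambda_l+1-\alpha=2i_1$ and $z_0\leq \tfrac12(2\sigma_l\tau_1)^{(1-\vartheta)/2}$ into a uniform $C^0$ estimate $|\tilde w|\leq C(n,\Sigma,l,\Lambda)$ on $Q$.

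Next, I would check that $\tilde w$ satisfies a uniformly parabolic quasilinear equation on $Q$ whose coefficients are uniformly bounded independently of $z_0,\theta_0,\tau_1$. Three points are relevant: (i) the principal part $\Delta_\mC$ transforms correctly under the cone dilation $z\mapsto z_0\bar z$, contributing a uniformly elliptic operator with smooth coefficients on $Q$; (ii) the potential $|A_\mC|^2 = |A_\Sigma|^2/z^2$ yields a bounded lower-order term $|A_\Sigma|^2/\bar z^2$; (iii) the transport term $c_l\tau^{-1}(w-zw_z)$ becomes $c_l z_0^2/(\tau_1+z_0^2\bar\tau)\,(\tilde w-\bar z\tilde w_{\bar z})$, whose coefficient is bounded by $C(2\sigma_l\tau_0)^{-\vartheta}\ll 1$ in the range under consideration, hence uniformly small. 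The nonlinear error $E(w)$ satisfies $|E(w)|\lesssim \mu^2/z$ with $\mu=\Lambda e^{-\lambda_l s}(z^{\alpha-1}+z^{2\lambda_l})$-type bounds from \eqref{eq basic assum}, which under the rescaling becomes a quadratic perturbation of the form $\tilde E(\tilde w,\nabla\tilde w,\nabla^2\tilde w)$ whose Lipschitz dependence on $\nabla^2\tilde w$ carries a small coefficient.

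With uniform ellipticity, bounded coefficients, and uniform $C^0$ control, interior parabolic Schauder estimates on the smaller cylinder $Q' := \{\bar z\in[7/8,9/8]\}\times\{|\theta-\theta_0|_\Sigma<(2z_0)^{-1}\}\times\{\bar\tau\in[-1/2,0]\}$ yield $\|\tilde w\|_{C^{2,\sigma}(Q')}\leq C(n,\Sigma,l,\Lambda)$. Unrescaling gives
\begin{equation*}
z_0|\nabla_{(z,\theta)}\hat w|(z_0,\theta_0,\tau_1)+z_0^2|\nabla^2_{(z,\theta)}\hat w|(z_0,\theta_0,\tau_1)\leq C(n,\Sigma,l,\Lambda)\,z_0^\alpha,
\end{equation*}
and since $(z_0,\theta_0,\tau_1)$ is arbitrary in the stated range, the lemma follows.

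The main obstacle is the boundary of the spacetime region: when $\tau_1$ is close to $\tau_0$ one cannot center a full backward parabolic cylinder, so one must combine the initial regularity \eqref{eq ini tip reg} (pulled back through the rescaling) with the interior estimates via a one-sided Schauder bound; similarly, near the spatial boundary $z_0\approx\frac{1}{2}(2\sigma_l\tau)^{(1-\vartheta)/2}$ the matching estimate with the intermediate region provides the needed boundary data through Proposition \ref{prop tip C0} and the argument of the previous subsection. A minor technical point is verifying that the nonlinear quasilinear error $E(w)$ remains a genuine perturbation after rescaling, which is ensured by the smallness of $\mu$ on the relevant scales in the admissibility range; this can be absorbed by a standard contraction in the Schauder norms or by the bootstrap already implicit in the scaling.
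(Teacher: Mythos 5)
There is a genuine gap, and it is conceptual rather than technical. The bound \eqref{eq adm z} that you invoke for your $C^0$ control is a bound on $w$, the profile of $\Gamma_\tau$ as a normal graph over the \emph{cone} $\mC$; the lemma is about $\hat w$, the profile over $S_{\kappa,+}$ defined in \eqref{eq hat w def}. These are different functions, and the whole content of the lemma is the passage between the two representations: one writes $\hat w$ essentially as $w-\psi_\kappa$ and uses the derivative asymptotics \eqref{eq psik der asy} of the Hardt--Simon leaf, $z^{|\gamma|}|\nabla^\gamma\psi_\kappa|\leq C z^\alpha$. Your proposal never performs this conversion, so the claimed uniform $C^0$ bound on $\tilde w$ does not follow from \eqref{eq adm z} as stated. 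Moreover, the heavy machinery is unnecessary: the admissibility condition \eqref{eq adm z} is an a priori hypothesis that already contains the $|\gamma|=1,2$ bounds for $w$ on the relevant range (and the second term $z^{2\lambda_l+1}(2\sigma_l\tau)^{-i_1}$ is dominated by $z^\alpha$ there, exactly by the computation $2\lambda_l+1-\alpha=2i_1$ you note). The paper's proof is therefore just these two observations; no parabolic regularity is needed.

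Even taken on its own terms, the Schauder step is not justified as written. The equation for $\hat w$ is quasilinear graphical MCF, whose ellipticity constants degenerate as $|\nabla\hat w|\to\infty$ (see \eqref{eq nor C nor gra eq}); uniform parabolicity therefore requires an a priori gradient bound, which cannot be read off from a $C^0$ bound alone without an Ecker--Huisken type interior gradient estimate (and if you instead import the gradient bound from admissibility, you are back to needing the representation change). In addition, the angular width $|\theta-\theta_0|_\Sigma<z_0^{-1}$ of your cylinder is the wrong scale: on the cone, a geodesic ball of radius $\sim z_0$ about $(z_0,\theta_0)$ has angular extent $\sim 1$, so after dividing lengths by $z_0$ the cylinder should have \emph{fixed} angular size; with width $z_0^{-1}$ the interior Schauder constants blow up as $z_0\to\infty$. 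The initial-time and outer-boundary issues you flag are real but secondary to these two points.
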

\begin{proof}
    This can be derived from the admissible condition and the asymptotics of $\psi_k$ in \eqref{eq psik asym}. In fact, by \eqref{eq adm z}, we have the profile function $w$ of $\hat F$ over $\mC$ satisfies
    \begin{equation}
        z^{|\gamma|}|\nabla^\gamma w(z,\theta,\tau)|\leq \Lambda(z^\alpha+\frac{z^{2\lambda_l+1}}{(2\sigma_l\tau)^l})\leq C(n,\Sigma,\Lambda)z^\alpha,\quad |\gamma|\in\{0,1,2\}
    \end{equation}
    for $(z,\theta)\in [\beta, \frac{1}{2}(\sigma_l\tau)^{\frac{1}{2}(1-\vartheta)}]\cap\mC$, $\tau_0\leq \tau \leq\tau^\circ$. By definition, $\hat w(z,\theta,\tau)$ is the distance of $\hat F$ to $S_{\kappa,+}$. Since $\lambda_+(\tau)\in(\frac{1}{2},2)$, and $\kappa\approx 1$ by \eqref{eq bound k diff} for $|t_0|\ll1$, we have
    \begin{equation}
        z^{|\gamma|}|\nabla^\gamma\hat w(z,\theta,\tau)|\leq z^i|\nabla^i(w(z,\theta,\tau)-k\psi(z,\theta))|\leq C(n,\Sigma)z^\alpha, \quad |\gamma|\in \{0,1,2\}
    \end{equation}
   by \eqref{eq psik der asy}, for $(z,\theta)\in [2\beta, \frac{1}{2}(\sigma_l\tau)^{\frac{1}{2}(1-\vartheta)}]\cap\mC$, $\tau_0\leq \tau\leq \tau^\circ$.
\end{proof}
\subsection{$C^2$ estimates in the tip region}
Recall that $\hat w(z,\theta,\tau)$ is the profile function of $\hat F$ over $S_{\kappa,+}$. First, we use maximum principle and the equation for $|\nabla \hat w|$ to prove the gradient estimate.
\begin{proposition}\label{prop tip C1}
    If $\beta\gg 1$ (depending on $n,\Lambda$), there holds
    \begin{equation}\label{eq tip C1 est}
        |\nabla_{(\tilde z,\theta)}\hat w(\tilde z,\theta,\tau)|\leq C(n,\Sigma,l,\Lambda)
    \end{equation}
    for $(\tilde z,\theta)\in S_{\kappa,+,\beta^2}$, $\tau_0\leq \tau\leq \tau^\circ$. Here $\nabla_{(\tilde z,\theta)}$ is the covariant derivative on $S_{\kappa,+}$.
\end{proposition}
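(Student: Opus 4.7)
The plan is to establish the uniform gradient bound on $S_{\kappa,+,\beta^2}$ by a cut-off Bernstein maximum-principle argument applied to $Q := |\nabla \hat w|^2$, in the spirit of the corresponding step in the symmetric case \cite{GS}. The two key inputs are the $C^0$ estimate $|\hat w| \leq C\beta^{-\tilde\alpha/4}(\tau/\tau_0)^{-\varrho}$ from Proposition \ref{prop tip C0}, which makes the quasilinear equation for $\hat w$ a small perturbation of a uniformly parabolic linear equation on $S_{\kappa,+}$, and the exterior gradient bound \eqref{eq gra est in intermedi reg hat w} from Lemma \ref{lem sec der est in intermedi reg hat w}, which furnishes lateral boundary data.

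First, I would rewrite \eqref{eq rescaled hat graph} as a quasilinear parabolic equation on $S_{\kappa,+}$ of the form
\[
\partial_\tau \hat w = \mathcal A^{ij}(\hat w,\nabla\hat w)\,\nabla_i\nabla_j \hat w + b(\tilde z,\theta;\hat w,\nabla\hat w) + c_l\tau^{-1}G(\tilde z,\theta;\hat w,\nabla\hat w),
\]
where $(\mathcal A^{ij})$ is uniformly elliptic and close to the inverse metric $\tilde g^{ij}$ of $S_{\kappa,+}$ on $S_{\kappa,+,2\beta^2}$ whenever $|\nabla\hat w|$ stays bounded, $b$ vanishes quadratically at $(\hat w,\nabla\hat w)=(0,0)$, and $G$ is smooth. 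Differentiating once, pairing with $\nabla\hat w$, and applying the Bochner formula on $S_{\kappa,+}$ yields a parabolic differential inequality of the schematic form
\[
\partial_\tau Q \leq \mathcal A^{ij}\nabla_i\nabla_j Q - 2c_0|\nabla^2\hat w|^2 + C_1\bigl(Q+|\hat w|^2+1\bigr) + c_l\tau^{-1}(Q+|\hat w|),
\]
on $S_{\kappa,+,2\beta^2}\times[\tau_0,\tau^\circ]$, with $c_0>0$ and $C_1$ depending only on the fixed compact region and on the curvature of $S_{\kappa,+}$ there.

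Next, let $\eta$ be a smooth cut-off on $S_{\kappa,+}$ with $\eta\equiv 1$ on $S_{\kappa,+,\beta^2}$, $\supp\eta\subset S_{\kappa,+,(3/2)\beta^2}$, and $|\nabla\eta|^2\eta^{-1}+|\nabla^2\eta|\leq C(\beta)$, and consider the test function $P := \eta^2 Q + A\hat w^2$ for $A=A(n,\Sigma,l,\Lambda,\beta)$ large. Standard Cauchy--Schwarz absorption of the $|\nabla\eta||\nabla Q|$-terms into $-c_0|\nabla^2\hat w|^2$, together with the $-2A|\nabla\hat w|^2$ contribution from the evolution of $\hat w^2$ (used to dominate the leftover $\eta^2 C_1 Q$), reduces the evolution to
\[
\partial_\tau P \leq \mathcal A^{ij}\nabla_i\nabla_j P + C_2 P + C_3
\]
on $S_{\kappa,+,(3/2)\beta^2}\times[\tau_0,\tau^\circ]$. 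The parabolic boundary data for $P$ are uniformly bounded: at $\tau = \tau_0$ the initial estimate \eqref{eq ini tip reg} gives $|\nabla\hat w(\cdot,\tau_0)|\leq C$, while on the lateral boundary $\partial S_{\kappa,+,(3/2)\beta^2}\times[\tau_0,\tau^\circ]$ the identification of $(\tilde z,\theta)$ with the cone coordinates $(z,\theta)$ via the Hardt--Simon parametrization recalled in Appendix \ref{sec HS foliation} is bi-Lipschitz in this annular region (because $\beta\gg 1$ puts us well outside $B_{R_s}$), so \eqref{eq gra est in intermedi reg hat w} yields $|\nabla\hat w|\leq C$. Applying the parabolic maximum principle to $P$ gives $P\leq C$ on $[\tau_0,\tau^\circ]$, and restricting to $S_{\kappa,+,\beta^2}$ (where $\eta\equiv 1$) proves the proposition.

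The principal obstacle is closing the Bernstein iteration cleanly: the quadratic error $E(\hat w)$ absorbed into $\mathcal A^{ij}$ and $b$ contains terms of the form $|\hat w||\nabla^2\hat w|$ and $|\nabla\hat w|^2|\nabla^2\hat w|$, so one must use the smallness of $|\hat w|$ from Proposition \ref{prop tip C0} and calibrate the constant $A$ together with the Cauchy--Schwarz absorption so that all resulting constants depend only on $(n,\Sigma,l,\Lambda,\beta)$ and not on $\sup Q$ itself. The time-dependent forcing $c_l\tau^{-1}(\cdots)$ is benign, since $\tau_0\gg 1$.
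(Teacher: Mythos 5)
Your overall strategy (an interior gradient bound via the maximum principle, with lateral data from Lemma \ref{lem sec der est in intermedi reg hat w} and initial data from \eqref{eq ini tip reg}) has the right shape, but the Bernstein computation on the curved base $S_{\kappa,+}$ does not close as written, for two reasons. First, and most seriously, your final inequality $\partial_\tau P\leq \mathcal A^{ij}\nabla_i\nabla_j P+C_2P+C_3$ cannot yield a uniform bound on $[\tau_0,\tau^\circ]$: this time interval has unbounded length ($\tau^\circ=(2\sigma_l)^{-1}|t^\circ|^{-2\sigma_l}\to\infty$ as $t^\circ\to 0$), so the maximum principle only gives $P\lesssim e^{C_2(\tau-\tau_0)}(1+C_3(\tau-\tau_0))$, which is useless here. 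The zeroth-order terms $C_2P+C_3$ are genuinely present with the wrong sign (they come from the potential $|A_{S_{\kappa,+}}|^2$ in the linearization, from curvature commutators on $S_{\kappa,+}$, and from Cauchy--Schwarz steps of the type $|\nabla\hat w|\cdot C\leq \delta Q+C/\delta$), and neither the cut-off nor the auxiliary term $A\hat w^2$ removes the residual constant forcing, since $|\nabla\hat w|$ is not known to decay in $\tau$ (only $|\hat w|$ is). Second, the ellipticity constant of $\mathcal A^{ij}$ and the coefficient bounds $C_1$ themselves depend on $\sup|\nabla\hat w|$, which is exactly what you are trying to bound; you flag this (``whenever $|\nabla\hat w|$ stays bounded'') but do not resolve it, and the calibration of $A$ against $c_0$ inherits the circularity. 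Note also that at this stage no Hessian bound is available in the tip region (Proposition \ref{prop tip C2} comes after and uses the $C^1$ bound), so any step that silently needs $|\nabla^2\hat w|$ controlled is off limits.

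The paper's proof is built precisely to avoid both problems: it localizes $\hat F$ near each $p^*\in S_{\kappa,+,\beta^2}$ as a graph $f$ over the tangent plane $T_{p^*}S_{\kappa,+}$. For a planar graph the rescaled flow reads $f_\tau=c_l\tau^{-1}(f-z_if_i)+a^{ij}(\nabla f)f_{ij}$, and upon differentiating one finds that every dangerous term is an exact first-order derivative of $|\nabla f|^2$ (e.g. $\partial_{p_l}a^{ij}\,f_{ij}\,(|\nabla f|^2)_l$), while the zeroth-order contributions of the scaling term cancel identically; hence $|\nabla f|^2$ satisfies a drift--diffusion inequality with no zeroth-order term, and its supremum over the whole parabolic cylinder is attained on the parabolic boundary, uniformly in time and with no a priori Hessian bound and no ellipticity issue. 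A covering argument and \eqref{eq nor C nor gra eq} then convert the planar gradient bound back into the bound on $|\nabla\hat w|$. If you insist on working directly on $S_{\kappa,+}$, you would need a genuinely local-in-time estimate (e.g. an Ecker--Huisken-type interior gradient estimate applied on unit time slabs using only the $C^0$ bound), not a global-in-time maximum principle.
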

\begin{remark}
    \eqref{eq tip C1 est} is the second inequality in \eqref{eq hat w sum}. 
\end{remark}
\begin{proof}
    By \eqref{eq ini tip reg}, and Lemma \eqref{lem sec der est in intermedi reg hat w},
    \begin{equation}
        \max_{(\tilde z,\theta)\in [0,2\beta^2]\times \Sigma} \{| \hat w(\tilde z,\theta,\tau_0)|\beta^{\frac{{\tilde \alpha} }{4}},|\nabla_{(\tilde z,\theta)} \hat w (\tilde z,\theta,\tau_0)|\}\leq C_0(n,\Sigma,l,\Lambda), 
    \end{equation}
   for some $C_0(n,\Sigma,l,\Lambda)>0$.
   This implies that
    \begin{equation}
    \begin{aligned}
        \frac{1}{C_1}g_{ij}\leq \hat{g}_{ij}(t_0)\leq C_1g_{ij},\\
        \nu(\tilde z,\theta)\cdot\hat\nu(\tilde z,\theta,t_0)\geq  h_1(C_1)
    \end{aligned}
    \end{equation}
    for some $C_1=C_1(C_0)>0$, for any $(\tilde z,\theta)\in S_{\kappa,+,2\beta^2}$ by \eqref{eq bar gij u}, \eqref{eq nor C nor gra eq}, where $h_1(s):\mathbb R_+\to\mathbb R_+$ is a positive decreasing function, and $_1 h(s)\to 0$ as $s\to \infty$. 
 
 Fix a point $p^*:=(\tilde z^*,\theta^*)\in S_{\kappa,+,2\beta^2}$, let $\pi_{p^*}=T_{p^*}S_{\kappa,+}$ be the tangent plane to $S_{\kappa,+}$ at $p^*$. Let $q^*=F(p^*,\tau_0),\,\nu^*=\nu_{S_{\kappa,+}}(p^*),\,\hat\nu^*=\hat\nu(q^*)$. Then for any $( \tilde z,\theta)\in B(p^*,\varepsilon(C_1))\cap S_{\kappa,+,2\beta^2}$, we have
 \begin{equation}
     \nu^*\cdot\nu(\tilde z,\theta)\geq \frac{99}{100}
 \end{equation}
 for $\varepsilon(C_1)>0$ small. Thus, 
 \begin{equation}
     \hat\nu(\tilde z,\theta,\tau_0)\cdot\nu^*\geq \frac{1}{2}\hat h(C_1).
 \end{equation}
and a neighborhood of $\hat F(p^*,\tau_0)$ can be written as a graph over a small ball $B^n(p^*,\bar \varepsilon(C_1))\subset \pi_{p^*}$ for some $\bar\varepsilon\leq \varepsilon$. By abuse of notation, we set $p^*$ as an origin and use $z=(z_1,\cdots,z_n)$ as coordinates for $\pi_{p^*}$. Since $\hat F$ evolves by \eqref{eq rescaled hat graph}, if we part of $\hat F$ can be written as a graph over $B^n(p^*,\bar \varepsilon)\subset \pi_{p^*}$ with profile function $f$, then $f$ evolves by
\begin{equation}
    f_\tau=c_l\tau^{-1}(-f_iz_i+f)+(\delta_{ij}-\frac{f_if_j}{1+|\nabla f|^2})f_{ij}.
\end{equation}
Differentiating this equation with respect to $z_l$, we get
\begin{align*}
    f_{\tau l}=c_l\tau^{-1}(f_l-f_{il}z_i-f_i\delta_{il})+(-\frac{f_{il}f_j+f_if_{jl}}{1+|\nabla f|^2}+\frac{2f_if_jf_kf_{kl}}{(1+|Df|^2)^2})f_{ij}+(\delta_{ij}-\frac{f_if_j}{1+|\nabla f|^2})f_{ijl},   
\end{align*}
and thus
\begin{equation}
   |\nabla f|^2_\tau=2f_lf_{\tau l}=c_l\tau^{-1}(-z_i|\nabla f|^2_i)+Q_i|\nabla f|^2_i+(\delta_{ij}-\frac{f_if_j}{1+|\nabla f|^2})(|\nabla f|^2_{ij}-2|\nabla^2 f|^2),
\end{equation}
where $Q_i=Q_i(\nabla f,\nabla^2f)$ is some smooth function of $\nabla f,\nabla^2f$. 

Let $\tau'\in [\tau_0, \tau^\circ]$ be the maximal time for which we can write $\hat F$ as a graph over  $B^n(p^*,\bar\varepsilon)\subset \pi_{p^*}$ for all $p^*\in S_{\kappa,+,\beta^2}$ and $\tau_0\leq\tau\leq \tau'$. Then $\tau'>\tau_0$ if we choose $\bar \varepsilon$ small enough. For each $p^*$, define $M_{p^*}:=\max_{[\tau_0, \tau']\times B^n(p^*,\bar \varepsilon)}|\nabla f|$. By maximum principle, $M_{p^*}$ is attained at some boundary point $(z,\theta,\tau'')\in (\partial B^n(p^*,\varepsilon)\times[\tau_0,\tau'])\cup (B^n(p^*,\bar \varepsilon)\times\{\tau_0\})$. By a covering argument, we have 
\begin{equation}\label{eq gradient es tip tan}
   \max_{p^*\in S_{\kappa,+,\beta^2}} M_{p^*}\leq h_2(\min\{ h_1(C_1),c_2\})
\end{equation}
where $c_2=\min_{(S_{\kappa,+,2\beta^2}\setminus S_{\kappa,+,\beta^2})\times [\tau_0,\tau^\circ]}\{\nu\cdot\hat \nu \}=h_2(C_2)>0$, $C_2=\min_{S_{\kappa,+,2\beta^2}\setminus S_{\kappa,+,\beta^2}\times [\tau_0,\tau^\circ]}\{|\nabla\hat w|\}<\infty$ by \eqref{eq gra est in intermedi reg hat w}, and $h_2(s):\mathbb R_+\to\mathbb R_+$ is a positive decreasing function, and $h_2(s)\to \infty$ as $s\to 0^+$. Thus, we can take $\bar \varepsilon>0$ small enough such that $\tau'=\tau^\circ$ and \eqref{eq gradient es tip tan} holds for $M_{p^*}=\max_{[\tau_0, \tau^\circ]\times B(p^*,\bar \varepsilon)}|\nabla f|$. Moreover, we get from \eqref{eq gradient es tip tan} that
\begin{equation}
    \max_{[\tau_0, \tau^\circ]\times S_{\kappa,+,\beta^2}} |\nabla f|(p^*)\leq  \max_{p^*\in S_{\kappa,+,\beta^2}} M_{p^*}\leq h_2(\min\{ h_1(C_1),c_2\}).
\end{equation}
and
\begin{equation}\label{eq tip normal prod}
   \inf_{p^*\in S_{\kappa,+,\beta^2}} \nu^*\cdot \hat\nu(p^*)=\frac{1}{\sqrt{1+|\nabla f|(p^*)^2}}\geq \frac{1}{\sqrt{1+C_3^2}}
\end{equation}
for $\tau\in[\tau_0,\tau^\circ]$. On the other hand, by \eqref{eq nor C nor gra eq},
$\nu\cdot\hat\nu\to 0$ if $|\nabla\hat w|\to\infty$.
Thus,
\begin{equation}
    |\nabla\hat w|(\tilde z,\theta,\tau)\leq C(C_3)\text{ on }S_{\kappa,+,\beta^2}\times [\tau_0,\tau^\circ].
\end{equation}
\end{proof}
Now we use maximum principle and the equation for $|\hat A|^2$ to prove the estimates for curvature for $\tau\in [\tau_0,\tau_0+\delta]$.
\begin{lemma}\label{lem tip sec func est short time}
If $\beta\gg 1$ (depending on $n,\Sigma,\Lambda$), then there is $\delta>0$ (depending on $n,\Sigma$) so that the second fundamental form of $\hat F$ satisfies 
\begin{equation}\label{eq tip sec func est short time}
    \max_{\hat F\cap B(O,3\beta)}|A_{\hat F }|\leq C(n,\Sigma,l). 
\end{equation}
for $\tau_0\leq \tau\leq\min\{\tau_0+\delta,\tau^\circ\}$. In particular, there holds
\begin{equation*}
    |\nabla^2_{(\tilde z,\theta)}\hat w(\tilde z,\theta,\tau)|\leq C(n,\Sigma,l)
\end{equation*}
for $(\tilde z,\theta)\in \cap S_{\kappa,+,3\beta}$, $\tau_0\leq \tau\leq \min\{\tau_0+\delta,\tau^\circ\}$.
\end{lemma}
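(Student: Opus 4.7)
\medskip
\noindent\emph{Plan.}
The proof will combine the $C^{2}$ bound on the initial datum $\hat F(\tau_{0})$ provided by the construction in \eqref{eq ini tip reg} with a short--time parabolic maximum--principle argument applied to $|A_{\hat F}|^{2}$ on a cut--off region.

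First, I would extract the initial curvature bound. By \eqref{eq ini tip reg},
\[
|\nabla\hat w|(\cdot,\tau_{0})+|\nabla^{2}\hat w|(\cdot,\tau_{0})\leq C(n,\Sigma,\Lambda)\beta^{-\tilde\alpha/2}
\]
on $S_{\kappa,+}\cap B(O,2\beta^{2})$. Since $S_{\kappa,+}$ is smooth and its own second fundamental form is uniformly controlled on $S_{\kappa,+,2\beta^{2}}$ (by the foliation asymptotics of Appendix \ref{sec HS foliation}, independently of $\beta$ for $\beta\gg 1$), the standard graph formulas of Appendix \ref{sec graph evo} yield $|A_{\hat F}|(\cdot,\tau_{0})\le C(n,\Sigma,l)$ on $\hat F(\tau_{0})\cap B(O,2\beta^{2})$.

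Next, I would derive the evolution equation of $|A|^{2}$ under the rescaled flow \eqref{eq rescaled hat flow}. Decomposing the dilation term $c_{l}\tau^{-1}\hat F$ into tangential and normal parts and applying Huisken's identity gives, after collecting terms,
\begin{equation*}
(\partial_{\tau}-\Delta)|A|^{2}=-2|\nabla A|^{2}+2|A|^{4}-4c_{l}\tau^{-1}|A|^{2}+R,
\end{equation*}
where $R$ is a lower--order remainder bounded pointwise by $C(n,\Sigma)\tau^{-1}(1+|\hat F|)|A|^{2}$. On $\hat F\cap B(O,4\beta)$ and for $\tau\geq\tau_{0}\gg 1$, both $R$ and the $\tau^{-1}|A|^{2}$ terms are negligible.

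Then I would apply the maximum principle to $Q:=\eta|A|^{2}$, where $\eta$ is the pull--back to $\hat F$ of an ambient smooth cutoff $\chi$ with $\chi\equiv 1$ on $B(O,3\beta)$, $\operatorname{supp}\chi\subset B(O,4\beta)$, and $|\nabla\chi|+|\nabla^{2}\chi|\leq C\beta^{-1}$. By the $C^{1}$--estimate of Proposition \ref{prop tip C1} together with Lemma \ref{lem sec der est in intermedi reg hat w}, the intrinsic derivatives of $\eta$ on $\hat F$ satisfy $|\nabla\eta|+|\nabla^{2}\eta|\leq C(n,\Sigma,l)$. A standard computation, absorbing the cross term $\nabla\eta\cdot\nabla|A|^{2}$ against $\eta|\nabla A|^{2}$ via Cauchy--Schwarz, yields
\begin{equation*}
\partial_{\tau}Q\leq\Delta Q+\langle V,\nabla Q\rangle+C\,Q^{2}+C\,Q+C
\end{equation*}
for a tangential vector field $V$. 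Setting $M(\tau):=\max_{\hat F}Q(\cdot,\tau)$, the differential inequality $M'(\tau)\leq CM(\tau)^{2}+CM(\tau)+C$ together with $M(\tau_{0})\leq C(n,\Sigma,l)$ produces $\delta=\delta(n,\Sigma)>0$ such that $M(\tau)\leq 2M(\tau_{0})$ on $[\tau_{0},\tau_{0}+\delta]$, giving \eqref{eq tip sec func est short time}. The $|\nabla^{2}\hat w|$--bound then follows from inversion of the graph formula and the $C^{1}$--bound of Proposition \ref{prop tip C1}.

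\medskip
\noindent\emph{Main obstacle.} The delicate point is that $\hat F$ is not the graph of a small function over a single fixed reference: it is a graph over the curved surface $S_{\kappa,+}$ only on $B(O,2\beta^{2})$, and the ambient cutoff $\eta$ must be shown to have $|\nabla\eta|+|\nabla^{2}\eta|$ bounded on the moving surface \emph{independently} of $\beta$. This requires the full strength of Proposition \ref{prop tip C1} together with the transition estimate Lemma \ref{lem sec der est in intermedi reg hat w}, which guarantee uniformly bounded $C^{1}$--geometry of $\hat F$ on the annulus $\{3\beta\leq |\hat F|\leq 4\beta\}$ where $\nabla\eta$ is supported. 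The dilation--induced terms in the evolution of $|A|^{2}$ are otherwise a harmless perturbation for $\tau_{0}\gg 1$, and the rest is a textbook reaction--diffusion argument for $|A|^{2}$.
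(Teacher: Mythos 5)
Your proposal is correct and follows essentially the same route as the paper: the rescaled evolution equation for $|\hat A|^2$ (with the extra negative $\tau^{-1}|\hat A|^2$ term from the dilation), localization near $B(O,3\beta)$ using the initial bound and the annulus curvature control from Lemma \ref{lem sec der est in intermedi reg hat w}, and a Riccati-type ODE comparison giving a short time $\delta$; the paper implements the localization via a Dirichlet-boundary maximum principle on $\hat F_\tau\cap B(O,3\beta)$ rather than a cutoff, which is a cosmetic difference. The only points to tighten are that your bound on $\nabla^2\eta$ on the moving surface uses the second fundamental form (hence the $C^2$, not just $C^1$, control on the annulus, which Lemma \ref{lem sec der est in intermedi reg hat w} does supply), and that the exact dilation coefficient is $\tfrac{1+2\sigma_l}{2\sigma_l\tau}$ with no genuine remainder $R$ — both harmless since these terms have a favorable sign.
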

\begin{proof}
    The second fundamental form $|A|^2$ of $F$ evolves by
    \begin{equation}
        \partial_t|A|^2=\Delta_{F}|A|^2+2|A|^4-2|\nabla_FA|^2.
    \end{equation}
    Since $\hat F=\frac{1}{|t|^{\frac{1}{2}+\sigma_l}}F$, and $t=-(2\sigma_l\tau)^{-\frac{1}{2\sigma_l}}$, the second fundamental form $|\hat A|^2$ of $\hat F$ satisfies  
    \begin{equation}\label{eq rescaled hat A}
    \begin{aligned}
        \partial_\tau|\hat A|^2
        =&\Delta_{\hat F}|\hat A|^4+2|\hat A|^2-2|\nabla_{\hat F}\hat A|^2-\frac{1+2\sigma_l}{2\sigma_l\tau}|\hat A|^2.
    \end{aligned}
    \end{equation}
    Following the same argument as in Lemma 7.10 of \cite{GS}, we have
    \begin{equation}
        \max_{\hat F_\tau\cap B(O,3\beta)}|\hat A|^2\leq 2C
    \end{equation}
    for $\tau_0\leq \tau\leq\min\{\tau_0+\delta,\tau^\circ\}$, for some $\delta=\delta(n,\Sigma,l)$, where $C=|\hat A_{\tau}|^2_{\max}(\tau_0)+\max_{Z_\tau\in\hat F_\tau,|Z_\tau|=3\beta}|\hat A_\tau(Z_\tau)|^2\leq C(n,\Sigma,l)$. The second conclusion follows from \eqref{eq tip inner C0 est hat}, \eqref{eq tip C1 est}, 
    \eqref{eq tip sec func est short time}, \eqref{eq bar gij u}, and \eqref{eq bar hij u}, with $X=S_{\kappa,+}$, and $u$ replaced by $\hat w$.
\end{proof}

At last, we use the results from the previous steps and \cite{EH}'s gradient and curvature estimate to prove second derivative estimates for $\tau\in[\tau_0,\tau^\circ)$.
\begin{proposition}\label{prop tip C2}
    If $\beta\gg 1$ (depending on $n,\Sigma,\Lambda$), there holds 
    \begin{equation}\label{eq hat w C2 tip}
        |\nabla^2_{(\tilde z,\theta)} \hat w(\tilde z,\theta,\tau)|\leq C(n,\Sigma,l)
    \end{equation}
    for $(\tilde z,\theta)\in S_{\kappa,+,3\beta}$, $\tau_0\leq\tau\leq \tau^\circ$.
\end{proposition}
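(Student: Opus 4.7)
\textbf{Proof proposal for Proposition \ref{prop tip C2}.}
The plan is to bound $|\hat{A}|$ on $\hat F_\tau \cap \bar B(O,3\beta)$ uniformly in $\tau \in [\tau_0,\tau^\circ]$ by reducing to the underlying (honest) mean curvature flow $F$ and invoking the Ecker--Huisken interior gradient and curvature estimates \cite{EH}, exactly as in the proof of Proposition \ref{prop C2 estim outer noncom first}. Once $|\hat A|$ is controlled, the formulas \eqref{eq bar hij u} with $X = S_{\kappa,+}$, combined with the $C^0$ estimate \eqref{eq tip inner C0 est hat} and the $C^1$ estimate \eqref{eq tip C1 est}, convert the bound on $|\hat A|$ into the desired bound on $|\nabla^2_{(\tilde z,\theta)}\hat w|$.

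The first step is to promote the pointwise $C^1$ control into \emph{uniform graphicality}: by Proposition \ref{prop tip C1} together with \eqref{eq tip inner C0 est hat}, there exist $R_0 = R_0(n,\Sigma,l,\Lambda) > 0$ and $c_0 = c_0(n,\Sigma,l,\Lambda) > 0$ such that for every $\tau \in [\tau_0,\tau^\circ]$ and every $q \in \hat F_\tau$ whose foot on $S_{\kappa,+}$ lies in $S_{\kappa,+,5\beta}$, the hypersurface $\hat F_\tau$ is a graph over the tangent plane $T_{p(q)} S_{\kappa,+}$ (parallel-translated to pass through $q$) on $B(q,R_0)$ with $\nu^{*}\cdot\hat\nu \ge c_0$, where $\nu^{*}$ is the unit normal to that tangent plane. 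This is a quantitative inverse function argument identical in spirit to the one carried out in the proof of Proposition \ref{pro outer C0} / Proposition \ref{prop tip C1}.

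The second step is to pass to the $F$ picture, where \eqref{eq rescaled hat flow} becomes pure MCF. Fix $\tau_1 \in [\tau_0 + \delta, \tau^\circ]$ with $\delta$ as in Lemma \ref{lem tip sec func est short time}, fix $q^{*} \in \hat F_{\tau_1} \cap \bar B(O,3\beta)$, and set $t_1 := -(2\sigma_l\tau_1)^{-1/(2\sigma_l)}$, $Q^{*} := |t_1|^{1/2+\sigma_l}\,q^{*} \in F_{t_1}$, $\nu^{*} := \nu_{S_{\kappa,+}}(p(q^{*}))$. Choose
\[
r := \tfrac{1}{2}R_0\,|t_1|^{\frac{1}{2}+\sigma_l}, \qquad \Delta t := c(n)\,r^{2} \;\sim\; |t_1|^{1+2\sigma_l}.
\]
Since $|t_0| \ll 1$, one has $\Delta t \le |t_1|/2$, so the parabolic cylinder $B(Q^{*}, r) \times [t_1-\Delta t, t_1]$ lies inside $[t_0,t^\circ]$. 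Moreover, by Step 1 (applied at each intermediate $\tau$ with the reference foot point travelling accordingly), $F_t \cap B(Q^{*},r)$ is a graph over $\nu^{*}$ with uniformly bounded tilt for every $t \in [t_1-\Delta t, t_1]$. Now the Ecker--Huisken gradient estimate preserves $\nu^{*}\cdot\bar\nu \ge c_0/2$ on a slightly smaller cylinder up to time $t_1$, and the Ecker--Huisken curvature estimate yields
\[
|A_F|(Q^{*}, t_1) \;\le\; \frac{C(n,\Sigma,l,\Lambda)}{\sqrt{\Delta t}} \;\le\; \frac{C(n,\Sigma,l,\Lambda)}{R_0\,|t_1|^{\frac{1}{2}+\sigma_l}}.
\]
Rescaling by $|t_1|^{1/2+\sigma_l}$ gives $|\hat A|(q^{*}, \tau_1) \le C(n,\Sigma,l)$, as claimed. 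The short time range $\tau_1 \in [\tau_0, \tau_0+\delta]$ is already covered by Lemma \ref{lem tip sec func est short time}.

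The main obstacle is the scaling bookkeeping in the second step: one must verify that the parabolic time scale $\Delta t \sim |t_1|^{1+2\sigma_l}$ is simultaneously small enough that the uniform graphicality from Step 1 persists on the entire cylinder $B(Q^{*},r) \times [t_1-\Delta t, t_1]$, and large enough that $1/\sqrt{\Delta t}$ produces the scale-correct curvature bound. This works precisely because the natural parabolic scaling of MCF at a type~II singularity with rate $|t|^{1/2+\sigma_l}$ matches the spatial/temporal rescaling we perform to pass between $F$ and $\hat F$, so the curvature after rescaling is automatically $O(1)$. The second technical point is that the reference tangent plane $T_{p(q^{*})}S_{\kappa,+}$ must be held fixed as $t$ varies on $[t_1-\Delta t, t_1]$, while the ``foot'' point on $S_{\kappa,+}$ corresponding to different times on $\hat F_\tau$ varies; this is handled by the fact that, on the parabolic window $[t_1-\Delta t, t_1]$, the displacement of these foot points is $O(\sqrt{\Delta t}) = O(r)$, which is absorbed into a slight shrinkage of the ball.
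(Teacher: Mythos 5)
Your proposal is correct and follows essentially the same route as the paper's proof: handle $\tau\in[\tau_0,\tau_0+\delta]$ by Lemma \ref{lem tip sec func est short time}, and for later times reduce to the honest mean curvature flow on a parabolic cylinder whose spatial/temporal scales match the type~II rescaling (so the Ecker--Huisken curvature bound rescales to $|\hat A|\le C$), with graphicality over a fixed tangent plane supplied by the tip $C^0$ and $C^1$ estimates. The only cosmetic difference is that the paper packages the parabolic window as an auxiliary rescaled MCF $\bar F_\iota$ and does the normal-comparison bookkeeping in that picture, whereas you work directly in the unrescaled $F$ picture; the two are equivalent up to the change of variables you describe.
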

\begin{remark}
    \eqref{eq hat w C2 tip} is the third inequality in \eqref{eq hat w sum}. 
\end{remark}
\begin{proof}
    By Proposition \ref{prop tip C1}, there is $\delta(n,\Sigma,l)$ so that
    \begin{equation}
        |\nabla^2\hat w(\tilde z,\theta,\tau)|\leq C(n,\Sigma,l)
    \end{equation}
    for $(\tilde z,\theta)\in  S_{\kappa,+,3\beta}$, $\tau_0\leq \min\{\tau_0+\delta,\tau^\circ\}$. Hence, to prove the lemma, we only need to consider the case $\tau^\circ-\tau>\delta$. Fix $\tau_0+\delta\leq \tau_*\leq \tau^\circ$, and let
    \begin{equation}
        \bar{F}_\iota=(2\sigma_l\tau_*)^{c_l}F_{-(2\sigma_l\tau_*)^{\frac{-1}{2\sigma_l}}(1-\frac{\iota}{2\sigma_l\tau_*})}
    \end{equation}
    where $c_l=\frac{1}{2}+\frac{1}{4\sigma_l}$ is the same constant as before. Then $\bar F_\iota$ defines a MCF for $-(2\sigma_l\tau_*)\left((\frac{\tau_*}{\tau_0})^{\frac{1}{2\sigma_l}}-1\right)\leq \iota\leq 0$. Note that 
    $$\bar F_0=(2\sigma_l\tau_*)^{c_l}F_{-(2\sigma_l\tau_*)^{\frac{-1}{2\sigma_l}}}=\hat F_{\tau_*}$$
and
\begin{equation}\label{eq time}
    (2\sigma_l\tau_*)\left((\frac{\tau_*}{\tau_0})^{\frac{1}{2\sigma_l}}-1\right)\geq\frac{\delta}{2}
\end{equation}
provided $\tau_0\gg 1$ (depending on $n,\Sigma,l$). By \eqref{eq type ii rescaling graph}, $(2\sigma_l\tau_*)^{\frac{-1}{2\sigma_l}}(1-\frac{\iota}{2\sigma_l\tau_*})=|t|=(2\sigma_l\tau)^{-\frac{1}{2\sigma_l}}$, thus
    \begin{equation}
   \tau=\frac{\tau_*}{(1-\frac{\iota}{2\sigma_l\tau_*})^{2\sigma_l}}. 
    \end{equation}
Since $F$ is admissible, by rescaling, we can write $\bar F_\iota$ as a graph over $(1-\frac{\iota}{2\sigma_l\tau_*})^{2\sigma_lc_l}S_{\kappa,+}(\tilde z,\theta)$ with profile function $\hat h(z,\theta,\iota)$. That is
\begin{equation}
\begin{aligned}
    \bar F_\iota=&(1-\frac{\iota}{2\sigma_l\tau_*})^{2\sigma_lc_l}\hat F_\tau=(1-\frac{\iota}{2\sigma_l\tau_*})^{2\sigma_lc_l}(S_{\kappa,+}(\tilde z,\theta)+\hat w(z,\theta,\frac{\tau_*}{(1-\frac{\iota}{2\sigma_l\tau_*})^{2\sigma_l}})\nu(z,\theta)).
\end{aligned}
\end{equation}
Let $c(\iota)=(1-\frac{\iota}{2\sigma_l\tau_*})^{2\sigma_lc_l}$. Since $|S_{\kappa,+}|\geq c_2(\Sigma)>0$ by \eqref{eq bound k diff}, and $|\hat w(z,\theta,\tau)|\leq C_2(n,\Sigma)\beta^{-\frac{{\tilde \alpha} }{4}}$ by \eqref{eq tip inner C0 est hat} for $|\tilde z|\leq 5\beta$ for some $C_2\geq c_2>0$, we have
\begin{equation}\label{eq bar F ciota quo}
   \frac{|\bar F(\tilde z,\theta,\iota)|}{c(\iota)|S_{\kappa,+}(\tilde z,\theta)|}=\frac{|S_{\kappa,+}(\tilde z,\theta)+\hat w(\tilde z,\theta,\tau)\nu(z,\theta)|}{|S_{\kappa,+}(\tilde z,\theta)|}\in (1-\delta_1,1+\delta_1),
\end{equation}
and
\begin{equation}\label{eq bar F ciota diff quo}
\frac{|\bar F(\tilde z,\theta,\iota)-c(\iota)S_{\kappa,+}(\tilde z,\theta)|}{c(\iota)|S_{\kappa,+}(\tilde z,\theta)|}=\frac{|\hat w(\tilde z,\theta,\tau)|}{|S_{\kappa,+}(\tilde z,\theta)|}\leq\delta_1
\end{equation}
where $\delta_1:=\frac{C_2\beta^{-\frac{{\tilde \alpha} }{4}}}{c_2}\ll 1$, if $\beta\gg 1$ large depending on $c_2,C_2$. 

If $|\bar F(\tilde z,\theta,\iota)-\bar F(\tilde z_*,\theta_*,0)|\leq \varepsilon_1$, then by \eqref{eq bar F ciota quo},
\begin{equation}\label{eq bar F bar Fstar diff quo}
    \frac{|\bar F(\tilde z,\theta,\iota)-\bar F(\tilde z_*,\theta_*,0)|}{|\bar F(\tilde z_*,\theta_*,0)|}=\frac{\varepsilon_1}{|\bar F(\tilde z_*,\theta_*,0)|}\leq \frac{\varepsilon_1}{(1-\delta_1)|S_{\kappa,+}(\tilde z^*,\theta^*)|}\leq \frac{\varepsilon_1}{(1-\delta_1)c_2}.
\end{equation}
Now, by triangle inequality,
    \begin{equation}\label{eq Q split}
    Q:=\frac{|c(\iota)S_{\kappa,+}(\tilde z,\theta)-S_{\kappa,+}(\tilde z^*,\theta^*)|}{|S_{\kappa,+}(\tilde z^*,\theta^*)|}\leq Q_1+Q_2+Q_3
    \end{equation}
    where
    \begin{align*}
    Q_1:=\frac{|\bar F(\tilde z,\theta,\iota)-\bar F(\tilde z_*,\theta_*,0)|}{|S_{\kappa,+}(\tilde z^*,\theta^*)|},\quad Q_2:=\frac{|\bar F(\tilde z,\theta,\iota)-c(\iota)S_{\kappa,+}(\tilde z,\theta)|}{|S_{\kappa,+}(\tilde z^*,\theta^*)|},\quad Q_3:=\frac{|\bar F(\tilde z_*,\theta_*,0)-S_{\kappa,+}(\tilde z^*,\theta^*)|}{|S_{\kappa,+}(\tilde z^*,\theta^*)|}.
    \end{align*}
    By \eqref{eq bar F ciota quo} and \eqref{eq bar F bar Fstar diff quo},
    \begin{equation}
        Q_1\leq \frac{|\bar F(\tilde z,\theta,\iota)-\bar F(\tilde z_*,\theta_*,0)|}{|\bar F(\tilde z_*,\theta_*,0)|}(1+\delta_1)\leq \frac{\varepsilon_1}{(1-\delta_1)c_2}(1+\delta_1).
    \end{equation}
    By \eqref{eq bar F ciota diff quo}, and note $c(0)=1$, we have
    \begin{equation}
        Q_3\leq \delta_1.
    \end{equation} 
    At, last, by \eqref{eq bar F ciota diff quo} again and triangle inequality,
    \begin{equation}
        Q_2=\frac{|\bar F(\tilde z,\theta,\iota)-c(\iota)S_{\kappa,+}(\tilde z,\theta)|}{|c(\iota)S_{\kappa,+}(\tilde z,\theta)|}\frac{|c(\iota)S_{\kappa,+}(\tilde z,\theta)|}{|S_{\kappa,+}(\tilde z^*,\theta^*)|}\leq \delta_1(1+\frac{|c(\iota)S_{\kappa,+}(\tilde z,\theta)-S_{\kappa,+}(\tilde z^*,\theta^*)|}{|S_{\kappa,+}(\tilde z^*,\theta^*)|})=\delta_1(1+Q).
    \end{equation}
    Plugging the above three inequalities into \eqref{eq Q split}, we get
    \begin{equation}
        Q\leq \frac{\varepsilon_1}{(1-\delta_1)c_2}(1+\delta_1)+\delta_1(1+Q)+\delta_3.
    \end{equation}
Taking $\beta>0$ large (depending on $c_2,C_2$), $\varepsilon_1>0$ small, and rearranging the terms, we get
\begin{align*}
   Q\leq (1-\delta_1)^{-1}[\frac{\varepsilon_1}{(1-\delta_1)c_2}(1+\delta_1)+2\delta_1]\leq C(\Sigma)(\varepsilon_1+\beta^{-\frac{{\tilde \alpha} }{4}}).
\end{align*}
Thus
\begin{align*}
    \nu(S_{\kappa,+}(\tilde z,\theta))\cdot \nu(\tilde z^*,\theta^*)\geq 1-\varepsilon_2.
\end{align*}
for some $\varepsilon_2>0$ small if $\beta>0$ large and $\varepsilon_1>0$ small. On the other hand, there exists $\delta_2>0$ such that
\begin{align*}
    \bar\nu(\bar F(\tilde z,\theta,\iota))\cdot \nu (S_{\kappa,+}(\tilde z,\theta))=\bar \nu(S_{\kappa,+}(\tilde z,\theta)+\hat w(\tilde z,\theta,\frac{\tau_*}{(1-\frac{\iota}{2\sigma_l\tau_*})^{2\sigma_l}})\nu(\tilde z,\theta))\cdot \nu (S_{\kappa,+}(\tilde z,\theta))\geq \delta_2>0\\
    \bar \nu (\bar F(\tilde z^*,\theta^*,0))\cdot \nu(S_{\kappa,+}(\tilde z^*,\theta^*))=\bar \nu(S_{\kappa,+}(\tilde z^*,\theta^*)+\hat w(\tilde z^*,\theta^*,\tau^*)\nu(\tilde z^*,\theta^*))\cdot \nu(S_{\kappa,+}(\tilde z^*,\theta^*))\geq \delta_2>0
\end{align*}
for $|\tilde z|\leq 2\beta$ by \eqref{eq nor C nor gra eq} since $|\hat w(\tilde z,\theta,\tau)|\leq C\beta^{-\frac{{\tilde \alpha} }{2}}\leq \varepsilon_3, |\nabla \hat w(\tilde z,\theta,\tau)|\leq C$ by \eqref{eq tip inner C0 est hat}, \eqref{eq tip C1 est} for $\beta>0$ large. Thus
\begin{equation}
    \bar \nu (\bar F(\tilde z^*,\theta^*,0))\cdot \bar\nu(\bar F(\tilde z,\theta,\iota))
    \geq \frac{\delta_2^2}{2}>0
\end{equation}
if $|\tilde z^*|\leq 2\beta$ and $|\bar F(\tilde z,\theta,\iota)-\bar F(\tilde z_*,\theta_*,0)|\leq \varepsilon_1$ for $\beta>0$ large and $\varepsilon_1>0$ small. Thus by the curvature estimate in \cite{EH} and \eqref{eq time}, we obtain
\begin{equation}
    |\bar A(\tilde z^*,\nu^*,0)|=|\hat A (\tilde z^*,\theta^*,\tau^*)|\leq C(\delta_2)(\sqrt{\frac{2}{\delta}}+\frac{1}{\varepsilon_1}).
\end{equation}
Since $(\tilde z^*,\theta^*,\tau^*)\in S_{\kappa,+,3\beta}\times [\tau_0,\tau^\circ]$ is arbitrary, we are done.
\end{proof}
\begin{proof}[Proof of \eqref{eq hat w sum}]
    Use the $C^0$ estimate (Proposition \eqref{prop tip C0}), $C^1$ estimate (Proposition \ref{prop tip C1}) and $C^2$ estimate (Proposition \ref{prop tip C2}), and the standard theory of parabolic equations as the proof of Proposition 7.12 in \cite{GS}.
\end{proof}
\subsection{Determination of $\Lambda$}
We have to prove that we can find a $\Lambda>0$ depends only on $n,\Sigma,l$ such that \eqref{eq hat w sum} holds. This can be done in the same way as in \cite{GS}, which use the interior estimate of last subsection, maximum principle and initial values to extend the estimates to the initial time. Then we can choose $|t_0|\ll 1$ to achieve this.
\appendix

\section{Parametrization of normal graphs}\label{sec graph evo}
In this appendix, we collect some basic formulas for normal graphs. The readers can refer to section 2 of \cite{CHL} for details, we give them here for completeness. Let $X\subset\mathbb R^{n+1}$ be a smooth hypersurface embedded in $\mathbb R^{n+1}$, $\nu$ be a unit normal vector of $X$. Suppose $F$ is a nomrmal graph over a surface $X$ with profile function $u$, that is
$$F(x)=X(x)+u(x)\nu(x),$$ 
where $x$ is the local coordinates on $X$, and $u$ is a smooth function on $X$. In the following of this section, we use index $i,j,k,l$ to denote the covariant differentiation on $X$ with respect to $x$, and use $\bar{}$ to denote quantities for $F$. For example, we use $g_{ij},\nu,h_{ij}$ to denote the metric, unit normal vector, and the second fundamental form of $X$, and use $\bar g_{ij},\bar\nu,\bar h_{ij}$ to denote that of $F$ respectively. Then
\begin{align*}
    F_i=&X_i+uh_i^kX_k+u_i\nu,\quad
    F_{ij}
    =(-h_{ij}-uh_i^kh_{kj}+u_{ij})\nu+(u_ih_j^k+u_jh_i^k+u(h_i^k)_j)X_k.
    \end{align*}
Thus, the metric $\bar g$ on $F$ and $g$ on $X$ has the form
\begin{equation}\label{eq bar gij u}
\bar{g}_{ij}=F_i\cdot F_j=g_{ij}+2uh_{ij}+u^2h_i^kh_j^lg_{kl}+u_iu_j.
\end{equation}
Using the equation $\bar{\nu}\cdot F_i=0$, we get 
\begin{align*}
    \bar{\nu}=&\frac{\nu-u_i\bar{g}^{ij}F_j}{|\nu-u_i\bar{g}^{ij}F_j|}.
\end{align*}
Note we can define $V^2:=1-u_iu_j\bar g^{ij}=(\nu-u_i\bar{g}^{ij}F_j)\cdot(\nu-u_k\bar g^{kl}F_l)\geq 0$, then we get
\begin{equation}\label{eq nor C nor gra eq}
    \nu\cdot\bar{\nu}=\frac{1-u_iu_j\bar{g}^{ij}}{|\nu-u_i\bar{g}^{ij}F_j|}=(1-u_iu_j\bar g_{ij})^{\frac{1}{2}}=V.
\end{equation}
and 
\begin{equation}\label{eq bar hij u}
    \bar{h}_{ij}=-F_{ij}\cdot\bar{\nu}=\frac{1}{|\nu-u_i\bar{g}^{ij}F_j|}\Big[(1-u_iu_j\bar{g}^{ij})(h_{ij}+uh_i^kh_{kj}-u_{ij})+(u_ih_j^k+u_jh_i^k+u(h_i^k)_j)u_m\hat{g}^{mn}(g_{kn}+uh_{nk})\Big].
\end{equation}
Let $\bar{H}=\bar{g}^{ij}\bar{h}_{ij}$ be the mean curvature of $F$, and
\begin{equation}\label{eq mathcal Lcu}
    \mathcal L u=\Delta u+|A|^2u,
\end{equation}
to be the Jacobi operator of $X$, where $\Delta ,|A|^2=g^{ik}g^{jl}h_{ij}h_{kl}$ are the Laplacian operator and length square of the second fundamental form of $X$. We have
\begin{equation}\label{eq error outer}
\begin{aligned}
    E(u):=&-\frac{\bar H}{\nu\cdot\bar\nu}-\mathcal Lu=\bar{g}^{ij}\Big[-\frac{u_m\bar{g}^{mn}}{1-u_iu_j\bar{g}^{ij}}(u_ih_j^k+u_jh_i^k+u(h_i^k)_j)(g_{kn}+uh_{nk})\Big]\\
    &+(\bar{g}^{ij}-g^{ij})(u_{ij}-uh_i^kh_{kj})+(\bar{g}^{ij}-g^{ij}+2ug^{im}h_{mn}g^{nj})(-h_{ij})
    \end{aligned}
    \end{equation}
 Now, let's state a lemma about the structure of $E(u)$, which is crucial for our analysis of our solution in Section \ref{sec C0 est} and Section \ref{sec higher est}.
\begin{lemma} \label{lem E stru}
Suppose $c_1 g_{ij}\leq \bar g_{ij}\leq C_1g_{ij}$, $V\geq c_1>0$ for some uniform constants $C_1\geq c_1>0$, where $V=\nu\cdot\bar \nu$ is defined in \eqref{eq nor C nor gra eq}.

(1) If $X$ is a smooth hypersurface, $|h|+|\nabla h|\leq M$, $\|u\|_{C^2(X)}\leq \mu$ for some constant $M,\mu>0$, then there holds
    \begin{equation}\label{eq E stru 1}
    |E(u)|\leq C(n,M,c_1,C_1)\mu^2.
\end{equation} 
(2) If $X=\mC=\{(r,\theta)|r\in\mathbb R_+,\theta\in \Sigma\}$ is a regular cone in $\mathbb R^{n+1}$ ($\Sigma=\mC\cap \mathbb S^{n}$ is the link of $\mC$ which is a smooth hypersurface of $\mathbb S^n$), and 
\begin{equation}\label{eq basic assum}
    |u(r,\theta)|r^{-1}+|\nabla u(r,\theta)|+|\nabla^2u(r,\theta)|r\leq \mu
\end{equation}
for some uniform constant $\mu>0$, then there is a constant $\varepsilon(n,\Sigma)\ll1$, $C(n)\gg 1$ such that if $\mu\leq \varepsilon$, there holds 
\begin{equation}\label{eq E stru 2}
    |E(u)|\leq C(n,\Sigma,c_1,C_1)r^{-1}\mu^2
\end{equation}
\end{lemma}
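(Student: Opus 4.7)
The plan is to prove both parts by the same term-by-term analysis of the three pieces of $E(u)$ displayed in \eqref{eq error outer}, with the only difference being the scaling used to bound each factor. First I would establish a handful of preparatory estimates that control the building blocks: (i) $|\bar g^{ij}| \leq c_1^{-1}$ and $1/V^2 \leq c_1^{-2}$ from the standing hypotheses; (ii) using the identity $\bar g_{ij}-g_{ij} = 2uh_{ij}+u^2 h_i^{\,k}h_j^{\,l}g_{kl}+u_iu_j$ and the Neumann expansion $\bar g^{ij} = g^{ij} - g^{ik}(\bar g_{kl}-g_{kl})g^{lj} + g^{ik}(\bar g_{kl}-g_{kl})\bar g^{lm}(\bar g_{mn}-g_{mn})g^{nj}$, a first-order bound $|\bar g^{ij}-g^{ij}|\leq C(|u||h|+|u|^2|h|^2+|\nabla u|^2)$; and, most importantly, the second-order identity
\[
\bar g^{ij}-g^{ij}+2u h^{ij} = -\bigl(u^2 h^i_{\,k}h^{jk}+u^{i}u^{j}\bigr) + g^{ik}(\bar g_{kl}-g_{kl})\bar g^{lm}(\bar g_{mn}-g_{mn})g^{nj},
\]
which shows that the quantity appearing as a factor in the third term of $E(u)$ is actually quadratic in the smallness of $u$.

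With these ingredients in hand, I would estimate the three terms of $E(u)$ in invariant form. The first term is schematically $\bar g^{-1}\bar g^{-1}\cdot \nabla u\cdot(\nabla u\cdot h + u\cdot \nabla h)\cdot (g+uh)/V^2$, and is therefore bounded by $C(c_1,C_1)\,|\nabla u|\bigl(|\nabla u||h|+|u||\nabla h|\bigr)(1+|u||h|)$. The second term is $(\bar g^{-1}-g^{-1})(\nabla^2 u - u h\cdot h)$, which is bounded by $C(|u||h|+|\nabla u|^2)\cdot(|\nabla^2 u|+|u||h|^2)$. The third term is where the cancellation above matters: $(\bar g^{-1}-g^{-1}+2uh)\cdot h$, bounded by $C(u^2|h|^2+|\nabla u|^2)\cdot|h|$.

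For part (1), the hypotheses $|h|+|\nabla h|\leq M$ and $\|u\|_{C^2}\leq \mu$ (with $\mu\leq 1$, say) make each of the three expressions above manifestly $O(\mu^2)$ with constants depending only on $n$, $M$, $c_1$, $C_1$. For part (2), I would exploit the cone scaling: on $\mC$ the intrinsic norms satisfy $|h|\leq C(\Sigma) r^{-1}$, $|\nabla h|\leq C(\Sigma) r^{-2}$, and the basic assumption \eqref{eq basic assum} gives $|u|\leq \mu r$, $|\nabla u|\leq \mu$, $|\nabla^2 u|\leq \mu r^{-1}$. Substituting into the three bounds yields respectively $\mu\cdot(\mu r^{-1}+\mu r^{-1})\cdot 1$, $(\mu+\mu^2)\cdot(\mu r^{-1}+\mu r^{-1})$, and $(\mu^2+\mu^2)\cdot r^{-1}$, each of which is $C(n,\Sigma,c_1,C_1)\mu^2 r^{-1}$, provided $\mu\leq \varepsilon(n,\Sigma)$ is small enough to make the second-order corrections in the Neumann expansion absorbable.

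The main obstacle will be the third term. A naive bound $|\bar g^{ij}-g^{ij}|\cdot|h_{ij}|$ is only first-order in $u$ (and in the cone case only $O(\mu)\cdot O(r^{-1})$), which is insufficient; the required quadratic decay is achieved only after noticing that the definition of $E(u)$ isolates the combination $\bar g^{ij}-g^{ij}+2uh^{ij}$, and exploiting the second-order identity above. Once this cancellation is in place, the rest of the argument is bookkeeping of how the intrinsic norms scale against powers of $r$ on the cone, where the assumption \eqref{eq basic assum} is calibrated precisely so that each extra derivative on $u$ costs a factor of $r^{-1}$ and matches the corresponding factor arising from $h$ or $\nabla h$.
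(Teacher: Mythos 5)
Your proposal is correct and follows essentially the same route as the paper: a term-by-term estimate of the three pieces of $E(u)$ in \eqref{eq error outer}, using \eqref{eq bar gij u} together with the expansion of $\bar g^{ij}-g^{ij}$ (the paper invokes the inverse-matrix differentiation formula \eqref{eq inver matrix der} where you use the equivalent resolvent/Neumann expansion), and then substituting the cone scalings $|h|\leq C(\Sigma)r^{-1}$, $|\nabla h|\leq C(\Sigma)r^{-2}$ against \eqref{eq basic assum}. The paper only writes out the first term and says the rest "follow similarly," so your explicit identification of the quadratic cancellation in $\bar g^{ij}-g^{ij}+2ug^{im}h_{mn}g^{nj}$ is exactly the detail being suppressed there, and it is correctly carried out.
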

\begin{proof}
    (1) \eqref{eq E stru 1} Follows directly from the expression of $E(u)$, \eqref{eq bar gij u}, and the formula
    \begin{equation}\label{eq inver matrix der}
        \frac{da^{ij}(s)}{ds}=-a^{ik}\frac{d a_{kl}(s)}{ds}a_{lj}
    \end{equation}
for any smooth one parameter invertible matrix $\{a_{ij}(s)\}$ of $s$, where $s$ is a parameter.

(2) Note that, for a cone $\mC$, we have $|\nabla^k h|\leq C(\Sigma,k)r^{-k-1}$. Thus we have
\begin{align*}
    &\left |\bar{g}^{ij}\Big[-\frac{u_m\bar{g}^{mn}}{1-u_iu_j\bar{g}^{ij}}(u_ih_j^k+u_jh_i^k+u(h_i^k)_j)(g_{kn}+uh_{nk})\Big]\right |\\
    \leq& C(n,c_1,C_1)|\nabla u|(|\nabla u||h|+|u||\nabla h|)(1+u|h|)\\
    \leq&C(n,\Sigma,c_1,C_1) \mu(|\nabla u| r^{-1}+|u|r^{-2})(1+u|r|)\leq C(n,\Sigma,c_1,C_1)r^{-1}\mu^2
\end{align*}
if $u$ satisfies \eqref{eq basic assum}. The estimates of other terms in $E(u)$ follows similarly via using \eqref{eq bar gij u} and \eqref{eq inver matrix der}.
\end{proof}
We compute the derivatives of $\bar g^{ij}$ for the purpose of $C^1,C^2$ estimates in the outer region.
\begin{lemma}\label{lem bar g der sec der u}
    The inverse metric $\bar g^{ij}$ satisfies
    \begin{equation}\label{eq bar gij deri u}
       \bar g^{ij}_k=-\bar g^{ip}\bar g^{qj}(u_{pk}u_q+u_pu_{qk})+G_{pqk}(g,h,\bar g,u,\nabla u)
\end{equation}
with $G_{pqk}(g,h,\bar g,u,\nabla u)=-\bar g^{ip}\bar g^{qj}(2u_kh_{pq}+2uh_{pqk}+(u^2h_p^mh_q^n)_kg_{mn})$, and 
 \begin{equation}\label{eq bar gij sec deri u}
    \begin{aligned}
     \bar g^{ij}_{kl}
    =&-\bar g^{ip}\bar g^{qj}(u_{pkl}u_q+u_{pk}u_{ql}+u_{pl}u_{qk}+u_{p}u_{qkl})+(\bar g^{im}\bar g^{np}\bar g^{qj}+\bar g^{ip}\bar g^{qm}\bar g^{nj})\big(u_{ml}u_n+u_mu_{nl}\big) \big(u_{pk}u_q+u_pu_{qk}\big)\\
    &+ G^{ij}_{1kl}(h,\nabla h,\bar g,u,\nabla u)+ G^{ij}_{2kl}(h,\nabla h,\bar g,u,\nabla u)* \nabla^2u
\end{aligned}
\end{equation}
for some tensor $G^{ij}_{mkl}$ $(m=1,2)$ in $h,\nabla h,\bar g,u, \nabla u$. 

Moreover, if $c_1g_{ij}\leq \bar g_{ij}\leq C_1g_{ij}$, for some uniform constants $C_1\geq c_1>0$, $X=\mC=\{(r,\theta)|r\in\mathbb R_+,\theta\in \Sigma\}$ is a regular cone in $\mathbb R^{n+1}$, and $u$ satisfies 
\begin{equation}\label{eq assum u and grad}
    |u|r^{-1}+|\nabla u|\leq \mu,\quad (r,\theta)\in [r_0,\infty)\times \Sigma
\end{equation}
for some $\mu,r_0>0$, then
\begin{equation}\label{eq G Gm structure}
    |G|^2+|G_1| \leq C(n,\Sigma,c_1,C_1)(r^{-1}\mu)^2, \quad |G_2|\leq C(n,\Sigma,c_1,C_1,r_0,\mu);\quad (r,\theta)\in [r_0,\infty)\times \Sigma,
\end{equation}
where $|G|, |G_m|$ are the norm of $G_{pqk}$, $G^{ij}_{mkl}$ ($m=1,2$) respectively.
\end{lemma}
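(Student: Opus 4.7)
The plan is direct: differentiate the standard inverse-matrix formula and bookkeep which terms carry factors of $\nabla^2 u$ versus only lower-order data.

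First, recall from \eqref{eq bar gij u} that
\[
\bar g_{pq} = g_{pq} + 2u h_{pq} + u^2 h_p^m h_q^n g_{mn} + u_p u_q,
\]
so
\[
(\bar g_{pq})_k = u_{pk} u_q + u_p u_{qk} + 2 u_k h_{pq} + 2 u h_{pq,k} + (u^2 h_p^m h_q^n)_k g_{mn}.
\]
Applying the general identity $\bar g^{ij}_k = -\bar g^{ip} \bar g^{qj} (\bar g_{pq})_k$ (i.e.\ \eqref{eq inver matrix der}) and grouping the first two terms separately, I obtain \eqref{eq bar gij deri u} with $G_{pqk}$ exactly as stated. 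For \eqref{eq bar gij sec deri u}, I would differentiate once more: writing
\[
\bar g^{ij}_{kl} = -(\bar g^{ip})_l \bar g^{qj} (\bar g_{pq})_k - \bar g^{ip}(\bar g^{qj})_l (\bar g_{pq})_k - \bar g^{ip}\bar g^{qj}(\bar g_{pq})_{kl},
\]
and expanding $(\bar g^{ip})_l, (\bar g^{qj})_l$ again by \eqref{eq inver matrix der}, the cross-product $(\bar g_{mn})_l (\bar g_{pq})_k$ produces a quartic-in-$u$ leading piece $(u_{ml}u_n + u_m u_{nl})(u_{pk}u_q + u_p u_{qk})$ shown explicitly in \eqref{eq bar gij sec deri u}. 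Meanwhile $(\bar g_{pq})_{kl}$ contains $u_{pkl}u_q + u_{pk}u_{ql} + u_{pl}u_{qk} + u_p u_{qkl}$, which gives the other explicit block.

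The remaining terms split into two classes according to whether they contain a factor of $\nabla^2 u$: those that do (e.g.\ $2 u_{kl} h_{pq}$ inside $(\bar g_{pq})_{kl}$, or a $(u_{ml}u_n + u_m u_{nl})$ factor paired with the $G_{pqk}$-piece of $(\bar g_{pq})_k$) are collected as coefficients of $\nabla^2 u$ and labelled $G^{ij}_{2kl}$; those that do not (including $2 u h_{pq,kl}$, products $(2u_k h_{pq})(2 u_l h_{mn})$, derivatives of the $u^2 h_p^m h_q^n$ block, etc.) are collected as $G^{ij}_{1kl}$. This is purely an organizational step.

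For the bounds on a cone, I would use the scaling of the second fundamental form of a regular cone: $|\nabla^j h| \leq C(\Sigma,j)\, r^{-1-j}$ for all $j \geq 0$. Together with $c_1 g \leq \bar g \leq C_1 g$ and the hypothesis \eqref{eq assum u and grad}, each summand in $G_{pqk}$ admits a pointwise estimate of the form $\mu \cdot r^{-1}$: the $u_k h_{pq}$ and $u h_{pq,k}$ terms directly, while the $(u^2 h_p^m h_q^n)_k g_{mn}$ term yields $\mu^2 r^{-1}$ after using $u \leq \mu r$ and $|h|\,|\nabla h| \leq C r^{-3}$ (absorbed into $\mu r^{-1}$ since $\mu \leq 1$ is implicit). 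This gives $|G| \leq C \mu r^{-1}$, hence $|G|^2 \leq C(\mu r^{-1})^2$. The same kind of inspection shows that every summand of $G_1$ is a product of two ``$\mu r^{-1}$-sized'' factors (coming either from two first derivatives of $\bar g$ multiplied together, or from one second-order derivative of $\bar g$ whose lower-order piece $2uh_{pq,kl}$ is of size $\mu r^{-1}\cdot r^{-1}$), hence $|G_1| \leq C (\mu r^{-1})^2$. Finally, $G_2$ collects coefficients of $\nabla^2 u$ consisting of products involving $h$, $\nabla h$, $u$, $\nabla u$, $\bar g$ with no compensating smallness; these are bounded by $C(n,\Sigma,c_1,C_1,r_0,\mu)$ uniformly on $[r_0,\infty)\times \Sigma$ because each relevant factor is bounded in terms of $r_0$ and $\mu$.

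The proof is almost entirely bookkeeping; the main (minor) obstacle is keeping the $\nabla^2 u$-dependence of each product cleanly separated in the second-derivative expansion, so that the residual tensors $G_1, G_2$ depend only on $h, \nabla h, \bar g, u, \nabla u$ as claimed. I would organize the computation as a two-step expansion ($\bar g^{ij}_k$ first, then differentiate) and verify the splitting term by term rather than attempting to write out the full expression in closed form.
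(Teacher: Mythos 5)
Your proposal is correct and follows essentially the same route as the paper's own proof: apply the inverse-matrix identity \eqref{eq inver matrix der} to \eqref{eq bar gij u}, differentiate once and then again, isolate the blocks carrying $\nabla^2u$ and $\nabla^3u$, and bound the residual tensors using the cone scaling $|\nabla^j h|\leq C(\Sigma,j)\,r^{-1-j}$ exactly as in part (2) of Lemma \ref{lem E stru}. One caveat, which your write-up shares with the paper's statement rather than introduces: the $G_1$-terms $2u_kh_{pq,l}$, $2u_lh_{pq,k}$ and $2uh_{pq,kl}$ are of size $\mu r^{-2}$, not $(\mu r^{-1})^2$, so the bound $|G_1|\leq C(n,\Sigma,c_1,C_1)(r^{-1}\mu)^2$ is literally off by a factor of $\mu$ unless $\mu$ is bounded below; this is harmless because in the application (Lemma \ref{lem sec der diff ine cut app}) only the $\mu$- and $r_0$-dependent boundedness of $G_1$ is used.
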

\begin{proof}
    Using the differentiation rule for inverse matrix and differentiating \eqref{eq bar gij u}, we get
\begin{equation}\label{eq bar gij inver der}
\begin{aligned}
   &\bar g^{ij}_k=-\bar g^{ip}\bar g^{qj}\bar g_{pq,k}=-\bar g^{ip}\bar g^{qj}(2u_kh_{pq}+2uh_{pqk}+(u^2h_p^mh_q^n)_kg_{mn}+(u_pu_q)_k).
\end{aligned}
\end{equation}
Then \eqref{eq bar gij deri u} follows by rearranging terms. Differentiating \eqref{eq bar gij inver der} again, and using 
\begin{align*}
    &\bar g^{ij}_{kl}=\bar g^{im}\bar g_{mn,l}\bar g^{tp}\bar g_{pq,k}\bar g^{qj}+\bar g^{ip}\bar g_{pq,k}\bar g^{qm}\bar g_{mn,l}\bar g^{nj}-\bar g^{ip}\bar g_{pq,kl}\bar g^{qj}
\end{align*}
we get \eqref{eq bar gij sec deri u}. The last statement follows the same as the proof of (2) of Lemma \ref{lem E stru}.
\end{proof}
Next, we consider the case when the normal graph $F$ evolves by MCF, i.e. $F$ satisfies \eqref{eq mcf}. Then $u$ evolves by \eqref{eq flow outer}. First, we calculate the equation for the gradient of $u$ along MCF.
\begin{lemma}\label{lem gra diff ine cut app}
   Suppose $u$ satisfies \eqref{eq flow outer}, then $|\nabla u|^2$ satisfies 
    \begin{equation}\label{eq gra diff ine app}
        \partial_t|\nabla u|^2=[-\bar g^{ip}\bar g^{qj}(|\nabla u|^2_pu_q+|\nabla u|^2_qu_p)+2G_{ijk}u_k](u_{ij}+V_{ij})+\bar g^{ij}[(|\nabla u|^2)_{ij}-2u_{pi}u_{pj}]+2\bar g^{ij}u_k\tilde V_{ijk}.
    \end{equation}
    where
    \begin{equation}\label{eq Vij}
        V_{ij}=V_{ij}(h,\bar g,u,\nabla u)=-uh_i^kh_{kj}-h_{ij}-\frac{u_m\bar{g}^{mn}}{1-u_iu_j\bar{g}^{ij}}(u_ih_j^k+u_jh_i^k+u(h_i^k)_j)(g_{kn}+uh_{nk}),
    \end{equation}
    \begin{align*}
        \tilde V_{ijk}=&(h_j^mh_{ki}-h_{ji}h_k^m)u_m+ V_{ijk},
\end{align*}
and $V_{ijl}$ is the covariant derivative of $V_{ij}$. Moreover, for any non-negative smooth function $\eta$ in dependent of $t$, we have
\begin{equation}\label{eq gra diff ine cut app}
    \begin{aligned}
      \partial_t(\eta |\nabla u|^2)=&[-\bar g^{ip}\bar g^{qj}\big((\eta|\nabla u|^2)_pu_q+(\eta|\nabla u|^2)_qu_p-(\eta_p u_q+\eta_q u_p)|\nabla u|^2\big)+2\eta G_{ijl}u_l](u_{ij}+V_{ij})\\
        +&\bar g^{ij}[(\eta |\nabla u|^2)_{ij}-2\eta_i|\nabla u|^2_j-\eta_{ij}|\nabla u|^2-2\eta u_{pi}u_{pj}]+2\eta\bar g^{ij}u_l\tilde V_{ijl}.
    \end{aligned}
\end{equation}
\end{lemma}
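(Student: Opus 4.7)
The plan is to start from the identity $u_t=\bar g^{ij}(u_{ij}+V_{ij})$, which is just a rewriting of \eqref{eq flow outer} using formulas \eqref{eq bar hij u} and \eqref{eq error outer}, and then differentiate. Since the background metric $g_{ij}$ on $X$ is time--independent,
\begin{equation*}
\partial_t|\nabla u|^2=2u^k(\partial_t u)_k=2u^k\bigl[\bar g^{ij}_{\,k}(u_{ij}+V_{ij})+\bar g^{ij}(u_{ijk}+V_{ij,k})\bigr].
\end{equation*}

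First I would process the $\bar g^{ij}_{\,k}(u_{ij}+V_{ij})$ term. Substituting the decomposition \eqref{eq bar gij deri u} from Lemma \ref{lem bar g der sec der u} yields
\begin{equation*}
2u^k\bar g^{ij}_{\,k}=-2\bar g^{ip}\bar g^{qj}\bigl(u^ku_{pk}u_q+u_pu^ku_{qk}\bigr)+2G^{ij}_{\,k}u^k.
\end{equation*}
Since $u^k u_{pk}=\tfrac12(|\nabla u|^2)_p$ (and similarly with $q$), this produces exactly the first bracket in the desired formula, namely
\begin{equation*}
\bigl[-\bar g^{ip}\bar g^{qj}\bigl((|\nabla u|^2)_p u_q+(|\nabla u|^2)_q u_p\bigr)+2G_{ijk}u^k\bigr](u_{ij}+V_{ij}).
\end{equation*}

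The second step is to rewrite the third--derivative term $2\bar g^{ij}u^k u_{ijk}$. I would commute the covariant derivatives, applying the Ricci identity to the $(0,2)$--tensor $u_{ij}$ to move $\nabla_k$ inward. The resulting curvature correction is controlled by the intrinsic Riemann tensor of $X\subset\mathbb R^{n+1}$, which by the Gauss equation equals $R_{ijkl}=h_{ik}h_{jl}-h_{il}h_{jk}$; the contraction with $u^k$ produces precisely the combination $(h_j^m h_{ki}-h_{ji}h_k^m)u_m$ that appears in the definition of $\tilde V_{ijk}$. After commuting, I would use the scalar identity $(|\nabla u|^2)_{ij}=2u^ku_{kij}+2u_{ki}u^k{}_j$, which rearranges to $2u^ku_{ijk}=(|\nabla u|^2)_{ij}-2u_{pi}u_{pj}$ up to the curvature terms just collected. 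Together with the trivial piece $2\bar g^{ij}u^k V_{ij,k}$, which combines with the Gauss--curvature correction into $2\bar g^{ij}u^k\tilde V_{ijk}$, this gives exactly \eqref{eq gra diff ine app}.

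Finally, for the weighted version \eqref{eq gra diff ine cut app}, I would multiply \eqref{eq gra diff ine app} by $\eta$ (which is time--independent) and rewrite every bare derivative $(|\nabla u|^2)_p$, $(|\nabla u|^2)_{ij}$ in terms of $(\eta|\nabla u|^2)_p$, $(\eta|\nabla u|^2)_{ij}$ using the Leibniz rule. The bookkeeping produces the stated correction terms $-(\eta_pu_q+\eta_qu_p)|\nabla u|^2$ in the first bracket and $-2\eta_i|\nabla u|^2_j-\eta_{ij}|\nabla u|^2$ inside $\bar g^{ij}[\,\cdot\,]$. The only nontrivial step in the whole argument is the commutation of covariant derivatives on $u_{ij}$; the remainder is algebraic substitution. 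I expect no difficulty beyond careful index tracking, since all ingredients (namely \eqref{eq bar gij deri u}, the Gauss equation, and the symmetry of $\bar g^{ij}$) are already in place.
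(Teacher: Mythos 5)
Your proposal is correct and follows essentially the same route as the paper: rewrite \eqref{eq flow outer} as $u_t=\bar g^{ij}(u_{ij}+V_{ij})$, differentiate once, contract with $2u^k$, substitute \eqref{eq bar gij deri u} for $\bar g^{ij}_{\,k}$, and absorb the Ricci/Gauss commutator from $2u^ku_{ijk}$ into $\tilde V_{ijk}$ before applying the Leibniz rule for the cutoff version. The paper's proof is just a terser version of the same computation, with the commutator already built into the definition of $\tilde V_{ijk}$.
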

\begin{proof}
    Differentiating \eqref{eq flow outer} gives
    \begin{align*}
        u_{tk}=&\bar g^{ij}_l(u_{ij}+V_{ij})+\bar g^{ij}(u_{kij}+\tilde V_{ijk}).
    \end{align*} 
On the other hand, by \eqref{eq bar gij deri u}, 
\begin{align*}
        2u_l\bar g^{ij}_l
        =-\bar g^{ip}\bar g^{qj}(|\nabla u|^2_pu_q+|\nabla u|^2_qu_p)+2G_{ijl}u_l.
    \end{align*}
    Thus,
    \begin{align*}
        &\partial_t|\nabla u|^2=2u_ku_{tk}=2u_l\bar g^{ij}_k(u_{ij}+V_{ij})+\bar g^{ij}[(|\nabla u|^2)_{ij}-2u_{pi}u_{pj}]+2\bar g^{ij}u_k\tilde V_{ijk}\\
        =&[-\bar g^{ip}\bar g^{qj}(|\nabla u|^2_pu_q+|\nabla u|^2_qu_p)+2G_{ijk}u_k](u_{ij}+V_{ij})+\bar g^{ij}[(|\nabla u|^2)_{ij}-2u_{pi}u_{pj}]+2\bar g^{ij}u_k\tilde V_{ijk},
    \end{align*} 
which is the first equation. Using \eqref{eq gra diff ine app}, it's easy to get  \eqref{eq gra diff ine cut app}. 
\end{proof}
Then we calculate the evolution equation for Hessian of $u$.
\begin{lemma}\label{lem sec der diff ine cut app}
    Suppose $u$ satisfies \eqref{eq flow outer}, and if $c_1g_{ij}\leq \bar g_{ij}\leq C_1g_{ij}$, for some uniform constants $C_1\geq c_1>0$, $X=\mC=\{(r,\theta)|r\in\mathbb R_+,\theta\in \Sigma\}$ is a regular cone in $\mathbb R^{n+1}$, and $u$ satisfies \eqref{eq assum u and grad} for some $\mu,r_0>0$. Then for any $\varepsilon>0$, $|\nabla^2 u|^2$ satisfies 
    \begin{equation}\label{eq sec der diff ine app}
    \begin{aligned}
       \partial_t|\nabla^2u(x,\theta,t)|^2\leq& \bar g^{ij}|\nabla^2 u|^2_{ij}+(C(n,\Sigma,c_1,C_1,r_0,\varepsilon)|\nabla u|^2-2c_1^2)|\nabla^2 u|^4-\bar g^{ip}\bar g^{qj}(|\nabla^2u|^2_pu_q+|\nabla^2u|^2_qu_p)\\&(u_{ij}+V_{ij})
       +(\varepsilon+C(n,\Sigma,c_1,C_1,r_0)\mu^2-2c_1) |\nabla^3 u|^2+C(n,\Sigma,c_1,C_1,r_0,\mu,\varepsilon)(|\nabla^2 u|^3+1),
    \end{aligned}
    \end{equation}
    for $(r,\theta)\in [r_0,\infty)\times \Sigma$. Moreover, for any non-negative smooth function $\eta$ independent of $t$, we have
    \begin{equation}\label{eq sec der diff ine cut app}
    \begin{aligned}
          \partial_t(\eta|\nabla^2u|^2)\leq& \bar g^{ij}[(\eta|\nabla^2 u|^2)_{ij}-2\eta_i |\nabla^2 u|^2_j-\eta_{ij}|\nabla^2 u|^2]+\eta[C(n,\Sigma,c_1,C_1,r_0,\varepsilon)|\nabla u|^2-2c_1^2)|\nabla ^2u|^4]\\
       &-\bar g^{ip}\bar g^{qj}[(\eta |\nabla^2u|^2)_pu_q+(\eta|\nabla^2u|^2)_qu_p-(\eta_p u_q+\eta_q u_p)|\nabla^2 u|^2](u_{ij}+V_{ij})\\
       &+\eta[(\varepsilon+C(n,\Sigma,c_1,C_1,r_0)\mu^2-2c_1^2)  |\nabla^3 u|^2+C(n,\Sigma,c_1,C_1,r_0,\mu,\varepsilon)(|\nabla^2 u|^3+1)],
    \end{aligned}
    \end{equation}
     for $(r,\theta)\in [r_0,\infty)\times \Sigma$.
\end{lemma}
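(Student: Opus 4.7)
The plan is to mimic the derivation of Lemma \ref{lem gra diff ine cut app} one derivative higher: differentiate \eqref{eq flow outer} twice in space, contract with $u_{kl}$, and carefully organize the resulting terms into a diffusion part, a transport part of the form $-\bar g^{ip}\bar g^{qj}(|\nabla^2u|^2_pu_q+|\nabla^2u|^2_qu_p)(u_{ij}+V_{ij})$, and error terms that can be bounded using Lemma \ref{lem bar g der sec der u} and the cone structure. First I differentiate $u_t=\bar g^{ij}(u_{ij}+V_{ij})$ once with respect to $\nabla_k$ and once more with respect to $\nabla_l$, commute covariant derivatives (which on $\mC$ costs only curvature terms controlled by $r^{-2}$), and collect everything into the schematic form
\begin{equation*}
u_{tkl}=\bar g^{ij}u_{ijkl}+\bar g^{ij}_{l}u_{ijk}+\bar g^{ij}_{k}u_{ijl}+\bar g^{ij}_{kl}(u_{ij}+V_{ij})+\bar g^{ij}(V_{ij})_{kl}+\text{commutator},
\end{equation*}
then multiply by $2u_{kl}$ and sum.

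Next I insert the expansions \eqref{eq bar gij deri u}--\eqref{eq bar gij sec deri u} from Lemma \ref{lem bar g der sec der u}. The term $-\bar g^{ip}\bar g^{qj}(u_{pk}u_q+u_pu_{qk})$ in $\bar g^{ij}_k$, paired with $u_{ijl}u_{kl}$, yields (after symmetrization in $k,l$) exactly the transport piece $-\bar g^{ip}\bar g^{qj}(|\nabla^2u|^2_pu_q+|\nabla^2u|^2_qu_p)(u_{ij}+V_{ij})$ displayed in \eqref{eq sec der diff ine app}. The $G_{pqk}$, $G^{ij}_{1kl}$, $G^{ij}_{2kl}$ tensors contribute lower order: by \eqref{eq G Gm structure} we have $|G|+|G_1|\le C r^{-1}\mu$ and $|G_2|\le C$, so their contribution after pairing with $u_{kl}$ is bounded by $C(|\nabla^2u|^3+|\nabla^2u|^2+1)$ modulo a term $G_2 * \nabla^2 u$ coming from $\bar g^{ij}_{kl}V_{ij}$ which is quadratic in $\nabla^2u$ (thus absorbable into $C|\nabla^2u|^3+C$). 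The dangerous fourth-order interaction comes from the product $(\bar g^{im}\bar g^{np}\bar g^{qj}+\bar g^{ip}\bar g^{qm}\bar g^{nj})(u_{ml}u_n+u_mu_{nl})(u_{pk}u_q+u_pu_{qk})$ in $\bar g^{ij}_{kl}$ against $u_{kl}(u_{ij}+V_{ij})$: each factor of $u_{ab}$ gives $|\nabla^2u|$, while each factor of $u_a$ gives $|\nabla u|\le \mu$ by \eqref{eq assum u and grad}, so this is dominated by $C|\nabla u|^2|\nabla^2u|^4$, which is exactly the first negative/borderline term displayed in \eqref{eq sec der diff ine app}.

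The diffusion piece $\bar g^{ij}|\nabla^2u|^2_{ij}$ together with the standard Bochner-type identity $2u_{kl}\bar g^{ij}u_{klij}=\bar g^{ij}|\nabla^2u|^2_{ij}-2\bar g^{ij}u_{klij-}u_{klj}\,$ produces the dominant good term $-2\bar g^{ij}u_{kli}u_{klj}\le -2c_1|\nabla^3u|^2$; a Young's inequality is used to absorb the three-derivative cross terms $\bar g^{ij}_k u_{ijl}u_{kl}$ and $\bar g^{ij}(V_{ij})_{kl}u_{kl}$ at the cost of $\varepsilon|\nabla^3u|^2+C_\varepsilon|\nabla^2u|^2$ and $C\mu^2|\nabla^3u|^2+C$ (the $\mu^2$ coming from $|G|^2\le C(r^{-1}\mu)^2$ and the cone bounds $|\nabla^kh|\le Cr^{-k-1}$ on $r\ge r_0$). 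Collecting, one obtains \eqref{eq sec der diff ine app}.

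Finally, \eqref{eq sec der diff ine cut app} follows by multiplying \eqref{eq sec der diff ine app} by the time-independent $\eta$ and rewriting $\eta\bar g^{ij}|\nabla^2u|^2_{ij}=\bar g^{ij}(\eta|\nabla^2u|^2)_{ij}-2\bar g^{ij}\eta_i|\nabla^2u|^2_j-\bar g^{ij}\eta_{ij}|\nabla^2u|^2$ and $\eta$ times the transport term as $-\bar g^{ip}\bar g^{qj}[(\eta|\nabla^2u|^2)_pu_q+(\eta|\nabla^2u|^2)_qu_p-(\eta_pu_q+\eta_qu_p)|\nabla^2u|^2](u_{ij}+V_{ij})$, which is a purely algebraic rearrangement and does not generate any new error. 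The hard part will be the careful bookkeeping in the second step: correctly identifying which fourth-order-in-$\nabla u$ terms cancel after symmetrizing in $(k,l)\leftrightarrow(i,j)$ against $u_{ij}+V_{ij}$ so that the only surviving contribution is the $C|\nabla u|^2|\nabla^2u|^4$ one (any spurious $|\nabla^2u|^4$ without a $|\nabla u|^2$ prefactor would ruin the estimate, since one needs the smallness of $\mu$ ultimately to absorb it against $-2c_1^2|\nabla^2u|^4$). Once this cancellation is verified, the remaining manipulations are standard and the explicit structural bounds \eqref{eq G Gm structure} do the rest.
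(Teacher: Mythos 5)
Your overall strategy coincides with the paper's (differentiate \eqref{eq flow outer} twice, contract with $2u_{kl}$, split into diffusion, transport and error terms, and invoke Lemma \ref{lem bar g der sec der u} together with the cone bounds $|\nabla^k h|\leq Cr^{-k-1}$), and your handling of the Bochner term and of the cutoff rearrangement at the end is correct. However, there are two linked bookkeeping errors that prevent the plan, as written, from producing \eqref{eq sec der diff ine app}. First, the transport term does not come from where you say. The product of $-\bar g^{ip}\bar g^{qj}(u_{pk}u_q+u_pu_{qk})$ (from $\bar g^{ij}_k$) with $u_{ijl}u_{kl}$ has its third derivative on $u_{ijl}$ with $i,j$ contracted against $\bar g^{ip}\bar g^{qj}$ and only $l$ shared with $u_{kl}$; this cannot be rewritten as $|\nabla^2u|^2_pu_q$, which requires both non-derivative indices of $u_{klp}$ to be contracted against both indices of $u_{kl}$, and no symmetrization in $(k,l)$ fixes the mismatch. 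In the paper that term is simply absorbed by Young's inequality into $\varepsilon|\nabla^3u|^2+C_\varepsilon|\nabla u|^2|\nabla^2u|^4+C\mu^2|\nabla^2u|^2$, while the transport term comes from the third-derivative part $-\bar g^{ip}\bar g^{qj}(u_{pkl}u_q+u_pu_{qkl})$ of $\bar g^{ij}_{kl}$ in \eqref{eq bar gij sec deri u}, contracted against $2u_{kl}(u_{ij}+V_{ij})$ and corrected by Ricci commutators.

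Second, and more seriously, your accounting never produces the essential good term $-2c_1^2|\nabla^2u|^4$, and your guiding principle for the quartic terms is wrong. You keep from $\bar g^{ij}_{kl}$ only the bilinear product $(u_{ml}u_n+u_mu_{nl})(u_{pk}u_q+u_pu_{qk})$ and the $G$-tensors, and you assert that any $|\nabla^2u|^4$ contribution without a $|\nabla u|^2$ prefactor must cancel. But \eqref{eq bar gij sec deri u} also contains $-\bar g^{ip}\bar g^{qj}(u_{pk}u_{ql}+u_{pl}u_{qk})$, which against $2u_{kl}u_{ij}$ gives a purely quartic term in $\nabla^2u$ with no $|\nabla u|^2$ factor. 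It does not cancel; it survives with a favorable sign, namely $-2\bar g^{ip}\bar g^{qj}u_{kl}u_{ij}(u_{pk}u_{ql}+u_{pl}u_{qk})\leq -2c_1^2|\nabla^2u|^4$, and this is exactly the term that makes the final inequality usable (it is also the same group of terms whose third-derivative part yields the transport term you misplaced). Without identifying this source you can neither obtain the $-2c_1^2|\nabla^2u|^4$ on the right of \eqref{eq sec der diff ine app} nor close the estimate, so the cancellation check you flag as "the hard part" is aimed at the wrong target: the issue is sign, not cancellation.
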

\begin{proof}
 In the following of the proof, we consider $(r,\theta)\in [r_0,\infty)\times \Sigma$. Differentiating \eqref{eq flow outer} two times with respect to $x_k,x_l$ and multiplying $u_{kl}$ and summing, we get
    \begin{align*}
        &\partial_t|\nabla^2u(x,\theta,t)|^2=2u_{lk}u_{klt}
        =
        I_1+I_2+II_1+II_2+III_1+III_2
    \end{align*}
       where 
    \begin{align*}
        I_1:=&2u_{kl}\bar g^{ij}_{kl}u_{ij},\quad I_2=2u_{kl}\bar g^{ij}_{kl} V_{ij},\quad II_1:=4u_{kl}\bar g^{ij}_ku_{ijl}\\
    II_2:=&4u_{kl}\bar g^{ij}_kV_{ijl}, \quad III_1:=2u_{kl}\bar g^{ij}u_{ijkl},\quad III_2:=2u_{pq}\bar g^{ij}V_{ijkl}.
    \end{align*}
    and $V_{ij}$ is the tensor in lemma \ref{lem gra diff ine cut app}, and $V_{ijl}, V_{ijkl}$ are the covariant derivatives of $V_{ij}$. By \eqref{eq bar gij sec deri u}, and changing the order of covariant derivative using Ricci identities, we obtain  
    \begin{align*}
        I_1\leq& -2\bar g^{ip}\bar g^{qj}((u_{klp}+R_{plk}^mu_m)u_q+u_p(u_{klq}+R_{qlk}^mu_m))u_{kl}u_{ij}
    -2\bar g^{ip}\bar g^{qj}u_{kl}u_{ij}(u_{pk}u_{ql}+u_{pl}u_{qk})\\
    &+C(n,c_1,C_1)|\nabla u|^2|\nabla ^2u|^4+C(n)|G_1||\nabla^2 u|^2+C(n)|G_2||\nabla^ 2u|^3\\
    \leq& -\bar g^{ip}\bar g^{qj}(|\nabla^2u|^2_pu_q+|\nabla^2u|^2_qu_p)u_{ij} -2\bar g^{ip}\bar g^{qj}u_{kl}u_{ij}(u_{pk}u_{ql}+u_{pl}u_{qk})\\
    &+C(n,\Sigma,c_1,C_1,r_0)|\nabla u|^2(|\nabla^2u|^2+|\nabla^2u|^4)
    +C(n)|G_1||\nabla^2 u|^2+C(n)|G_2||\nabla^ 2u|^3,
    \end{align*}
where $|G_m|$ is the norm of $G^{ij}_{mkl}$ ($m=1,2$) in Lemma \ref{lem bar g der sec der u}. By \eqref{eq G Gm structure}, we get
\begin{align*}
    I_1\leq & -\bar g^{ip}\bar g^{qj}(|\nabla^2u|^2_pu_q+|\nabla^2u|^2_qu_p)u_{ij} -2\bar g^{ip}\bar g^{qj}u_{kl}u_{ij}(u_{pk}u_{ql}+u_{pl}u_{qk})\\
    &+C(n,\Sigma,c_1,C_1,r_0)|\nabla u|^2|\nabla^2u|^4
    +C(n,\Sigma,c_1,C_1,r_0,\mu)(|\nabla^2 u|^2+|\nabla^ 2u|^3).
\end{align*}
Since 
\begin{align*}
    \bar g^{im}\bar g^{nj}u_{pq}u_{ij}(u_{mp}u_{nq}+u_{mq}u_{np})=2\operatorname {Tr}({\bar g^{-1}(\nabla^2u)^3} \bar g^{-1}(\nabla^2u))\geq 2c_1^2(n,\Sigma) |\nabla^2u|^4,
\end{align*}
we get
\begin{equation}\label{eq imp sed outer noncom I1 app}
\begin{aligned}
    I_1\leq & -\bar g^{ip}\bar g^{qj}(|\nabla^2u|^2_pu_q+|\nabla^2u|^2_qu_p)u_{ij} -2c_1^2|\nabla^2u|^2\\
    &+C(n,\Sigma,c_1,C_1,r_0)|\nabla u|^2|\nabla^2u|^4
    +C(n,\Sigma,c_1,C_1,r_0,\mu)(|\nabla^2 u|^2+|\nabla^ 2u|^3).
\end{aligned}
\end{equation}
For $I_2$, we note from the expression of $V_{ij}$ in \eqref{eq Vij} and the assumption, we have
\begin{align*}
    |V_{ij}|\leq C(n,c_1,C_1)(r^{-1}\mu+|h|)\leq C(n,\Sigma,c_1,C_1)(\mu+1)r^{-1}.
\end{align*}
This together with \eqref{eq bar gij sec deri u} and \eqref{eq G Gm structure} yields,
\begin{equation}\label{eq imp sed outer noncom I2 app}
\begin{aligned}
    I_2\leq& C(n,c_1,C_1)|\nabla^2u|(|\nabla^2u|^2+|\nabla u||\nabla^2u|+|G_1|+|G_2|)|V|-2u_{kl}\bar g^{ip}\bar g^{qj}(u_{klp}u_q+u_{klq}u_p) V_{ij}(h,\bar g,u,\nabla u)\\
    =& C(n,c_1,C_1,\mu,r_0)(|\nabla^2u|^3+1)-\bar g^{ip}\bar g^{qj}(|\nabla^2u|^2_pu_q+|\nabla^2u|^2_qu_p)V_{ij}
\end{aligned}
\end{equation} 
for $(r,\theta)\in [r_0,\infty)\times \Sigma$. For $II_1$, by \eqref{eq bar gij deri u}, and Cauchy inequality (assuming $|\nabla^2 u|$ large), we have
\begin{align*}
     II_1=&-4u_{kl}u_{ijl}\bar g^{ip}\bar g^{qj}\big(u_{pk}u_q+u_pu_{qk}+G_{pqk}\big)
     \leq C(n,c_1,C_1)|\nabla^2u||\nabla^3u|(|\nabla^2u||\nabla u|+|G|)\\
    \leq& \varepsilon |\nabla^3 u|^2+C(\varepsilon,c_1,C_1)|\nabla^2 u|^4|\nabla u|^2+C(\varepsilon,c_1,C_1)|\nabla^2u|^2|G|^2.
\end{align*}
By \eqref{eq G Gm structure} and \eqref{eq assum u and grad}, we get
\begin{equation}\label{eq imp sed outer noncom II1 app}
    II_1\leq  \varepsilon |\nabla^3 u|^2+C(\varepsilon,c_1,C_1)|\nabla^2 u|^4|\nabla u|^2+C(\varepsilon,c_1,C_1,r_0)|\nabla^2u|^2\mu^2.
\end{equation}
For $II_2$, we have by \eqref{eq bar gij deri u}, \eqref{eq Vij}, and \eqref{eq assum u and grad} that
\begin{align*}
    II_2=&-4u_{kl}\bar g^{ip}\bar g^{qj}\big(u_{pk}u_q+u_pu_{qk}+G_{pqk}\big)V_{ijl}\leq C(n,\Sigma,c_1,C_1,r_0)(\mu+1)(|\nabla^2u||\nabla u|+|G|)(|\nabla^2u|+1)
\end{align*}
By \eqref{eq G Gm structure}, we get
\begin{equation}\label{eq imp sed outer noncom II2 app}
\begin{aligned}
    II_2\leq C(n,\Sigma,c_1,C_1,r_0,\mu)(|\nabla^2 u|^3+|\nabla u|^2+1)
\end{aligned}
\end{equation}
For $III_1$, 
    \begin{align*}
    III_1=&2u_{kl}\bar g^{ij}(u_{klij}+(R_{ilk}^mu_m)_j+R_{jli}^mu_{mk}+R_{jlk}^mu_{im}+(R_{ikj}^mu_m)_l)\\
    \leq &\bar g^{ij}(|\nabla^2 u|^2_{ij}-2u_{kli}u_{klj})+C(n,c_1,C_1,r_0)(|\nabla^2 u|^2+|\nabla ^2u|).
    \end{align*}
Since 
\begin{align*}
    \bar g^{ij}u_{kli}u_{klj}\geq c_1(n,\Sigma)|\nabla^3u|^3,
\end{align*}
we get
\begin{equation}\label{eq imp sed outer noncom III1 app}
    \begin{aligned}
    III_1\leq &\bar g^{ij}|\nabla^2 u|^2_{ij}-2c_1|\nabla^3u|^3+C(n,c_1,C_1,r_0)(|\nabla^2 u|^2+|\nabla ^2u|).
    \end{aligned}
\end{equation}
At last, by \eqref{eq Vij}, and Cauchy inequality
\begin{equation}\label{eq imp sed outer noncom III2 app}
\begin{aligned}
    III_2\leq& C(n,c_1,C_1)|\nabla^2u||V_{ijkl}|\leq C(n,\Sigma,c_1,C_1,r_0)|\nabla^2u| (|\nabla^2u|+1+|\nabla^2u|^2+\mu |\nabla^3u|)\\
    \leq& C(n,\Sigma,c_1,C_1,r_0)(|\nabla^2 u|^3+\mu^2|\nabla^3u|^2+|\nabla^2 u|^2+1)
\end{aligned}
\end{equation}
Combining \eqref{eq imp sed outer noncom I1 app}-\eqref{eq imp sed outer noncom III2 app}, and another use of Cauchy inequality, we get \eqref{eq sec der diff ine app}. Similar as the proof in Lemma \ref{lem gra diff ine cut app}, we get \eqref{eq sec der diff ine cut app}.
\end{proof}

\section{Calculus on cones}\label{sec cone prea}
\subsection{Geometry on cones}
Let $\mC=\mathbb R_+\times\Sigma=\{(r,\theta)|r\in\mathbb R_+,\theta\in\Sigma\}\subset\mathbb R^{n+1}$ be a regular hypercone in $\mathbb R^{n+1}$, where $\Sigma=\mC\cap\mathbb S^{n}$ is the link of $\mC$, which is a smooth hypersurface in $\mathbb S^{n-1}$. Then we can parametrize $\mC$ by $\mC=r\Psi(\theta_1,\cdots,\theta_{n-1})$, where $\Psi(\theta)\in\Sigma\subset\mathbb S^n$, $r\in\mathbb R_+$. We collect some basic facts about cones here. We use index Lattin letters $i,j,k,l\cdots$ to denote index $1,2,\cdots,n$, and use Greek letters $\alpha,\beta,\gamma,\delta,\cdots$ to denote index $=1,2,\cdots,n-1$. Denote the metric and second fundamental form of $\mC$ by $g_{ij}$ and $h_{ij}$ respectively, we have 
\begin{align*}
    & g_{rr}=\mC_r\cdot \mC_r=1,\quad g_{r\alpha}=\mC_r\cdot \mC_{\alpha}=0, g_{\alpha\beta}=\mC_{\alpha}\cdot \mC_{\beta}=r^2 g_{\Sigma,\alpha\beta}.\\
     &h_{rr}=-\mC_{rr}\cdot\nu=0\cdot\nu=0,h_{r\alpha}=-\mC_{r\alpha}\cdot\nu=-\Psi_{\alpha}\cdot\nu=\frac{1}{r}\mC_{\alpha}\cdot\nu=0;\\
    & h_{\alpha\beta}=-\mC_{\alpha\beta}\cdot\nu=-r\Psi_{\alpha\beta}\cdot\nu=r\bar h_{\alpha\beta},\text{ where }\bar h_{ij}=\Psi_{\theta_i\theta_j}\cdot\nu|_{r=1}.\\
     &h_r^r=g^{ri}h_{i r}=g^{rr}h_{rr}=0,h^r_\alpha=g^{ri}h_{i\alpha}=g^{rr}h_{r\alpha}=0,h^{\alpha}_r=g^{\alpha i}h_{i r}=0,\\
    & h^\alpha_\beta=g^{\alpha i}h_{i \beta}=g^{\alpha \gamma}h_{\gamma \beta}=r^{-1}\bar h^\alpha_\beta,\text{ where }\bar h^\alpha_\beta=h^\alpha_\beta|_{r=1}.\quad \alpha,\beta=1,2,\cdots,n-1.
\end{align*}
Next, we compute the Christoffel symbol $\Gamma_{ij}^k$ on $\mC$:
\begin{align*}
&\Gamma_{\alpha\beta}^\gamma=\Gamma_{\Sigma,\alpha\beta}^\gamma,\,\Gamma_{\alpha\beta}^r=-rg_{\Sigma,\alpha\beta},\,\Gamma_{\alpha r}^\gamma=\frac{1}{r}\delta_{\alpha\gamma},\,\Gamma_{\alpha r}^r=\Gamma_{rr}^\alpha=\Gamma_{rr}^r=0,\\
&r_{\alpha\beta}=r_{,\alpha\beta}-\Gamma_{\alpha\beta}^\gamma r_\gamma-\Gamma_{\alpha\beta}^rr_r=-\Gamma_{\alpha\beta}^rr_r=rg_{\Sigma,\alpha\beta},\\
&r_{\alpha r}=r_{,\alpha r}-\Gamma_{\alpha r}^\gamma r_\gamma=0,\quad r_{rr}=r_{,rr}=0.
\end{align*}
Then we compute the derivatives of the function $r$ on $\mC$.
\begin{equation}\label{eq z C2 norm}
    |\nabla_{\mC} r|^2=g^{ij}r_ir_j=1,\, |\nabla^2_{\mC}r|^2=g^{ik}g^{jl}r_{ij}r_{kl}=r^{-4}g_{\Sigma}^{\alpha\gamma}g_\Sigma^{\beta\delta}r^2 g_{\Sigma,\alpha\beta}g_{\Sigma,\gamma\delta}=\frac{n-1}{r^2}.
\end{equation}
where $\nabla_{\mC}$ denotes the differentiation on $\mC$, $|T|$ denotes the norm of a tensor $T$ on $\mC$. 

\subsection{A Morrey type inequality on cones}\label{sec Morrey}
Recall the definition of $\mathbf H$ in \eqref{eq def H}. We prove a version of Morrey inequality for functions in $\mathbf H$.. 
\begin{lemma}\label{lem Morrey}
    Functions in $\mathbf H$ are actually continuous, i.e. $\mathbf{H}\subset C(\mathbb R_+\times\Sigma)$. Moreover, fo any $v\in\mathbf H$, there holds,
    \begin{equation}\label{eq Morrey}
        |v(y,\theta)|\leq C(n,\Sigma)(\frac{1}{y^{\frac{n}{2}}}+e^{\frac{(y+1)^2}{4}})(\|\nabla v\|_W+\|v\|_W)
    \end{equation}
    for $y>0$.
\end{lemma}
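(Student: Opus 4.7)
By density of $C_c^\infty(\mathcal{C})$ in $\mathbf{H}$, it suffices to establish the pointwise bound for smooth, compactly supported $v$; continuity of an arbitrary $v \in \mathbf{H}$ and the inequality then follow by passing to the limit once the right-hand side is shown to be finite. I would then split the argument into two regimes according to the size of the base point $y_0$.

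In the small-$y_0$ regime ($y_0 \lesssim 1$), I would exploit the scale invariance of the cone metric $dy^2 + y^2 g_\Sigma$: under the dilation $(y, \theta) \mapsto (y/y_0, \theta)$, the volume element rescales by a factor $y_0^n$, and the weight $e^{-|Y|^2/4}$ is bounded above and below by universal constants on any fixed cone-neighborhood of $(y_0, \theta_0)$ provided $y_0$ is small. A Sobolev/Morrey estimate on the rescaled unit-size patch then produces the factor $y_0^{-n/2}$ when converting an $L^2_W$-bound into a pointwise value. In the large-$y_0$ regime ($y_0 \gtrsim 1$) the weight itself is rapidly decreasing, so I would integrate along a radial slice, writing $v(y_0, \theta) = -\int_{y_0}^\infty v_y(r, \theta) dr$, and apply Cauchy--Schwarz with the weight split $r^{n-1} e^{-r^2/4} \cdot r^{-(n-1)} e^{r^2/4}$. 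The growth factor $e^{(y_0+1)^2/4}$ comes from Laplace's method applied to $\int_{y_0}^\infty r^{-(n-1)} e^{r^2/4} dr$, which concentrates near $r = y_0$. The two regimes would be patched via a smooth radial cut-off, with the commutator terms absorbed into $\|v\|_W + \|\nabla v\|_W$.

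The main technical obstacle is that on an $n$-dimensional cone with $n \geq 2$, the classical Morrey embedding $H^1 \hookrightarrow C^0$ does not hold, so the naive scaling argument on cone-balls is not enough. To circumvent this I would exploit the product structure $\mathcal{C} = \mathbb{R}_+ \times \Sigma$ via the expansion $v(y, \theta) = \sum_j v_j(y) \omega_j(\theta)$ in the $L^2(\Sigma)$-eigenbasis $\{\omega_j\}$ of $\mathcal{L}_\Sigma$, which reduces each mode to a problem on $(0, \infty)$ with measure $y^{n-1} e^{-y^2/4} dy$. Each radial factor $v_j$ satisfies a one-dimensional Morrey inequality of the desired form, where the 1D estimates can be derived by the same scaling argument described above applied to the half-line (essentially the computation used by Herrero--Vel\'azquez in \cite{HV}). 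The reassembly $v(y_0, \theta_0) = \sum_j v_j(y_0) \omega_j(\theta_0)$ would then be handled by Cauchy--Schwarz with a summability weight $(1+\mu_j)^{-s}$, using the $y^{-2}$-weighted angular gradient term inside $\|\nabla v\|_W^2$ to supply the required decay in $j$ on $\{v_j\}$, and Weyl-type pointwise bounds on $\|\omega_j\|_\infty$ to control the dual sum. This step is the most delicate point and is where the explicit combination of radial and angular weights in the definition of $\mathbf{H}$ is essential.
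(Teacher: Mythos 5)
Your overall architecture (reduce to $C_c^\infty$ by density, split into $y\lesssim 1$ and $y\gtrsim 1$, extract $y^{-n/2}$ from the volume of a cone-annulus at scale $y$ and $e^{(y+1)^2/4}$ from the Gaussian weight on $[y,y+1]$) matches the paper's proof, which integrates the fundamental-theorem-of-calculus identity $v(y,\varphi)^2\lesssim v(z,\theta)^2+y\int_\gamma|\nabla v|^2$ over $(z,\theta)\in[\tfrac y2,y]\times\Sigma$ (resp.\ $[y,y+1]\times\Sigma$) rather than literally rescaling. You also correctly identify the real issue, which the paper's proof passes over: on the $n$-dimensional cone the embedding $H^1\hookrightarrow C^0$ fails for $n\ge 2$, so some extra input beyond the two-regime bookkeeping is needed.

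However, your proposed fix does not close. Granting a sharp one-dimensional Morrey inequality for each mode $v_j$, the reassembly $|v(y_0,\theta_0)|\le\bigl(\sum_j(1+\mu_j)^s|v_j(y_0)|^2\bigr)^{1/2}\bigl(\sum_j(1+\mu_j)^{-s}|\omega_j(\theta_0)|^2\bigr)^{1/2}$ requires, by the sup-norm bound $\|\omega_j\|_\infty\lesssim(1+\mu_j)^{(n-2)/4}$ and Weyl's law $\mu_j\sim j^{2/(n-1)}$ on the $(n-1)$-manifold $\Sigma$, that $s>n-\tfrac32$ for the second factor to converge; but the angular part of $\|\nabla v\|_W^2$ supplies only \emph{one} power of $\mu_j$ in the first factor (namely $\sum_j\mu_j\int y^{-2}v_j^2\,y^{n-1}e^{-y^2/4}dy$ up to bounded corrections from $|A_\Sigma|^2$). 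Since $n\ge 7$ here, you are short by roughly $n/2$ angular derivatives; this is the same $(n-1)$-dimensional trace obstruction you started with, merely transported to Fourier space. In fact no argument can repair this within the stated hypotheses: placing a $\log\log$-type (or small-capacity) singularity at a fixed interior point $(1,\theta_0)$ of $\mathcal C$, where the weight $e^{-|Y|^2/4}$ is bounded above and below, produces a sequence in $C_c^\infty(\mathcal C)$ with uniformly bounded $\|\cdot\|_{W,1}$ and unbounded sup-norm, so \eqref{eq Morrey} cannot hold for all of $\mathbf H$ when $n\ge 2$. The paper's own proof stumbles at the corresponding point: the step replacing the per-geodesic integral $\int_0^{d}|\nabla v(\gamma_{(z,\theta)}(t))|^2\,dt\,z^{n-1}d\theta$ by the volume integral $\int|\nabla v|^2r^{n-1}e^{-r^2/4}\,d\theta\,dr$ ignores the order-$\mathrm{dist}(\cdot,(y,\varphi))^{n-1}$ degeneration of the Jacobian where the geodesics focus at the target point — exactly the $|x-y|^{-(n-1)}$ kernel that forces $p>n$ in the classical Morrey theorem. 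A correct version of the lemma would have to invoke more than $\|v\|_W+\|\nabla v\|_W$ on the right-hand side (e.g.\ roughly $n/4$ powers of $L_{\mathcal C}$ applied to $v$, or interior elliptic/parabolic estimates exploiting the equation satisfied by $\tilde v_*$, which is the only function the lemma is applied to).
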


\begin{proof} The proof follows that of Lemma 6.3 of \cite{GS} with a little modification. For simplicity, we omit $W$ in $\|\cdot\|_W$ in the proof. Let's first assume that $v\in C^1(\mathbb R_+\times\Sigma)\cap\mathbf H$. 

For any $(y,\varphi),(z,\theta)\in \mC$, let $\gamma_{(z,\theta)}$ be a geodesic on $\mC$ from $(z,\theta)$ to $(y,\varphi)$ of arclength parameter. By the fundamental theorem of calculus, 
\begin{align*}
v(y,\varphi)=v(z,\theta)+\int_0^{d((y,\varphi),(z,\theta))}\nabla v(\gamma_{(z,\theta)}(t))\cdot\dot{\gamma}_{(z,\theta)}(t)dt\leq v(z,\theta)+\int_0^{d((y,\varphi),(z,\theta))}|\nabla v(\gamma_{(z,\theta)}(t))|dt, 
\end{align*}
where  By H\"older inequality, this implies that
\begin{equation}\label{eq Holde ineq cone}
\begin{aligned}
    v(y,\varphi)^2\leq &C\left(v(z,\theta)^2+y\int_0^{d((y,\varphi),(z,\theta))}|\nabla v(\gamma_{(z,\theta)}(t))|^2dt\right)\\
    \leq &Cv(z,\theta)^2+C(\Sigma)y\int_0^{d((y,\varphi),(z,\theta))}|\nabla v(\gamma_{(z,\theta)}(t))|^2e^{-\frac{r(\gamma_{(z,\theta)}(t))^2}{4}}dt
\end{aligned}
\end{equation}
 For each $0<y\leq 1$, integrate this against $e^{-\frac{z^2}{4}}dv(z,\theta)$ over $(z,\theta)\in [\frac{y}{2},y]\times \Sigma$, then we get
\begin{align*}
v(y,\varphi)^2 \int_{\frac{y}{2}}^y\int_\Sigma z^{n-1}e^{-\frac{z^2}{4}}d\theta dz\leq CI_1+C(\Sigma)I_2
   \end{align*}
   where
\begin{align*}
    I_1:=\int_{\frac{y}{2}}^y\int_\Sigma v(z,\theta)^2z^{n-1}e^{-\frac{z^2
   }{4}}d\theta dz\leq \|v\|^2,
\end{align*}
and
   \begin{align*}
       I_2:=&y\int_{\frac{y}{2}}^y\int_\Sigma \int_0^{d((y,\varphi),(z,\theta))}|\nabla v(\gamma_{(z,\theta)}(t))|^2e^{-\frac{r(\gamma_{(z,\theta)}(t))^2}{4}}dtz^{n-1}e^{-\frac{z^2}{4}}d\theta dz\\
       \leq& y\int_{\frac{y}{2}}^y \int_\Sigma \int_0^{d((y,\varphi),(z,\theta))}|\nabla v(\gamma_{(z,\theta)}(t))|^2e^{-\frac{r(\gamma_{(z,\theta)}(t))^2}{4}}dtd\theta z^{n-1}e^{-\frac{z^2}{4}}dz\\
       \leq &y\int_{\frac{y}{2}}^y\int_\Sigma\int_0^{d((y,\varphi),(z,\theta))}|\nabla v(\gamma_{(z,\theta)}(t))|^2r(\gamma_{(z,\theta)})^{n-1}e^{-\frac{r(\gamma_{(z,\theta)}(t))^2}{4}}d\theta dr e^{-\frac{z^2}{4}}dz\\
   \leq &y\|\nabla v\|^2\int_{\frac{y}{2}}^ye^{-\frac{z^2}{4}}dz\leq y^2\|\nabla v\|^2.
   \end{align*}
Thus
\begin{align*}
    v(y,\varphi)^2\leq &C(n,\Sigma)(\int_{\frac{y}{2}}^{y}\int_\Sigma z^{n-1}e^{-\frac{z^2}{4}}d\theta_2dz)^{-1}[\|v\|^2+y^2\|\nabla v\|^2]\leq C(n,\Sigma)\frac{1}{y^{n}}[\|v\|^2+y^2\|\nabla v\|]^2\\
    \leq &C(n,\Sigma)y^{-n}[\|v\|^2+\|\nabla v\|^2]
\end{align*}
for $0<y\leq 1$. 
 
For $y\geq 1$, integrate \eqref{eq Holde ineq cone} against $e^{-\frac{z^2}{4}}dv(z,\theta)$ over $(z,\theta)\in [y,y+1]\times \Sigma$, then we get
\begin{align*}
   &v(y,\varphi)^2 \int_{y}^{y+1}\int_\Sigma z^{n-1}e^{-\frac{z^2}{4}}d\theta dz\leq CI_3+C(\Sigma)I_4,
   \end{align*}
   where 
   \begin{align*}
       I_3:=\int_{y}^{y+1}\int_\Sigma v(z,\theta)^2z^{n-1}e^{-\frac{z^2
   }{4}}d\theta dz\leq \|v\|^2,
   \end{align*}
   and
   \begin{align*}
       I_4:=&y\int_{y}^{y+1}\int_\Sigma \int_0^{d((y,\varphi),(z,\theta))}|\nabla v(\gamma_{(z,\theta)}(t))|^2e^{-\frac{r(\gamma_{(z,\theta)}(t))^2}{4}}dtz^{n-1}e^{-\frac{z^2}{4}}d\theta dz\\
       \leq& y\int_{y}^{y+1} \int_\Sigma \int_0^{d((y,\varphi),(z,\theta))}|\nabla v(\gamma_{(z,\theta)}(t))|^2e^{-\frac{r(\gamma_{(z,\theta)}(t))^2}{4}}dtd\theta z^{n-1}e^{-\frac{z^2}{4}}dz\\
       \leq& ye^{\frac{(y+1)^2}{4}}\int_{y}^{y+1}\int_\Sigma\int_0^{d((y,\varphi),(z,\theta))}|\nabla v(\gamma_{(z,\theta)}(t))|^2r(\gamma_{(z,\theta)})^{n-1}e^{-\frac{r(\gamma_{(z,\theta)}(t))^2}{4}}d\theta dr e^{-\frac{z^2}{4}}dz\\
       \leq& ye^{\frac{(y+1)^2}{4}}\|\nabla v\|^2\int_{y}^{y+1}e^{-\frac{z^2}{4}}dz\leq  ye^{\frac{(y+1)^2-y^2}{4}}\|\nabla v\|^2.
   \end{align*}
Thus
\begin{align*}
    v(y,\varphi)^2\leq &C(n,\Sigma)(\int_{y}^{y+1}\int_\Sigma z^{n-1}e^{-\frac{z^2}{4}}d\theta_2dz)^{-1}[\|v\|^2+ye^{\frac{(y+1)^2-y^2}{4}}\|\nabla v\|^2]\\
    \leq &C(n,\Sigma)y^{-n}e^{\frac{(y+1)^2}{4}}[\|v\|^2+ye^{\frac{(y+1)^2-y^2}{4}}\|\nabla v\|]^2
    \leq C(n,\Sigma)e^{\frac{(y+1)^2}{2}}[\|v\|^2+\|\nabla v\|^2]
\end{align*}
for $y\geq 1$. 

More generally, \eqref{eq Morrey} holds for $v\in\mathbf H$ since $C_c^1(\mC)$ is dense in $\mathbf H$.
\end{proof}

\section{Hardt-Simon's foliation}\label{sec HS foliation}
Suppose $\mC\subset\mathbb R^{n+1}$ is a regular area minimizing hypercone in $\mathbb R^{n+1}$, $\mathbb R^{n+1}\setminus \mC=E_+\cup E_-$ has two connected components $E_+,E_-$. By the result of \cite{HS},  (Theorem 2.1 of \cite{HS}), there is a smooth area minimizing hypersurface $S_+$ which foliates $E^+$ and has positive distance to the origin. Moreover, $S_+$ can be written as a normal graph over the cone $\mC=\{(r,\theta)|r\in \mathbb R_+,\theta\in \Sigma\}$ outside a big ball $B_{R_s}$, with profile function $\psi(r,\theta)$. That is
\begin{equation}\label{eq S+ para}
    S_+(r,\theta)=\mC(r,\theta)+\psi (r,\theta)\nu_{\mC}(r,\theta),\quad (r,\theta)\in[R_s,\infty)\times \Sigma,
\end{equation}
and $\psi$ has the asymptotics
\begin{equation}\label{eq psi asy}
    \psi(r,\theta)=cr^\alpha+O(r^{\alpha-{\tilde \alpha} }), \quad\text { as }r\to \infty. 
\end{equation}
for some $c>0$, ${\tilde \alpha} (\Sigma)>0$. Moreover, this asymptotic propagates to the derivatives by minimal surface equation. That is,
\begin{equation}\label{eq psi der asy}
    r^{|\gamma|}|\nabla^\gamma \psi(r,\theta)|\leq C(\Sigma,|\gamma|)r^\alpha, \quad |\gamma|\in\mathbb N, \quad(r,\theta)\in[R_s,\infty)\times \Sigma.
\end{equation}
By rescaling, for any $k>0$, 
\begin{equation}
    S_{\kappa,+}:={\kappa}^{\frac{1}{1-\alpha}}S_+
\end{equation}
has profile function
\begin{align*}
     \psi_{\kappa}(r,\theta):={\kappa}^{\frac{1}{1-\alpha}}\psi(k^{\frac{-1}{1-\alpha}}r,\theta),\quad (r,\theta)\in [{\kappa}^{\frac{1}{1-\alpha}}R_s,\infty)\times\Sigma,
\end{align*}
over $\mC$ outside $B_{{\kappa}^{\frac{1}{1-\alpha}}R_s}$. That is,
\begin{equation}\label{eq Sk+ para}
     S_{\kappa,+}(r,\theta)=\mC(r,\theta)+\psi_{\kappa}(r,\theta)\nu_{\mC}(r,\theta),\quad (r,\theta)\in [{\kappa}^{\frac{1}{1-\alpha}}R_s,\infty)\times\Sigma.
\end{equation}
Note that
\begin{equation}\label{eq psik asym}
    \psi_{\kappa}(r,\theta)=ckr^\alpha+O((\frac{r}{1-\alpha})^{\alpha-{\tilde \alpha} })= ckr^\alpha+O(r^{\alpha-{\tilde \alpha} })\text { as }r\to\infty,
\end{equation}
and 
\begin{equation}\label{eq psik der asy}
     r^{|\gamma|}|\nabla^\gamma \psi_{\kappa}(r,\theta)|\leq C(\Sigma,|\gamma|,k)r^\alpha, \quad |\gamma|\in\mathbb N, \quad(r,\theta)\in[R_s,\infty)\times \Sigma.
\end{equation}
Thus, by changing $k>0$, we may assume that $c=1$ with out loss of generality. 

From the parametrization \eqref{eq Sk+ para} , we can use $\{(r,\theta)|r\geq {\kappa}^{\frac{1}{1-\alpha}}R_s,\theta\in \Sigma\}$ for the coordinates of $S_{\kappa,+}\setminus B_{{\kappa}^{\frac{1}{1-\alpha}}R_s}$. We introduce a global coordinates $\{(\tilde r,\theta)|\tilde r\geq r_0,\theta\in \Sigma \}$ on $S_{\kappa,+}$. For any point $p\in S_{\kappa,+}$, $p$ has coordinates $(\tilde r,\theta)$ if $P_{\mC}(p)=(r,\theta)$, where $P_{\mC}:\mathbb R^{n+1}\to \mC$ is the projection from  $\mathbb R^{n+1}$ to $\mC$. If $P_{\mC}(p)$ has more then one point, then we take $\theta$ to be any $\theta_0$ in this projection. This can only happen if $P_{\mC}(p)=(r_0,\theta)$. By this definition, $(\tilde r,\theta)=S_{\kappa,+}(r,\theta)$ if $r\geq {\kappa}^{\frac{1}{1-\alpha}}R_s$.

\end{document}